%

\listfiles
\documentclass[11pt]{amsart}

\usepackage{amsmath}
\usepackage{amssymb,mathtools,amsthm,latexsym}
\setlength{\textwidth}{\paperwidth}
\addtolength{\textwidth}{-2in}
\calclayout

\usepackage{stackrel}

\usepackage{color}

\numberwithin{equation}{section}
\setcounter{secnumdepth}{2}
\setcounter{tocdepth}{1}

\usepackage{tikz}
\usetikzlibrary{matrix,arrows}

\theoremstyle{plain}
\newtheorem{theorem}[equation]{Theorem}
\newtheorem{proposition}[equation]{Proposition}
\newtheorem{lemma}[equation]{Lemma}
\newtheorem{corollary}[equation]{Corollary}

\theoremstyle{remark}
\newtheorem{remark}[equation]{Remark}

\theoremstyle{definition}
\newtheorem{definition}[equation]{Definition}

\newtheorem*{question*}{Question}



\usepackage{graphicx}

\makeatletter

\let\c@equation\c@figure
\makeatother

\title[Loewner Carpets]{Thin Loewner carpets and their quasisymmetric embeddings in $S^2$
}
\author{Jeff Cheeger}
\address{Courant Institute, 251 Mercer St, New York, NY 10012}
\email{jc6@nyu.edu}

\author{Sylvester Eriksson-Bique}
\address{Department of Mathematics, University of California, Los Angeles, Box 95155, Los Angeles, CA, 90095-1555}
\email{syerikss@math.ucla.edu}

\newcounter{prob}
\setcounter{prob}{1}

\newcommand{\Z}{\ensuremath{\mathbb{Z}}}

\newcommand{\N}{\ensuremath{\mathbb{N}}}
\newcommand{\R}{\ensuremath{\mathbb{R}}}
\newcommand{\C}{\ensuremath{\mathbb{C}}}

\newcommand{\diam}{\ensuremath{\mathrm{diam}}}
\newcommand{\Lip}{\ensuremath{\mathrm{Lip\ }}}

\newcommand{\defeq}{\mathrel{\mathop:}=}

\newcommand{\str}{\ensuremath{\mathrm{Str}}}

\newcommand{\len}{\ensuremath{\mathrm{length}}}

\newcommand{\Mod}{\ensuremath{\mathrm{Mod}}}

\def\Xint#1{\mathchoice
{\XXint\displaystyle\textstyle{#1}}%
{\XXint\textstyle\scriptstyle{#1}}%
{\XXint\scriptstyle\scriptscriptstyle{#1}}%
{\XXint\scriptscriptstyle\scriptscriptstyle{#1}}%
\!\int}
\def\XXint#1#2#3{{\setbox0=\hbox{$#1{#2#3}{\int}$ }
\vcenter{\hbox{$#2#3$ }}\kern-.58\wd0}}
\def\avint{\Xint-}

\newcommand{\co}{\mskip0.5mu\colon\thinspace}   

%
%
%
%
\def\vint_#1{\mathchoice%
        {\mathop{\kern 0.2em\vrule width 0.6em height 0.69678ex depth -0.58065ex
                \kern -0.8em \intop}\nolimits_{\kern -0.4em#1}}%
        {\mathop{\kern 0.1em\vrule width 0.5em height 0.69678ex depth -0.60387ex
                \kern -0.6em \intop}\nolimits_{#1}}%
        {\mathop{\kern 0.1em\vrule width 0.5em height 0.69678ex
            depth -0.60387ex
                \kern -0.6em \intop}\nolimits_{#1}}%
        {\mathop{\kern 0.1em\vrule width 0.5em height 0.69678ex depth -0.60387ex
                \kern -0.6em \intop}\nolimits_{#1}}}
\def\vintslides_#1{\mathchoice%
        {\mathop{\kern 0.1em\vrule width 0.5em height 0.697ex depth -0.581ex
                \kern -0.6em \intop}\nolimits_{\kern -0.4em#1}}%
        {\mathop{\kern 0.1em\vrule width 0.3em height 0.697ex depth -0.604ex
                \kern -0.4em \intop}\nolimits_{#1}}%
        {\mathop{\kern 0.1em\vrule width 0.3em height 0.697ex depth -0.604ex
                \kern -0.4em \intop}\nolimits_{#1}}%
        {\mathop{\kern 0.1em\vrule width 0.3em height 0.697ex depth -0.604ex
                \kern -0.4em \intop}\nolimits_{#1}}}

\begin{document}

\maketitle

\begin{abstract}
\noindent A {\it carpet} is a metric space which is homeomorphic to the standard 
Sierpinski carpet in
$\R^2$, or equivalently, in $S^2$. A carpet is called thin if its Hausdorff dimension is $<2$. 
A metric space is called {\it  Q-Loewner}
if its $Q$-dimensional Hausdorff measure is Q-Ahlfors regular and if it satisfies
a $(1,Q)$-Poincar\'e inequality. As we will show, $Q$-Loewner planar metric spaces are always carpets, and admit quasisymmetric embeddings into the plane.


 In this paper, for every pair $(Q,Q')$, with $1<Q<Q'< 2$
we construct infinitely many pairwise quasi-symmetrically distinct
$Q$-Loewner carpets $X$ which admit explicit snowflake embeddings,
$f\co X\to S^2$, for which the image, $f(X)$, admits an explicit description and
is $Q'$-Ahlfors regular. In particular, these $f$ are quasisymmetric embeddings.
 By a result of Tyson, 
the Hausdorff dimension of
a Loewner space cannot be lowered by a quasisymmetric homeomorphism. 
By definition, this means that
the carpets $X$ and $f(X)$ realize their conformal dimension.
Each of images $f(X)$ can be further uniformized via post composition with a quasisymmetric 
homeomorphism of $S^2$, so as to yield a circle carpet and also a square carpet.

Our Loewner carpets $X$ are constructed via what we call an {\it admissible 
quotiented inverse system}. This mechanism
 extends the inverse limit construction for PI spaces given in \cite{cheegerkleinerinverse}, which however, 
does not yield
 carpets. Loewner spaces
are a particular subclass of PI spaces. They have strong rigidity 
properties which do not hold for PI spaces
in general.

 In many
cases the construction of the carpets and their
snowflake embeddings, $f$, can also be described in terms 
of substitution rules. The 
statement above concerning $(Q,Q')$ is already a consequence of these examples. 
The images of these snowflake embeddings can be de-snowflaked using a deformation by a strong $A_\infty$ weight, which multiplies the metric infinitesimally by a conformal factor of the form $\omega=d(\mathrm{Image}(f),\cdot)^\alpha$. Consequently, our examples also yield new examples of strong $A_\infty$-weights for which the associated metrics admit no 
bi-Lipschitz embeddings  
 into  Banach spaces with the Radon Nikodym Property such as $L_p$, for $1<p<\infty$ 
and $\ell_1$.
\end{abstract}

\tableofcontents

\section{Introduction}

The central
 objects considered in this paper are quasisymmetric maps, thin carpets and Loewner spaces.
By a {\it carpet} we mean a metric space homeomorphic to the standard Sierpinksi carpet.
By a \emph{thin} carpet, we mean one with Hausdorff dimension $Q \in (1,2)$.
Additional definitions and background are recalled below.
Previously, there were many studies of  quasisymmetric homeomorphisms between 
thin carpets, especially carpets in $S^2$, as well as studies of the relations between
 quasisymmetric maps and Loewner spaces.  These connections are strengthened in
 Theorem \ref{thm:confemb} which states that if a Loewner space with Hausdorff dimension $Q \in (1,2)$ is {\it planar},
i.e.\ if it embeds topologically in $S^2$,  then
it is a thin carpet 
which embeds quasisymmetrically  in $S^2$. The proof of Theorem \ref{thm:confemb}
depends on Proposition  \ref{lemma:carpet} and Theorem \ref{lem:quasicircle}.

Theory notwithstanding,
 prior to the present paper, there were no 
 examples in the literature of thin carpets in $S^2$ 
which were known to be quasisymmetrically equivalent to  Loewner spaces.\footnote{ Bruce Kleiner (unpublished), has  constructed some examples of thin Loewner carpets using a substitution rule. He showed that his 
examples admit quasisymmetric embeddings in $S^2$, though explicit such embeddings, or values of the Hausdorff dimension $Q'$ of their images,  are not known. Also, for other interesting explicit examples
of $2$-dimensional (``fat'') Loewner carpets in $S^2$, see  \cite{mackaytysonwildrick,sylvesterjasun}, \cite[Theorem 1.6]{merenkovwildrick}.} %
Here for every $Q, Q'$, with $1<Q<Q'<2$, we construct uncountably many 
explicit examples of Loewner spaces, $X_\alpha$,
of Hausdorff dimension $Q$
which admit explicit
quasisymmetric embeddings $f\co X\to S^2$,
such that $f(X)$ has Hausdorff
\hbox{dimension $Q'$}; see Theorem \ref{thm:existence}. 
In fact, these embeddings are {\it \hbox{$\alpha$-snowflake mappings}}, i.e.\ 
they
become bi-Lipschitz
if the metric on $X$ is snowflaked with exponent $Q/Q'$. Both the explicit
Loewner carpets and their images $f(X)$ can be described in terms of substitution
rules. For each $Q,Q'$ as above, we 
 show that an infinite subcollection of our explicit examples are pairwise 
quasisymmetrically distinct. Conjecturally, this holds for an uncountable subset.

Our explicit examples mentioned in the previous paragraph can also be viewed
as special cases of a   
general construction of Loewner carpets $X$  as
limits of an admissible quotiented inverse system; see 
Section \ref{sec:generalconstruction}. 
In the general case,  the proof
that the limit is planar follows from 
a general result (Proposition \ref{thm:GHplanar}). It states that
if $X_i$ are planar graphs equipped with a path metric which converge in the Gromov-Hausdorff sense to $X_\infty$, and if $X_\infty$ does not have cut-points, then $X_\infty$ is planar. 


The construction of admissible quotiented inverse systems 
generalizes the admissible inverse system construction 
of PI spaces given in \cite{cheegerkleinerinverse}.
That construction does not yield planar Loewner spaces.

The fact that for every fixed $Q, Q'$, infinitely many of our
thin carpet examples are pairwise
 quasisymmetrically distinct, is a consequence of the 
 strong rigidity properties of Loewner spaces.  In our case, the carpets in question are given explicitly
as $f_1(X_1),\,  f_2(X_2)\subset S^2$, and the issue boils down to being 
able to decide whether there are tangent cones of the Loewner spaces $X_1, X_2 $ 
which are {\it bi-Lipschitz equivalent}. Despite the the gain in rigidity and the reduction to the bi-Lipschitz case, the problem is still non-trivial. In general, deciding   bi-Lipschitz equivalence for spaces or their tangents with the same Hausdorff dimension is not easy. Solving this problem for an infinite family of examples involves introduction of (quasi-)invariants tailored to our specific context, and which are of independent interest. The construction of this invariant makes use of 
an unpublished result of Kleiner which we state as Theorem \ref{thm:rigidityquantitative}. 
Since no proof exists in the literature, for completeness we provide one; see Section \ref{sec:rigidity}.



In order to provide context and motivation for our main results, in the first part of this 
introduction,
we will review some of what is known about quasisymmetries,
carpets, and Loewner spaces.    In the second part, which begins
with Subsection \ref{ss:pls2}, 
we will state our main results, 
describe the \hbox{quotiented inverse system} construction which
gives rise to our Loewner carpet examples and briefly summarize the remainder of the paper.

\noindent
\subsection{Quasisymmetries} The classical theory of quasiconformal
(equivalently, \hbox{quasisymmetric}) mappings
between subsets of $S^2$ can be generalized to the context of metric spaces.
In particular, it is meaningful for
Sierpinski carpets; see for example \cite{bonkmerenkov}. 
In this connection seminal work  
was done by Heinonen-Koskela, \cite{heinonenkoskela}.  Let $B(x,r)$ denote the open ball with
center $x$ and radius $r$ in a metric space $X$.
\begin{definition}
A \textit{quasisymmetry} between metric spaces,
$f\co X_1\to X_2$  is a homeomorphism such that there exists $C>1$ such that
for every ball $B(x,r)\subset X_1$ there exists $R$ and $y\in X_2$ such that 
$B(y,R/C)\subset f(B(x,r))\subset B(y,CR)$. A \textit{quasisymmetric embedding} $f \co X \to Y$ is a quasisymmetry onto its image $\mathrm{Image}(f)$.
\end{definition}

Note that the collection of quasisymmetric homeomorphisms of a metric space
form a group. Also,
 while bi-Lipschitz maps preserve the
 Hausdorff dimension of a space, a quasisymmetric map might  increase or 
decrease the Hausdorff dimension. Replacing a given metric with its snowflake
provides an example of this
for which $f$ is the identity map.

The {\it conformal dimension} is 
a quasisymmetric
 invariant which  was
first defined by Pansu \cite{pansuconfdim} 
and discussed in a different form by Bourdon and Pajot \cite{bourdoncohomologie}.

Let $\mathcal{H}_{dim}(Y)$ denote the Hausdorff dimension of $Y$.
 The {\it Ahlfors regular conformal dimension} 
of a metric space $X$ is defined as
follows.
\begin{definition}
\label{d:arcd}
The {\it Ahlfors regular conformal dimension}      of $X$ is:
\begin{equation}
\label{def:confdim}
 {\rm confdim}(X) \defeq 
\inf 
\{\mathcal{H}_{dim}(Y) \ | \  Y 
\,\, {\rm
 is \, \, Ahlfors\, \, regular\, \, and\, \, there\,\, exists\, \, a\,\, quasisymmetry  }
\,\,  f\co X \to Y\,
\}
\end{equation}
\vskip1mm

If there exists some $Y$ as above with
$\mathcal H_{\dim}(Y)={\rm confdim}(X)$, then we say $Y$ 
{\it realizes} the conformal dimension of $X$.
\end{definition}
 \vskip2mm

If two metric spaces $X_1,X_2$ can be shown to have different conformal
dimensions then one can conclude that they must be
 quasisymmetrically distinct. However, there is a catch:
 in practice, the conformal
dimension is often difficult to compute.

In general, given $X$, there may be no space  $Y$
which  realizes the conformal dimension of $X$;
see for example \cite{tysonwu}. 
However, if such a $Y$ exists, it will possess additional  properties. 
This was initially observed by Keith and
 Laakso, who characterized the inability to lower Hausdorff dimension as being equivalent
 to the condition that some weak 
tangent cones possesses a curve family with positive modulus 
for some positive exponent \cite{keithlaakso}. (By passing to another weak
tangent cones if necessary, the exponent can always be taken to be $1$.)
As a simple example, any metric space of the form $X\times I$,
with $I$ an interval, realizes its conformal dimension.

Despite the above mentioned characterization, for many  spaces of interest,
it is difficult to compute the conformal dimension and also to decide whether 
it is realized. On the other hand, for
many such spaces it is known that {\it if} the conformal dimension is realized,
then it is realized by a Loewner space; see below for a discussion of Loewner spaces.

In light of the known rigidity properties
of Loewner spaces, if the conformal dimension is realized, then
strong consequences ensue. Specifically,
as we will explain, this is related to Cannon's conjecture and
the Kapovich-Kleiner conjecture \cite{haissinskyhyper}, 
which pertain to geometric group theory. Both of these conjectures could be solved by proving that the conformal dimension of a planar  hyperbolic group boundary is realized. This is explained by the fact that in both cases, it is known that if the conformal dimension is realized, then it must be realized by a Loewner space.


\subsection{Carpets}
By definition, a {\it carpet} is a metric space homeomorphic to the standard Sierpinski carpet $S_3$. 
Here, we are particularly concerned with {\it thin} carpets i.e.\ those with 
$1< \dim_\mathcal H(X)<2$. (There do exist {\it fat}  carpets $X$ in $S^2$,
with $\dim_\mathcal H(X)=2$  and also,
{\it very fat} carpets with $\dim_\mathcal H(X)>2$.) 

A fundamental theorem of Whyburn \cite{whyburnsierp} provides necessary and sufficient intrinsic 
conditions on a metric space which guarantee that it is a carpet.  
To state his result we need some definitions.

A topological space is called {\it planar} if it is homeomorphic to a subset of the plane.
A point $p$ in a metric space $X$ is {\it local cut point} if there is a neighborhood $U$
of $p$ such that $U\setminus \{p\}$ is not connected.

\begin{theorem}[Whyburn \cite{whyburnsierp}]
 \label{thm:whyburn}
  A compact metric space is a carpet if and only if it is planar, connected,
  locally connected, has no local cut-points and no open subset
 is homeomorphic to $\R^2$. 
 \end{theorem}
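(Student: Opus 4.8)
The plan is to prove the two implications separately: necessity is a direct check on the model carpet $S_3$, while sufficiency is the substantial direction and proceeds by embedding $X$ in $S^2$ and analyzing the complementary domains.

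For \emph{necessity}, I would verify that $S_3 \subset \R^2$ has each of the five listed properties. Planarity, compactness and connectedness are immediate from the standard nested construction. Local connectedness holds because $S_3$ is a decreasing intersection of compact sets, each a finite union of uniformly small closed squares glued along edges, so nearby points are joined by short arcs; equivalently $S_3$ is a Peano continuum. No open subset of $S_3$ is homeomorphic to $\R^2$ because $S_3$ has covering dimension $1$ -- indeed it has planar Lebesgue measure zero, so its relatively open subsets cannot contain a $2$-cell. Finally, to see that $S_3$ has no local cut point, I fix $x \in S_3$ and a small neighbourhood $U$ and, using the self-similar square structure, route an arc in $U \setminus \{x\}$ between any two prescribed points of $U \setminus \{x\}$: at every scale $x$ lies on the boundary of one of the peripheral squares, and arcs can be pushed around it along the boundaries of these peripheral squares, so $U \setminus \{x\}$ is connected.

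For \emph{sufficiency}, assume $X$ is a compact, connected, locally connected metric space (a Peano continuum) that is planar, has no open subset homeomorphic to $\R^2$, and has no local cut point. Being compact and planar, $X$ embeds as a compact subset of $S^2$, and I would study $\Omega := S^2 \setminus X$, a countable disjoint union of domains $D_i$. The argument then rests on four classical plane-topology facts about the $D_i$. First, $X$ has empty interior in $S^2$: otherwise $\mathrm{int}_{S^2}(X)$ would be a nonempty open subset of the surface $S^2$, hence would contain a set homeomorphic to $\R^2$ that is also open in $X$, contradicting the hypothesis; thus $\Omega$ is dense. Second, $\{D_i\}$ is a null sequence, $\diam D_i \to 0$ -- the standard fact that the complementary domains of a locally connected continuum in $S^2$ form a null sequence, derived from uniform local connectedness. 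Third, each $\partial D_i$ is a simple closed curve: by the Torhorst theorem $\partial D_i$ is itself a Peano continuum, and it has no cut point, for if $p$ cut $\partial D_i$ then $D_i$ would pinch at $p$ and a small neighbourhood of $p$ in $X$ would be disconnected by deleting $p$, making $p$ a local cut point of $X$; and a nondegenerate Peano continuum which bounds a simply connected domain of $S^2$ and has no cut point is a simple closed curve, so each $D_i$ is a Jordan domain. Fourth, the closures $\overline{D_i}$ are pairwise disjoint: a common boundary point $p$ of $D_i$ and $D_j$ with $i \ne j$ would again force a small neighbourhood of $p$ in $X$ to fall apart upon removing $p$, so $p$ would be a local cut point, and the null-sequence property is used to exclude the analogous pathology arising from accumulation of infinitely many $D_j$. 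Together the third and fourth facts also show there must be infinitely many $D_i$, since a finite disjoint union of closed Jordan regions cannot have dense complement.

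With these facts in hand, $X = S^2 \setminus \bigsqcup_i D_i$ is the complement of a null sequence of disjoint open Jordan domains with pairwise disjoint closures and dense union, and I would conclude by invoking the extrinsic characterization of the carpet (also due to Whyburn): any such subset of $S^2$ is homeomorphic to $S_3$. The homeomorphism $X \to S_3$ is built by an inductive back-and-forth scheme -- at stage $n$ one matches the $n$ largest peripheral circles of $X$ with the $n$ largest peripheral squares of $S_3$ by a homeomorphism of $S^2$, extending across the bounded Jordan domains by the Schoenflies theorem and using the null-sequence property to pass to a limit homeomorphism of the two carpets -- so $X$ is a carpet. The main obstacle is precisely the passage from the internal hypothesis ``no local cut point'' to the external statements in the third and fourth facts, namely that each complementary domain is a Jordan domain and that the closures are pairwise disjoint; this is where the Torhorst theorem, Carath\'eodory/prime-end theory for the domains $D_i$, and delicate local separation arguments in $S^2$ are needed. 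The final combinatorial assembly is longer but conceptually routine once this geometric input is secured.
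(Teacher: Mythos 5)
The paper does not prove this statement: Theorem \ref{thm:whyburn} is quoted as a classical result of Whyburn \cite{whyburnsierp} and used as a black box (together with its extrinsic companion, Theorem \ref{thm:whyburnsierp}), so there is no in-paper argument to compare yours against. Your outline is, in substance, the standard (essentially Whyburn's original) route: reduce the intrinsic characterization to the extrinsic one by embedding $X$ in $S^2$, showing the complementary domains form a null sequence of Jordan domains with pairwise disjoint closures and dense union, and then running the back-and-forth/Schoenflies scheme. The necessity direction and the steps ``empty interior,'' ``null sequence,'' and ``infinitely many domains'' are all correctly handled.

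Two points deserve flagging. First, the passage from ``$X$ has no local cut point'' to ``each $\partial D_i$ is a Jordan curve and the closures $\overline{D_i}$ are pairwise disjoint'' is where all the real work sits, and your ``$D_i$ pinches at $p$'' phrasing is not yet an argument: a cut point of $\partial D_i$ need not a priori disconnect a small neighbourhood of $p$ in $X$, since the two pieces of $\partial D_i$ could reconnect through the rest of $X$. The correct mechanism is a crosscut argument (via prime ends or Whyburn's purely topological substitute): a failure of injectivity of the boundary parametrization at $p$ yields a Jordan curve $J=\gamma\cup\{p\}$ with $\gamma\subset D_i$, and since $\gamma$ misses $X$, the two complementary components of $J$ separate $X\setminus\{p\}$, exhibiting $p$ as a (local) cut point of $X$; you correctly name Torhorst and Carath\'eodory as the inputs, but the separation step should be made explicit in this form. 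Second, the theorem as literally stated admits degenerate counterexamples (a one-point space satisfies every listed hypothesis), so a nondegeneracy assumption is implicit; your proof would need to invoke it when asserting that the boundaries $\partial D_i$ are nondegenerate Peano continua. With those caveats, the proposal is a sound reconstruction of the cited proof.
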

Whyburn gave additional characterizations of carpets in \cite{whyburnsierp}. 
For example.
\begin{theorem}[Whyburn \cite{whyburnsierp}]
\label{thm:whyburnsierp}
A compact subset $K \subset S^2$ with empty interior is a carpet if and only if $K = S^2 \setminus \bigcup D_i$, where $D_i$ are countably many Jordan domains with disjoint closures in $S^2$
with $\lim_{n \to \infty}\, \diam(D_n) = 0$. 
\end{theorem}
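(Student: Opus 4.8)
The plan is to use Whyburn's first characterization, Theorem \ref{thm:whyburn}, as the bridge in both directions, supplemented by classical plane topology: the Jordan--Schoenflies theorem, Whyburn's cyclic connectedness theorem, the characterization of local connectedness of plane continua via their complementary domains (which subsumes Torhorst's theorem), and invariance of domain. For the ``if'' direction one checks that $K=S^2\setminus\bigcup_i D_i$, under the stated hypotheses, satisfies all the conditions of Theorem \ref{thm:whyburn}. For the ``only if'' direction one takes a carpet $K\subseteq S^2$ with empty interior, writes $S^2\setminus K$ as the union of its components $D_i$ (at most countably many, since $S^2$ is second countable), and shows these are Jordan domains with pairwise disjoint closures and diameters tending to $0$.

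For the \emph{``if'' direction}, assume $K=S^2\setminus\bigcup_i D_i$ with the $D_i$ Jordan domains, $\overline{D_i}$ pairwise disjoint, and $\diam D_i\to 0$; then $K$ is a nonempty compact set with empty interior and $S^2\setminus K=\bigsqcup_i D_i$ is exactly the decomposition into components. \emph{Compactness} is clear, and \emph{planarity} holds since $K\neq S^2$, so $K\subseteq S^2\setminus\{pt\}\cong\R^2$. \emph{No open subset of $K$ is homeomorphic to $\R^2$}: such a subset would contain a small Euclidean disk which, being a $2$-manifold inside the $2$-manifold $S^2$, is open in $S^2$ by invariance of domain, contradicting that $K$ has empty interior. \emph{Connectedness}: for each $n$, deleting the finitely many $D_i$ with $\diam D_i\ge 1/n$ one at a time produces a decreasing chain $S^2\supseteq R_{n,1}\supseteq R_{n,2}\supseteq\cdots$; since $\overline{D_k}$ is disjoint from all previously removed closures, $\overline{D_k}$ lies in the interior of the current region, so at each stage one deletes an open topological disk with closure in the interior of a compact connected planar region, and the result stays connected; $K$ is the nested intersection of these compact connected sets, hence connected. \emph{Local connectedness}: each $\partial D_i$ is a Jordan curve, hence a Peano continuum, and only finitely many $D_i$ exceed any given diameter, so the classical complementary-domain criterion gives local connectedness. \emph{No local cut points}: here the disjointness of the closures is essential --- it forces at most one $D_i$ to contain a given $p\in K$ in its closure, and then $p\in\partial D_i$, so near $p$ the set $K$ is, up to homeomorphism, a closed half-disk with a null sequence of disjoint closed Jordan subregions removed, all of whose closures miss $p$; since any small removed region can be detoured around along its bounding Jordan curve, arbitrarily small connected neighbourhoods of $p$ remain connected after deleting $p$.

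For the \emph{``only if'' direction}, let $K\subseteq S^2$ be a carpet; by Theorem \ref{thm:whyburn} it is a locally connected continuum with no local cut points and no open subset homeomorphic to $\R^2$, whence $K$ has empty interior. Write $S^2\setminus K=\bigsqcup_i D_i$ with $D_i$ the components. Each $D_i$ is \emph{simply connected}: $S^2\setminus D_i=K\cup\bigcup_{j\ne i}D_j$ is connected, since $K$ is connected and every $\overline{D_j}$ meets $K$ while being contained in $S^2\setminus D_i$. Having no local cut point, $K$ has no cut point, so by Whyburn's cyclic connectedness theorem $K$ is a cyclically connected Peano continuum; by the classical theorem on complementary domains of such continua, each $\partial D_i$ is a simple closed curve. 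By Jordan--Schoenflies, $\partial D_i$ bounds two Jordan domains in $S^2$, and the one containing $D_i$ is an open connected set with boundary $\partial D_i$, hence equals $D_i$; so $D_i$ is a Jordan domain. That $\diam D_i\to 0$ is the easy (necessary) half of the local-connectedness criterion used above. Finally $\overline{D_i}\cap\overline{D_j}=\emptyset$ for $i\neq j$: a common point would lie on both Jordan curves $\partial D_i$, $\partial D_j$ with $D_i\cap D_j=\emptyset$, and by the local analysis of the ``if'' direction run in reverse it would then be a local cut point of $K$, contradicting Theorem \ref{thm:whyburn}.

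The main obstacle, common to both directions, is the local analysis at boundary points: proving that removing a null sequence of disjoint closed Jordan regions from a disk never produces a local cut point, and conversely that two complementary Jordan domains sharing a boundary point must. Once the disjointness of closures is exploited --- reducing, near any $p\in K$, to a \emph{single} removed region touching $p$ and only along an arc of its boundary --- this is a routine ``detour along the bounding Jordan curve'' argument, but it requires genuine care with the small removed regions accumulating at $p$ and with keeping all detours inside a prescribed neighbourhood; the bicollaring of Jordan curves furnished by Schoenflies is what makes the local pictures standard. The remaining ingredients --- cyclic connectedness, the complementary-domain local-connectedness criterion, Schoenflies, and invariance of domain --- are all classical and used as black boxes.
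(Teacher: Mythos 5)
The paper does not actually prove this statement: Theorem \ref{thm:whyburnsierp} is quoted from Whyburn's paper \cite{whyburnsierp} as a second classical characterization of the carpet, alongside Theorem \ref{thm:whyburn}, and both are used as black boxes (e.g.\ in Proposition \ref{lemma:carpet} and in the discussion of peripheral circles). So there is no internal argument to compare yours against, and I assess the proposal on its own terms. Your route --- funnelling both directions through the intrinsic characterization of Theorem \ref{thm:whyburn} and supplying the classical plane-topology input (Schoenflies, the Torhorst/Whyburn complementary-domain criterion for local connectedness, cyclic connectedness, invariance of domain) --- is the standard proof, and most of the individual verifications are correct: planarity, the invariance-of-domain exclusion of manifold points, simple connectivity of the components $D_i$ in the converse direction, the identification of $\partial D_i$ as a simple closed curve, and the deduction that the complementary Jordan domain containing $D_i$ equals $D_i$ all check out. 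Two remarks on the ``if'' direction: the step ``deleting an open Jordan domain whose closure lies in a compact connected planar region keeps it connected'' is true but not obvious --- it is an application of the unicoherence of $S^2$ (write $R\setminus D=R\cap(S^2\setminus D)$ and note $R\cup(S^2\setminus D)=S^2$) and should be flagged as such; and the no-local-cut-point verification is, as you acknowledge, the real work, but your reduction to a single touching domain plus a detour along its bounding Jordan curve is the right skeleton.

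The one genuine logical gap is the disjointness of closures in the ``only if'' direction. You assert that a common point $p\in\partial D_i\cap\partial D_j$ would be a local cut point ``by the local analysis of the `if' direction run in reverse.'' That analysis establishes a \emph{sufficient} condition for $p$ \emph{not} to be a local cut point (namely, that only one complementary domain has $p$ in its closure); the failure of a sufficient condition does not give the negation of its conclusion, so there is nothing to run in reverse. What is needed is a separate separation argument: take small arcs of $\partial D_i$ and of $\partial D_j$ through $p$ together with crosscuts of $D_i$ and of $D_j$ near $p$, and use the Jordan curve theorem (or Janiszewski's theorem) to produce, inside every sufficiently small neighbourhood of $p$, a Jordan curve meeting $K$ only near $p$ that separates two points of $K$ which can then only be joined through $p$. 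This is exactly the argument pattern the paper itself carries out in Step~2 of the proof of Theorem \ref{lem:quasicircle} (there in a quantitative form, deriving uniform relative separation from the ALLC condition), so that proof can serve as a template. Until this step is supplied, the ``only if'' direction is incomplete.
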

 Of particular interest are those carpets which arise via
 removal of sets of 
a particular form.  
Namely, a square (respectively circle) carpet is a planar 
subset $K=\Omega \setminus \bigcup_i D_i$, where $\Omega$ and $D_i$ are each squares
 (respectively circles). Consider the class of carpets $K\subset \R^2$  whose "peripheral circles" (see Section \ref{sec:tplsqcs2})
are uniformly relatively separated uniform quasicircles and which are Ahlfors regular
of dimension $<2$. Such carpets can be uniformized by circle carpets in the sense
that each such carpet  is quasisymmetric to a circle carpet which is unique up to 
Moebius transformations, see Theorem \ref{thm:confemb} below and  \cite{bonkuniformization}, \cite[Corollary 3.5]{haissinskyhyper}. Although these assumptions 
do not seem to be very restrictive, for hyperbolic group boundaries which are homeomorphic to carpets, the Kapovich-Kleiner conjecture would follow if such carpets were known
to satisfy them with some ``visual metric''. So called {\it slit carpets} are another interesting class of carpets \cite{merenkovwildrick}, which, for example, can fail to possess any quasisymmetric embedding to the plane.

Whyburn's theorems
could  be viewed a partial
explanation for the fact that
carpets arise
naturally in various contexts including 
 Julia sets of postcritically-finite rational maps and hyperbolic group boundaries; compare \cite{kapovitchkleiner}.
In the case of hyperbolic group boundaries, the carpets
which arise come naturally equipped with a quasisymmetry class of metrics, but not with
a canonical representative of this class. A special class of circle carpets 
arise naturally as limit sets of groups 
acting isometrically, discretely, and co-compactly on a closed convex subset of 
hyperbolic $3$-space with totally geodesic boundary. 
Circle carpets are quasisymmetrically rigid in the sense that if two of them 
are quasisymmetric then they are M\"obius-equivalent; see \cite{bonkkleinermerenkov}.  

Explicitly describing a circle carpet which is 
quasisymmetric  to a given carpet is typically not easy. 
Consequently, even though  \cite{bonkuniformization} 
and \cite[Corollary 3.5]{haissinskyhyper} show that 
many carpets possess a quasisymmetrically 
equivalent circle carpet, it is not practical to use this to 
distinguish carpets up to quasisymmetries. Further, for 
group boundaries which are homeomorphic to  
carpets, the Kapovich-Kleiner  
conjecture \cite{bonkICM} could be resolved by proving that these carpets admit quasisymmetircally equivalent circle carpets.\



\begin{remark}
\label{r:phsics}
  There are numerous studies
of analysis/probability on self-similar carpets; see for instance \cite{barlow}.  Also, 
the physics literature contains
 various models based on carpets; see e.g. \cite{porousmaterials}, \cite{antennas}.
\end{remark}

\subsection{Loewner spaces}
Loewner spaces are a certain subclass of PI spaces with strong rigidity
properties that do not hold in general for PI spaces.

 Recall that PI spaces are metric measure
spaces satisfying a doubling condition,
\begin{equation}\label{eq:doubling}
\mu(B(x,2r)) \leq C\mu(B(x,r))\, ,
\end{equation}
 and a Poincar\'e inequality. The latter condition
means the following.
Set
\begin{align}
\tau B(x,r)& \defeq B(x,\tau r)\notag\\
f_B &\defeq \vint_B f ~d\mu \defeq \frac{1}{\mu(B)} \int_B f ~d\mu\, ,\notag\\
\Lip[f](x) &\defeq \limsup_{\stackrel{y \to x}{ y \neq x}} \frac{|f(x)-f(y)|}{d(x,y)}\, .\notag
\end{align}
A metric measure space is said to satisfy a {\it $(1,p)$-Poincar\'e inequality}, if there are constants 
$C_{PI}$ and $\tau$, such that for every Lipschitz function $f$ and every ball $B \defeq B(x,r)$, 
we have
\begin{equation}
\label{eq:PI}
\vint_B |f-f_B| ~d\mu \leq C_{PI} \, r \left( \vint_{\tau B} \Lip[f](x)^p ~d\mu \right)^{\frac{1}{p}}.
\end{equation}
 As a consequence of H\"olders inequality, this inequality becomes stronger with a smaller exponent $p$. 
See \cite{heinonenkoskela,keith2003modulus} for discussion of alternative (ultimately
equivalent) 
versions of the Poincar\'e inequality in which the concept of an 
``upper gradient'' for the function $f$ replaces $\Lip f$.

A PI space carries among other things:
\vskip1mm

\begin{itemize}

\item
A unique
first order differential structure; \cite{ChDiff99}.
\vskip1mm

\item
 A measurable cotangent bundle;  \cite{ChDiff99}.
\vskip1mm

\item
A good
theory Sobolev spaces $H^{1,p}$ for $p>1$, \cite{ChDiff99}; see also 
\cite{shanmugalingamsobolev}.
\vskip1mm

\item
A theory of $p$--harmonic functions and blow-ups of Sobolev functions
 \cite{ChDiff99,bjornbjornbook,asymptBV}.
\end{itemize}

\begin{remark}
\label{r:pi}
A large class of examples of PI spaces which typically are not Loewner spaces
 is provided by the admissible inverse limit spaces of 
 \cite{cheegerkleinerinverse}. In fact, for 
a given limiting metric arising from their
construction,  they construct an uncountable collection
of  distinct measures which 
 make the limit into a PI space. Schioppa
observed that the measures in an uncountable subset of
 these are in actuality, mutually singular; \cite{schioppapoincare}. 
For additional
examples of PI spaces,  see \cite{schioppasingular}, \cite{kleinerschioppatopdim}. 
\end{remark}

\begin{remark}
\label{r:pil}
Although in this paper, we are {\it primarily} interested in Loewner spaces, to a limited extent, 
more general PI spaces are also relevant. For one thing, the differentiability
theory for $H^{1,p}$ maps between PI spaces has implications for 
quasisymmetric maps between
Loewner spaces.  
This plays a role in our proof that for every 
$Q,Q'$ as above, there is an infinite collection of carpets which are pairwise 
quasisymmetrically distinct.  
Also, our general
quotiented inverse system construction  provides new examples of PI spaces 
which are not carpets. Specifically, these spaces are always doubling and satisfy a $(1,1)$-Poincar\'e inequality. To obtain carpets, we additionally need to enforce Ahlfors regularity and planarity. 
\end{remark}

\begin{definition}
\label{d:ls}
A $Q$-Loewner space is PI space for which the measure $\mu$  is $Q$-dimensional
Hausdorff measure
and is \emph{Ahlfors regular:}
$$
c^{-1}r^Q\leq \mu(B(r,x))\leq cr^Q\, ,
$$
 and in addition, a $(1,Q)$-Poincar\'e 
inequality holds.
\end{definition}

In their seminal work \cite{heinonenkoskela}, Heinonen and Koskela gave a different
definition of Loewner spaces which they show to be equivalent to the one given above, when the space is $Q$-Ahlfors regular.
To state it, we need some definitions.

A {\it continuum} is a compact and connected set. A continuum is {\it nondegenerate}
if it has more than one point.
If $E, F \subset \C$ are two disjoint and compact sets,
we define their {\it relative separation} $\Delta(E,F)$ as follows:
\begin{align}
\label{e:Deltadef}
\Delta(E,F) := \frac{d(E,F)}{\min(\diam(E), \diam(F))}\, .
\end{align}



\begin{remark}
\label{r:lpi}
The Loewner space concept pertains specifically 
to metric spaces and Hausdorff measure, while the
PI space concept encompasses a more class general metric measure spaces on which one
can do first order calculus.
\end{remark}

Many examples of 
Loewner spaces appear in the literature including Euclidean spaces, certain ``uniform''
subsets of metric spaces \cite{bjornnages},
Riemannian manifolds
with lower Ricci bounds and their Gromov-Hausdorff limit spaces \cite{ChColI}, Carnot 
groups \cite{jerison}, Laakso spaces \cite{laaksospace}, the 
Boudon-Pajot Fuchsian and hyperbolic buildings \cite{bourdonPI,bourdonfuchs},  
certain constructions by Kleiner--Schioppa \cite{kleinerschioppatopdim} and the $2$-Ahlfors regular ``fat carpets'' of \cite{mackaytysonwildrick} 
and \cite{sylvesterjasun}.  However, none of these are thin Loewner carpets.


\subsection{Rigidity of Loewner spaces under quasisymmetries}
 The following properties exemplify the quasisymmetric rigidity of Loewner spaces.
\begin{itemize}
\vskip2mm

\item[1)]
A fundamental result from  the seminal work of Heinenen-Koskela, \cite{heinonenkoskela},
 states that if a map $f$ between Loewner
spaces is quasiconformal, then $f$ is in fact quasisymmetric.
Here, {\it quasiconformal} is an infinitesimal
version of the quasisymmetry   condition. 
\vskip1mm

\item[2)]
According to Semmes, \cite{davidsemmesinfty, semmesainfty} (see also the argument in \cite{bonkheinonensaksmann}),
 if there exists a quasisymmetric homeomorphism
between Loewner space $X_1$ and $X_2$, then the metric on $X_2$ is obtained from that 
on $X_1$ by a process known as deformation by a strong $A_\infty$ weight. 
\vskip1mm

\item[3)]
By a result of Tyson,
\cite{tysonconfdim},
a Loewner space realizes its conformal dimension; equivalently it 
is not  quasisymmetrically equivalent to a metric space
of strictly smaller Hausdorff dimension.  
\vskip1mm

\item[4)]
 A quasisymmetric homeomorphism between Loewner spaces lies in
the Sobolev space $H^{1,Q}$ and in particular, has an almost everywhere defined 
strong differential which is an almost everywhere defined map 
of the measurable cotangent bundles; see
\cite{ChDiff99}, \cite{heinonenquasi}, \cite{kleinerICM}.
\end{itemize}
\vskip2mm



Theorem \ref{thm:existence} below states that for
 all $Q,Q'$ with $1<Q<Q'<2$, there exist infinitely many pairs of Loewner carpets
$X_1,X_2$ as above, which are quasisymmetrically distinct.  Given the explicit
examples, the proof that they are quasisymmetrically distinct uses a new bi-Lipschitz invariant, see Equation \ref{eq:quasiinv}. The proof of its (quasi-)invariance in Theorem \ref{thm:comparability} uses a  refined version of 4), namely Theorem \ref{thm:rigidityquantitative}. 

Further rigidity can be obtained using Theorem \ref{thm:rigidityquantitative}. Indeed, in Proposition \ref{prop:uniformquasiconformal} we show that the quasiconformal constant is uniformly controlled. While not needed in this paper, it is of independent interest.

We also mention that  a {\it different 
application} of our Loewner carpet examples
provides new examples of {\it strong $A_\infty$-weights}; see 
Theorem \ref{thm:ainfty}.
\vskip2mm

 \subsection{Cannon's conjecture and the Kapovich-Kleiner conjecture}
 As previously mentioned, metric spaces which are boundaries of Gromov hyperbolic spaces, 
and of Gromov hyperbolic groups have a natural quasisymmetry class of visual metrics,
but not a canonical representative of this class;
for background on Gromov hyperbolic spaces, see \cite{buyalohyper}. 
Bonk and Kleiner have shown that if the conformal dimension
of a hyperbolic group boundary is realized, then it must be realized by a Loewner space \cite{bonkkleinerhyper}. 
Below, we will briefly discuss two important specific instances in which
the rigidity of Loewner spaces under 
quasisymmetries would have significant consequences if one could show
that the conformal dimension is realized.


In case the boundary is $S^2$,    Cannon's conjecture  asserts 
that the group (up to finite index)
acts isometrically and properly discontinuously on hyperbolic 3-space.
 In \cite{bonkkleinertwosphere}, Bonk and Kleiner showed that Cannon's
conjecture is true in those
cases in which the conformal dimension of the boundary
 is realized. They conjecture that this always holds.
In this case, the conformal dimension, $2$, would be realized, by the Loewner space $S^2$ 
with its standard metric.

The Kapovich-Kleiner conjecture is the analogous statement for 
hyperbolic groups whose boundaries are carpets. Namely, that in this
case, the group (up to finite index)
acts  discretely, cocompactly, and isometrically on a convex subset of
$\mathbb{H}^3$  with nonempty totally
geodesic boundary; see \cite{kapovitchkleiner}.
 Ha\"issinski showed that the
Kapovich-Kleiner conjecture follows
from the conjecture that if the boundary of a word hyperbolic group
is planar then it admits a quasisymmetric embedding in $S^2$; see
Conjecture 1.7 of \cite{haissinskyhyper}.
For boundaries which are carpets of conformal dimension $<2$, this holds
by work of Ha\"issinski \cite{haissinskyhyper}, which uses an approach sketched by Bonk and Kleiner \cite{bonkICM}, compare also 
Theorem  \ref{thm:confemb} and Proposition \ref{lemma:carpet}.

The fact that the conformal dimension is $<2$ would follow,
 if one could show that these carpets attain their conformal dimensions with a Loewner space. While proving that the minimizers for conformal dimension exist may be a harder problem than showing that the conformal dimension is less than two, finding such minimizers could also have other consequences for the rigidity of the spaces \cite{bourdonkleiner}. 
 It is then tantalizing
 to try to understand when and if these carpet boundaries attain their conformal dimensions.

\subsection{The combinatorial Loewner property implies
minimizers are Loewner}
For spaces, $X$, which satisfy a combinatorial version of the Loewner condition,
  it can be shown that if there
exists $Y$  which realizes the conformal dimension of $X$ then 
of necessity, $Y$ is Loewner. Conversely, if there exists a minimizer, $Y$, which
 is Loewner, then the space $X$ is, of necessity, combinatorially Loewner; 
see \cite{bourdonkleiner,kleinerICM}. So for such spaces,
the remaining question is whether the conformal dimension is realized.
 
The  {\it combinatorial Loewner condition}  was
 introduced in \cite{bourdonkleiner} (see \cite{claiscombmod, haissinskycombmod} for a variation).
The first conclusion 
 above follows immediately
from the comparability of the discrete and continuous modulus given in 
\cite[Proposition B.2]{haissinskycombmod}, together with a different definition of the Loewner property (see \cite[Definition 3.1]{heinonenkoskela}).

Spaces satisfying the combinatorial Loewner property arise naturally in 
contexts with sufficient symmetry.  A sufficient, and widely applicable, notion of symmetry is described and proven in \cite{bourdonkleiner}. However, the necessary symmetry has not been fully described in the literature.  For example, any Sierpi\'nski carpet $S_p$, 
Menger curve and certain boundaries of Coxeter groups satisfy this property \cite{bourdonkleiner}.  Indeed, for some planar group boundaries, this provides a second proof, different
from that of Bonk-Kleiner \cite{bonkkleinerhyper}, that a minimizer  must
 be Loewner, provided it exists. 

In all of the above cases, whether the 
conformal dimension is realized is still 
a hard open question. In particular,  it is not known if a standard carpet $S_p$
carpet attains its conformal dimension. Indeed, our constructions of planar 
Loewner spaces which 
have  some self-similarity and which are quasisymmetric to square carpets, $f(X)$,
shows at least that these properties of $S_3$ are not incompatible with 
the property that the conformal
dimension is realized. 

Further, our examples can be used to give many carpets (all $X_Q$ in Theorem \ref{thm:existence}) which attain their conformal dimension and are combinatorially Loewner, as well as carpets which attain their conformal dimension but fail to be combinatorially Loewner. For example, glue $X_{Q_1}$ and $X_{Q_2}$ from Theorem \ref{thm:existence} with distinct $Q_1,Q_2$ quasisymmetrically.

\subsection{Thin Loewner carpets
embed quasisymmetrically in $S^2$}
\label{ss:pls2}
In addition to our explicit examples of carpets, we will show 
 that if $X$ is a compact planar metric space which is $Q$-Loewner, 
 for some $1<Q<2$, then $X$  is a carpet which
embeds quasisymmetrically in $S^2$; see Theorem \ref{thm:confemb}
and Proposition \ref{prop:uniformization}.\footnote{The fundamental question of
whether {\it every thin carpet}, perhaps with a mild assumption, embeds quasisymmetrically in $S^2$ remains open.} As mentioned, this is the first result
in which all three classes of spaces --- thin planar Loewner spaces,  carpets and
spaces with explicit quasisymmetric embeddings in $S^2$ ---  play a simultaneous role.
Thus, it can be viewed as one of our main results.
In particular, it provides  motivation for our explicit constructions of thin 
Loewner carpets.

\begin{remark}
\label{r:blsf}
By a result of \cite{ChDiff99}, no planar Loewner space bi-Lipschitz embeds into the plane.  
On the other hand, our results give examples of such spaces  which  admit  snowflake 
embeddings, among them, examples for which
snowflaking exponent, $Q/Q'$, can be chosen arbitrarily close to $1$.
\end{remark}

\subsection{Examples of thin Loewner carpets.}
Next we describe our examples of thin Loewner carpets. These
provide the first published examples of thin Loewner carpets,
and in particular, the first published examples which attain their conformal dimensions. 
 These are also the first examples admitting explicit quasi-symmetric embeddings 
 in $S^2$; compare
Theorem \ref{thm:confemb} and  Proposition \ref{prop:uniformization}. A figure with an approximation of one such construction is given in Figure \ref{fig:step2}.

\begin{theorem}\label{thm:existence}
 For every $1<Q <  Q'<2$, there exist infinitely many quasisymmetrically distinct
 $Q$-Ahlfors  regular carpets,  $X$, which 
satisfy a $(1,1)$-Poincar\'e inequality. Moreover:
\begin{itemize}
\item[1)]
 The  Loewner carpets $X$ can be chosen to be self-similar and to admit 
explicit quasisymmetric embeddings $f \co X \to \C$ such that $f(X)$
has Hausdorff dimension $Q'$. 
\vskip1mm

\item[2)]
The images $f(X)$ can be constructed by 
explicit substitution rules.
\vskip1mm

\item[3)]
The embeddings $f$ can be chosen
to be $(Q/Q')$-snowflake mappings.
\vskip1mm

\item[4)]
There exist planar quasiconformal maps $g,h \co \C \to \C$ so that $g \circ f(X)$ is a circle
 carpet and $h \circ f(X)$ is a square carpet.
\end{itemize}
\end{theorem}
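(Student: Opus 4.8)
The plan is to build the carpets $X$ and their embeddings $f$ as limits of an admissible quotiented inverse system (Section \ref{sec:generalconstruction}), making all the combinatorial choices explicit so that parts 2) and 3) come essentially for free from the construction, and then to verify the analytic properties (Loewner, quasisymmetry, dimension count) and finally the rigidity statement (quasisymmetric distinctness) separately.

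First I would describe a family of substitution rules parametrized by a finite combinatorial datum (a subdivision pattern of the unit square together with a choice of which subsquares to ``keep'' and a rule for attaching edge-length weights). For the source space $X$, one chooses the weights so that each of the $N$ retained children of a square of generation $k$ has diameter $\lambda^{k+1}$ with $\lambda = N^{-1/Q}$; this forces $Q$-Ahlfors regularity of the natural limit measure, and the $(1,1)$-Poincar\'e inequality is inherited from the inverse-system construction exactly as in \cite{cheegerkleinerinverse} (the quotienting is what makes the limit a carpet rather than the PI spaces produced there). For the image $f(X)\subset\C$, one runs the same combinatorial substitution but with the \emph{geometric} edge lengths of actual subsquares, i.e.\ ratio $1/m$ where $m$ is the linear subdivision factor; choosing $m$ so that $N = m^{Q'}$ (rounding issues handled by allowing slightly inhomogeneous but comparable weights, or by passing to a subfamily where $N$ is a perfect power) makes $f(X)$ a self-similar square carpet of Hausdorff dimension $Q'$. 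The map $f$ is then defined cellwise as the identification of the two inverse systems, and because corresponding cells have diameters $\lambda^k \asymp (m^{-k})^{Q/Q'}$, $f$ is by inspection a $(Q/Q')$-snowflake map, hence quasisymmetric; this gives 1), 2), 3) once one checks the standard cell-comparability estimates (uniform relative separation of the complementary squares, bounded geometry of the subdivision) needed for the inverse limit to be a genuine metric space and for $f$ to be well-defined and bi-H\"older.

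That $X$ is a carpet follows from Whyburn's Theorem \ref{thm:whyburn} together with Proposition \ref{thm:GHplanar}: the approximating graphs $X_i$ are planar (they are built on subdivisions of the square), the limit is connected, locally connected, and has no local cut points by the design of the substitution (one requires that in each generation the retained cells form a connected ``annular'' pattern with every peripheral circle touched along a nondegenerate continuum), and no open subset is $\R^2$ because $X$ is $Q$-Ahlfors regular with $Q<2$. That $X$ is Loewner, not merely PI, is then automatic from Definition \ref{d:ls} since we arranged $\mu$ to be $Q$-dimensional Hausdorff measure and the Poincar\'e exponent to be $Q$. Part 4) follows from Theorem \ref{thm:confemb} / Proposition \ref{prop:uniformization} (equivalently \cite{bonkuniformization}, \cite[Corollary 3.5]{haissinskyhyper}): $f(X)$ is an Ahlfors regular carpet of dimension $<2$ whose peripheral curves are actual squares, hence uniformly relatively separated uniform quasicircles, so it is quasisymmetrically equivalent to a circle carpet via some $g$ of $\C$; that $f(X)$ is itself quasisymmetric to a \emph{square} carpet is immediate since it already is one (or one applies the square-carpet uniformization in the same way), giving $h$.

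The main obstacle, as the introduction flags, is the last clause: producing \emph{infinitely many} pairwise quasisymmetrically distinct carpets for each fixed $(Q,Q')$. Here I would not argue directly; instead I would vary a discrete parameter in the substitution rule (e.g.\ the local subdivision multiplicities, or the branching pattern near peripheral circles) so as to change a bi-Lipschitz invariant of the weak tangent cones of $X$, namely the invariant defined in Equation \ref{eq:quasiinv}. Since any quasisymmetry between Loewner carpets is in $H^{1,Q}$ and differentiates (property 4) of Subsection on rigidity, refined to Theorem \ref{thm:rigidityquantitative}), a quasisymmetry $X_1\to X_2$ would induce a bi-Lipschitz identification of suitable weak tangent cones (Theorem \ref{thm:comparability}), forcing equality of the invariant; choosing the combinatorial parameters so the invariant takes infinitely many values then finishes the proof. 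The delicate points are (i) checking the invariant is genuinely a bi-Lipschitz invariant of tangent cones in our setting --- this is where Theorem \ref{thm:rigidityquantitative} and the uniform quasiconformality of Proposition \ref{prop:uniformquasiconformal} enter --- and (ii) computing it on enough explicit examples to see it is unbounded; both are carried out in Section \ref{sec:rigidity}.
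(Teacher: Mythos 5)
Your overall architecture matches the paper's (quotiented inverse systems built from substitution rules, a snowflake embedding obtained by comparing an intrinsic path metric with a planar grid metric, uniformization for part 4), and the invariant $\overline{h}_{\inf}$ of \eqref{eq:quasiinv} together with Theorem \ref{thm:comparability} for distinctness). But there is a genuine error at the heart of your construction of the image. You take $f(X)$ to be ``a self-similar square carpet of Hausdorff dimension $Q'$'' and later assert that $f(X)$ ``already is'' a square carpet. This cannot happen. If $f$ is a $(Q/Q')$-snowflake embedding of a quasiconvex $Q$-Loewner space with $Q<Q'$, then $d_X(x,y)\asymp |f(x)-f(y)|^{Q'/Q}$ with exponent $Q'/Q>1$; taking three nearly collinear points on a Euclidean segment contained in $f(X)$ and iterating the midpoint subdivision $k$ times produces a factor $2^{k(Q'/Q-1)}\to\infty$ that violates the two-sided comparability with the triangle inequality for $d_X$. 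Hence $f(X)$ can contain no nondegenerate line segment, so it cannot be a union of boundaries of axis-parallel squares. This is exactly why the paper's images are the wiggly sets of Figure \ref{fig:step2}: the rules $C_N$ and $WS_N$ subdivide the path metric by a strictly larger factor than the ambient grid (e.g.\ $s_{k+1}=s_k/(96N+26)$ versus $l_{k+1}=l_k/(64N)$), so that $l_k\asymp s_k^{\alpha}$ with $\alpha=Q/Q'<1$ and the image accumulates Euclidean length at every scale. The square and circle carpets in part 4) are obtained only after post-composing with global quasiconformal maps of $\C$ via \cite{bonkuniformization} and \cite{dimitriossquare}, not by construction. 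Relatedly, a ``remove some subsquares and reweight'' rule would make $X$ snowflake- or quasisymmetrically equivalent to a standard self-similar carpet, and whether such carpets are quasisymmetric to any Loewner space is precisely the open problem the paper circumvents by building the abstract space first out of multiple glued copies (wormholes) of the previous graph.

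A second gap concerns realizing arbitrary exponents. A single substitution rule with integer parameters $N,m$ only realizes a countable set of pairs $(Q,Q')$; ``rounding'' or passing to subfamilies does not give every real pair $1<Q<Q'<2$. The paper achieves arbitrary $(Q,Q')$ by interleaving three rules $S_{N_0},C_{N_1},WS_{N_2}$ with asymptotic frequencies $(\alpha_0,\alpha_1,\alpha_2)$ solving the linear system \eqref{eq:1}--\eqref{eq:3}, and verifying that this system has a nonnegative solution. This mixing is also what makes the distinctness argument work: the examples $X_{\mathbf{a}^N}$ obtained by repeating blocks of the same sequence share $(Q,Q')$ (same frequencies) but have different $\liminf$ behavior of $h(tr)/h(r)$. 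Varying ``subdivision multiplicities or branching patterns,'' as you suggest, would generically change $Q$ and $Q'$ rather than produce quasisymmetrically distinct examples within a fixed pair, so some mechanism of this block-repetition type is needed and is not supplied by your sketch.
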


\begin{remark}
\label{r:np}
Part of the interest in our examples stems from the fact that although they are planar,  
they do not attain their conformal dimensions with a planar quasiconformal map. In general, 
if $X \subset \C$, one can define also the notion of a conformal dimension with planar 
quasiconformal maps $f \co \C \to \C$ replacing the role of general quasisymetries.  In our case, the set $Y=f(X) \subset \C$, 
does attain its conformal dimension with an abstract spaces $X$, but there is no
 quasiconformal map $g\co \C \to \C$ so that $g(Y)$ is $Q$-Ahlfors regular.
\end{remark}

\begin{remark}
\label{r:pc}
 The square and circle carpet images $f(X)$ 
are obtained via post
composition with quasisymmetric maps of $S^2$ as in 4) of Theorem \ref{thm:existence},
 using results of \cite{bonkuniformization} or \cite{dimitriossquare}. In principle, 
 the square and circle carpets are explicitly computable, but as a consequence of 
their complexity carrying out the computations might not provide much insight. 
\end{remark}

\subsection{Admissible inverse and quotiented inverse systems}
\label{ss:iqs}
  One might 
hope, that Loewner carpets
could be explicitly constructed by starting with a suitable carpet in $S^2$ 
and then finding a metric, quasisymmetric to the given one, which 
is  Loewner. This approach, which one might call the 
``direct method'', seems very difficult to implement.
Here we   reverse the process by first constructing a Loewner carpet
$X$ and then a quasisymmetric embedding $f \co X\to S^2$.

 Recall that in \cite{cheegerkleinerinverse} the first author and  Kleiner
 introduced a class of {\it admissible inverse limit systems},
 $$
X_1\stackrel{\pi_1}{\longleftarrow} X_2\stackrel{\pi_2}{\longleftarrow}X_3
\stackrel{\pi_3}{\longleftarrow} \cdots
$$
whose objects are metric graphs equipped with suitable measures.
The spaces,
$X_i$, as well as the measures, $\mu_i$ and 
the maps $\pi_i$, were assumed to satisfy certain so-called admissibility
 conditions
from which it could be deduced that the doubling constants
and constants in the $(1,1)$-Poincar\'e inequality remained
  uniformly bounded as $i\to \infty$.
In some cases these limit spaces are planar while in other cases they are Loewner.   
 However, both conditions are never satisfied simultaneously for these examples.

The sequences of graphs in our construction are part of a more general
system which we refer to as an {\it admissible quotiented inverse system}. Here, the maps which
would be present in an inverse system are alternated with quotient maps, which are subjected to 
additional assumptions. At the formal level, the quotiented inverse system looks like.
\begin{align}
\label{e:iq}
X_1^1\stackrel{\pi_1^1}{\longleftarrow} X_1^2\stackrel{q^2_2}{\longrightarrow}X_2^2
\stackrel{\pi_2^2}{\longleftarrow}X_2^3\stackrel{q^3_3}{\longrightarrow} X^3_3
\stackrel{\pi_3^3}{\longleftarrow}\cdots
\end{align}

A more elaborate diagram displaying additional structure is given in the 
Figure \ref{fig:inversequotient}.  By using the  admissability conditions,
we will show that the graphs in the sequence
$X^n_n$  satisfy uniform $(1,1)$-Poincar\'e inequalities and are doubling. These graphs converge in the Gromov-Hausdorff sense (see for example \cite{bbi, fukaya, keith2003modulus, cheegerkleinerschioppa, ChColI}) to limit spaces $X_n^n\stackrel{d_{GH}}{\longrightarrow}X$. The limits $X$ are more general than carpets. Indeed, the main obstruction to being a carpet is planarity. Specifically, if $X_n^n$ are all planar, then the limit will be a carpet, except for some degenerate cases. We also show certain general properties for these limits, which are  independent of planarity and of independent interest. For example, the limits are always analytically one dimensional (Proposition \ref{p:adiq}).

The limits of quotiented inverse systems can be made ``uniform''.

\begin{definition}
\label{d:huniform}
A measure is said to be $h$-uniform if $\mu(B(x,r)) \asymp h(r)$ for some 
increasing function $h \co [0,\infty) \to [0,\infty)$.\footnote{For the notation $\asymp$, see the end
of this introductory section.}
\end{definition}

The function $h(r)$ can be controlled by varying the substitution rules used in our constructions. This gives Ahlfors regularity, which is needed to prove the Loewner property, but also yields an invariant in Equation \ref{eq:quasiinv}. This invariant plays a role in distinguishing the spaces up to quasisymmetries. See Subsection 
\ref{sec:uniformity} for more details. 

Additionally, to obtain carpets we need to enforce planarity of each graph $X_n^n$ in the sequence. From this, the planarity of the limits $X_\infty^\infty$ follows in one of two ways. For the explicit constructions we give, the planar embedding of the limit space is obtained  as a limit of embeddings from the sequence. For more general constructions we use Proposition \ref{thm:GHplanar}, which was suggested to us by Bruce Kleiner. We present the proof in Subsection \ref{sec:planarity}.

In the following, a cut-point for $X$ is a point $p$ such that $X \setminus p$ is disconnected.

\begin{proposition}\label{thm:GHplanar} If $X_i$ are planar graphs equipped with a path metric which converge in the Gromov-Hausdorff sense to $X_\infty$, and if $X_\infty$ does not have cut-points, then $X_\infty$ is planar.
\end{proposition}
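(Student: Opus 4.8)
The plan is to characterize planarity by an obstruction that passes to Gromov--Hausdorff limits. The natural candidate is a version of the Claytor/Kuratowski-type embedding theorem: a Peano continuum embeds in $S^2$ if and only if it contains no homeomorphic copy of $K_5$, $K_{3,3}$, or one of the two ``generalized'' non-planar objects obtained from these by replacing edges with the $\Theta$-graph or a pair of disjoint arcs (the so-called Claytor graphs). So the first step is to verify that $X_\infty$ is a Peano continuum: it is compact and connected as a GH-limit of compact connected spaces, and local connectedness should follow from the fact that the $X_i$ are path metric spaces with, one expects, uniform local connectedness estimates inherited from the graph structure (any two nearby points are joined by a short path); this is where I would be slightly careful, since GH-convergence alone does not preserve local connectedness, so I would want to extract a uniform modulus of local connectedness (e.g. a Cannon-type linear local connectivity bound) from the $X_i$ and pass it to the limit. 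Granting that, $X_\infty$ is a Peano continuum, and by Claytor's theorem it suffices to rule out the finitely many forbidden subcontinua.

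Next I would argue by contradiction: suppose $X_\infty$ contains one of the forbidden Claytor continua $\Gamma$. Each such $\Gamma$ is built from finitely many ``branch points'' joined by finitely many arcs, with a specified combinatorial incidence pattern, and crucially $\Gamma$ has no cut points. Using GH-approximations $X_i \to X_\infty$, I would pull back the branch points to nearby points $p^i_1,\dots,p^i_k \in X_i$ and the arcs to paths in $X_i$ (available because $X_i$ is a path metric space) connecting the corresponding approximate branch points, obtaining for large $i$ a subgraph of $X_i$ which, after contracting the paths to edges, contains a subdivision of $K_5$ or $K_{3,3}$ — contradicting planarity of $X_i$ by Kuratowski's theorem. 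The technical point to nail down here is that the approximating paths can be chosen to intersect only near the prescribed branch points: two arcs of $\Gamma$ that are disjoint in $X_\infty$ give paths in $X_i$ that come within $\epsilon_i \to 0$ of being disjoint, and the worry is that they could actually cross. This is exactly where the no-cut-point hypothesis is used — more precisely, where it is used for the \emph{limit}: I would instead want to use it locally to reroute the approximating arcs around any spurious intersection point, since near a point of $X_\infty$ that is not a cut point one has connectivity of punctured neighborhoods and hence room to detour.

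The main obstacle I anticipate is precisely this ``genericity of arcs'' step: turning the topological non-crossing data in $X_\infty$ into honest disjoint paths in the finite graphs $X_i$, uniformly in $i$. A clean way to organize it is to first produce, inside $X_\infty$ itself, a \emph{finite} planar-looking skeleton and then to note that if $X_\infty$ were non-planar, Claytor gives a Kuratowski-type obstruction \emph{already realized as a topological subspace} without cut points; because this obstruction is a finite graph-like object with controlled geometry, one can approximate it stably. Alternatively — and this may be the slicker route — one can avoid arc surgery entirely by working with the nerves/Reeb-type combinatorial structure: planarity of a Peano continuum is equivalent to a statement about separation of pairs of points by pairs of points (a theorem in the spirit of the ``3-point property'' / Zippin), and these separation properties, being stated purely in terms of connectedness of complements of finite sets and being robust under the no-cut-point assumption, pass to GH-limits of spaces satisfying them. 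I would try the Claytor-plus-Kuratowski-approximation argument first, and fall back to the separation-property formulation if the arc-disjointness bookkeeping becomes unwieldy.

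Finally, I would record the one genuinely necessary role of the hypothesis: the conclusion fails without ``no cut points'' (e.g. a wedge of arbitrarily many segments at a point, or a sequence of graphs limiting onto a space with a cut point whose two sides are separately planar but cannot be simultaneously embedded compatibly), so the proof must invoke it, and it should enter exactly at the rerouting/separation step described above rather than merely at the end.
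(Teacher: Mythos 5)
Your overall strategy --- reduce to excluding finite forbidden subcontinua via Claytor's theorem, then lift such a subcontinuum to the approximating graphs $X_i$ and derive a contradiction with Kuratowski-type planarity criteria --- is the same as the paper's. But you have misplaced the role of the no-cut-point hypothesis, and this matters. Claytor's theorem for Peano continua requires excluding not only $K_5$ and $K_{3,3}$ but also two further continua $L_1,L_2$, and those two are themselves Gromov--Hausdorff limits of planar graphs; no approximation argument of the kind you propose could ever rule them out, and indeed Claytor's examples show the proposition is false without the cut-point hypothesis. The hypothesis is therefore consumed entirely at the first step: since $X_\infty$ has no cut points, Claytor's theorem reduces the problem to excluding topological copies of $K_5$ and $K_{3,3}$ alone. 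It is not available as, and is not needed for, a ``rerouting'' tool later on.

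The genuine gap is your treatment of spurious intersections of the approximating paths. Rerouting inside $X_i$ using connectivity of punctured neighborhoods of $X_\infty$ is not justified: the finite graphs $X_i$ may have many cut points and bridges regardless of the limit, so there is no room to detour, and your fallback via separation properties is not developed. The paper's resolution avoids surgery altogether. Two disjoint arcs of the embedded $K_5$ or $K_{3,3}$, once small neighborhoods of the branch points are deleted, are disjoint compact sets and hence at positive distance $\delta>0$; choosing the discretization scale $\epsilon\ll\delta$ and $i$ large, the corresponding simple paths in $X_i$ can therefore only meet within distance $10\epsilon$ of the approximate branch vertices. One then contracts everything in those $10\epsilon$-neighborhoods to single vertices, obtaining a $K_5$ or $K_{3,3}$ \emph{minor} of $X_i$, and concludes by Wagner's theorem (forbidden minors) rather than Kuratowski's (forbidden subdivisions). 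Passing from subdivisions to minors is exactly what absorbs the uncontrolled intersections near the vertices, and is the idea your proposal is missing. (Your worry about local connectedness of $X_\infty$ is harmless: a Gromov--Hausdorff limit of compact length spaces is a length space, hence locally connected, so $X_\infty$ is a Peano continuum and Claytor applies.)
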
 

Combining this with uniformity yields the fact that limits of many uniform planar admissible inverse quotients give examples of Loewner carpets. In these cases one need not construct explicit embeddings to obtain planarity. See Corollary \ref{cor:limitplanar} for a detailed statement.





\subsection{Explicit examples via substitution rules}
In certain cases, the sequence $X^n_n$ of graphs in our admissible quotiented inverse
 system can also be described in terms
of {\it substitution rules}. In fact, we will use these cases
in the proof of Theorem \ref{thm:existence}.
  Roughly speaking, employing a substitution rule means that one passes from $X^n_n$ to
$X^{n+1}_{n+1}$, by removing successively disjoint pieces of $X^n_n$ of some finite collection of
specified types
and replacing them successively with pieces of
a different specified type chosen from another
 finite collection. In passing 
from $X^n_n$ to $X^{n+1}_{n+1}$,  edge lengths decrease by a definite factor 
and the number of edges increases by a definite factor. Essentially by definition,
spaces constructed by substitution rules exhibit at least some degree of self-similarity.


The particular examples of Loewner carpets that we construct via substitution
rules involve at each stage several distinct choices.  
By suitably varying them,
we obtain 
uncountably many examples, $X$, for each
$Q, Q'$, as in Theorem \ref{thm:existence}. 
However, as previously mentioned, for now, we
can only show that a countable subclass of the above examples
 are pairwise quasisymmetrically distinct;
see Section \ref{sec:rigidity}.

\subsection{Quasisymmetric embeddings}
As mentioned at the beginning of this introduction,\linebreak \hbox{Theorem  \ref{thm:confemb}}
states that a thin planar Loewner space is a carpet which embeds
quasisymmetrically in $S^2$. The limits of an admissible, uniform and planar inverse quotient system will be carpets. However, the proof of this in Corollary \ref{cor:limitplanar} does not yield an effective embedding for this limit space. Thus,
 Theorem \ref{thm:confemb} is needed to find quasisymmetric embeddings. This approach is non-explicit.
 
 For specific examples we will present, without resorting to general methods,
 one can construct explicit
  uniformly quasisymmetric embeddings for $X_n^n$  into the plane in such a way that one can pass to 
the limit. This can be done for infinitely many quasisymmetrically distinct pairs as above.

The above mentioned
 embeddings map the graphs $X^n_n$ to finer and finer grids whose lines are parallel to the
coordinate axes. These embeddings
 become bi-Lipschitz after the original metric on $X$ is snowflaked. 
The fine grids enable one to
make the images wiggle appropriately. An example of such an embedding in which
the first few stages are simple enough to actually be
 drawn  is given in Figure \ref{fig:step2}.

\subsection{Non-embedding of metrics arising from strong $A_\infty$-weights}
  \label{sec:embedding}
 
Strong $A_\infty$ weights were introduced by David-Semmes \cite{davidsemmesinfty, semmesainfty}.
A strong $A_\infty$ weight on a $Q$-Loewner space $(X,D)$ is a nonnegative locally $L_1$ function $\omega$
such that for the measure $\mu$ satisfying $d\mu :=\omega\cdot d\mathcal H_Q$,
the following hold. 
\vskip1mm

\begin{enumerate}
\item The measure $\mu$ is doubling with respect to the metric $D$.
\vskip1mm

\item There exists a metric $D_\omega$ and a constant $C$, such that for all $x,y \in X$:

$$
\frac{1}{C} \mu(B(x,D(x,y)))^\frac{1}{Q}\leq D_\omega(x,y) \leq C\mu(B(x,D(x,y)))^\frac{1}{Q}\, .
$$
\end{enumerate}
\vskip1mm

  The metric $D_\omega$ is well defined up to bi-Lipschitz equivalence.
A representative of the equivalence class can always be taken to be:
$$
D_\omega(x,y) = \inf_{B_0, \cdots, B_m} \sum_{i=0}^{m} \mu(B_i)^\frac{1}{Q}\, .
$$
Here the balls $B_i$ are defined with respect to the metric $D$ and the
 infimum is taken over chains of balls $B_0, \dots, B_m$ such 
 that $x \in B_0, y\in B_m$ and $B_{i} \cap B_{i+1} \neq \emptyset$ for $i=0, \dots, m-1$.
 If $\omega$ is continuous, then the metric has a simpler expression:
$$
D_\omega(x,y) = \inf_{\gamma} \int_\gamma \omega^\frac{1}{Q} ~ds\, ,
$$
where the infimum is over all rectifiable curves $\gamma$ connecting $x$ to $y$.
 
Strong $A_\infty$ weights
 form a strict subset of Muckenhoupt $A_\infty$ weights in the classical 
 sense, see \cite{kansanen}. Equivalently, $\omega$ satisfies a reverse H\"older i.e.\ there exists $p>1$ and a constant $C$
such that for every ball $B$,
$$
\Big(\frac{1}{\mu(B)}\int_{B}\omega^p \, d\mathcal H^Q \Big)^{1/p}
\leq C\cdot \frac{1}{\mu(B)}\int_{B}\omega
\, d\mathcal H^Q\, .
$$

If $X$ is $Q$-Loewner and $\omega$ is strong $A_\infty$ 
weight then $(X,D_\omega)$ is also $Q$-Loewner
and the identity map is quasisymmetric from $(X,D)$ to $(X,D_\omega)$. Conversely, if $h\co X\to Y$ is a quasisymmetric 
homeomorphism between $Q$-Loewner spaces, then the push forward, 
$h^{-1}_*(\mathcal H_Q)$ of $\mathcal H_Q$,
under the map, $h^{-1}$, satisfies:
$$
d(h^{-1}_*(\mathcal H_Q))=\omega \cdot d\mathcal H_Q\,, 
$$ 
for some 
strong $A_\infty$ weight $\omega$.\footnote{This fact will play a role in Section \ref{sec:rigidity}.} When $Y=X$, then $\omega$ becomes a Jacobian $J_f$ of a quasisymmetric self-map $f\co X \to X.$

Specifically, if $f\co \R^N \to \R^N$ is quasisymmetric (which is equivalent to quasiconformal), then its Jacobian $J_f$ is a strong-$A_\infty$ weight. The converse question was asked and answered (in the negative) by Semmes. 
Namely, is {\it every} strong $A_\infty$ weight
(perhaps even assumed to be continuous)
up to a constant multiple of  the Jacobian $J_h$ for some quasisymmetric map $h\co \R^N\to \R^N$. Otherwise put, are there examples in which we can be 
certain that the metric obtained by deformation by a strong $A_\infty$ weight is
not just the original metric in some disguised form i.e.\ disguised by composition with some unknown 
bi-Lipschitz homeomorphism.
Semmes gave two different types of counter examples. 
The second, which we now describe,
is particularly  flexible and is the pertinent one for this paper.

Let $(Y,d)$ denote a complete doubling metric space and $f\co Y\to \R^N$ be an
 $\alpha$ snowflake embedding for some $\alpha<1$.
Semmes showed that the following function is a strong $A_\infty$ weight.
$$
\omega(x):= d(x,f(Y))^{1/\alpha}\, .
$$
Additionally, $f \co (Y,d)\to (f(Y),D_\omega)$ is bi-Lipschitz. Therefore,
 if there exists quasisymmetric $h\co (\R^N,d)\to (\R^N,d)$ with
Jacobian $\omega$, it follows that $h\circ f \co Y\to \R^N$ is a bi-Lipschitz embedding. Semmes gave a specific example with $n=4$, in
which $(Y,d)$ could be shown to admit no such embedding. (Another counter-example using a different obstruction of his works with with
$n=3)$.) Subsequently, Laakso, gave a different counter example in the plane $\R^2$; see  \cite{laaksoainfty}.  The spaces $(Y,d)$ in these examples
in fact were observed not to admit bi-Lipschitz embeddings in any uniformly convex Banach space (let alone $\R^2$).

By Assouad's theorem, any doubling metric space $Z$ admits 
a snowflake embedding
in some $\R^N$. Thus, as Semmes observed, if $Z$ does not bi-Lipschitz that $\R^N$,
 his construction of the associated 
$D_\omega$ gives provides an example of a strong $A_\infty$ weight which is not the Jacobian 
of a quasisymmetric homeomorphism
$h\co (R^N,d)\to (R^N,d)$ as above.

In particular, our examples of thin $Q$-Loewner carpets which admit $(Q/Q')$-snowflake
 embeddings provide such 
counter examples, but in $\R^2$, which do not in fact embed in any 
Banach space with the
Radon Nikodym Property ($RNP$) of which uniformly convex spaces are a special case);
see \cite{pisier} or the references in \cite{cheegerkleinerRN} for
 additional details. Recall, that a Banach space $B$ is said to satisfy the Radon-Nikodym property if every Lipschitz function $f \co [0,1] \to B$ is differentiable almost everywhere. See \cite{pisier} or the references in \cite{cheegerkleinerRN} for more details. Thus, we have:

 \begin{theorem}
\label{thm:ainfty} 
There exist strong $A_\infty$-weights $\omega$ on $\R^2$, such that $(\R^2, D_\omega)$ does not bi-Lipschitz embed into any Banach space with the Radon-Nikodym property. In particular $\omega$ is not a Jacobian of 
 any quasiconformal map.
 \end{theorem}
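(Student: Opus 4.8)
The plan is to obtain Theorem~\ref{thm:ainfty} as a formal consequence of three ingredients already at our disposal: the explicit thin Loewner carpets of Theorem~\ref{thm:existence}, the strong $A_\infty$ weight that Semmes attaches to a snowflake embedding (recalled in Subsection~\ref{sec:embedding}), and the fact that a thin Loewner space admits no bi-Lipschitz embedding into a Banach space with the Radon--Nikodym property. So the proof is short once these are in place, and the real work is the upstream construction.

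First I would fix a pair $1<Q<Q'<2$ and let $X$ be one of the $Q$-Loewner carpets produced by Theorem~\ref{thm:existence}, together with its $(Q/Q')$-snowflake embedding $f\co X\to\C=\R^2$, whose image $f(X)$ has Hausdorff dimension $Q'$. Put $\alpha\defeq Q/Q'\in(0,1)$ and
$$
\omega(z)\defeq d(z,f(X))^{1/\alpha}=d(z,f(X))^{Q'/Q},\qquad z\in\R^2.
$$
Since $X$ is a carpet it is compact, and since it is Ahlfors regular it is doubling; thus $(X,d)$ is complete and doubling and $f$ is an $\alpha$-snowflake embedding with $\alpha<1$, so Semmes's hypotheses are met. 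By the result of Semmes quoted above, $\omega$ is a strong $A_\infty$ weight on $\R^2$, the deformed metric $D_\omega$ is well defined up to bi-Lipschitz equivalence, and $f\co (X,d)\to (f(X),D_\omega)$ is bi-Lipschitz onto its image.

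Next I would rule out bi-Lipschitz embeddings of $(\R^2,D_\omega)$ into Radon--Nikodym targets. Suppose, for contradiction, that $\iota\co (\R^2,D_\omega)\to V$ is bi-Lipschitz into a Banach space $V$ with the Radon--Nikodym property. Restricting $\iota$ to $f(X)\subset\R^2$ and precomposing with $f$ gives a bi-Lipschitz embedding $\iota\circ f\co (X,d)\to V$. But $X$ is $Q$-Loewner with $Q>1$, hence a PI space whose measure is $\mathcal{H}^{Q}$ and which is analytically one-dimensional by Proposition~\ref{p:adiq}. By the differentiation theorem of Cheeger and Kleiner for Lipschitz maps from PI spaces into Banach spaces with the Radon--Nikodym property (\cite{cheegerkleinerRN}; see also item~4) in the rigidity list above), $\iota\circ f$ is differentiable $\mathcal{H}^{Q}$-a.e.\ with respect to the one-dimensional measurable cotangent bundle of $X$, and, being bi-Lipschitz, its differential is injective a.e.; by the same mechanism that underlies Remark~\ref{r:blsf} (the case $V=\R^2$, due to \cite{ChDiff99}), the image $(\iota\circ f)(X)$ is then $1$-rectifiable, so $\mathcal{H}_{dim}\big((\iota\circ f)(X)\big)\le 1$. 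On the other hand $\iota\circ f$ is bi-Lipschitz, so $(\iota\circ f)(X)$ is Ahlfors $Q$-regular and has $\mathcal{H}_{dim}\big((\iota\circ f)(X)\big)=Q>1$, a contradiction. Hence no such $V$ exists, which proves the first assertion. For the ``in particular'' clause, if $\omega$ were a constant multiple of the Jacobian $J_h$ of a quasiconformal --- equivalently quasisymmetric --- map $h\co\R^2\to\R^2$, then, as recalled in Subsection~\ref{sec:embedding}, $h\circ f\co X\to\R^2$ would be a bi-Lipschitz embedding; since $\R^2$ itself has the Radon--Nikodym property, this contradicts what was just proved, so $\omega$ is not the Jacobian of any quasiconformal self-map of $\R^2$.

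The substantive difficulty lies entirely upstream: it is Theorem~\ref{thm:existence} together with Proposition~\ref{p:adiq} --- constructing a thin planar $Q$-Loewner carpet, its explicit snowflake embedding into $\R^2$, and the proof that it is analytically one-dimensional --- that carry all the content, and these are established in the body of the paper. Within the present argument the only genuine analytic input beyond Semmes's construction is the Cheeger--Kleiner differentiation theorem for Radon--Nikodym-valued Lipschitz maps, which supplies the rectifiability obstruction; the crux of the deduction is the dimension clash $1\ge\mathcal{H}_{dim}((\iota\circ f)(X))=Q>1$, and everything else (verifying Semmes's hypotheses, chasing the bi-Lipschitz maps through $f$) is bookkeeping.
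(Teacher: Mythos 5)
Your proposal is correct and follows essentially the same route as the paper: Semmes's weight $\omega=d(\cdot,f(X))^{1/\alpha}$ attached to the explicit $(Q/Q')$-snowflake embedding, the bi-Lipschitz equivalence $f\co(X,d)\to(f(X),D_\omega)$, and the non-embeddability of the $Q$-Loewner carpet into RNP targets via Cheeger--Kleiner differentiation. The only nitpick is that the intermediate claim ``the image $(\iota\circ f)(X)$ is $1$-rectifiable'' is more precisely stated as: differentiability plus the bi-Lipschitz lower bound forces the one-dimensional chart $\phi$ to be bi-Lipschitz on positive-measure subsets, which yields the same dimension clash $1\geq Q>1$.
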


 These counter examples are particularly strong since the (explicit) snowflake embedding
is in the minimal possible dimension, $2$ and also because $(Q/Q')$ can be taken arbitrarily closed to $1$.  By Assouad, any doubling space admits a snowflake 
embedding in some Euclidean space $\R^N$. It is trivial that our carpets would give also an
example in higher dimensions of a weight that wasn't comparable to a 
Jacobian. Here, the difficulty is doing this construction in the minimal dimension, i.e. in the plane. 

We remark that the question of giving nontrivial
 sufficient conditions for a strong $A_\infty$-weight to be comparable to the Jacobian of a 
quasiconformal mapping is still wide open.

\subsection{Overview of the remainder of the paper}
We now briefly summarize the contents of the remaining sections of the paper.

 In Section \ref{sec:tplsqcs2} we show that thin planar Loewner spaces
are carpets admitting quasisymmetric embeddings in $S^2$. 
\vskip1mm

In Section \ref{sec:generalconstruction} we define and study 
admissible quotiented inverse systems. We prove that limits of planar admissible quotiented inverse systems are carpets. 
\vskip1mm

In Section \ref{s:gsiqs} we give a general scheme involving
 substitution rules for constructing admissible quotiented inverse systems. 
\vskip1mm

In Section \ref{sec:planarinversequot} we construct our uncountably
many examples of
explicit snowflake embeddings.
\vskip1mm

In Section \ref{sec:rigidity}, for each $Q,Q'$, we show that a countable subcollection
of our examples are pairwise quasisymmetrically distinct.
This section also contains the proof of the main rigidity Theorem \ref{thm:rigidityquantitative}.
\vskip1mm

\subsection{Notation and conventions} We list here some
conventions which are used in the remainder of the paper. 

Throughout we will only be discussing complete, proper metric spaces $(X,d)$ and metric measure spaces $(X,d,\mu)$ equipped with a Radon measure $\mu$. 

%
 For two quantities $A,B$, sequences $A=A_n,B=B_n$ or functions $A=A(x_i),B=B(x_i)$, 
 we will say $A \lesssim B$ if $A \leq C B$ for some fixed $C$. Further, denote $A \asymp B$ if $A \lesssim B$ and $B \lesssim A$. 
 If we want to make the constants explicit, we write $A \lesssim_C B$ for $A \leq CB$ and
 $A \asymp_C B$ if $A \lesssim_C B$ and $B \lesssim_C  A$. Implicitly this notation  means
 that $C$ does not depend on the parameters $x_i$ or $n$, but may depend on other constants
 in the statement of the theorem/lemma.
 
 We will need then notions of pointed measured Gromov-Hausdorff convergence, for which we refer to standard references \cite{bbi, fukaya, keith2003modulus, cheegerkleinerschioppa, ChColI}. At a point $p \in X$, the the tangent cones are limits of $(X,d/r_n, \mu/\mu(B(p,r_n)))$ along some subsequence $r_n \searrow 0$. The collection of them is denoted by $\mathcal{T}X_p$, and an individual tangent cone is denoted by $T_X$. We also refer to \cite{liconf} for a discussion of quasisymmetries and their blow-ups. We will briefly discuss Hausdorff convergence in the plane, for which the reader can consult \cite[p. 281]{munkres}. 

\vskip1mm
\subsection{Summary of Main Results}
The hall marks of our work are explicitness, and the ability to attain and control both the embedding and the space. The main new results of this paper can be summarized as follows.

\begin{enumerate}
\item \textbf{Embeddability:} Any thin planar Loewner space admits a quasisymmetric embedding to $\R^2$. Previous results involve either topological or metric conditions, but this theorem is the first to involve the Loewner condition together with the planarity.
\item \textbf{Explicit examples:} For any dimensions $1<Q,Q' < 2$ we construct infinitely many $Q$-Loewner carpets $X$, which admit a quasisymmetric embedding $f\co X \to \R^2$ so that $f(X)$ is $Q'$ Ahlfors regular. Indeed, $f$ is a snowflake embedding and the map and the space are explicit and arise through substitution rules.  The proofs  of embeddability for these examples can be done directly from the constructions. This special structure allows for further constructions of interest, such as strong $A_\infty$-weights that don't arise as Jacobians. 
\item \textbf{General Framework:} The explicit constructions fall into a framework of a quotiented inverse system. This framework permits more general construction, including other planar Loewner spaces with inexplicit embeddings, as well as other general spaces with Poincar\'e inequalities. 
\item \textbf{Quasisymmetric distinctness:} Using a new bi-Lipschitz invariant we show that infinitely many of our examples, with the same $(Q,Q')$ will be quasisymmetrically distinct. The invariance is proven using the strong rigidity Theorem for quasiconformal maps \ref{thm:rigidityquantitative}, which reduced the problem to a bi-Lipschitz mapping problem. We observe, that despite the explicit setting, a lack of more invariants prevents us from proving distinctness for all of our examples.
\end{enumerate}
\vskip4mm

\noindent \textbf{Acknowledgments:}We are very grateful to Mario Bonk and Bruce Kleiner for many
conversations, which provided crucial motivation for some of the questions addressed in this paper,
as well as for some of the results. 
In particular, we thank Kleiner for describing his unpublished constructions, using substitution rules,  
of some thin Loewner carpets
which he showed admit quasisymmetric embeddings in $S^2$. He also suggested Proposition \ref{thm:GHplanar} and its proof to us.  Further, our quasisymmetric distinctness makes use of Theorem \ref{thm:rigidityquantitative}, whose statement is due to Kleiner, but whose proof has not appeared previously. 

The tools to prove quasisymmetric embeddability of general (inexplicit) Loewner carpets case were inspired by discussions with Bonk, and his description of partial results with Kleiner. Bonk also pointed out the usefulness of the reference \cite{kinnerbergisoperim} and encouraged pursuing substitution rules. 

We also thank Qingshan Zhou for a careful reading of parts of the paper and for helpful comments. The first author was partially supported by NSF grant DMS-1406407. The second author was partially supported by NSF grant DMS-1704215. 

\section{Thin planar Loewner spaces are quasisymmetric to carpets in $S^2$.} \label{sec:tplsqcs2}

In this section which show that thin planar Loewner spaces, are carpets which embed
quasisymmetrically in $S^2$; see Theorem \ref{thm:confemb}.
 We will begin by recalling a number relevant definitions. Then we state a basic result of 
Tukia and V\"ais\"al\"a, Theorem \ref{thm:tukiavaisala} which is needed for the proof of our
key technical result, Theorem \ref{lem:quasicircle}. Theorem \ref{thm:confemb} is
an easy consequence of Theorem \ref{lem:quasicircle}.

\subsection{Quasisymmetries and quasicircles}

\begin{definition}
\label{d:llc}
A metric space $X$ is  $C$-linearly locally connected (LLC)
if there is a constant $C$ with the following two properties.
 \vskip2mm

 \begin{enumerate}
  \item[$LC_1$] For all $x,y \in X$ there is a continuum $E$ with $x,y \in E$ and
 such that \hbox{$\diam(E) \leq C\cdot d(x,y)$}.
\vskip1mm

  \item[$LC_2$] For every $0<r<\diam(X)/C$ and any ball $B(z,r)$ 
and any points $x,y \not\in B(z,Cr)$, there is a continuum 
$E$ with $x,y \in E$, 
and $E \cap B(z,r)= \emptyset$.
 \end{enumerate}
\vskip2mm

$X$ is {\it linearly locally connected} if $C$ as above exists. 
\end{definition}

\begin{definition}
\label{d:allc}
A metric space $X$ is  annularly linearly locally connected (ALLC) if for every $x,y,z \in 
X$, and any $R>0$, such that $R<d(x,y)<d(x,z)<2R$, there is a curve $\gamma$ such that
the following hold.
\begin{itemize}

\item[1)]
 $y,z \in {\rm Image}(\gamma)$.
\vskip1mm

\item[2)]
 $\diam(\gamma) \leq C\cdot d(y,z)$.
\vskip1mm

\item[3)] $\gamma \cap B(x,R/C) = \emptyset$. 
\end{itemize}
\end{definition}

Usually, the above definition
is stated using a continuum instead of a curve. (Recall, that a continuum is a compact connected 
set.)
  However, using a curve makes 
our proofs below easier. Also, for quasiconvex spaces this version is equivalent to the standard one. Quasisymmetries preserve the LLC and ALLC conditions.

\begin{definition}
\label{d:etaqs}
Given a homeomorphism $\eta \co [0,\infty) \to [0,\infty)$, we say that a homeomorphic map 
$f \co (X,d_X) \to (Y,d_Y)$ is $\eta$-quasisymmetric 
if for all $x,y,z \in X$, with $x\neq z$
\begin{equation} \label{def:quasisym}
 \frac{d_Y(f(x),f(y))}{d_Y(f(x),f(z))} \leq \eta\Bigg( \frac{d_X(x,y)}{d_X(x,z)} \Bigg).
\end{equation}
\end{definition}

\begin{definition}
\label{d:qsym} Given a metric space $X$, the image of a quasisymmetric embedding $\phi \co S^1 \to 
{\rm Image}(\gamma) \subset X$ is called a \emph{quasicircle}.
A collection $\Gamma$ of quasicircles is called uniform
if for some fixed $\eta$, it consists of images of $\eta$-quasisymmetric
maps of $S^1$.
\end{definition}

\begin{definition}
\label{d:bt}
 A curve $\gamma \co S^1 \to X$
has bounded turning, if there is a $C \geq 1$ such that
for any distinct $s,t \in S^1$, and arcs $I,J$ of $S^1$ defined by $s,t$,
$$
\min\left(\diam( \gamma(I) ),\diam(\gamma(J) ) \right) \leq C d(\gamma(s),\gamma(t))\, .
$$
\end{definition}

The following result of Tukia and V\"ais\"al\"a \cite{tukiavaisala}
provides a characterization of quasicircles.
\begin{theorem}[Tukia--V\"ais\"al\"a, \cite{tukiavaisala}]
\label{thm:tukiavaisala} 
If $\gamma \co S^1 \to X$ is a doubling embedded circle, then its image is a quasicircle if and only if $\gamma$ is of bounded turning. The 
function $\eta$ in Definition \ref{d:etaqs} depends solely on the constant $C$ in
the bounded turning condition inequality \eqref{d:bt}.
\end{theorem}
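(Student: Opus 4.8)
The plan is to establish the two implications separately. The ``only if'' direction is a short computation with the defining inequality of a quasisymmetry, while the ``if'' direction is the substantive assertion, which I would prove by constructing a quasisymmetric parametrization of the image by hand.

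\emph{If the image is a quasicircle, then $\gamma$ has bounded turning.} Since bounded turning is a property of the set $\Gamma=\gamma(S^1)$ with its metric, invariant under reparametrization of $\gamma$ by a homeomorphism of $S^1$, I may assume $\gamma$ is itself an $\eta$-quasisymmetric homeomorphism from the standard unit circle. Given distinct $s,t\in S^1$, let $I$ be the shorter of the two arcs between them; then $d_{S^1}(s,p)\le d_{S^1}(s,t)$ and $d_{S^1}(t,p)\le d_{S^1}(s,t)$ for every $p\in I$. Feeding the triples $(s,p,t)$ and $(t,p,s)$ into \eqref{def:quasisym} and using that $\eta$ is increasing yields $d(\gamma(s),\gamma(p)),\,d(\gamma(t),\gamma(p))\le\eta(1)\,d(\gamma(s),\gamma(t))$, hence $\diam\gamma(I)\le 2\eta(1)\,d(\gamma(s),\gamma(t))$. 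So $\gamma$ has bounded turning with constant $2\eta(1)$, depending only on $\eta$.

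\emph{If $\gamma$ has bounded turning, then the image is a quasicircle.} I would build a parametrization $\phi\co S^1\to\Gamma$ recursively: split $S^1$ into two arcs of equal length and $\Gamma$ into two sub-arcs of equal diameter; then, having matched each dyadic arc $I_{j,n}=[\,j2^{-n},(j+1)2^{-n}\,]$ with a sub-arc $\Gamma_{j,n}\subseteq\Gamma$ compatibly with endpoint identifications, bisect $I_{j,n}$ and split $\Gamma_{j,n}$ into two sub-arcs of equal diameter. Because the $\Gamma_{j,n}$ are closed, nested, and shrink to points (by injectivity and continuity of $\gamma$), this determines $\phi$ on dyadic points and extends to a homeomorphism. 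With $D_n:=\max_j\diam\Gamma_{j,n}$, the two facts I need are: (i) \emph{geometric decay} $D_{n+N}\le\lambda D_n$ for some $N=N(C)$ and $\lambda=\lambda(C)<1$; and (ii) \emph{comparability within a generation}: a level-$n$ sub-arc and an adjacent level-$n$ sub-arc, or a descendant of its neighbor at a comparable scale, have comparable diameters. Item (ii) is easy, since each child of an equal-diameter bisection has diameter at least half its parent's. Granting (i) and (ii), verifying \eqref{def:quasisym} for $\phi$ is bookkeeping: for distinct $s,t,u$ with $d_{S^1}(s,t)\le d_{S^1}(s,u)$, locate $s,t$ in a common dyadic arc of scale $\asymp d_{S^1}(s,t)$ to bound $d(\phi(s),\phi(t))$ above by the corresponding $D_m$, use (ii) to bound $d(\phi(s),\phi(u))$ below by a fixed multiple of $D_{m'}$ with $2^{-m'}\asymp d_{S^1}(s,u)$, and iterate (i) from scale $m'$ down to scale $m$; this gives $d(\phi(s),\phi(t))\le\eta_0\big(d_{S^1}(s,t)/d_{S^1}(s,u)\big)\,d(\phi(s),\phi(u))$ with $\eta_0$ determined by $N$ and $\lambda$, hence by $C$. (Alternatively, one proves first the weaker estimate with a single multiplicative constant and then upgrades it using that $\Gamma$, a bounded turning circle, is connected and doubling.)

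I expect the one genuine difficulty to be estimate (i). Bounded turning alone is vacuous here, since it only says that the minor of the two arcs between two points has diameter at most $C$ times the distance between those points, which is itself at most that diameter; so the decay must be produced by a contradiction argument spread over many levels. If some $\Gamma_{j,n}$ had all of its level-$(n+N)$ descendants of diameter $\ge\tfrac12 D_n$, then, since a bisection step at worst halves a diameter, one extracts $\gtrsim N$ essentially disjoint sub-arcs of $\Gamma$ arranged consecutively along $\Gamma_{j,n}$, each of diameter $\asymp D_n$ and all lying in a set of diameter $\le D_n$; bounded turning prevents such sub-arcs from being thin, while a doubling (packing) bound — which for a metric circle is itself controlled by $C$, since a tightly packed family of sub-arcs of a fixed diameter would have endpoints too close together for bounded turning to permit — forbids there being many of them, a contradiction once $N$ is large in terms of $C$. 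This single estimate is where bounded turning and doubling genuinely interact, and it is what forces $\eta_0$, and hence the quasisymmetry function in the statement, to depend on $C$ alone.
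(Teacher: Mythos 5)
The paper offers no proof of Theorem \ref{thm:tukiavaisala}: it is quoted from \cite{tukiavaisala}, so there is nothing internal to compare against. Your sketch follows the classical route: the forward implication by a direct computation with \eqref{def:quasisym} (correct as written), and the converse via an equal-diameter bisection scheme whose quasisymmetry rests on a geometric decay estimate for the diameters of the dyadic sub-arcs, obtained by playing bounded turning against a packing bound. That is the right architecture, and the decay estimate is indeed the point where the two hypotheses interact.

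There is, however, one concretely false claim: that the doubling (packing) constant of a metric circle ``is itself controlled by $C$.'' It is not. Take $S^1$ with $\rho(x,y)=\big(\log(A/|x-y|)\big)^{-1}$, where $|x-y|$ is arc-length and $A$ is large: for such a concave distortion the diameter of the shorter arc between $s,t$ equals $\rho(s,t)$, so the circle is $1$-bounded-turning, yet a $\rho$-ball of radius $r$ is an arc of length $\asymp e^{-1/r}$, so the space is not doubling; truncating the distortion at scale $1/n$ produces $1$-bounded-turning doubling circles whose doubling constants tend to infinity, and these cannot admit $\eta$-quasisymmetric parametrizations with a common $\eta$, since an $\eta$-quasisymmetric image of $S^1$ is doubling with constant controlled by $\eta$. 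So the doubling hypothesis is essential, your packing bound must be drawn from it rather than from $C$, and the $\eta$ you produce depends on both $C$ and the doubling constant (this is how Tukia and V\"ais\"al\"a state it; the paper's ``solely on $C$'' elides the doubling constant, harmlessly for its application, since the peripheral circles all sit in one Ahlfors regular space). A smaller gap: your item (ii) is not ``easy.'' The halving property only compares a child with its parent and sibling; two adjacent level-$n$ arcs with distinct parents have their nearest common ancestor possibly many generations up, and the resulting a priori bound on the ratio of their diameters degenerates with that depth. Comparability of neighboring arcs at comparable scales has to be extracted from the decay estimate (i) together with bounded turning, and is part of the genuine work in the converse direction.
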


Next we show that compact thin planar Loewner spaces are carpets.

 \begin{proposition}\label{lemma:carpet}
For $1<Q<2$, a  compact planar $Q$-Loewner space $X$ is a carpet.
 \end{proposition}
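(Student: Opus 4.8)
The plan is to verify Whyburn's criterion (Theorem \ref{thm:whyburn}): a compact metric space is a carpet if and only if it is planar, connected, locally connected, has no local cut-points, and no open subset is homeomorphic to $\R^2$. Planarity is given by hypothesis, so four conditions remain. The Loewner property is very strong, and each of these will follow from standard consequences of the $(1,Q)$-Poincar\'e inequality together with $Q$-Ahlfors regularity.

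First I would record that a $Q$-Loewner space is quasiconvex and hence connected, locally connected, and in fact linearly locally connected; this is classical (Heinonen--Koskela). Quasiconvexity gives $LC_1$, and the Loewner/modulus estimates give $LC_2$, since one can always find curve families of large modulus joining points outside a ball while avoiding a smaller concentric ball (the annulus has definite Loewner-modulus). Local connectedness is immediate from quasiconvexity. So the two remaining points are: (i) $X$ has no local cut-points, and (ii) no open subset of $X$ is homeomorphic to $\R^2$.

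For (i), I would argue that a Loewner space cannot have local cut-points. Indeed, if $p$ were a local cut-point, there would be a ball $B(p,r)$ whose complement of $p$, inside a neighborhood $U$, disconnects into two relatively open pieces $A_1, A_2$. Taking nondegenerate continua $E_i \subset A_i$ near $p$ on each side, the Loewner condition forces the curve family connecting $E_1$ to $E_2$ to have positive $Q$-modulus, hence positive ($<\infty$) modulus means there exist rectifiable curves joining them; but every such curve must pass through $p$, so the curve family joining $E_1,E_2$ within $U$ is contained in the family of curves through the single point $p$. Since $\{p\}$ has zero $Q$-capacity (as $Q>1$ and $X$ is $Q$-Ahlfors regular, a point is null for $Q$-modulus), the connecting family has zero modulus, contradicting the Loewner lower bound. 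This modulus-of-curves-through-a-point vanishing is the standard ingredient and I expect it to be routine given Ahlfors regularity and $Q>1$.

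For (ii), the key point is the dimension mismatch: since $\dim_{\mathcal H}(X) = Q < 2$, no subset of $X$ can be homeomorphic to an open subset of $\R^2$. This requires an argument because homeomorphisms need not preserve Hausdorff dimension; the clean way is to invoke topological dimension. A $Q$-Ahlfors regular space with $Q<2$ has topological (covering) dimension $\le 1$ — indeed $\le \lfloor Q \rfloor \le 1$ — by the standard fact that topological dimension is bounded by Hausdorff dimension. On the other hand $\R^2$, hence any nonempty open subset of it, has topological dimension $2$. Since topological dimension is a topological invariant, no open subset of $X$ can be homeomorphic to $\R^2$. This is where I would be most careful to cite the precise statement (topological $\dim \le$ Hausdorff $\dim$ for separable metric spaces), but it is a classical theorem. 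Having checked all five hypotheses of Whyburn's theorem, we conclude that $X$ is a carpet. The main obstacle, such as it is, is marshalling the modulus estimate for curves through a point cleanly enough to rule out local cut-points; everything else is bookkeeping with known consequences of the Loewner condition.
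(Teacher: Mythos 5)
Your proposal follows the same overall route as the paper: verify Whyburn's criterion (Theorem \ref{thm:whyburn}), get connectivity, local connectivity and the absence of local cut-points from the Loewner/Poincar\'e hypotheses, and rule out open subsets homeomorphic to $\R^2$ by the inequality between topological and Hausdorff dimension (the paper cites Hurewicz--Wallman for exactly this). The only real difference is in how the no-local-cut-point condition is handled: the paper simply invokes \cite[Theorem 3.13]{heinonenkoskela}, which says a $Q$-Loewner space is annularly linearly locally connected, and observes that an ALLC space has no local cut-points; you instead try to re-derive this from scratch with a modulus argument.

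That hand-made argument has a gap as written. The Loewner condition gives a lower bound on $\Mod_Q$ of the family of \emph{all} curves in $X$ joining $E_1$ to $E_2$, whereas your claim ``every such curve must pass through $p$'' only holds for curves that stay inside the neighborhood $U$; a connecting curve may leave $U$ and return, avoiding $p$ entirely. To close the gap you must split the connecting family into curves that stay in a small ball $B(p,r_0)\subset U$ (these do pass through $p$ and carry zero $Q$-modulus, as you say) and curves that escape $B(p,r_0)$; the latter join $B(p,\epsilon)$ to $X\setminus B(p,r_0)$ and so have modulus $\lesssim (\log(r_0/\epsilon))^{1-Q}\to 0$ as $\epsilon\to 0$ by the standard annulus estimate in a $Q$-Ahlfors regular space with $Q>1$. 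Choosing $E_1,E_2$ with bounded relative separation and $\epsilon$ small enough then yields the contradiction. This is repairable and is essentially a proof of the relevant piece of the ALLC property, but as stated the step would fail; citing Heinonen--Koskela, as the paper does, avoids the issue.
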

 
 \begin{proof}
  Since  $X$ is $Q$-Loewner,
  by \cite[Theorem 3.13]{heinonenkoskela} it
is annularly linearly locally connected. In particular, it is locally connected and connected. We note that, while the definition of Loewner in \cite{heinonenkoskela} is stated slightly differently, the authors also establish the equivalence of our definition and theirs. 
  
An annularly linearly locally connected space cannot have local cut points. 
Also, $X$ cannot have manifold points since in that case,
 it would then have at least topological dimension $\geq 2$, and  hence,
Hausdorff dimension $\geq 2$ as well; see \cite{hurewicz}. Thus,
 by Whyburn's Theorem \ref{thm:whyburn}, $X$ is a carpet.
 \end{proof}
\vskip2mm

We will now show that such planar Loewner spaces $X$, which are also always Loewner carpets, admit a quasisymmetric embedding into the plane.

If $E, F \subset X$ are two disjoint and compact sets of a metric space $X$, which contain more than one point, 
recall from \eqref{e:Deltadef} that  their relative separation is defined as:
$\Delta(E,F) := \frac{d(E,F)}{\min(\diam(E), \diam(F))}$.

\begin{definition}
\label{d:rs}
A collection $\mathcal{F}$ of sets is uniformly $\delta$-relatively separated, 
if $\Delta(E,F) \geq \delta>0$ for 
each distinct $E,F \in \mathcal{F}$. 
\end{definition}

The planarity of a Loewner space $X$ guarantees, that
there is a {\it topological} embedding $\phi \co X \to \C$. Consequently, via stereographic projection, we also have an embedding $\phi \co X \to S^2$. As shown above, since $X$ is a carpet so is $\phi(X)$ and by  Whyburn's Theorem \ref{thm:whyburnsierp}, we can express $\phi(X) = S^2 \setminus \bigcup D_i$, where $D_i$ are countably many Jordan domains with disjoint closures\footnote{Recall, that a domain $D$ is a \textit{Jordan domain} if $\partial D$ is a Jordan curve. A \textit{Jordan curve} is any embedded copy of $S^1$ in $S^2$.}. So, each boundary $\partial D_i$ is a Jordan curve in $X$.  We will give each $\partial D_i$ some parametrization by an embedded circle $\gamma_i$, and denote the collection of these circles by $\Gamma$. The curves $\gamma_i$ lie in $\phi(X)$, but we will identify them via the homeomorphism $\phi$ with curves in $X$.

These curves are called {\it peripheral circles}. In general, any 
Jordan curve $\gamma$ in $X$ is a peripheral circle if and only if it doesn't 
separate $X$ into two components. This is an easy consequence of the Jordan curve theorem. 

In order to prove our main result, Theorem \ref{thm:confemb}, we will need the following theorem. 

\begin{theorem}
 \label{lem:quasicircle}
 If $X$ is a Loewner carpet, then the collection of peripheral circles $\Gamma = \{\gamma_i\}$ 
is uniformly relatively separated and consists of uniform quasicircles, that is, there is a function $\eta$ such that $\gamma_i$ are all $\eta$-quasicircles and another $\delta$ so that $\Delta(\gamma_i,\gamma_j) \geq \delta$ for each $\gamma_i,\gamma_j \in \Gamma$ distinct.
\end{theorem}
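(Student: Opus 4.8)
\textbf{Proof strategy for Theorem \ref{lem:quasicircle}.}

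The plan is to establish the two conclusions --- uniform relative separation and the uniform quasicircle condition --- separately, both by exploiting the Loewner/PI structure through modulus estimates, and using Whyburn's characterization (Theorem \ref{thm:whyburnsierp}) together with the Tukia--V\"ais\"al\"a criterion (Theorem \ref{thm:tukiavaisala}) to reduce the quasicircle part to a bounded turning estimate.

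\emph{Step 1: Uniform relative separation.} I would argue by contradiction using a blow-up/compactness argument together with the fact that a Loewner space cannot have local cut points (as established in the proof of Proposition \ref{lemma:carpet}). Suppose there were sequences of distinct peripheral circles $\gamma_i, \gamma_j$ with $\Delta(\gamma_i,\gamma_j)\to 0$. Rescale the metric so that the smaller of $\diam(\gamma_i),\diam(\gamma_j)$ becomes comparable to $1$; then after passing to a pointed Gromov--Hausdorff limit (using properness, doubling, and the stability of the Loewner/PI conditions under such limits), the two peripheral circles degenerate to sets that touch, which --- since the complementary Jordan domains $D_i, D_j$ have disjoint closures in the finite stage --- would force the limit space to have a local cut point or, alternatively, an open subset homeomorphic to $\R^2$, contradicting Proposition \ref{lemma:carpet} or the carpet structure via Theorem \ref{thm:whyburn}. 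A cleaner variant: directly use the Loewner condition, which gives a positive lower bound on the modulus $\Mod_Q(\Gamma(\gamma_i,\gamma_j))$ of curves joining $\gamma_i$ to $\gamma_j$ that is a decreasing function of $\Delta(\gamma_i,\gamma_j)$ only; combine this with an upper bound on that modulus coming from the fact that any curve joining $\gamma_i$ to $\gamma_j$ must cross the "annular gap" region between them, whose $Q$-modulus is small when $\Delta$ is small (here one uses Ahlfors regularity and a standard test-function/covering argument). The two bounds are incompatible for $\Delta$ sufficiently small, giving the uniform $\delta$.

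\emph{Step 2: Each $\gamma_i$ is a uniform quasicircle.} By Theorem \ref{thm:tukiavaisala}, since $X$ is doubling (Ahlfors regular), it suffices to prove a \emph{uniform} bounded turning estimate: there is $C\geq 1$, independent of $i$, such that for any two points $p,q$ on $\gamma_i$, one of the two subarcs $I,J$ of $\gamma_i$ cut out by $p,q$ satisfies $\diam(\gamma_i(I))\leq C\, d(p,q)$. Here is where the Loewner condition does the real work. Given $p,q\in\gamma_i$ at distance $d(p,q)=t$, the ALLC property of $X$ (valid since $X$ is $Q$-Loewner, by \cite[Theorem 3.13]{heinonenkoskela}, as used in Proposition \ref{lemma:carpet}) produces continua in annuli around $p$ avoiding a definite ball, i.e.\ provides a lower modulus bound for curve families linking small and large scales. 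Consider the subarc, say $\gamma_i(I)$, with the smaller diameter; if $\diam(\gamma_i(I))$ were much larger than $t$, then $\gamma_i(I)$ together with $\gamma_i(J)$ would form a Jordan curve having a "thin waist" near $p,q$, and one estimates the $Q$-modulus of the family of curves in $D_i$ (the Jordan domain bounded by $\gamma_i$) separating the waist: on one hand this modulus is bounded below by the Loewner function evaluated at the relative distance of the two halves of $\gamma_i(I)$ near the waist; on the other hand it is bounded above by a quantity $\to 0$ as $\diam(\gamma_i(I))/t\to\infty$, because the separating curves are forced through a region of small measure (Ahlfors regularity again) --- contradiction. Equivalently, one can run this through the Keith--Laakso / Heinonen--Koskela equivalence between the Loewner property and the "$LLC$ plus modulus lower bound" package, observing that the peripheral circle $\gamma_i$, being a metric circle in a carpet whose complement in $S^2$ is the Jordan domain $D_i$, inherits bounded turning with a constant depending only on the data $(Q, C_{LLC}, C_{ALLC}, C_{PI}, \tau, c)$ of $X$, hence uniform in $i$.

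\emph{Main obstacle.} The delicate point is Step 2: turning the qualitative statement "the arc with smaller diameter is not too large" into a \emph{quantitative, $i$-independent} bound. The issue is that the peripheral circles $\gamma_i$ may have widely varying diameters and sit in highly distorted parts of $X$, so one must carefully normalize (rescale by $d(p,q)$) and invoke a modulus lower bound that is uniform across all scales --- precisely the content of the Loewner condition --- while simultaneously extracting a matching upper bound for the modulus of curves through the hypothetical thin waist. Getting the upper bound requires a good "collar" or separating-annulus around the waist inside $X$ (not just inside $\R^2$), which is where one must use the ALLC property of $X$ to build the required continua/curves and Ahlfors $Q$-regularity to estimate the relevant measures. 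Organizing these two modulus estimates so that their constants depend only on the structural data of $X$, and not on the particular circle $\gamma_i$ or the points $p,q$, is the technical heart of the proof. Once bounded turning with a uniform constant is in hand, Theorem \ref{thm:tukiavaisala} immediately upgrades it to a uniform $\eta$ for all the $\gamma_i$, completing the proof.
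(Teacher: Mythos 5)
Your reduction of the quasicircle statement to a uniform bounded turning estimate via Theorem \ref{thm:tukiavaisala}, and your identification of the ALLC property as the engine, both match the paper. But the mechanism you propose for producing the contradiction does not work, and the actual mechanism in the paper is missing from your sketch. In Step 2 you want to bound from below ``the $Q$-modulus of the family of curves in $D_i$ \ldots by the Loewner function.'' The Jordan domain $D_i$ is a complementary component of the carpet: $\overline{D_i}\cap X=\partial D_i=\gamma_i$, so $D_i$ carries no part of the measure $\mathcal H^Q$ and the Loewner property of $X$ says nothing about curve families inside $D_i$ (nor does Ahlfors regularity give you a ``region of small measure'' there). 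Your separation argument has a parallel defect but in the opposite direction: when $\Delta(\gamma_i,\gamma_j)$ is small the Loewner condition makes the modulus of connecting curves \emph{large}, and the competing upper bound you invoke (curves forced through a thin annular gap) is not small --- short curves across a gap of width $d$ inside a ball of radius $Rd$ already contribute modulus bounded below independently of $d$, by Ahlfors regularity. So the two modulus estimates you want to play off against each other are not in tension.

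The idea you are missing is topological, not metric: the paper forms a Jordan curve $\sigma$ in $S^2$ by concatenating a \emph{short} simple curve $\beta_1\subset X$ joining the two points $a,b$ (supplied by quasiconvexity/LLC$_1$, hence contained in $B(a,Rd(a,b))$) with a simple arc $\beta_2$ through the complementary domain $D_i$. The Jordan curve theorem then shows $\sigma$ separates two points $s\in I$, $t\in J$ (respectively $x\in\gamma_i$, $y\in\gamma_j$ in the separation step) that lie at definite distance from $a$. The ALLC property produces a curve in $X$ joining $s$ to $t$ while avoiding the ball that contains $\beta_1$; since that curve lies in $X$ it automatically misses $\beta_2\subset D_i$, hence misses all of $\sigma$, contradicting the separation. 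The Loewner hypothesis enters only through quasiconvexity and ALLC (via \cite[Theorem 3.13]{heinonenkoskela}); no direct modulus computation is needed, and the uniformity of the constants is immediate because $C$ and $\delta$ are explicit functions of the single ALLC/quasiconvexity constant $R$. Without the step of routing part of the Jordan curve through the complementary domain --- which is exactly what exploits planarity and the carpet structure --- I do not see how your modulus estimates can be completed.
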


\begin{proof}
In order to simplify notation, and recalling the above discussion, we will identify $X$ with its image $\phi(X) = S^2 \setminus \bigcup D_i$ in the plane. The metric notions of diameter, and distance, will refer to the metric on $X$, which is distinct from the restricted metric in $S^2 \setminus \bigcup D_i$.

By assumption together with \cite[Theorem 3.13]{heinonenkoskela} and \cite{ChDiff99} the space
$X$ is $R$-ALLC and $R$-quasiconvex for some $R\geq 1$. 
\vskip2mm

\noindent
{\bf Step 1.} The peripheral circles are uniform quasicircles. 
\vskip1mm

By Theorem \ref{thm:tukiavaisala} it suffices to prove that this collection of  circles has  
uniformly bounded turning. (Indeed, since $\gamma_i \subset X$ and since $X$ is doubling, then $\gamma_i$ are all doubling.) We prove this by contradiction. 
Fix a large $C \geq 1$ (in fact $C=4R^2$ suffices), and suppose that some $\gamma_i$ is not $C$-bounded turning. 
Then, there would 
exist two  points $a,b \in \gamma_i$ which separate $\gamma_i$ 
into Jordan arcs $I,J \subset \gamma_i$ with 
$\diam(I) \geq \diam(J) \geq Cd(a,b)$. 
By the R-LLC condition, there exists a curve $\beta \subset X$ such that $\diam(\beta) \leq R\cdot
 d(a,b)$ connecting $a$ to $b$.
 By \cite[Theorem 1]{moorecurvesubset}, 
we can find another curve $\beta_1$  which is a simple 
curve contained in the image of $\beta$, and which connects 
the two points $a,b$. 

 
Now, since $D_i$ is a Jordan domain and $\partial D_i = \gamma_i$, by the Jordan curve theorem, we can find a simple curve $\beta_2$ which is contained in the interior of $D_i$, except for  its endpoints, which connects $a$ to $b$. Consider the simple Jordan curve $\sigma$ formed by concatenating $\beta_1$ and $\beta_2$. This curve divides $S^2$ into exactly two components.  
 
If $C = 4R^2$, then 
we can find two points $s \in I,\, t \in J$, 
which satisfy
$$
d(s,a)=d(t,a) \geq \frac{1}{2}C\cdot d(a,b)\, .
$$

\noindent
{\it Claim:}
 $s,t$ cannot both lie in the same component $U$ of $S^2 \setminus \sigma$.

To prove the claim,
 suppose that they both lay in the same component $U$ and denote the other component by $V$. Then the
 Jordan curve
 theorem implies that $\partial U = \partial V = \sigma$. Since every point in
 $D_i \setminus \sigma$ can be 
connected to either $s$ or $t$ by a simple curve contained in the closure
 and intersecting $\overline{D}_i$ only at the end-points, we would have that 
$D_i \setminus \sigma$ would also lie in $U$. 
However, then each point of $\sigma \cap D_i$ would have
 a neighborhood $D_i$ which only contains points of $U$, and it would follow
 that $\sigma \cap D_i \cap \partial V = \emptyset$. This
 contradicts the equality $\partial V = \sigma$.

 Since $X$ is $R$-ALLC,
 there exists another curve $\gamma$ connecting
$s,t$ within $X$ such that $\gamma \cap B(a,2Rd(a,b)) = \emptyset$. 
Then $\beta_1 \cap \gamma = \emptyset$ since $\beta_1 \subset \beta \subset B(a,2R d(a,b))$, 
and $\beta_2 \cap \gamma = \emptyset$ since $\gamma \subset X$. In particular 
$\gamma \cap \sigma = \emptyset$,
which is impossible since $\sigma$ separated the points $s,t$. This completes the
 proof of Step 1. i.e.\ peripheral
circles are uniform quasicircles.
\vskip2mm

\noindent
{\bf Step 2.} The collection $\Gamma$ is uniformly relatively separated.
\vskip1mm 

Again, we argue by contradiction. 
Assume that for some very small $\delta$ (in fact $\delta= \frac{1}{2^4R^4}$ suffices), we have,
$\Delta(\gamma_i,\gamma_j) \leq \delta$ for some distinct $\gamma_i,\gamma_j \in \Gamma$ i.e.\ 
$$
d(\gamma_i,\gamma_j) \leq \delta \min(\diam(\gamma_i), \diam(\gamma_j))\, .
$$
Let $a \in \gamma_i, b \in \gamma_j$
be such that $d(a,b) = d(\gamma_i,\gamma_j)$. By the $R$-quasiconvexity condition 
we can connect $a,b$ by a curve $\alpha$ with $\alpha \subset B(a,R\cdot d(a,b))$.

For points $s,t \in \gamma_i$, denote by $A_{st}$ the  
subarc of $\gamma_i$ containing $s,t$ containing $a$.  
If 
$\delta$ is chosen sufficiently 
small, we can pick $s,t \in \gamma_i$ to be the closest points to $a$, such that
  $d(s,a)=d(t,a) = 2R^2\cdot d(a,b)$. By choosing the closest $s,t$, we can ensure that  the 
sub-arc $A_{st}$ containing $a$ satisfies:
$$
A_{st} \subset B(a,2R^2\cdot d(a,b))\, .
$$

As above it follows from the ALLC condition that there is a simple curve $\beta_1$ connecting 
$s,t$ which does not intersect $\alpha$ and satisfies
with 
\begin{align}
\diam(\beta_1) &\leq 4R^3 d(a,b)\notag\\
 \beta_1& \subset B(a,8R^3 \cdot d(a,b))\, . \notag
\end{align} 
As above, we can connect $s,t$ by a simple curve $\beta_2$ within $\overline{D_i}$, and form a Jordan curve $\sigma$ by concatenating $\beta_1$ with $\beta_2$. This divides $S^2 \setminus \sigma$ into two components $U$,$V$. 

Without loss of generality, we can assume that
  $\delta<\frac{1}{2}$ and 
$$
d(a,b)=d(\gamma_i,\gamma_j) \leq \delta \min(\diam(\gamma_i), \diam(\gamma_j))\, .
$$ 
Then  we can find $x \in \gamma_i, y \in \gamma_j$ with $d(x,a) \geq \frac{1}{2\delta} d(a,b)$ and 
$d(y,a) \geq \frac{1}{2\delta} d(a,b)$.  

By the same argument as above we see that $x$ and $a$ lie in separate components of 
$S^2\setminus\sigma$, say $x \in U$ and $a \in V$. However, $\alpha$ does not intersect $\sigma$, 
and so $b \in V$ as well. Finally, we can find a simple   curve $\beta_3$  contained in $D_j \cup \{b,y\}$
 connecting
 $b$ and $y$. Then since $\sigma \subset D_i \cup X$ and $\beta_3$ does not intersect $\sigma$,
 it follows that $y \in V$ as well.
Consequently $x \in U$ and $y \in V$ lie in separate components.


However,
 by the R-ALLC condition, we can connect $x$ to $y$ with a curve
 $\gamma \subset X$ which avoids the ball $B(a,\frac{1}{2\delta R } d(a,b)) \cap X$. 
By choosing any $\delta < \frac{1}{8R^4}$, we have 
$$
\sigma \cap X = \beta_1 \subset B(a,4R^3 \cdot d(a,b)) \subset B\big(a,\frac{1}{2\delta R }d(a,b)\big)\, .
$$

Therefore, since $\sigma$ separates $x,y$, the curve $\gamma$ must intersect $\sigma$.
 However, $\gamma \cap \sigma \subset \gamma \cap \sigma \cap X \subset \gamma \cap \beta_1$,
 and so $\gamma$ must also intersect $B(a,\frac{1}{2\delta R }d(a,b))$, which is a contradiction. 
This completes the proof of Step 2., and hence, the proof of Theorem \ref{lem:quasicircle} as well.
\end{proof}

Finally, we show that Loewner carpets
can be realized as planar subsets via a quasisymmetric embedding.

\begin{theorem}
\label{thm:confemb}
 If $X$ is 
$Q$-Loewner planar space with $Q \in (1,2)$, then there is a quasisymmetric embedding   $f \co X \to S^2$.
\end{theorem}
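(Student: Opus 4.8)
The plan is to combine the structural results already in hand --- Proposition \ref{lemma:carpet}, which tells us $X$ is a carpet, and Theorem \ref{lem:quasicircle}, which tells us the peripheral circles of $X$ (under some fixed topological embedding $\phi \co X \to S^2$) are uniform quasicircles that are uniformly relatively separated --- with an existing uniformization theorem for carpets satisfying exactly these two conditions. Concretely, I would invoke the result of Bonk \cite{bonkuniformization} (see also \cite[Corollary 3.5]{haissinskyhyper}): a metric carpet whose peripheral circles are uniform quasicircles and uniformly relatively separated, and which is (say) doubling and of finite Assouad dimension, admits a quasisymmetric homeomorphism onto a circle carpet in $S^2$. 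Since $X$ is $Q$-Ahlfors regular it is doubling, so Proposition \ref{lemma:carpet} and Theorem \ref{lem:quasicircle} supply precisely the hypotheses needed, and the resulting quasisymmetry $g \co X \to K \subset S^2$ is in particular a quasisymmetric embedding $f \co X \to S^2$.

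The key steps, in order, are: (1) Use planarity of $X$ to fix a topological embedding $\phi \co X \to S^2$; by Proposition \ref{lemma:carpet}, $X$ --- hence $\phi(X)$ --- is a carpet, so by Whyburn's Theorem \ref{thm:whyburnsierp} we may write $\phi(X) = S^2 \setminus \bigcup_i D_i$ with the $D_i$ Jordan domains having disjoint closures and diameters tending to $0$, giving the well-defined family of peripheral circles $\Gamma = \{\gamma_i\}$. (2) Apply Theorem \ref{lem:quasicircle}: there are $\eta$ and $\delta$ so that each $\gamma_i$ is an $\eta$-quasicircle and $\Delta(\gamma_i,\gamma_j) \ge \delta$ for all distinct $i,j$. (3) Note $X$ is $Q$-Ahlfors regular, hence doubling, with controlled Assouad dimension. (4) Feed (1)--(3) into the carpet uniformization theorem of \cite{bonkuniformization} to obtain a quasisymmetry $g$ from $X$ onto a round carpet in $S^2$; set $f = g$, which is by definition a quasisymmetric embedding into $S^2$.

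I expect the main obstacle --- really the only substantive point not already resolved by Theorem \ref{lem:quasicircle} --- to be checking that the hypotheses of the cited uniformization theorem are met in exactly the form stated there. The uniformization theorems in the literature are sometimes phrased for carpets sitting in $S^2$ with the \emph{ambient} (spherical) metric rather than for an abstract metric carpet, whereas here the metric of interest is the intrinsic $Q$-Loewner metric on $X$, transported to $\phi(X)$; one must be careful that ``peripheral circle,'' ``quasicircle,'' and ``relative separation'' are all being measured with respect to the correct metric, which in Theorem \ref{lem:quasicircle} they are. If a direct citation does not apply verbatim, the fallback is to quote \cite[Corollary 3.5]{haissinskyhyper} (an abstract-metric-space version designed for precisely the visual-metric setting of group boundaries) or, at worst, to run the standard argument: the uniform quasicircle and relative-separation conditions let one replace each $\gamma_i$ by a genuine round circle after a controlled quasisymmetric deformation, and a limiting/compactness argument over the countably many peripheral circles produces the global quasisymmetry onto a circle carpet. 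In any case the work has been front-loaded into Theorem \ref{lem:quasicircle}, so the present theorem is a short deduction.

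\begin{proof}
Since $X$ is planar, fix a topological embedding $\phi \co X \to S^2$. By Proposition \ref{lemma:carpet}, $X$ is a carpet, hence so is $\phi(X)$, and Whyburn's Theorem \ref{thm:whyburnsierp} gives $\phi(X) = S^2 \setminus \bigcup_i D_i$ where the $D_i$ are countably many Jordan domains with pairwise disjoint closures and $\diam(D_i) \to 0$. As in the discussion preceding Theorem \ref{lem:quasicircle}, the boundaries $\partial D_i$, parametrized by embedded circles $\gamma_i$ and identified with subsets of $X$ via $\phi$, form the collection $\Gamma = \{\gamma_i\}$ of peripheral circles of the Loewner carpet $X$.

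By Theorem \ref{lem:quasicircle} there exist $\eta$ and $\delta > 0$ such that every $\gamma_i$ is an $\eta$-quasicircle and $\Delta(\gamma_i, \gamma_j) \ge \delta$ for all distinct $\gamma_i, \gamma_j \in \Gamma$; that is, $\Gamma$ consists of uniform quasicircles which are uniformly relatively separated. Moreover, $X$ is $Q$-Ahlfors regular, hence doubling with finite Assouad dimension.

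We may therefore apply the uniformization theorem for carpets with uniformly relatively separated uniform quasicircle boundaries (see \cite{bonkuniformization} and \cite[Corollary 3.5]{haissinskyhyper}): there is a quasisymmetric homeomorphism $g$ from $X$ onto a circle carpet $K \subset S^2$. Taking $f = g$, viewed as a map into $S^2$, yields a quasisymmetry onto its image $f(X) = K$, i.e.\ a quasisymmetric embedding $f \co X \to S^2$.
\end{proof}
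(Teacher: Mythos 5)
Your proposal is correct and follows essentially the same route as the paper: Theorem \ref{lem:quasicircle} supplies the uniformly relatively separated uniform quasicircles, and the embedding is then obtained by citing the carpet uniformization result, with \cite[Corollary 3.5]{haissinskyhyper} being the version that actually applies to an abstract metric carpet (the paper uses exactly this citation). The one point to tighten is the hypothesis list for that corollary: what is needed is not ``doubling and finite Assouad dimension'' but the ALLC property together with $Q$-Ahlfors regularity for $Q\in(1,2)$, the former holding for $X$ by \cite[Theorem 3.13]{heinonenkoskela} since $X$ is Loewner.
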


\begin{proof} By Theorem \ref{lem:quasicircle}, we know that the peripheral circles 
are uniformly relatively separated uniform quasicircles. Further,
 by 
\cite[Theorem 3.13]{heinonenkoskela} $X$ is ALLC. Now, 
\cite[Corollary 3.5]{haissinskyhyper} implies that any space which is ALLC and $Q$-Ahlfors regular with $Q \in (1,2)$, and whose collections of peripheral circles consists of uniformly separated quasicircles, admits a quasisymmetric embedding\footnote{A minor technical point in this argument should be noted. There is an additional porosity assumption used in \cite{haissinskyhyper}, which is only used for a lower bound for Ahlfors regularity, and is not actually needed for the uniformization result. In fact, this lower bound for Ahlfors regularity follows purely from topological considerations. In particular a topological manifold, which is ALLC always satisfies the lower Ahlfors regularity bound, see \cite{kinnerbergisoperim}.} to $S^2$. Since our space satisfies these assumptions, it also admits such an embedding.
\end{proof}

\begin{remark}
 The proof of \cite[Theorem 3.1]{haissinskyhyper} involves gluing in
 quasidisks to the peripheral circles in order to construct a $2$-Ahlfors regular and ALLC 
surface, which then can be quasisymmetrically embedded using  \cite{bonkkleinertwosphere}. 
\end{remark}

\begin{remark} In the discussion above and following theorem, assuming the Loewner condition
is not strictly necessary; ALLC would suffice.
\end{remark} 

Now, this can be combined with \cite{bonkuniformization} or \cite{dimitriossquare} 
to give the following.

\begin{corollary}\label{prop:uniformization}
If $Y$ is planar and  $Q$-Loewner,
then there exists a quasisymmetric embedding 
$f \co Y \to \C$ and a planar quasiconformal map $g \co \C \to \C$ so
 that $g \circ f (Y)$ is a circle carpet. Similarly, the image can be uniformized 
with a square carpet by another map $h \co \C \to \C$ so that $h \circ f(Y)$ is a square carpet.
\end{corollary}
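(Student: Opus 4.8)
\textbf{Proof proposal for Corollary \ref{prop:uniformization}.}
The plan is to deduce the statement directly from Theorem \ref{thm:confemb} together with the uniformization theorems of Bonk \cite{bonkuniformization} and Dimitrios \cite{dimitriossquare}. First I would invoke Theorem \ref{thm:confemb} to obtain a quasisymmetric embedding $f \co Y \to S^2$ whose image, via stereographic projection, we may regard as a planar subset $f(Y) \subset \C$. By Proposition \ref{lemma:carpet} the space $Y$ is a thin carpet, so $f(Y)$ is also a carpet, and by Theorem \ref{lem:quasicircle} applied to $Y$ (whose peripheral circle structure is carried by the quasisymmetry $f$ onto that of $f(Y)$), the peripheral circles of $f(Y)$ form a uniformly relatively separated family of uniform quasicircles. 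Moreover $f(Y)$ is $Q'$-Ahlfors regular for some $Q' < 2$ — here one must note that although $f$ need not preserve Hausdorff dimension, the image $f(Y)$ is still Ahlfors regular of some dimension strictly below $2$, because $f(Y)$ has empty interior in $S^2$ as a carpet and, being ALLC (the ALLC property is quasisymmetrically invariant), satisfies a lower Ahlfors-regularity bound by the topological argument cited in the footnote to Theorem \ref{thm:confemb} via \cite{kinnerbergisoperim}; the upper bound is automatic for carpets with empty interior. Strictly speaking one should choose $f$ so that $f(Y)$ is Ahlfors regular, which one can arrange by the construction underlying Theorem \ref{thm:confemb}, or simply observe that these hypotheses are exactly what \cite{bonkuniformization} requires.

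With $f(Y) \subset \C$ in hand, I would then apply the circle-carpet uniformization theorem of Bonk \cite{bonkuniformization}: any carpet in $S^2$ of Ahlfors regular dimension strictly less than $2$ whose peripheral circles are uniformly relatively separated uniform quasicircles is quasisymmetrically equivalent to a round circle carpet, and moreover the uniformizing map extends to a quasiconformal homeomorphism $g \co S^2 \to S^2$; restricting $g$ to a chart we obtain the desired planar quasiconformal $g \co \C \to \C$ with $g \circ f(Y)$ a circle carpet. For the square-carpet statement I would instead apply the analogous theorem of Dimitrios \cite{dimitriossquare} (or alternatively first pass to the circle carpet and then apply the known equivalence of the relevant hypotheses), producing a quasiconformal $h \co \C \to \C$ with $h \circ f(Y)$ a square carpet. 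Composing $g$ (respectively $h$) with $f$ gives the claimed maps.

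The only genuine subtlety — and the step I expect to need the most care — is verifying that the image $f(Y)$ inherits, with \emph{uniform} constants, the three properties that the uniformization theorems demand: Ahlfors regularity of dimension below $2$, uniform relative separation of peripheral circles, and the uniform quasicircle property. The latter two are quasisymmetric invariants, so they transfer from $Y$ (where they hold by Theorem \ref{lem:quasicircle}) to $f(Y)$ with constants depending only on the distortion function of $f$ and the constants in Theorem \ref{lem:quasicircle}. The Ahlfors regularity of $f(Y)$ does \emph{not} follow merely from quasisymmetry, but it is part of the conclusion of the construction behind Theorem \ref{thm:confemb} that $f$ may be taken so that $f(Y)$ is Ahlfors regular of some dimension in $(1,2)$; the lower regularity bound is then the topological point addressed in the footnote of Theorem \ref{thm:confemb}. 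Granting this, the corollary is immediate. All remaining assertions are direct citations, so no further computation is required.
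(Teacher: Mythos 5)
Your proposal matches the paper's proof, which is exactly the one-line combination of Theorem \ref{thm:confemb} with the uniformization results of \cite{bonkuniformization} and \cite{dimitriossquare}; the transfer of the peripheral-circle hypotheses from $Y$ (Theorem \ref{lem:quasicircle}) to $f(Y)$ by quasisymmetric invariance is the right justification. One small caveat: your parenthetical claim that upper Ahlfors regularity is ``automatic for carpets with empty interior'' is false in general, but this does not damage the argument since Bonk's circle uniformization needs only the uniformly relatively separated uniform quasicircle condition, which you have correctly verified.
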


\section{Admissible quotiented inverse systems yielding planar
Loewner spaces}
\label{sec:generalconstruction}

In this section we define admissible quotiented inverse systems and prove that their measured
Gromov-Hausdorff limits are doubling and satisfy Poincar\'e inequalities. 
Initially we do {\it not} address the issue of quasisymmetric embeddings, and do not assume any uniformity or planarity. The general case does not require these, and leads to other examples, while specializing the construction to enforce these conditions leads to the planar Loewner carpets in Theorem \ref{thm:existence}.

We will consider systems of spaces $X^i_j$, where each $X^i_j$ is a metric measure graph
 with each edge isometric to an edge of length $s_i$. For simplicity, set $s_0=1$ and
 $s_i \leq s_{i'}$ for $i \geq i'$. 

\begin{definition}
\label{d:monotone}
Graphs satisfying the above
conditions will be referred to as \emph{monotone}, 
if there are functions $h_j^i \co X^i_j \to \frac{2}{\pi}S^1$, 
or $h_j^i \co X^i_j \to \R$, which are isometries when restricted to edges.
\end{definition}

  We use 
$\frac{2}{\pi}S^1$ to denote the circle rescaled to have length $4$. This is convenient
 for most of our explicit examples, but not actually necessary.
 The spaces $\R$ and $\frac{2}{\pi}S^1$ will be considered as simplicial complexes 
with edge length $1$.  They also have a natural structure as directed graphs and 
so $X^i_j$ are also directed graphs; the orientation of an edge $e$ is the one inherited from $h_j^i(e)$.



Fix a sequence of integers $\{m_i\}_{i=1}^\infty$ with $3 \leq m_i \leq M$, and define $s_k = \prod_{i=1}^k \frac{1}{m_i}$. We assume $X^i_k$ to have edge lengths given by $s_i$ with measures $\mu^i_k$ when $i, k \in \N_0$ and $k \leq i$. 
\begin{definition}
\label{d:iq}
A system as above is called a (monotone) quotiented inverse system if there are mappings 
$$
\pi^i_k \co X^{i+1}_k \to X^i_k,\quad   q^i_k \co X^i_{k-1} \to X^i_k,
 \quad  
h_i^k \co X^i_k \to \frac{2}{\pi}S^1 (\text{or } \R),
$$
that  commute according to diagram in Figure \ref{fig:inversequotient}.
\end{definition} 


We denote the compositions of these maps by 
$q^i_{lk} = q^i_k \circ \cdots \circ q^i_{l+1} \co X^{i}_l \to X^i_k$ and $\pi_k^{ij} = \pi_k^j \circ \cdots \circ \pi_k^{i-1} \co X^{i}_k \to X^j_k$.
\vskip2mm

For a graph $G$, with edge length $s$, we will denote the graph obtained by subdividing each edge into $m$ pieces of length $s/m$ by $G^{/m}$. If $v \in G$ is a vertex, we denote by $\str^G_v$ the closed star 
 of a vertex in $G$. By definition, it consists of $v$ together
 with the edges adjacent to it. 
If the graph $G$ is evident from context we will simply denote 
$\str_v$.


\begin{definition}
\label{d:iqs}
An quotiented inverse system is called {\em admissible} if the following properties hold:

\begin{enumerate}
 \item Simplicial property: The maps $\pi^i_k$ and $q^i_k$ are simplicial, where $\pi^i_k$ is 
considered as a function onto the subdivided graph $(X^{i}_{k})^{/m_{i+1}}$.
\vskip1mm

 \item Connectivity: The graphs $X^i_k$ are connected.
\vskip1mm

 \item Bounded geometry: The graphs $X^i_k$ have $C$-bounded degree, and $\mu^i_k(e)\asymp_C \mu^i_k(f)$, when $e,f \in X^i_k$ are adjacent edges.
 \vskip1mm

\item Compatibility with the measure: $$(q^i_k)^*(\mu^i_{k-1})=\mu^i_k, \ \ \ \ \  (\pi_k^i)^*(\mu^{i+1}_k)=\mu^i_k.$$
\vskip1mm

 \item Compatibility with monotonicity: $$h^i_k \circ \pi_k^i = h^{i+1}_k, \ \ \ \ \ h^i_k \circ q^i_k = h^i_{k-1}.$$
\vskip1mm

 \item Diameter bound: $\diam((\pi_k^i)^{-1}(p)) \leq Cs_i.$
\vskip1mm

 \item Openness: The maps $\pi_k^i$ are open.
\vskip1mm

 \item Surjectivity: The maps $\pi_k^i$ and $q^i_k$ are surjective.
\vskip1mm

 \item Balancing condition: If $v \in (X^{i} _k)^{ / m_{i+1}}$ and $v' \in X^{i+1}_k$, then there is a constant $c_{v',v}$ such that $(\pi^i_k)^*(\mu^{i+1}_{k-1}|_{\str_{v'}})=c_{v',v}\mu^i_k|_{\str_v}$. The star at $v$ is in the subdivided graph $(X^{i} _k)^{ / m_{i+1}}$.
\vskip1mm

 \item Quotient condition: There are constants $\delta,C>0$ such that if $l \leq k \leq i$,
 $x \in X^i_k$ and $r \in (0,\delta s_l)$, then:
 \begin{equation} \label{eq:quotient}
   \diam(\big( q^i_{lk} \big)^{-1} (B(x,r))) \leq Cs_l\, .
 \end{equation}
\end{enumerate}
\end{definition}

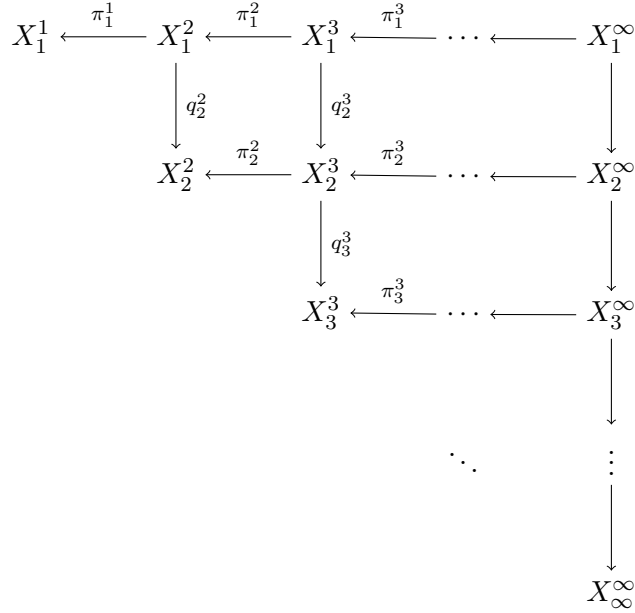
\begin{figure}[h!] 
  \begin{tikzpicture}[descr/.style={fill=white,inner sep=2.5pt}]
 \matrix (m) [matrix of math nodes, row sep=3em,
 column sep=3em]
 { X^1_1 & X^2_1 & X^3_1 & \cdots & X^\infty_1 \\
         & X^2_2 & X^3_2 & \cdots & X^\infty_2 \\
         &       & X^3_3 &  \cdots & X^\infty_3 \\
         &       &       & \ddots   & \vdots \\
         &	 	 & 	  &  	           & X^\infty_\infty \\};
 \path[->, font=\scriptsize]
 (m-1-2) edge node[above] {$\pi^1_1$} (m-1-1)
         edge node[right] {$q^2_2$} (m-2-2)
 (m-1-3) edge node[above] {$\pi^2_1$} (m-1-2)
         edge node[right] {$q^3_2$} (m-2-3)
 (m-1-4) edge node[above] {$\pi^3_1$} (m-1-3)
 (m-1-5) edge node[above] {} (m-1-4)
 (m-2-3) edge node[above] {$\pi^2_2$} (m-2-2)
 		edge node[right] {$q^3_3$} (m-3-3)
 (m-2-4) edge node[above] {$\pi^3_2$} (m-2-3)
  (m-3-4) edge node[above] {$\pi^3_3$} (m-3-3)
   (m-2-5) edge node[above] {} (m-2-4)
    (m-3-5) edge node[above] {} (m-3-4)
    (m-1-5) edge node[above] {} (m-2-5)
    (m-2-5) edge node[above] {} (m-3-5)
    (m-3-5) edge node[above] {} (m-4-5)
    (m-4-5) edge node[above] {} (m-5-5);
 \end{tikzpicture}
 \caption{Quotiented inverse system}\label{fig:inversequotient}
 \end{figure}

\begin{remark}
\label{r:conditions}
The conditions involving $\pi^i$ coincide with the Cheeger-Kleiner axioms for each 
of the rows being an inverse limit \cite{cheegerkleinerinverse} (see Figure \ref{fig:inversequotient}). 
This guarantees that the rows satisfy certain uniform Poincar\'e inequalities and 
doubling properties. The final assumption on $q$ is the crucial assumption that guarantees 
that the quotient maps preserve the Poincar\'e inequalities and doubling properties.
It is possible to verify the last assumption in many 
particular instances. For example, it suffices that the quotient maps $q$ only
identify vertices close enough to vertices. These will be discussed separately 
in Subsection \ref{subsec:quotientcondition}.
\end{remark} 

\subsection{Uniform doubling and Poincar\'e inequality}

Throughout this subsection,
 we will consider an admissible monotone quotiented inverse system $X^i_k$.
 
\begin{remark} The admissible monotone quotiented inverse systems are constrained only up to unit scale $s_0$. Thus, our analytic properties only hold up to that scale. Consider properties such as in Inequalities \eqref{eq:doubling} and \eqref{eq:PI}, which depend on a scale $r$ and location $x$. We adopt the convention that such a property is \textit{local} if it holds for all $r<r_0$ for some $r_0$ uniform in $x$ with uniform constants. If we wish to specify the scale, we will say the property holds locally up to scale $r_0$. The constants in the property are assumed here independent of the scale. By analogy with \cite{bjornbjornlocal}, a \emph{semi-local} property is one where the property holds for every $r$ but with constants that are bounded only when $r$ lies in some bounded subset of $(0,\infty)$. In \cite{bjornbjornlocal} the constants are also allowed to depend on the location $x$, but we can avoid this dependence since all of our spaces are quasiconvex and locally doubling. To give examples: hyperbolic $n$-space $\mathbb{H}^n$ is semi-locally doubling, but the space $\N$ equipped with the discrete metric $d(x,y) = 1$ if $x \neq y$ and the counting measure is only locally doubling up to scale $1/2$.
 \end{remark}

\begin{lemma} \label{lem:piatscale}
 Let $L \geq 1$ be arbitrary. The spaces $X^i_k$ are locally measure doubling 
and satisfy a local $(1,1)$-Poincar\'e inequality up to scale $Lm^{-k}$. 
\end{lemma}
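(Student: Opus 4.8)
The goal is to establish, for the graphs $X^i_k$ in an admissible monotone quotiented inverse system, a \emph{local} measure doubling property and a \emph{local} $(1,1)$-Poincar\'e inequality valid up to scale $Lm^{-k}$ (where $m = \min_i m_i \geq 3$, or some fixed comparison constant). The strategy is to bootstrap from the already-known properties of the rows: by Remark \ref{r:conditions}, each row $X^i_k \xleftarrow{\pi^i_k} X^{i+1}_k \xleftarrow{\pi^{i+1}_k} \cdots$ satisfies the Cheeger--Kleiner admissibility axioms, hence the inverse limit $X^\infty_k$ is doubling and satisfies a $(1,1)$-Poincar\'e inequality with \emph{uniform} constants (uniform in $k$, because the axioms only involve the fixed bounds $M$, $C$, $\delta$). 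The point is then to transfer these properties to the finite-stage graphs $X^i_k$ sitting at the head of each row, and to note that since $X^i_k$ has edge length $s_i \le s_k = m^{-k}$ and the row maps $\pi_k^{ij}$ have fiber diameter $\lesssim s_i$, at scales $r \gtrsim s_k$ the graph $X^i_k$ is $\epsilon$-close in the Gromov--Hausdorff sense to $X^\infty_k$ (with $\epsilon \lesssim s_i \le s_k$), so doubling/Poincar\'e pass down with only a controlled loss. The parameter $L$ enters because we want the statement uniformly for all scales up to a fixed multiple of $s_k$; one absorbs $L$ into the constants.

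\textbf{Key steps, in order.} First I would recall that the row axioms give, via \cite{cheegerkleinerinverse}, that for each fixed $k$ the inverse system $\{X^i_k\}_i$ has uniformly (in $i$) doubling measures $\mu^i_k$ and uniform $(1,1)$-Poincar\'e inequalities, at all scales $r \ge s_i$ (below scale $s_i$ the graph is just a $1$-complex with edges of length $s_i$, where both properties hold trivially because an edge is an interval). This is essentially the content of the Cheeger--Kleiner theorem applied to one row. Second, I would use the compatibility of measures $(\pi^i_k)^*\mu^{i+1}_k = \mu^i_k$ together with the diameter bound $\diam((\pi^i_k)^{-1}(p)) \le Cs_i$ and openness/surjectivity of $\pi^i_k$ to compare balls: for $x \in X^i_k$, $B(x,r)$ pulls back under $\pi^{ij}_k$ to something trapped between $B(\tilde x, r - Cs_i)$ and $B(\tilde x, r + Cs_i)$ for any lift $\tilde x$, and measures are preserved, so $\mu^i_k(B(x,r))$ is comparable to the inverse-limit measure of a ball of radius $\asymp r$ whenever $r \ge 2Cs_i$. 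Combined with the doubling of the inverse limit, this yields $\mu^i_k(B(x,2r)) \lesssim \mu^i_k(B(x,r))$ for $r \in [2Cs_i, \text{unit scale}]$; for $r < 2Cs_i \le 2Cs_k \asymp 2Cm^{-k}$ one uses bounded geometry (bounded degree, $\mu^i_k(e) \asymp \mu^i_k(f)$ for adjacent $e,f$) directly, since such a ball meets a bounded number of edges. Third, for the Poincar\'e inequality I would do the same ball-comparison: given a Lipschitz $f$ on $X^i_k$ and a ball $B = B(x,r)$ with $r \gtrsim s_i$, push the relevant data to the inverse limit $X^\infty_k$ (or rather work with a large-enough finite stage $X^{i'}_k$, $i' \gg i$, which already has the uniform PI by the row axioms) via $\pi^{i'i}_k$: because $\pi^{i'i}_k$ is a measure-preserving, fiber-diameter-$\lesssim s_i$ map, the averages $f_B$ and the gradient integrals $\int_{\tau B}\Lip[f]$ are comparable between the two graphs up to the error $s_i \le s_k$, which is absorbed into the radius $r$ since $r \gtrsim s_k$. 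Applying the uniform PI on $X^{i'}_k$ and transporting back gives the PI on $X^i_k$ at scales $r \in [C's_k, \text{const}]$. Finally, at the smallest scales $r \lesssim s_k$, the graph looks like a $1$-complex with edges of length $s_i \le s_k$ and bounded degree, so the $(1,1)$-PI holds for elementary reasons (an interval satisfies it, and bounded-degree graphs of bounded-ratio edge measures do too, up to the edge scale).

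\textbf{Main obstacle.} The delicate point is the last-mentioned error-absorption: one needs the fiber-diameter bound $Cs_i$ to be genuinely \emph{negligible} relative to the scale $r$ at which the Poincar\'e inequality is claimed, and the claim is that this holds all the way down to $r \asymp s_k$, not merely $r \asymp s_i$. This is fine when $i = k$ (so $s_i = s_k$), but for $i > k$ one has $s_i \ll s_k$ and the error $Cs_i$ is even smaller, so the estimate only improves --- the real subtlety is instead ensuring the \emph{comparison constants do not degrade} as $i \to \infty$ and as $k$ varies, which is exactly why one phrases everything in terms of the \emph{uniform} Cheeger--Kleiner constants for the rows (depending only on $M, C, \delta$) rather than constants that might a priori blow up with $i$ or $k$. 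A secondary point requiring care is the passage between ``ball in $X^i_k$'' and ``ball in $X^\infty_k$'': one must verify that $\pi^{\infty i}_k := \varprojlim$ composed appropriately is indeed a measure-preserving map with uniformly bounded fibers, which follows by taking limits of the finite-stage diameter bounds using that $\sum_{j \ge i} s_j$ converges geometrically (as $m_j \ge 3$). Once these uniformities are pinned down, the doubling and Poincar\'e inequalities for $X^i_k$ up to scale $Lm^{-k}$ follow by the standard ``GH-close spaces inherit PI and doubling'' principle (see the references on measured GH convergence cited in the paper), with $L$ merely rescaling the constants.
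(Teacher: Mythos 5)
Your overall strategy --- invoke the Cheeger--Kleiner theorem for each row of the diagram and then compare the finite stages $X^i_k$ to the inverse limit via the measure-preserving, fiber-diameter-$\lesssim s_i$ projections --- is in the same spirit as the paper's argument, which simply cites \cite[Theorem 1.1]{cheegerkleinerinverse} to get doubling and the $(1,1)$-Poincar\'e inequality for all the $X^i_k$ up to a scale \emph{comparable to} $s_k$. (Your ball-transfer step is in fact redundant: the Cheeger--Kleiner theorem already applies to every space in the row, not just the limit, so there is nothing to transfer back down; but that is only an inefficiency, not an error.)

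The genuine gap is at the final step, the passage from scale $\asymp s_k$ to scale $Ls_k$ for arbitrary $L \geq 1$. You assert in your first step that the properties hold ``at all scales $r \ge s_i$'' and at the end that ``$L$ merely rescal[es] the constants,'' but neither claim is justified, and the first is false as stated: bounded degree and the row axioms only give doubling and PI for radii up to a fixed multiple of the edge length $s_k$ of the base graph $X^k_k$, and a graph of bounded degree can have, say, exponential volume growth at scales much larger than its edge length, so there is no ``unit scale'' range for free. A local doubling/PI property at scale $r_0$ does not extend to scale $Lr_0$ by rescaling constants --- one needs a chaining argument using connectivity (equivalently quasiconvexity, which itself follows from the local PI) to cover a ball of radius $Lr_0$ by controllably many balls of radius $r_0$ and to glue the Poincar\'e inequalities along chains. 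This is exactly the local-to-semilocal machinery of \cite{bjornbjornlocal} that the paper invokes at this point, and it is what produces the $L$-dependence of the constants in the statement. Without it, your argument only proves the lemma for $L$ below some fixed threshold.
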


\begin{proof}
 The space $X^k_k$ satisfies these properties since
 it has bounded degree and therefore bounded geometry at scales comparable to $s_k$. 
 The rows of a quotiented inverse system satisfy the inverse-limit axioms of
 \cite{cheegerkleinerinverse}.
 Thus, by \cite[Theorem 1.1]{cheegerkleinerinverse}, the spaces $X^i_k$ satisfy  
 a Poincar\'e inequality and measure doubling
up to a scale comparable to $s_k$.  In view of 
the connectivity and bounded geometry of $X^i_k$,
this can be strengthened to hold up to any scale $Ls_k$ by appealing to  the results from \cite{bjornbjornlocal}.  
 The constants will depend on $L$ and the constants defining $X^i_k$.
\end{proof}

The quotient condition leads to doubling bounds.

\begin{lemma} \label{lem:doubling}
The spaces $X^i_k$ in an admissible monotone quotiented inverse system are doubling up to scale $r_0=1$.
\end{lemma}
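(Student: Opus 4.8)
\textbf{Proof proposal for Lemma \ref{lem:doubling}.}

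The plan is to upgrade the ``local up to scale $Ls_k$'' doubling of Lemma \ref{lem:piatscale} to doubling ``up to scale $1$'' for $X^i_k$ by bridging the scale range $[s_k, 1]$ using the quotient condition (condition (10) of Definition \ref{d:iqs}) together with the fact that the coarser graphs $X^i_l$ for $l < k$ are already controlled at the relevant scales. First I would set up the geometric picture: fix $l$ with $0 \le l \le k$ and recall that $q^i_{lk} \co X^i_l \to X^i_k$ is a surjective simplicial map which is measure-preserving ($(q^i_{lk})^*\mu^i_l = \mu^i_k$ by condition (4)), and which by condition (10) has the property that preimages of balls $B(x,r)$ with $r < \delta s_l$ have diameter $\le C s_l$. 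The point is that such a map is coarsely bi-Lipschitz at scales $\gtrsim s_l$ in a measure-preserving way: for $x \in X^i_k$ and $\delta s_l/C' \le r \le 1$ (for a suitable constant), I would show $\mu^i_k(B(x,r)) \asymp \mu^i_l(B(\tilde x, C'' r))$ for any lift $\tilde x$, by using that $q^i_{lk}$ is surjective and $C$-coarsely-Lipschitz downstairs while having fibers and ball-preimages of controlled diameter $\le Cs_l$ upstairs, so $q^i_{lk}$ sandwiches balls: $q^i_{lk}(B(\tilde x, r)) \subset B(x, Cr)$ and $(q^i_{lk})^{-1}(B(x,r)) \subset B(\tilde x, Cr + Cs_l)$, and then using $(q^i_{lk})_*\mu^i_l = \mu^i_k$ to transfer measures.

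Next I would run the doubling estimate by induction on $k$ (or directly by a dyadic argument on the scale). For $r \le L s_k$ with, say, $L = 2C/\delta$, Lemma \ref{lem:piatscale} already gives $\mu^i_k(B(x,2r)) \lesssim \mu^i_k(B(x,r))$ with uniform constants. For $s_k \le r \le 1$, choose $l = l(r)$ to be the largest index with $s_l \ge c r$ (for a fixed small $c$ depending on $C,\delta$); note $l \le k$ and $s_l \asymp r$ since consecutive $s$'s differ by a factor $m_{l+1} \le M$. Then both $r$ and $2r$ lie in the range where the comparison of the previous paragraph applies between $X^i_k$ and $X^i_l$, so
$$
\mu^i_k(B(x,2r)) \asymp \mu^i_l(B(\tilde x, C'' \cdot 2r)) \asymp \mu^i_l(B(\tilde x, 2 C'' r)),
$$
and now $2C'' r \lesssim s_l$, so we can apply the \emph{local} doubling of $X^i_l$ (Lemma \ref{lem:piatscale} at level $l$, which holds up to scale $L s_l \gtrsim r$) a bounded number of times to bound this by $\mu^i_l(B(\tilde x, C'' r)) \asymp \mu^i_k(B(x,r))$. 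All constants are uniform: they depend only on $M$, $C$, $\delta$ and the bounded-geometry constants, not on $i,k,l$ or $r$, because at each stage we invoke results with uniform constants and we only ever apply the local doubling inequality a bounded (depending on $M,C,\delta$) number of times.

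The main obstacle I expect is making the ``coarse bi-Lipschitz, measure-preserving'' comparison between $X^i_k$ and $X^i_l$ via $q^i_{lk}$ fully rigorous and uniform — specifically, controlling the measure of a ball in $X^i_k$ by the measure of a comparable ball in $X^i_l$ in \emph{both} directions. One direction (an upper bound for $\mu^i_k(B(x,r))$ in terms of $\mu^i_l$ of a slightly larger ball) is easy since $(q^i_{lk})^{-1}(B(x,r))$ is contained in a ball of radius $\lesssim r + s_l \asymp r$ upstairs and $q^i_{lk}$ pushes $\mu^i_l$ to $\mu^i_k$. The other direction requires that $q^i_{lk}(B(\tilde x, r'))$ genuinely covers $B(x, c' r')$ for an appropriate $c'$; here I would use surjectivity of $q^i_{lk}$ together with the fact that it is simplicial with bounded-degree source, hence $C$-Lipschitz at scales $\gtrsim s_l$ in the sense that the image of an $r'$-ball contains an $(r'/C)$-ball — this is essentially the statement that a surjective map with fibers and ball-preimages of diameter $\le Cs_l$ between connected bounded-geometry graphs is a quasi-isometry at scales $\gtrsim s_l$. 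Assembling this quasi-isometry statement with uniform constants, independent of all indices, is the technical heart; once it is in hand, the doubling bound up to scale $1$ follows by the dyadic/inductive bookkeeping sketched above.
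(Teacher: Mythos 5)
Your proposal follows the same route as the paper: transfer the ball $B(x,r)\subset X^i_k$ to the coarser level $X^i_l$ with $s_l\asymp r$ via the measure-preserving quotient $q^i_{lk}$, sandwich the relevant sets between balls of radius $\asymp s_l$ upstairs, and finish with the local doubling of Lemma \ref{lem:piatscale} at level $l$. The outline is correct, but the step you single out as the ``technical heart'' --- proving that $q^i_{lk}(B(\tilde x,r'))$ covers a ball $B(x,c'r')$, i.e.\ a quasi-isometry statement --- is not needed, and as stated it would not even deliver the inequality you want: covering of $B(x,c'r')$ by the image of $B(\tilde x,r')$ says nothing about $\mu^i_l(B(\tilde x,r'))$ versus $\mu^i_k(B(x,c'r'))$, since the full preimage $(q^i_{lk})^{-1}(B(x,c'r'))$ may be much larger than $B(\tilde x,r')$ and the measure identity is $\mu^i_k(A)=\mu^i_l((q^i_{lk})^{-1}(A))$. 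The correct and much simpler observation, which is what the paper uses, is that $q^i_{lk}$ is $1$-Lipschitz (it is a quotient map for the path metric), so $B(\tilde x,\rho)\subset (q^i_{lk})^{-1}(B(x,\rho))$ and hence $\mu^i_k(B(x,r))\geq \mu^i_l(B(\tilde x,\delta s_{l+1}))$ immediately; combined with the quotient-condition containment $(q^i_{lk})^{-1}(B(x,2r))\subset B(\tilde x,2Cs_l)$ and local doubling at level $l$, the whole proof is a three-line computation, with no induction on $k$ and no two-sided ball comparison. One further small point: your choice of $l$ requires $2r\leq\delta s_l\leq\delta$, so this argument only reaches scale $\delta/2$; the paper covers the remaining range $[\delta/2,1]$ by the local-to-semilocal upgrade of \cite{bjornbjornlocal}, which you should also invoke (or replace by a direct connectivity/bounded-geometry argument).
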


\begin{proof}
 Fix $\delta,C$ as in Condition (9) of Definition \ref{d:iqs}
 and equation \eqref{eq:quotient}. Let $x \in X^i_k$, and $r \in (0,\delta/2)$ be fixed.
 We will show doubling up to scale $\delta/2$, from which we can apply the
 local-to-semi-local techniques from \cite{bjornbjornlocal} to obtain a doubling constant 
at unit scale.
 By Lemma \ref{lem:piatscale} it follows that $X^i_k$ are 
$D_0$-doubling up to scale $2Cs_k$ for some $D_0$. If $r<C s_k$, then the doubling bound 
for $B(x,r)$ follows from this. Thus, assume $r>Cs_k$. Let $l<k$ be such
 that $\delta s_{l+1} < 2r \leq \delta s_l$ and pick a $z \in (q^i_{lk})^{-1}(x)$.
 We have from the quotient condition 
$$
B(z,\delta s_{l+1}) \subset (q^i_{lk})^{-1}(B(x,r)) \subset (q^i_{lk})^{-1}(B(x,2r)) \subset B(z,2Cs_l)\, .
$$ The desired 
doubling then follows  by a direct computation.
 Note that, since $s_{l}/s_{l+1} \leq M$, we have $s_l \leq \frac{M}{\delta} r$.
 \begin{align*}
      \frac{\mu^i_k(B(x,2r))}{\mu^i_k(B(x,r))} & = \frac{\mu^i_l((q^i_{lk})^{-1}(B(x,2r)))}{\mu^i_l((q^i_{lk})^{-1}B(x,r))}\\
      & \leq \frac{\mu^i_l(B(z,2Cs_l))}{\mu^i_l(B(z,\delta s_{l+1}))} \\
      & \leq D_0^{\log_2(CM/\delta) + 2}
 \end{align*}
\end{proof}

Next, using similar estimates we establish the Poincar\'e inequality.

\begin{proposition} \label{thm:pidoublnig}
 The spaces $X^k_i$ satisfy a local $(1,1)$-Poincar\'e inequality and doubling at unit scale. 
\end{proposition}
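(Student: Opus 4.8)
The plan is to bootstrap from the two lemmas just proved: Lemma \ref{lem:piatscale} gives a local $(1,1)$-Poincar\'e inequality for $X^i_k$ up to scale $Ls_k$ (any $L$), and Lemma \ref{lem:doubling} already gives doubling at unit scale. So the only thing to do is to push the Poincar\'e inequality from the ``fine'' scale $\asymp s_k$ up to unit scale, using the quotient maps $q^i_{lk}$ exactly as in the doubling argument. The key structural input is the quotient condition (9): for $l\le k\le i$, $x\in X^i_k$ and $r\in(0,\delta s_l)$ one has $\diam\big((q^i_{lk})^{-1}(B(x,r))\big)\le Cs_l$, together with the measure compatibility $(q^i_k)^*(\mu^i_{k-1})=\mu^i_k$, which says $q^i_{lk}$ pushes $\mu^i_l$ to $\mu^i_k$ measure-preservingly, and the fact (balancing condition / simplicial property) that the fibers of $q^i_{lk}$ are ``spread out'' in a controlled way.

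First I would fix $x\in X^i_k$ and a ball $B=B(x,r)$ with $r<\delta/2$ (say), and split into two cases. If $r\le Cs_k$, the inequality is immediate from Lemma \ref{lem:piatscale}. If $r>Cs_k$, choose $l<k$ with $\delta s_{l+1}<2\tau r\le \delta s_l$ (where $\tau$ is the dilation constant from Lemma \ref{lem:piatscale}), pick $z\in(q^i_{lk})^{-1}(x)$, and observe, as in the proof of Lemma \ref{lem:doubling}, that
\[
B(z,\delta s_{l+1})\subset (q^i_{lk})^{-1}(B(x,r))\subset (q^i_{lk})^{-1}(\tau B)\subset B(z,2Cs_l).
\]
Now given a Lipschitz $f$ on $X^i_k$, pull it back to $\tilde f:=f\circ q^i_{lk}$ on $X^i_l$; because $q^i_{lk}$ is simplicial and the edge lengths on $X^i_l$ are at most those on $X^i_k$, $\tilde f$ is Lipschitz with $\Lip[\tilde f]\le C'\,(\Lip[f]\circ q^i_{lk})$ (the constant from bounded geometry). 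Apply the Poincar\'e inequality of Lemma \ref{lem:piatscale} on $X^i_l$ to $\tilde f$ on the ball $B(z,2Cs_l)$ — which is legitimate since $2Cs_l$ is comparable to $s_l$, hence within the allowed scale for $X^i_l$ after invoking \cite{bjornbjornlocal} with $L$ large enough depending only on $C,\delta,M$. This gives
\[
\vint_{B(z,2Cs_l)}|\tilde f-\tilde f_{B(z,2Cs_l)}|\,d\mu^i_l \;\le\; C_{PI}\,(2Cs_l)\Big(\vint_{B(z,2C\tau s_l)}\Lip[\tilde f]\,d\mu^i_l\Big).
\]

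To turn this back into an estimate on $B(x,r)\subset X^i_k$ I would use the measure-preservation of $q^i_{lk}$: $\mu^i_k(B(x,r))=\mu^i_l((q^i_{lk})^{-1}B(x,r))$ and similarly for the dilated balls, together with the two-sided ball containments above; this converts averages over $B(z,\cdot)$ into averages over $(q^i_{lk})^{-1}(B(x,\cdot))$, which sandwich averages over $B(x,r)$ and $\tau B$ up to the uniform doubling constant from Lemma \ref{lem:doubling}. A standard telescoping/maximal-function argument (or just the elementary ``comparable balls'' trick since the ratio of radii $2Cs_l/r$ is bounded by $2CM/\delta$) then replaces $\tilde f_{B(z,2Cs_l)}$ by $f_{B(x,r)}$ at the cost of a bounded factor, and replaces $s_l$ by $r$ using $s_l\le (M/\delta)r$. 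Finally $\Lip[\tilde f]=\Lip[f]\circ q^i_{lk}$ up to the bounded-geometry constant, and pushing the measure forward once more turns $\vint \Lip[\tilde f]\,d\mu^i_l$ into a constant times $\vint_{\tau' B}\Lip[f]\,d\mu^i_k$ over a fixed dilate $\tau' B$ of $B$ in $X^i_k$. This yields the $(1,1)$-Poincar\'e inequality for $X^i_k$ up to scale $\delta/2$, and then \cite{bjornbjornlocal} (using that the spaces are quasiconvex — which follows from connectivity plus bounded geometry of the graphs — and locally doubling) upgrades ``up to scale $\delta/2$'' to ``up to scale $1$'', i.e.\ local at unit scale. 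Combined with Lemma \ref{lem:doubling} this is exactly the statement.

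The main obstacle I expect is the bookkeeping in the last step — verifying that the chain of containments and measure-preservations genuinely lets one compare $\vint_{B(x,r)}|f-f_{B(x,r)}|$ with $\vint_{B(z,2Cs_l)}|\tilde f-\tilde f_{B(z,2Cs_l)}|$ without the ``gradient side'' ball $\tau B$ escaping the range of admissible scales, and checking that all constants remain uniform in $i,k,l$. There is also the mild point that Lipschitz functions on $X^i_k$ do not obviously pull back to Lipschitz functions on $X^i_l$ under a simplicial map unless one is careful about how edges subdivide; but the simplicial property together with $s_l\le s_k$ and bounded degree makes this routine. None of these steps is deep — the real content was already packaged into the quotient condition (9) and Lemma \ref{lem:doubling} — so the proof is essentially a repetition of the doubling argument with ``measure of a ball'' replaced by ``oscillation of a function over a ball''.
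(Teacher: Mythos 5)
Your proposal is correct and follows essentially the same route as the paper's proof: reduce to the case $r>Cs_k$, lift the ball via $q^i_{lk}$ to a set sandwiched between $B(z,\delta s_{l+1})$ and $B(z,Cs_l)$, apply the fine-scale Poincar\'e inequality of Lemma \ref{lem:piatscale} upstairs, and transfer back using measure preservation, doubling, and $\Lip[f\circ q^i_{lk}]=\Lip[f]\circ q^i_{lk}$ (the paper notes this is an a.e.\ equality since $q$ is a local isometry off a discrete set, so your bounded-geometry constant is not even needed). The only cosmetic difference is your explicit appeal to \cite{bjornbjornlocal} to pass from scale $\delta/2$ to unit scale, which the paper handles the same way in the doubling lemma.
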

\begin{proof} Let $C,\delta$ be the constants from the quotient condition. The doubling was shown already in Lemma \ref{lem:doubling}. It suffices to prove the Poincar\'e inequality. We show that the Poincar\'e inequality holds up scale $r<\delta/2$. Fix $x \in X^i_k$, $r \in (0,\delta/2)$ and a Lipschitz function $f \co X^i_k \to \R$. By Lemma \ref{lem:piatscale} we have a Poincar\'e inequality on $X^i_j$ up to scale $C s_j$ for all $i \leq j$. Suppose $C_{PI}$ is the constant of this Poincar\'e inequality. We can thus assume $r>Cs_k$. Then, choose $l<k$ so that $\delta s_{l+1} < r \leq \delta s_{l}.$ Also, since $s_{l}/s_{l+1} \leq M$, we have $s_l \leq \frac{M}{\delta} r$. Denote $\Omega \defeq (q^i_{lk})^{-1}(B(x,r)) \subset X^i_l$ and $z \in (q^i_{lk})^{-1}(x)$. We know that
$$B(z,\delta s_{l+1}) \subset \Omega \subset B(z, Cs_{l}),$$

The Poincar\'e inequality then follows from the following computation.
\begin{align*}
\avint_{B(x,r)} |f-f_{B(x,r)}| ~d\mu^i_k & ~\leq~  2\vint_{B(x,r)} |f-(f\circ q^i_{kl})_{B(z, Cs_{l})}|~d\mu^i_k \\
                                    & =  2\vint_{\Omega} |f \circ q^i_{kl} -(f\circ q^i_{kl})_{B(z, Cs_{l})}|~d\mu^i_l \\
                                    & \leq  2D^{\log_2(CM/\delta)+1}\vint_{B(z,Cs_l)} |f-(f\circ q^i_{kl})_{B(z, Cs_{l})}|~d\mu^i_l \\
                                    & \leq  2D^{\log_2(CM/\delta)+1}  C\frac{M}{\delta} r C_{PI} \vint_{B(z, Cs_l)} \Lip[f \circ q^i_{lk}]~d\mu^i_l \\
                                    & \leq   2D^{2\log_2(CM/\delta)+2}  C\frac{M}{\delta} C_{PI} r \vint_{B(x, CM/\delta r)} \Lip[f]~d\mu^i_l.
\end{align*}
In the above, on the third line we used the Poincar\'e inequality up to scale  $Cs_l$. 
On the last line we used the almost everywhere equality 
$\Lip[f \circ q^i_{lk}] = \Lip[f]\circ q^i_{lk}$, since $q$ is a local isometry 
everywhere except on a discrete set of points. We also used the doubling property,
 a change of variables with $q^{i}_{lk}$, and 
 $B(x,r) \subset q^{i}_{lk}(B(x,Cs_l)) \subset B(x,CM/\delta r)$.
\end{proof}

\begin{remark} By \cite{bjornbjornlocal}, it follows from Proposition \ref{thm:pidoublnig} that the
 Poincar\'e inequality and doubling property also hold semi-locally. Also, combined with the Gromov-Hausdorff convergence in the following sections, we obtain that the limit spaces also satisfy Poincar\'e inequalities (see e.g. \cite{ChDiff99, keith2003modulus}).%
\end{remark}

\subsection{Approximation Lemmas} \label{subsec:approx}

We will need the following lemmas concerning the distance distortion between the spaces $X^k_l$ by the maps $q$ and $\pi$. Throughout, we will assume that $X_k^l$ is an admissible quotiented inverse system where each $X^k_i$ is compact.

We can first define $X_n^\infty$ as the inverse limits of the 
rows of the diagram. That is, take $$X_n^\infty = \left\{(x_n^k)_{k = n}^\infty \ |\  x_n^k \in X_n^k,\  \pi^{ij}_n(x_n^i) = x_n^j\right\},$$
and define a distance by
$$
d_{n,\infty}\left((x_n^k)_{k = n}^\infty,(y_n^k)_{k = n}^\infty \right) 
= \lim_{k \to \infty} d(x_n^k, y_n^k)\, .
$$
Since the sequence is increasing and bounded, the limit exists.

The maps $q^i_{kl} \co X_k^i \to X_{l}^i$ induce natural maps $q^\infty_{kl} \co X_k^\infty \to X_{l}^\infty$ since they induce
maps of the tails of the inverse limit sequences. Then, define $X_\infty^\infty$ as the
direct limit of the sequence $X_n^\infty$. 

\begin{remark}[Direct limits]
\label{r:dl}
Recall, that spaces $Y_l$ form a {\it directed system of metric
 spaces} if there are maps $q_{kl} \co Y_k \to Y_l$ for 
 any $k<l$ which are $1$-Lipschitz and surjective, and 
 such that $q_{kl} \circ q_{ln} = q_{kn}$. Then, one can 
 define the {\it direct limit} as  follows:
 $$
Y_\infty := \Big\{(y_k)_{k = 1}^\infty \ |\  y_k \in Y_k, \ q_{kl}(y_k)=y_l\Big\}\,,
$$
$$
d_{\infty}\left((a_k)_{k = 1}^\infty,(b_l)_{l = 1}^\infty \right) = \lim_{n \to \infty} d(a_n, b_n)\, .
$$
There are natural maps
 $q_{k\infty} \co Y_k \to Y_\infty$. The system
  is called a {\it measured direct limit}, if in addition, every space 
 $Y_k$ possesses a measure $\mu_l$ such that $q_{kl}^*(\mu_k) = \mu_l$. The induced 
map $q_{l\infty} \co Y_l \to Y_\infty$ defines a measure
 on $Y_\infty$ by $q_{l\infty}^*(\mu_l) = \mu_\infty$, which is independent of the choice of $l$. The 
 measure $\mu_\infty$ is called the \textit{direct limit measure}.
\end{remark}

In our cases, the spaces $X_l^\infty$ form a directed sequence, 
so they define a direct limit space $X_\infty^\infty$. The induced quotient 
maps $q_{l\infty}^\infty \co X_l^\infty \to X_\infty^\infty$ may decrease distances, 
but the quantity can be controlled using the following lemma. We note that it is an 
easy exercise to verify that the maps $q_{l\infty}^\infty$ satisfy the same quotient
 condition as in \eqref{eq:quotient} with $X^i_k$ replaced by $X^\infty_k$.

\begin{lemma}\label{lem:qdist} Assume that the spaces $X_k^k$ are compact. 
Assume also that $x,y \in X_\infty^\infty$ with $d(x,y) = r \leq 1$, and 
 that $l\geq 0$ is such that $\delta s_{l+1} \leq r$. Then for
 any lifts $a,b\in X^\infty_l$ such that  $q_{l\infty}^{\infty}(a)=x, q_{l\infty}^{\infty}(b)=y$, 
we have:
$$
d(x,y) \asymp d(a,b)\, .
$$
\end{lemma}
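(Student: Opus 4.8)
The plan is to bound $d(x,y)$ above and below by $d(a,b)$ using only the quotient condition \eqref{eq:quotient} (adapted to the row-inverse-limits $X^\infty_k$, as remarked just before the statement) together with the fact that the quotient maps $q^\infty_{l\infty}$ are $1$-Lipschitz and surjective. The upper bound $d(x,y) \lesssim d(a,b)$ is immediate: $q^\infty_{l\infty}$ is $1$-Lipschitz, so $d(x,y) = d(q^\infty_{l\infty}(a), q^\infty_{l\infty}(b)) \leq d(a,b)$. So the content is entirely in the lower bound $d(a,b) \lesssim d(x,y)$, i.e.\ in showing the quotient map does not collapse distances by more than a bounded factor \emph{at the scale $r$ we are looking at}.

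For the lower bound, the first step is to pass to a level $k \geq l$ where the geometry of $X^\infty_k$ is still essentially that of its defining graph at scale $\sim s_k$ and where $a,b$ are represented by genuine graph vertices; more precisely, I would work with the projections $a_k = \pi^\infty_{l}$-images... rather, since $a,b \in X^\infty_l$ are themselves elements of an inverse limit, I would choose $k$ large and look at the coordinates $a^k, b^k \in X^l_k$ (with a harmless abuse, since the distances on $X^\infty_l$ are limits of distances on $X^l_k$). Then I would take a short curve $\gamma$ in $X^\infty_\infty$ from $x$ to $y$ realizing $d(x,y)$ up to a factor $2$ — using quasiconvexity/path-connectedness of $X^\infty_\infty$, which follows from the connectivity axiom and the local $(1,1)$-Poincar\'e inequality of Proposition \ref{thm:pidoublnig}. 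Cover $\gamma$ by boundedly many balls $B(x_j, r)$ with $x_0 = x$, $x_N = y$, $N \lesssim 1$ (since $\len(\gamma) \asymp r$), consecutive balls intersecting. For each ball, $\big(q^\infty_{l\infty}\big)^{-1}(B(x_j,r))$ has diameter $\leq C s_l \leq \frac{CM}{\delta} r$ by the quotient condition, using $\delta s_{l+1} \leq r$. Now lift: starting from $a$ in the preimage of $B(x_0,r)$, and using surjectivity of $q^\infty_{l\infty}$ on the overlaps, pick a point $z_j$ in $\big(q^\infty_{l\infty}\big)^{-1}(B(x_j,r))$ for each $j$ with $z_0 = a$, $z_N = b$, chosen so that $z_j$ and $z_{j+1}$ lie in the preimage of a common point of $B(x_j,r)\cap B(x_{j+1},r)$ — hence in a common preimage-set of diameter $\leq C s_l$. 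Chaining, $d(a,b) \leq \sum_j d(z_j, z_{j+1}) \leq 2N C s_l \lesssim r = d(x,y)$, where I used that each $z_j, z_{j+1}$ lie in a set of diameter $\le C s_l$, and also that $z_0=a$ lies in the preimage-set of $B(x_0,r)=B(x,r)$ of diameter $\le Cs_l$, so switching between the chosen lift $a$ and the starting point of the chain costs only $Cs_l$; similarly at $z_N=b$. This gives $d(a,b) \lesssim d(x,y)$, completing the two-sided bound.

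The one subtlety — and I expect it to be the main technical point rather than a genuine obstacle — is the passage from the finite-level graphs $X^l_k$ to the inverse limit $X^\infty_l$ and the direct limit $X^\infty_\infty$: one must check that the quotient condition \eqref{eq:quotient}, stated for the maps $q^i_{lk}$ between finite graphs, really does pass to the limit maps $q^\infty_{l\infty}$ with the \emph{same} constants $\delta, C$. This is the "easy exercise" flagged in the remark preceding the lemma: diameters of preimages are limits (more precisely, controlled by a diagonal argument / the definition of the limiting metrics as increasing limits), and surjectivity passes to inverse and direct limits of compact spaces. Once that bookkeeping is in place, everything reduces to the covering-and-lifting argument above, which is the same mechanism already used in the proofs of Lemma \ref{lem:doubling} and Proposition \ref{thm:pidoublnig}. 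So I would structure the write-up as: (1) reduce to the lower bound; (2) state the limiting form of the quotient condition and of surjectivity/compactness, citing the preceding remark; (3) produce the short chain of overlapping balls along a near-geodesic in $X^\infty_\infty$; (4) lift the chain and sum diameters of preimage sets.
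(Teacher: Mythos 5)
Your core mechanism is the right one, and it is essentially the paper's: since $d(x,y)=r$, both $x$ and $y$ already lie in a single ball $B(x,r')$ with $r<r'<\delta s_l$, so the limit form of the quotient condition \eqref{eq:quotient} (the ``easy exercise'' flagged in the remark before the lemma) gives $\diam\big((q^{\infty}_{l\infty})^{-1}(B(x,r'))\big)\leq Cs_l\leq CMs_{l+1}\leq \frac{CM}{\delta}r$ in one step, while the reverse inequality $d(x,y)\leq d(a,b)$ is just the $1$-Lipschitz property of the quotient map. Your near-geodesic, covering, and chain-lifting machinery is therefore unnecessary here (and it drags in quasiconvexity of $X_\infty^\infty$, which the paper's one-ball argument never needs); it is not wrong, just overkill, since the chain has length $N=1$.

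There is, however, one genuine gap. The quotient condition at level $l$ applies only for radii in $(0,\delta s_l)$, and the lemma's hypothesis $\delta s_{l+1}\leq r$ does \emph{not} imply $r<\delta s_l$: for large $l$ the radius $r$ can vastly exceed $\delta s_l$, and then the bound $\diam\big((q^{\infty}_{l\infty})^{-1}(B(x_j,r))\big)\leq Cs_l$ is simply not what condition (10) provides. In your write-up the inequality $\delta s_{l+1}\leq r$ is used only to convert $Cs_l$ into $\lesssim r$; the actual hypothesis of \eqref{eq:quotient} is never verified. The fix is exactly the first step of the paper's proof: reduce to the \emph{minimal} $l$ with $\delta s_{l+1}\leq r$, for which $r<\delta s_l$ holds by minimality (the boundary case $l=0$ with $r\geq\delta$ being immediate from $\diam(X^\infty_0)<\infty$), and then note that lifts at any larger admissible level $l'$ are images under the surjective $1$-Lipschitz map $q^\infty_{ll'}$ of lifts at the minimal level, so their mutual distance is squeezed between $r$ and the bound already obtained. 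With that reduction inserted, your argument closes.
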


\begin{proof}  If $d(x,y) \geq \delta$, the assertion is clear since then $l=0$, 
and $C \geq \diam(X^\infty_0) \geq d(a,b) \geq r$ for some $C$. 
Thus we can assume $d(x,y)<\delta$. It then suffices to prove the assertion
 for the smallest $l$ such that $\delta s_{l+1} \leq r$.  
In this case, we also have $r < \delta s_l$. 
By the quotient condition $d(a,b) \leq Cs_l \leq CM s_{l+1} \leq \frac{CM}{\delta} r$,
 which gives the desired conclusion.
\end{proof}

Recall  that by \cite[Estimate 2.14 and Section 2.5]{cheegerkleinerinverse},
if $a,b \in X^\infty_l$, then
\begin{equation}\label{eq:distest}
d(a,b) - L s_l\leq d(\pi^{\infty l}_l(a), \pi^{\infty l}_l(b)) \leq d(a,b).
\end{equation}

\begin{lemma}\label{lem:approx} If $x,y \in X_\infty^\infty$ are two points with $d(x,y) = r \leq 1$. 
Then if $\max\{\delta s_{k+1},2Ls_k\} \leq r$ and $x_k, y_k \in X_k^k$, and $x'_k, y_k' \in X_k^\infty$ are any points
 such that $q_{k\infty}^{\infty}(x_k')=x, q_{k\infty}^{\infty}(y_k')=y$, and
 $\pi^{\infty k}_k(x_k')=x_k, \pi^{\infty k}_k(y_k') = y_k$, then
$$r \asymp d(x_k,y_k) \asymp d(x_k', y_k').$$ 
\end{lemma}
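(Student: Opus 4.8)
The plan is to chain together the two estimates already at our disposal: Lemma \ref{lem:qdist}, which compares distances in $X_\infty^\infty$ with distances in $X_k^\infty$ under the quotient maps $q_{k\infty}^\infty$, and the inverse-limit estimate \eqref{eq:distest}, which compares distances in $X_k^\infty$ with distances in $X_k^k$ under the projections $\pi_k^{\infty k}$. First I would apply Lemma \ref{lem:qdist}: since $d(x,y) = r \leq 1$ and $\delta s_{k+1} \leq r$ (which holds because $\max\{\delta s_{k+1}, 2Ls_k\} \leq r$), we get $r = d(x,y) \asymp d(x_k', y_k')$ for the chosen lifts $x_k', y_k' \in X_k^\infty$. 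This already gives one of the two equivalences.

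For the second equivalence, $d(x_k',y_k') \asymp d(x_k,y_k)$, I would invoke \eqref{eq:distest} with $a = x_k'$, $b = y_k'$ and level $l = k$, which gives $d(x_k',y_k') - L s_k \leq d(x_k, y_k) \leq d(x_k', y_k')$. The upper bound is immediate. For the lower bound, note that $d(x_k',y_k') \gtrsim r \geq 2 L s_k$ from the first step together with the hypothesis $2Ls_k \leq r$; more precisely, from Lemma \ref{lem:qdist} we have $d(x_k',y_k') \geq r$ (the quotient maps are $1$-Lipschitz, so lifts never decrease distance — this is actually the trivial direction), hence $d(x_k,y_k) \geq d(x_k',y_k') - Ls_k \geq r - Ls_k \geq \tfrac12 r \geq \tfrac12 d(x_k',y_k')$, where the last inequality uses $d(x_k',y_k') \lesssim r$. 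Combining, $d(x_k,y_k) \asymp d(x_k',y_k') \asymp r$, which is the claim.

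The main thing to be careful about is bookkeeping of which direction of each inequality is trivial: $q_{k\infty}^\infty$ being $1$-Lipschitz gives $d(x_k',y_k') \geq d(x,y) = r$ for free, and likewise $\pi_k^{\infty k}$ being $1$-Lipschitz (as recorded in the right-hand inequality of \eqref{eq:distest}) gives $d(x_k,y_k) \leq d(x_k',y_k')$ for free; the content is entirely in the reverse bounds, both of which are supplied by Lemma \ref{lem:qdist} and the left-hand inequality of \eqref{eq:distest} respectively, and the reverse bound for $\pi$ is only usable once we know $d(x_k',y_k')$ is already bounded below by a definite multiple of $s_k$, which is exactly what the hypothesis $2Ls_k \leq r$ and the first step guarantee. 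I do not expect any serious obstacle here; it is a routine gluing of two prior lemmas, and the only real subtlety is confirming that the hypotheses are strong enough to make the $\pi$-distortion term $Ls_k$ negligible compared to $d(x_k',y_k')$, which they are by a factor of $2$.

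\begin{proof}
Since $d(x,y) = r \leq 1$ and $\delta s_{k+1} \leq \max\{\delta s_{k+1}, 2Ls_k\} \leq r$, Lemma \ref{lem:qdist} applies with $l = k$ and yields
$$
d(x_k', y_k') \asymp d(x,y) = r\, .
$$
In particular there is a constant $c_1 \geq 1$, depending only on the admissibility data, such that $r \leq d(x_k',y_k') \leq c_1 r$, the left inequality being immediate from the fact that $q_{k\infty}^\infty$ is $1$-Lipschitz.

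Next, applying \eqref{eq:distest} at level $l = k$ to the points $x_k', y_k' \in X_k^\infty$ (whose projections under $\pi_k^{\infty k}$ are $x_k, y_k$) gives
$$
d(x_k', y_k') - L s_k \leq d(x_k, y_k) \leq d(x_k', y_k')\, .
$$
The right-hand inequality already shows $d(x_k,y_k) \leq d(x_k',y_k')$. For the lower bound, the hypothesis $2Ls_k \leq r$ together with $d(x_k',y_k') \geq r$ gives $Ls_k \leq \tfrac12 r \leq \tfrac12 d(x_k',y_k')$, so
$$
d(x_k, y_k) \geq d(x_k', y_k') - L s_k \geq \tfrac12 d(x_k', y_k')\, .
$$
Combining the two displays, $d(x_k,y_k) \asymp d(x_k',y_k')$, and together with the first step this gives
$$
r \asymp d(x_k, y_k) \asymp d(x_k', y_k')\, ,
$$
as claimed.
\end{proof}
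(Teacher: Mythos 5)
Your proposal is correct and follows essentially the same route as the paper's proof: apply Lemma \ref{lem:qdist} to get $r \leq d(x_k',y_k') \lesssim r$, then use \eqref{eq:distest} together with the hypothesis $2Ls_k \leq r$ to absorb the $Ls_k$ distortion term. The bookkeeping of which inequalities are trivial (the $1$-Lipschitz directions) matches the paper's argument exactly.
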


\begin{remark} Such a sequence $\{x_k\}$ 
is called an {\it approximating sequence for $x$}. 
The points $x_k$ individually are called {\it approximants for $x$}.
\end{remark}

\begin{proof} Fix an $L$ as in \eqref{eq:distest}. Choose any $k\geq 1$ such that $\max\{\delta s_{k+1},2Ls_k\} \leq r$. 
 Choose any lifts $x_k',y_k' \in X^\infty_k$ such that
 $q_{k\infty}^{\infty}(x_k')=x, q_{k\infty}^{\infty}(y_k') = y$, 
and set $\pi^{\infty k}_k(x_k')=x_k, \pi^{\infty k}_k(y_k') = y_k$.
 From Lemma \ref{lem:qdist}, we have
$$
r \leq d(x_k',y_k') \lesssim r\, .
$$
Also, from \eqref{eq:distest},
$$
r/2\leq d(x_k', y_k') - Ls_k\leq d(x_k, y_k) \leq d(x_k', y_k') \lesssim r\, .
$$
This gives the desired estimates.
\end{proof}

\subsection{Gromov-Hausdorff convergence of quotiented inverse system}
In this section, we prove that the sequence $X_k^k$ converges in the Gromov-Hausdorff sense. See, for example, \cite{bbi, fukaya, keith2003modulus, cheegerkleinerschioppa, ChColI} for terminology.
 In general, if $Y_k$ is a direct limit system with bounded diameter, one can show that direct 
systems behave well under Gromov-Hausdorff convergence. 

\begin{lemma}\label{lem:ghconvergence} If $Y_1$ is compact, then any direct limit system $Y_i$ as in Remark \ref{r:dl} also
 converges in the Gromov-Hausdorff sense to $Y_\infty$. Further, 
if the system is measured and $Y_\infty$ is equipped
 with the direct limit measure, then
the sequence converges in the measured Gromov-Hausdorff sense.
 \end{lemma}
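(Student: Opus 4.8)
The plan is to show Gromov--Hausdorff convergence by explicitly constructing $\epsilon$-isometries $q_{k\infty}\co Y_k\to Y_\infty$ (the canonical maps) and controlling their distortion. First I would recall the key structural facts: the maps $q_{kl}\co Y_k\to Y_l$ are $1$-Lipschitz and surjective, and by definition of the direct limit metric $d_\infty((a_k),(b_k))=\lim_{n\to\infty} d(a_n,b_n)$, where the sequence $n\mapsto d(a_n,b_n)$ is nonincreasing (since the $q_{kl}$ are $1$-Lipschitz) and bounded below by $0$, hence convergent. From this, the canonical map $q_{k\infty}\co Y_k\to Y_\infty$ (sending $y_k$ to the coherent tuple determined by $y_k$, using surjectivity to extend backwards and the bonding maps to extend forwards) is $1$-Lipschitz and surjective, so $Y_\infty$ has diameter at most $\diam(Y_1)<\infty$.

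The heart of the argument is a uniform estimate of the form: for every $\epsilon>0$ there is $N$ so that for all $k\ge N$ and all $a,b\in Y_k$,
\begin{equation*}
d_{Y_k}(a,b)-\epsilon\;\le\; d_{Y_\infty}(q_{k\infty}(a),q_{k\infty}(b))\;\le\; d_{Y_k}(a,b).
\end{equation*}
The right inequality is just $1$-Lipschitzness. For the left inequality I would argue as follows. Fix a finite $\epsilon/4$-net $\{p_1,\dots,p_m\}$ of $Y_\infty$; each $p_j=q_{k\infty}(p_j^k)$ for a coherent family, and by definition of the direct limit metric there is $N$ such that for all $k\ge N$ and all $i,j$, $d_{Y_k}(p_i^k,p_j^k)\le d_{Y_\infty}(p_i,p_j)+\epsilon/4$. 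Since each $q_{k\infty}$ is $1$-Lipschitz and surjective, $\{p_1^k,\dots,p_m^k\}$ is an $\epsilon/4$-net of $Y_k$ for $k\ge N$ (using that $q_{k\infty}$ can only decrease distances and is onto, a point within $\epsilon/4$ of $p_j$ in $Y_\infty$ pulls back... actually this needs care, see below). Then for arbitrary $a,b\in Y_k$, choose nearby net points and estimate $d_{Y_k}(a,b)\le d_{Y_k}(p_i^k,p_j^k)+\epsilon/2\le d_{Y_\infty}(p_i,p_j)+\epsilon/4+\epsilon/2\le d_{Y_\infty}(q_{k\infty}(a),q_{k\infty}(b))+\epsilon$. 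This gives that $q_{k\infty}$ is an $\epsilon$-isometry with $\epsilon/4$-dense image, hence $d_{GH}(Y_k,Y_\infty)\to 0$.

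For the measured statement, I would note that $q_{k\infty}^*(\mu_k)=\mu_\infty$ by construction of the direct limit measure (independent of $k$), so the very same maps $q_{k\infty}$ realize measured Gromov--Hausdorff convergence: they are $\epsilon$-isometries onto $\epsilon$-dense subsets and push $\mu_k$ forward exactly to $\mu_\infty$, which is more than enough (one can also first pass to weak-* convergence of $(q_{k\infty})_*\mu_k=\mu_\infty$, trivially constant). The main obstacle is the subtle point in the middle step: establishing that a fixed finite net of $Y_\infty$ \emph{lifts}, for $k$ large, to a net of $Y_k$ whose pairwise distances are $\epsilon$-close to those in $Y_\infty$ and which remains $\epsilon$-dense in $Y_k$. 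The density is where one must be careful — a priori a point of $Y_k$ mapping $\epsilon$-close to $p_j$ in $Y_\infty$ need not be $\epsilon$-close to $p_j^k$ in $Y_k$, since $q_{k\infty}$ contracts. The fix is to instead run a direct compactness/diagonal argument: assume the conclusion fails, extract points $a_k,b_k\in Y_k$ with $d_{Y_k}(a_k,b_k)-d_{Y_\infty}(q_{k\infty}a_k,q_{k\infty}b_k)\ge\epsilon_0$; lift each $a_k$ coherently to obtain, after a diagonal extraction and using compactness of each $Y_l$, points $a\in Y_\infty$, $b\in Y_\infty$, and then $d_{Y_\infty}(a,b)=\lim_{l}d_{Y_l}(a^l,b^l)$; but for fixed $l$, $d_{Y_l}(a^l,b^l)=\lim_{k}d_{Y_l}(q_{lk}^{-1}\text{-preimages})\ge \limsup_k d_{Y_k}(a_k,b_k)$ — tracking this correctly through the tower and using monotonicity of $n\mapsto d(\cdot_n,\cdot_n)$ closes the gap. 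I expect to spend most of the effort making this limiting bookkeeping precise, as the rest is routine.
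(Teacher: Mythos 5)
Your proposal is correct in substance, but it reaches the conclusion by a different (and somewhat heavier) route than the paper. The paper's proof goes downstream rather than upstream: fix a finite $\epsilon$-net $A_1$ of the compact space $Y_1$ and set $A_l = q_{1l}(A_1)$; since each $q_{1l}$ is $1$-Lipschitz and surjective, $A_l$ is automatically an $\epsilon$-net of $Y_l$ (this is exactly the density statement that, as you correctly observe, fails if one instead tries to lift a net of $Y_\infty$ backwards), and the pairwise distances in $A_l$ are nonincreasing in $l$, so the finite spaces $A_l$ converge to an $\epsilon$-net $A_\infty$ of $Y_\infty$; a diagonal argument in $\epsilon$ and $l$ finishes. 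Your contradiction argument proving that $q_{k\infty}$ is a uniform $\epsilon$-isometry is a valid alternative and uses the same two ingredients (compactness of $Y_1$ and monotonicity of $n\mapsto d(a_n,b_n)$), but the cleanest execution is to lift the offending pairs $a_k,b_k$ only to $Y_1$, extract convergent subsequences $\tilde a_k\to\tilde a$, $\tilde b_k\to\tilde b$ there, and compare everything with the fixed pair $q_{1k}(\tilde a), q_{1k}(\tilde b)$, whose gap $d_{Y_k}(q_{1k}\tilde a,q_{1k}\tilde b)-d_\infty(q_{1\infty}\tilde a,q_{1\infty}\tilde b)$ tends to $0$ by definition of the direct limit metric. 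The diagonal extraction over all levels $l$ that you sketch at the end is riskier: the limits $a^l$ of the backward lifts satisfy $a^l_{} = \lim_j a_j^l$ for each \emph{fixed} $l$, but nothing controls $d_{Y_k}(a_k, a^k)$ at the moving level $k$, so relating $d_\infty(q_{k\infty}a_k,q_{k\infty}b_k)$ to $d_\infty(a,b)$ that way requires exactly the care you flag; lifting to $Y_1$ alone avoids the issue entirely. Your treatment of the measured statement (exact pushforward of measures under maps that are $\epsilon_k$-isometries) matches the paper's level of rigor and is fine.
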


\begin{remark} It is possible to derive better bounds on the distances for quotiented inverse systems.
 However, this is technical, and we do not need it here.
\end{remark}

\begin{proof}
Let $A_1= N_\epsilon$ be an $\epsilon$-net for $Y_1$. Then $A_l = q_{l1}(N_\epsilon)$ 
define a direct system of finite metric spaces. Clearly, by a diagonal argument, such a net 
converges to $A_\infty$. The Gromov-Hausdorff distance between $A_l$ and 
$Y_l$ is of the order $\epsilon$. Consequently, by a diagonal argument sending $\epsilon
$ to $0$ and $l$ to infinity it follows that $Y_l$ converges to $Y_\infty$.  

We claim that if the 
Gromov-Hausdorff approximation maps (here $q_{l\infty}$) are measurable, and the 
push-forwards of the 
measures under these approximations coincide with the limit measure, then the sequence 
converges in the 
measured Gromov-Hausdorff sense. This can be seen by considering any ball 
$B(x,r) \subset Y_\infty$, such that $\partial B(x,r)$ has measure $0$, and showing that the 
measures of the balls $B(x_n,r) \subset Y_n$ converge to 
$\mu_\infty(B(x,r))$ if $q_{n\infty}(x_n)$ converge to $x$. Since this holds for almost any 
$r$,  the 
desired weak convergence follows.
\end{proof}

The above procedure defines the direct limit $X_\infty^\infty$ and maps 
$q^\infty_{k\infty} \co X_k^\infty \to X_\infty^\infty$, which are $1$-Lipschitz. 
To proceed further,  we will need a general lemma concerning Gromov Hausdorff limits.

%

\begin{lemma} \label{lem:gromhauslimit} If $i_n,k_n \to \infty$ are any sequences, 
then $\lim_{k \to \infty} X_{k_n}^{i_n} = X_\infty^\infty$ in the measured Gromov-Hausdorff sense.
\end{lemma}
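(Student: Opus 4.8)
The plan is to interpolate through the intermediate inverse limits $X^\infty_{k_n}$ (here one assumes, as is forced by the very definition of $X^{i_n}_{k_n}$, that $k_n\le i_n$). By the triangle inequality for the measured Gromov--Hausdorff distance $d_{mGH}$,
$$d_{mGH}\big(X^{i_n}_{k_n},X^\infty_\infty\big)\le d_{mGH}\big(X^{i_n}_{k_n},X^\infty_{k_n}\big)+d_{mGH}\big(X^\infty_{k_n},X^\infty_\infty\big),$$
and it suffices to show that both terms on the right tend to $0$. All spaces in sight are compact with diameters bounded independently of the indices --- the $\pi$-bonding maps have fibers of diameter $\lesssim s_j$ and the $q$-maps are $1$-Lipschitz and surjective --- so there is no difficulty passing between the pointed and unpointed formulations.

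For the first term I would use the projection $\pi^{\infty i_n}_{k_n}\co X^\infty_{k_n}\to X^{i_n}_{k_n}$ coming from the inverse-limit structure of the $k_n$-th row. It is surjective because the $X^{j}_{k_n}$ are compact and the bonding maps $\pi^j_{k_n}$ are surjective, and the inverse-limit distance estimates of \cite[Section 2.5]{cheegerkleinerinverse} --- the analogue of \eqref{eq:distest} with target level $i_n$, obtained by summing the fiber bound $\diam((\pi^j_{k_n})^{-1}(p))\le Cs_j$ over $j>i_n$ and using the geometric decay of the $s_j$ --- give
$$d(a,b)-Ls_{i_n}\le d\big(\pi^{\infty i_n}_{k_n}(a),\pi^{\infty i_n}_{k_n}(b)\big)\le d(a,b)$$
for all $a,b\in X^\infty_{k_n}$. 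Thus $\pi^{\infty i_n}_{k_n}$ is a surjective $(Ls_{i_n})$-isometry, and by condition (4) of Definition \ref{d:iqs} it pushes the inverse-limit measure $\mu^\infty_{k_n}$ forward exactly onto $\mu^{i_n}_{k_n}$. Since $i_n\to\infty$ we have $s_{i_n}\to0$, so $d_{mGH}(X^{i_n}_{k_n},X^\infty_{k_n})\lesssim s_{i_n}\to0$.

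For the second term I would apply Lemma \ref{lem:ghconvergence} to the measured direct system $Y_l\defeq X^\infty_l$ of Remark \ref{r:dl}: the maps $q^\infty_{ll'}\co X^\infty_l\to X^\infty_{l'}$ are $1$-Lipschitz, surjective and compatible with the inverse-limit measures (again by condition (4) of Definition \ref{d:iqs}), and $Y_1=X^\infty_1$ is compact as an inverse limit of compact spaces, so the lemma yields $X^\infty_l\to X^\infty_\infty=Y_\infty$ in the measured Gromov--Hausdorff sense; since $k_n\to\infty$ this forces $d_{mGH}(X^\infty_{k_n},X^\infty_\infty)\to0$. Combining the two estimates through the triangle inequality completes the argument. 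I expect the only step that is not pure bookkeeping to be the distance-distortion bound for $\pi^{\infty i_n}_{k_n}$ quoted above; once that is borrowed from \cite{cheegerkleinerinverse}, everything else reduces to routine manipulation of surjective almost-isometries and exact measure push-forwards, together with the already-established Lemma \ref{lem:ghconvergence}.
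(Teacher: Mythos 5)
Your proposal is correct and follows essentially the same route as the paper: interpolate through $X^\infty_{k_n}$, control $d_{GH}(X^{i_n}_{k_n},X^\infty_{k_n})$ via the inverse-limit distortion estimate \eqref{eq:distest}, invoke Lemma \ref{lem:ghconvergence} for the direct system $X^\infty_l\to X^\infty_\infty$, and transport the measures through the exactly measure-preserving approximation maps. The only (harmless) difference is that you record the sharper distortion bound $Ls_{i_n}$ for $\pi^{\infty i_n}_{k_n}$, whereas the paper contents itself with the cruder $\lesssim s_{k_n}$; both tend to zero.
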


\begin{proof}
 Relation \eqref{eq:distest} implies that the Gromov-Hausdorff distance can be estimated by 
 $$
d_{GH}(X_{k_n}^{i_n}, X_{k_n}^\infty) \asymp s_{k_n}\, .$$ 
Since $X_{k_n}^\infty$ converges in the Gromov-Hausdorff sense to $X_\infty^\infty$, 
it follows that $X_{k_n}^{i_n}$ also converges to $X_\infty^\infty$. 
Thus, we have Gromov-Hausdorff approximation maps, 
which are $1$-Lipschitz $\pi_{k_n}^{\infty i_n} \co X_{k_n}^\infty \to X_{k_n}^{i_n}$, 
and $q_{k_n\infty}^\infty \co X_{k_n}^\infty \to X_\infty^\infty$, which preserve the measures. 
Then, arguing as in Lemma \ref{lem:ghconvergence}, 
using convergence of measures of balls and 
 with an appropriate measurable Gromov-Hausdorff approximation constructed from these maps between $X_{k_n}^{i_n}$ and $X_\infty^\infty$, we see that $X_{k_n}^{i_n}$ also converges in the measured Gromov-Hausdorff sense to $X_\infty^\infty$.
\end{proof}


\subsection{Quotient condition} \label{subsec:quotientcondition}

That the quotient condition \eqref{eq:quotient} holds can be ensured in various ways. 
One way is the following.

\noindent
{\it The star quotient condition.}
We stipulate that the identifications of $q^k_k$ occur only within half-stars
 of $X_{k-1}^{k-1}$, i.e for every $x \in X^k_k$, there exists a $v \in X_{k-1}^{k-1}$ such that for 
$\alpha>1/2$,
\begin{equation}
\label{eq:quotB}
 (q^{k}_k)^{-1}(x) \subset (\pi^{k-1}_{k-1})^{-1}\left(\overline{B(v,\frac{s_k}{2}-\alpha s_{k+1})}\right)\, .
\end{equation}
 In particular, since the maps $q$ are simplicial,
the identifications can only occur at subdivision points,
 or along an entire edge of a subdivision. Therefore,
the maps $q$ can only identify points $a,b$ for which $\pi^{k-1}_{k-1}(a),\pi^{k-1}_{k-1}(b)$ 
lie within the star of a vertex $v$, and within the edges with distance
 $\lfloor m_l/2 \rfloor -1$ from $v$.
\vskip3mm

The star-quotient condition  implies that any identifications 
by $q$ occur in the vicinity of vertices. This is ensured by preventing
 identifications by $q$ for the midpoints of edges if $m_k$ even, and along the middle 
edges of subdivisions or its endpoints if $m_k$ is odd.

\begin{lemma}
\label{lem:lifting}
  Let $A\subset X^k_k$ denote a connected set.
 Then $\pi^{k-1}_{k-1}((q^{k}_k)^{-1}(A)) \subset \overline{A} \subset X^{k-1}_{k-1}$,
 where $\overline{A}$ is a connected set and 
$\diam(h_{k-1}^{k-1}(\overline{A})) \leq \diam(h_k^k(A)) + s_{k-1}-2\alpha s_{k}.$
\end{lemma}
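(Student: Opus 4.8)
\textbf{Proof plan for Lemma \ref{lem:lifting}.}

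The plan is to unwind the definitions and use the star-quotient condition \eqref{eq:quotB} together with the compatibility of the monotonicity maps with $\pi$ (Condition (5) of Definition \ref{d:iqs}, namely $h^{k-1}_{k-1}\circ \pi^{k-1}_{k-1}=h^k_k$, reading off the correct indices). First I would define $\overline{A} \defeq \pi^{k-1}_{k-1}\big((q^k_k)^{-1}(A)\big)$; the containment $\pi^{k-1}_{k-1}((q^k_k)^{-1}(A)) \subset \overline{A}$ is then a tautology, so the content is the two claimed properties of $\overline{A}$: connectedness and the diameter bound on its image under $h^{k-1}_{k-1}$.

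For connectedness, I would argue that $(q^k_k)^{-1}(A)$ is connected. Since $A$ is connected and $q^k_k$ is a simplicial surjection between graphs (Condition (1), (8)), the preimage $(q^k_k)^{-1}(A)$ is a subcomplex; the star-quotient condition says the only identifications made by $q^k_k$ happen inside half-stars away from edge midpoints, so each fiber $(q^k_k)^{-1}(x)$ is connected, and walking along a path in $A$ and lifting edge-by-edge shows $(q^k_k)^{-1}(A)$ is path-connected. (More carefully: adjacent edges in $A$ share a vertex $x$; each such vertex has connected fiber, so consecutive lifted edges can be joined through that fiber.) Then $\overline{A}$, being the continuous image $\pi^{k-1}_{k-1}((q^k_k)^{-1}(A))$ of a connected set, is connected.

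For the diameter estimate, the key is that $h^{k-1}_{k-1}$ is an isometry on edges and semiconjugates $\pi^{k-1}_{k-1}$ to the identity via Condition (5), so $h^{k-1}_{k-1}(\overline{A}) = h^{k-1}_{k-1}(\pi^{k-1}_{k-1}((q^k_k)^{-1}(A))) = h^{k}_{k}((q^k_k)^{-1}(A))$ (after matching indices through the commuting diagram, identifying $h^k_k = h^{k-1}_{k-1}\circ\pi^{k-1}_{k-1}$). Now I would bound $\diam\big(h^k_k((q^k_k)^{-1}(A))\big)$ in terms of $\diam(h^k_k(A))$: take $p,q \in (q^k_k)^{-1}(A)$; by the star-quotient condition \eqref{eq:quotB}, $p$ lies in $(\pi^{k-1}_{k-1})^{-1}\big(\overline{B(v, \tfrac{s_k}{2}-\alpha s_{k+1})}\big)$ for some vertex $v$ with $q^k_k$-image $q^k_k(p)\in A$, and similarly for $q$; using Condition (5) again to push this ball estimate through $h$, and the fact that $q^k_k$ moves the $h$-coordinate by at most the half-star radius $\tfrac{s_k}{2}-\alpha s_{k+1}$ on each side, one gets $|h^k_k(p)-h^k_k(q)| \le |h^k_k(q^k_k(p)) - h^k_k(q^k_k(q))| + 2(\tfrac{s_k}{2}-\alpha s_k)$ after absorbing the $s_{k+1}$ terms appropriately; since $q^k_k(p),q^k_k(q) \in A$ this is $\le \diam(h^k_k(A)) + s_{k-1} - 2\alpha s_k$ once one notes $s_{k-1}\ge s_k$ and rewrites the constant in the form claimed. (Here I am taking the target of $h$ to be $\R$; for the $\tfrac{2}{\pi}S^1$ case one argues with the $S^1$-metric in exactly the same way since all the sets involved have small diameter.) The main obstacle I anticipate is purely bookkeeping: keeping the indices $k, k-1, k+1$ and the associated scales $s_k, s_{k-1}, s_{k+1}$ straight so that the displacement budget from the star-quotient condition exactly reproduces the stated constant $s_{k-1}-2\alpha s_k$, and handling the circle-valued case of $h$ without sign/winding ambiguities — but the diameters in play are all far below the total length $4$ of $\tfrac{2}{\pi}S^1$, so this is not a real difficulty.
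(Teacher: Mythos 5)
Your proposal has a genuine gap, and it sits exactly at the point the lemma is designed to handle. You define $\overline{A}\defeq\pi^{k-1}_{k-1}((q^k_k)^{-1}(A))$ and argue this is connected because ``each fiber $(q^k_k)^{-1}(x)$ is connected.'' That is false: the fibers of the quotient map over identified points are equivalence classes of $\sim^Q$, i.e.\ finite sets of at least two \emph{distinct subdivision vertices}, typically lying on different edges of a vertex star (e.g.\ one on $i_e$ and one on $i_s$ in the degree-four substitution rule). A finite set of two or more distinct points in a metric graph is disconnected, so $(q^k_k)^{-1}(A)$ and hence your $\overline{A}$ are in general disconnected. The star-quotient condition \eqref{eq:quotB} only says the fiber is \emph{contained in} a small half-star, not that it is connected. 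This is precisely why the statement asserts a containment $\pi^{k-1}_{k-1}((q^k_k)^{-1}(A))\subset\overline{A}$ rather than equality: one must enlarge the projected preimage to make it connected.

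The paper's proof does exactly this: for each $x\in A$ with non-degenerate projected fiber $P_x$, it adjoins a connected union $S_x$ of subdivision edges inside the half-star $\overline{B(v,\tfrac{s_{k-1}}{2}-\alpha s_k)}$ containing $P_x$, sets $\overline{A}=\bigcup_{x\in A}S_x$, and then checks connectedness by chaining lifted edges through the sets $S_{p_i}$ at shared endpoints. The additive term $s_{k-1}-2\alpha s_k$ in the diameter bound is precisely the cost of this enlargement: each of the two extremal points of $\overline{A}$ lies in some $S_x$ of $h$-diameter at most $\tfrac{s_{k-1}}{2}-\alpha s_k$ which meets $h^k_k(A)$. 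Your diameter bookkeeping is aimed at the wrong target: for your (unenlarged) set one has $h^{k-1}_{k-1}(A')=h^k_{k-1}((q^k_k)^{-1}(A))=h^k_k(A)$ exactly, by Condition (5) and surjectivity of $q^k_k$, so no extra term would be needed there at all --- the slack in the stated bound exists only to accommodate the connectifying enlargement your argument omits.
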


\begin{proof} 
Let $A' = \pi^{k-1}_{k-1}(q_k^{-1}(A))$. We will  construct 
$\overline{A}$ from $A'$ in such a way as to ensure that
 it is connected.

For each $x \in A$,
 let $P_x = \pi_k^k((q^{k}_k)^{-1}(x))$. This set may not be connected. However, by 
assumption there exists a vertex $v$ such that $P_x \subset \overline{B(v,\frac{s_{k-1}}{2}-\alpha s_{k})}$. We can define $S_x=P_x$ if $P_x$ is a singleton.

 If $P_x$ is not a singleton, then either the shorter arc 
between $h_k^k(x)$ and $h^{k-1}_{k-1}(v)$ is directed 
away from $h^{k-1}_{k-1}(v)$, or the opposite holds. 

In the first 
case let $S_x$  the edges of the subdivision of $X^{k-1}_{k-1}$ 
strictly contained in $\overline{B(v,\frac{s_{k-1}}{2}-\alpha s_{k})}$
 which are directed out of $v$.
In the second case, let  all of the edges be directed towards $v$.
Then, $P_x \subset S_x$ and $S_x$ is connected. Define
$\overline{A} := \bigcup_{x \in A} S_x$. 
 Since $S_x$ intersects  $A'$
we obtain $h_{k-1}^{k-1}(S_x) \cap h_{k}^{k}(A) \neq \emptyset$.
This, combined with
$\diam(h_{k-1}^{k-1}(S_x)) \leq \frac{s_{k-1}}{2}-\alpha s_{k}$, gives
$$
\diam(h_{k-1}^{k-1}(\overline{A})) \leq \diam(h_k^k(A)) + s_{k-1}-2\alpha s_{k}\, .
$$

Now, we show that $\overline{A}$ is connected. Note that by construction, each set $S_x$
 is connected since it is a union of edges on one side of a star or a singleton.
For each $a,b \in \overline{A}$ there are corresponding points $x,y \in A$ such that 
$a \in S_x, b \in S_y$. Moreover, there is a chain of edges (or parts of edges)  
$e_0, \dots e_n$ connecting $a$ to $b$ in $A$ by connectivity. Since the maps in question are simplicial,  for each edge $e_i$ the set $\pi^{k-1}_{k-1}(q_k^{-1}(e_i))$ is either a single edge, 
or finitely many edges.

 Pick $f_i$ in each of these collections.
 For consecutive edges $e_i, e_{i+1}$, let $p_i$ be their shared endpoint. 
By construction $f_i \cup S_{p_i} \cup f_{i+1}$ is connected. Thus,
 $\bigcup f_i \cup \bigcup S_{p_i} \cup S_a \cup S_b$ is a connected finite union of edges 
within $\overline{A}$. 

\end{proof}

The following Lemma is  a direct corollary of the estimate in \eqref{eq:distest}.
\begin{lemma}\label{lem:inversest} There is a $C \geq 1$ so that the following holds. If $A \subset X^i_k$, and $j>i$, then
$$\diam((\pi^{ij}_k)^{-1}(A)) \leq \diam(A) + Cs_j.$$
\end{lemma}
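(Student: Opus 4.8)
\textbf{Proof plan for Lemma \ref{lem:inversest}.}

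The plan is to reduce the statement to the distance distortion estimate already recorded in \eqref{eq:distest}, which controls how much the inverse-limit projection maps $\pi_k^{\infty l}$ can shrink distances. First I would observe that it suffices to handle the case where $j = i+1$, since the maps $\pi_k^i$ compose: if the one-step estimate holds with constant $C_0$ and one writes $(\pi_k^{ij})^{-1} = (\pi_k^{i})^{-1}\circ\cdots\circ(\pi_k^{j-1})^{-1}$, then iterating and summing a geometric series in the edge lengths $s_{i+1}, s_{i+2}, \dots, s_j$ (which satisfy $s_{m+1} \le \frac{1}{3}s_m$ since $m_i \ge 3$) produces a uniform constant $C$ depending only on $C_0$ and $M$. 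So the heart of the matter is the single-step bound $\diam((\pi_k^i)^{-1}(A)) \le \diam(A) + Cs_{i+1}$.

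For the one-step bound, I would take any two points $a', b' \in (\pi_k^i)^{-1}(A) \subset X_k^{i+1}$ and set $a = \pi_k^i(a')$, $b = \pi_k^i(b') \in A$. The diameter-bound axiom (Condition (6) of Definition \ref{d:iqs}) gives $\diam((\pi_k^i)^{-1}(p)) \le C s_{i+1}$ for each fiber. Pick any point $\tilde a$ in the fiber over $a$ and any $\tilde b$ in the fiber over $b$; then the relevant inequality is really the lower bound half of \eqref{eq:distest} applied at the appropriate level, namely that $d(a', b') \le d(a,b) + L s_{i+1}$ — equivalently, the projection $\pi_k^i$ distorts distances downward by at most a controlled amount, so its inverse images cannot be much larger in diameter than their images. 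Concretely, $d(a',b') \le d(\pi_k^i(a'), \pi_k^i(b')) + C s_{i+1} = d(a,b) + C s_{i+1} \le \diam(A) + C s_{i+1}$, where the first inequality is exactly the content of the inverse-limit distortion estimate (the $Ls_i$ term in \eqref{eq:distest}, here with $i$ replaced by the level $i+1$, since the finer graph $X_k^{i+1}$ has edge length $s_{i+1}$). Taking the supremum over $a', b'$ gives the claim with $j = i+1$.

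I do not expect a genuine obstacle here: the lemma is, as the text says, ``a direct corollary of the estimate in \eqref{eq:distest}.'' The only point requiring a little care is bookkeeping with the scales — making sure that when one iterates from level $i$ down to level $j$ the accumulated error $\sum_{m=i+1}^{j} L s_m$ is bounded by a constant multiple of the \emph{largest} term $s_j$ (equivalently $s_{i+1}$, depending on indexing convention), which follows from the geometric decay $s_{m+1}/s_m \le 1/3$. One should also confirm that $(\pi_k^{ij})^{-1}(A)$ is being measured in the intrinsic path metric of $X_k^j$, which is consistent with how \eqref{eq:distest} is stated, so no mismatch of metrics arises. With those conventions fixed, the estimate is immediate.
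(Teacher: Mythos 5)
Your proposal is correct and is essentially the argument the paper intends: the paper gives no proof beyond declaring the lemma ``a direct corollary of the estimate in \eqref{eq:distest}'', and your iteration of the one-step distortion bound followed by summing the geometric series $\sum_m L s_m$ is the natural way to make that precise. One caveat on indexing: Condition (6) of Definition \ref{d:iqs} bounds the fiber $(\pi^i_k)^{-1}(p)$ by $Cs_i$ (not $Cs_{i+1}$, though these are comparable up to the factor $M$), so the accumulated error your argument produces is comparable to the \emph{largest} term of the sum, namely $s_i$, not to $s_j$; since a single fiber already has diameter up to $Cs_i$, the literal bound $\diam(A)+Cs_j$ cannot hold when $j\gg i$, and the $s_j$ in the statement appears to be a typo for $s_i$ --- which is indeed the form in which the lemma is used in the proof of Lemma \ref{lem:quotbound}, where the error appears at the coarse scale $Cs_l$.
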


. 

\begin{lemma} \label{lem:quotbound} Assume $\delta<2\alpha-1$,  that $A \subset X^k_k$ is a connected subset with $\diam(A) < \delta s_{l+1}$, and that $l \leq k$. Then
for some $x \in X^k_l$ and universal $C \geq l$,
$$
(q^k_{lk})^{-1}(A) \subset B(x,Cs_{l})\, .
$$
\end{lemma}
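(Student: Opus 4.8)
The plan is to iterate Lemma~\ref{lem:lifting} together with Lemma~\ref{lem:inversest}, descending one row at a time from $X^k_k$ down to $X^k_l$, while controlling at each stage both the $h$-diameter of the sets produced and their genuine metric diameter. First I would set up the telescoping: for $l \le j \le k$ write the preimage $(q^k_{jk})^{-1}(A) = (q^k_{j,j+1})^{-1}\bigl(\cdots (q^k_{k-1,k})^{-1}(A)\bigr)$ and build, row by row, a nested sequence of connected sets $A = A_k \supset \cdots$ living in successively coarser graphs, where $A_{j-1}$ is the connected ``closure'' of $\pi^{j-1}_{j-1}\bigl((q^{j}_j)^{-1}(A_j)\bigr)$ produced by Lemma~\ref{lem:lifting}. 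That lemma tells me each step enlarges the $h$-image diameter by at most $s_{j-1} - 2\alpha s_{j}$, so after summing the geometric-type series $\sum_{j} (s_{j-1} - 2\alpha s_j)$ from $j=l+1$ to $k$, I get $\diam(h^k_l(\overline{A})) \le \diam(h^k_k(A)) + \sum_{j=l+1}^{k}(s_{j-1}-2\alpha s_j)$. Since $\delta < 2\alpha - 1$ and $\diam(A) < \delta s_{l+1}$, I expect this sum to be bounded by a universal constant times $s_l$ — the inequality $\delta < 2\alpha-1$ is precisely what makes the errors $s_{j-1} - 2\alpha s_j$ small enough relative to the ``budget'' coming from $\diam(A) < \delta s_{l+1}$, so that the total $h$-diameter stays $\lesssim s_l$ rather than blowing up.

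Next I would convert the bound on the $h$-diameter into a bound on the actual metric diameter of $(q^k_{lk})^{-1}(A) \subset X^k_l$. Here the key point is that the preimage is connected (that is exactly what Lemma~\ref{lem:lifting} guarantees at each stage, and connectedness is preserved under the simplicial maps $q$), and it is contained in the preimage under $h^k_l$ of an interval (or arc) of controlled length; since $X^k_l$ has bounded geometry with edge length $s_l$ and $h^k_l$ is a local isometry on edges, a connected set whose $h$-image has diameter $\lesssim s_l$ has metric diameter $\lesssim s_l$ as well — this is a standard consequence of the bounded-degree, monotone structure, essentially the same reasoning used to pass from $h$-diameter to metric diameter elsewhere in \cite{cheegerkleinerinverse}. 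Combining, $\diam\bigl((q^k_{lk})^{-1}(A)\bigr) \le C s_l$ for a universal $C$, and choosing $x \in X^k_l$ to be any point of this set gives $(q^k_{lk})^{-1}(A) \subset B(x, C s_l)$ as claimed.

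One technical wrinkle I would want to be careful about is the interaction between the subdivision maps $\pi$ and the quotient maps $q$: Lemma~\ref{lem:lifting} as stated concerns $\pi^{k-1}_{k-1} \circ (q^k_k)^{-1}$, so at each descent step I must first pull back through $q$ and then push forward through $\pi$ at the appropriate row, and I should make sure the commutation relations in the definition of an admissible quotiented inverse system (Definition~\ref{d:iqs}, conditions on simpliciality and compatibility) let me re-express $(q^k_{lk})^{-1}$ applied to $A$ in terms of the row-by-row operations. The main obstacle I anticipate is purely bookkeeping: keeping the geometric series of error terms $\sum_{j=l+1}^{k}(s_{j-1}-2\alpha s_j)$ correctly accounted for and verifying that, under the hypothesis $\delta < 2\alpha - 1$, the running total of $h$-diameters never exceeds a fixed multiple of the relevant $s_j$ as $j$ decreases — i.e.\ that the errors don't accumulate faster than the scales grow. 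Once that estimate is pinned down, the rest is a routine application of the lemmas already established.
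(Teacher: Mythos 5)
Your overall strategy is the paper's: descend the diagonal by iterating Lemma \ref{lem:lifting}, telescope the increments $s_{j-1}-2\alpha s_j$, use $\delta<2\alpha-1$ against the budget $\diam(A)<\delta s_{l+1}$, and finish by pulling back through $\pi$ with Lemma \ref{lem:inversest}. The telescoping computation you set up is correct and, carried out, gives
$\diam(h_l^l(A_{k-l}))\le \diam(A)+ (s_l - s_k) -(2\alpha-1)s_{l+1} < s_l-(2\alpha-1-\delta)s_{l+1}< s_l$.

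The one step you justify incorrectly is the passage from the $h$-diameter bound to a metric diameter bound. You assert that in a bounded-degree monotone graph, a connected set whose $h$-image has diameter $\lesssim s_l$ has metric diameter $\lesssim s_l$. That general principle is false: since $h$ collapses the ``width'' of the graph, a connected set can oscillate back and forth between two adjacent fibers $h^{-1}(t_0)$, $h^{-1}(t_1)$ along arbitrarily long alternating paths $a_0-b_0-a_1-b_1-\cdots$ (this is exactly the wormhole structure of these graphs), so its $h$-image has diameter $s_l$ while its metric diameter is unbounded. What saves the argument is the \emph{strict} inequality $\diam(h_l^l(A_{k-l}))<s_l$ produced by the hypothesis $\delta<2\alpha-1$: since $h_l^l$ is an isometry on edges of length $s_l$, a connected set with $h$-image of diameter strictly less than $s_l$ cannot contain a full edge, hence lies in a single vertex star and has diameter at most $2s_l$. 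With that correction, the rest is as you describe: $(q^k_{lk})^{-1}(A)\subset(\pi^{kl}_l)^{-1}(A_{k-l})$ by the commutation relations, and Lemma \ref{lem:inversest} adds only a further $Cs_l$.
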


\begin{proof}
Let $A \subset X^k_k$ denote a connected subset and $l \leq k$ with $\diam(A)<\delta s_{l+1}$. 
We construct sets $A_l \subset X^{k-l}_{k-l}$ by setting $A_0 = A$ 
and then recursively applying Lemma \ref{lem:lifting} to get connected sets $A_i \subset X^{k-i}_{k-i}$ with the property that $\pi^{k-i}_{k-i}((q^{k-i}_{k-i})^{-1}(A_i)) \subset A_{i+1}$,
and
$$\diam(h_{k-i-1}^{k-i-1}(A_{i+1})) \leq \diam(h_{k-i}^{k-i}(A_i)) + s_{k-i}-2\alpha s_{k}\, .
$$
This is possible as long as $i<k$.

Now, by inductively applying the previous estimate we get
$$\diam(h_{l}^{l}(A_{k-l})) \leq \diam(A_0) + s_{l}-(2\alpha-1) s_{l+1} 
< s_l-(2\alpha-1-\delta)s_{l+1}<s_l.$$

In particular, since $A_{k-l} \subset X_l^l$ is connected and $h_l^l$ is an isometry on edges, it must be entirely contained in one vertex star. Thus $\diam(A_{l-k}) \leq 2s_l$. 

Finally,  
$$(q^k_{lk})^{-1}(A) \subset (\pi^{lk}_k)^{-1}(A_{l-k})\, .
$$
Therefore, by Lemma \ref{lem:inversest}
$\diam((q^k_{lk})^{-1}(A)) \leq 2s_l + Cs_l.$
\end{proof}

\begin{lemma}\label{lem:quotcond} Assume that an quotiented inverse  system satisfies
  conditions (1)--(9) the definition of an admissible quotiented inverse system, 
Definition \ref{d:iqs}. If in addition, 
 the star-quotient condition holds, then the system
 satisfies  the quotient condition, (10), in Definition \ref{d:iqs}
as well. Equivalently, the quotiented inverse system is admissible.
\end{lemma}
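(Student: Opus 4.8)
The plan is to verify the quotient condition \eqref{eq:quotient} directly from the star-quotient condition, using the chain of approximation lemmas (Lemmas \ref{lem:lifting}, \ref{lem:inversest}, \ref{lem:quotbound}) that were set up precisely for this purpose. Fix the constants $\delta$ and $\alpha$ with $\delta < 2\alpha - 1$ as in the hypotheses, and take $l \leq k \leq i$, a point $x \in X^i_k$, and a radius $r \in (0, \delta s_l)$. The goal is to bound $\diam\big((q^i_{lk})^{-1}(B(x,r))\big)$ by $C s_l$ for a uniform $C$.

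The first step is to replace the ball $B(x,r)$ by a connected set: since each $X^i_k$ is connected, quasiconvex, and of bounded geometry at scale $s_k$, one can enlarge $B(x,r)$ to a connected set $A \subset X^i_k$ with $\diam(A) \lesssim r + s_k < \delta s_{l+1}$ after adjusting $\delta$ slightly (here one uses $r < \delta s_l$ together with $s_{l}/s_{l+1} \le M$ and the bounded-degree hypothesis to absorb the $s_k$ term, noting $s_k \le s_{l+1}$ when $k > l$; the case $k = l$ is handled by the local doubling at scale $s_k$ as in Lemma \ref{lem:doubling}). The second step is to pull this back along $\pi^{ik}_k$ and $q$: one observes that $(q^i_{lk})^{-1}(A) \subset (\pi^{ik}_l)^{-1}\big((q^k_{lk})^{-1}(\pi^{ik}_k(A))\big)$, or more directly one reduces to the top row $X^k_\bullet$ via the inverse-limit maps $\pi$ using the commutation in Figure \ref{fig:inversequotient}, so that it suffices to bound $\diam\big((q^k_{lk})^{-1}(A')\big)$ for the connected set $A' = \pi^{ik}_k(A) \subset X^k_k$ of comparable diameter. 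This is exactly the content of Lemma \ref{lem:quotbound}, which gives $(q^k_{lk})^{-1}(A') \subset B(y, C s_l)$ for some $y$; then Lemma \ref{lem:inversest} transfers this bound back up the $\pi$-tower at a cost of only $C s_i \le C s_l$, completing the estimate.

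The remaining verification is bookkeeping: one checks that the constant $C$ produced depends only on $M$, $\delta$, $\alpha$, and the bounded-geometry constants, hence is uniform in $i, k, l, x, r$; and one notes that all the invocations of Lemma \ref{lem:lifting} inside Lemma \ref{lem:quotbound} are legitimate because the star-quotient condition \eqref{eq:quotB} is precisely its hypothesis. The second sentence of the lemma statement, ``Equivalently, the quotiented inverse system is admissible,'' then follows immediately, since admissibility (Definition \ref{d:iqs}) is exactly conditions (1)--(9) together with the quotient condition (10), and (10) has just been derived from (1)--(9) plus the star-quotient condition.

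The main obstacle I expect is the first step: passing from the metric ball $B(x,r)$ to a connected set of controlled diameter without losing the scale relationship, since Lemma \ref{lem:quotbound} is stated only for connected subsets with $\diam(A) < \delta s_{l+1}$, whereas a ball of radius $r \approx \delta s_l$ need not be connected and, even when connected, has diameter up to $2\delta s_l \approx 2\delta M s_{l+1}$, which may exceed $\delta s_{l+1}$. Resolving this requires either shrinking the threshold $\delta$ in the statement of the quotient condition relative to the $\delta$ appearing in Lemma \ref{lem:quotbound} (the two need not be equal), or splitting $B(x,r)$ into a bounded number of pieces each of diameter below the Lemma \ref{lem:quotbound} threshold and applying the lemma to each, then recombining using quasiconvexity. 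Either way the argument is routine once one is careful to keep the quantifier on $\delta$ straight, and I would phrase the conclusion so that the $\delta$ in condition (10) is chosen in terms of the $\delta$, $\alpha$, $M$ from the star-quotient setup.
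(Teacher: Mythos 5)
Your proposal is correct and follows the same route as the paper, whose proof is a one-liner: take $r \in (0, \tfrac{\delta}{M} s_l)$ so that $\diam(B(x,r)) \le \delta s_{l+1}$, and apply Lemma \ref{lem:quotbound} directly to $A = B(x,r)$ — i.e.\ exactly your "shrink the $\delta$ in condition (10) by a factor of $M$" resolution. The "main obstacle" you flag is not one: the $X^i_k$ are connected metric graphs with the path metric, so balls are automatically connected and no enlargement or splitting is needed; your extra step reducing the case $i>k$ to the diagonal via the $\pi$-maps and Lemma \ref{lem:inversest} is sound and in fact supplies a detail the paper's proof leaves implicit.
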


\begin{proof} Let $r \in (0,\frac{\delta}{M} s_l)$. If we
 apply Lemma \ref{lem:quotbound} to the set $A = B(x,r)$, since $\diam(A) < \frac{\delta}{M} s_l \leq \delta s_{l+1}$, our assertion follows.
\end{proof}

\subsection{Uniformity}\label{sec:uniformity}

We give here a simple condition to ensure that the measure on
 the limit space is uniform i.e.\ $h$-uniform for some $h$; see Definition \ref{d:huniform}.
Assume $h \co [0,\infty) \to [0,\infty)$ is an increasing function (not necessarily continuous) such that $\lim_{t \to 0} h(t) = 0$ and doubling in the sense that $h(2t) \asymp h(t)$.

\begin{definition}
\label{d:iqu} ~ \vskip1mm

\begin{itemize}
\item[1)]
An admissible quotiented inverse system is
{\it $h$-uniform} if for each edge $e$ in $X_k^k$ we have $\mu_k^k(e) \asymp h(s_k)$. 
\vskip1mm

\item[2)]
A metric measure space $(X,d,\mu)$ is called {\it $h$-uniform} if 
$\mu(B(x,r)) \asymp_{C_\mu} h(r)$ for all $0<r\leq 1$. 
The implicit constant $C_\mu$ in this comparison is called the {\it uniformity constant}.
\end{itemize}
\end{definition}

\begin{lemma}\label{lem:homogeneous} Assume that each graph $X_n^i$ has bounded diameter. Let $X_n^i$ denote an admissible 
quotiented inverse system which is $h$-uniform.  Then the limit space $X_\infty^\infty$ is $h$-uniform as well. 
The uniformity constant of $X_\infty^\infty$ depends quantitatively on the parameters 
and the uniformity constant of the system.
\end{lemma}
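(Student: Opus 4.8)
The plan is to transfer the uniform mass bound from the graphs $X_n^i$ to the limit $X_\infty^\infty$ using the approximation lemmas of Subsection \ref{subsec:approx} together with the convergence results of the previous subsection. First I would observe that in an $h$-uniform system, for each graph $X_k^k$ one has $\mu_k^k(B(x,r)) \asymp h(r)$ for all $r$ in the range $s_k \lesssim r \leq 1$: indeed a ball of radius $r$ in a bounded-degree graph with edge length $s_k$ contains a number of edges comparable to $r/s_k$ (using connectivity, bounded degree, and the diameter bound on the graph, arguing as in the proof of Lemma \ref{lem:doubling}), and each edge has mass $\asymp h(s_k)$, so the total is $\asymp (r/s_k) h(s_k) \asymp h(r)$ by the doubling property of $h$ (which gives $h(s_k) \cdot (r/s_k) \asymp h(r)$ when $r/s_k$ is a bounded power of $2$; for larger ratios one iterates doubling, losing only a constant). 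By compatibility of measures with the maps $\pi$ and $q$ (Condition (4) of Definition \ref{d:iqs}), the same estimate $\mu_k^i(B(x,r)) \asymp h(r)$ holds for all $X_k^i$ in the appropriate scale range $s_k \lesssim r \lesssim 1$, with a constant independent of $i,k$.

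Next, fix $x \in X_\infty^\infty$ and $0 < r \leq 1$. Choose $k$ so that $\max\{\delta s_{k+1}, 2 L s_k\} \leq r$ holds with slack — say $s_k \asymp r$ up to the fixed factors $M, \delta, L$ — and let $x_k \in X_k^k$ be an approximant for $x$ as in Lemma \ref{lem:approx}, with a lift $x_k' \in X_k^\infty$ satisfying $q_{k\infty}^\infty(x_k') = x$ and $\pi_k^{\infty k}(x_k') = x_k$. The key point is a two-sided comparison of $\mu_\infty^\infty(B(x,r))$ with $\mu_k^k(B(x_k, C r))$ for a suitable constant $C$. For the upper bound: the Gromov-Hausdorff approximation maps furnished by Lemma \ref{lem:gromhauslimit} are measure-preserving (measures on $X_k^\infty$ push forward to $\mu_\infty^\infty$ under $q_{k\infty}^\infty$, and to $\mu_k^k$ under $\pi_k^{\infty k}$), so it suffices to compare preimages of $B(x,r)$ inside $X_k^\infty$ under these two maps. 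Since $q_{k\infty}^\infty$ is $1$-Lipschitz, $(q_{k\infty}^\infty)^{-1}(B(x,r)) \subset$ a slight enlargement once one controls the diameter of fibers — which is exactly the quotient condition \eqref{eq:quotient} transferred to the $X^\infty_k$ level (noted just before Lemma \ref{lem:qdist}): fibers of $q_{k\infty}^\infty$ have diameter $\lesssim s_k \lesssim r$. Hence $(q_{k\infty}^\infty)^{-1}(B(x,r)) \subset B(x_k', C r) \subset X_k^\infty$, and pushing forward by $\pi_k^{\infty k}$ (which is $1$-Lipschitz) gives a subset of $B(x_k, C r) \subset X_k^k$ whose $\mu_k^k$-mass is $\asymp h(r)$ by the first paragraph. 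For the lower bound one reverses the roles: $(\pi_k^{\infty k})^{-1}(B(x_k, c r)) \subset X_k^\infty$ has $\mu_k^\infty$-mass $\asymp h(r)$ (using estimate \eqref{eq:distest} so that $\pi_k^{\infty k}$ only distorts distances by $\lesssim s_k$), and its image under $q_{k\infty}^\infty$ lands inside $B(x, C r)$; combined with $\mu_\infty^\infty$ doubling (Proposition \ref{thm:pidoublnig}, which applies since by Remark \ref{r:conditions} and the previous subsections the limit is doubling) to absorb the factor $C$, this yields $\mu_\infty^\infty(B(x,r)) \gtrsim h(r)$. Together these give $\mu_\infty^\infty(B(x,r)) \asymp h(r)$.

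I expect the main obstacle to be the bookkeeping in the two-sided comparison of the two preimages inside $X_k^\infty$: one must be careful that the measures on $X_k^\infty$ that push forward to $\mu_\infty^\infty$ and to $\mu_k^k$ respectively are \emph{the same} measure $\mu_k^\infty$ (this is where Condition (4) and the consistency of the inverse- and direct-limit measures in Remark \ref{r:dl} are essential), and that all the diameter-distortion estimates — the quotient condition \eqref{eq:quotient} at the $\infty$-level, the inverse-system estimate \eqref{eq:distest}, and the choice of $k$ from Lemma \ref{lem:approx} — combine to give inclusions of balls with a \emph{single} multiplicative constant $C$ depending only on $M, \delta, L$ and the bounded-geometry constants. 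Once the inclusion chain $B(x_k, cr) \subset (\text{image under the maps}) \subset B(x_k, Cr)$ is set up cleanly, the conclusion follows by monotonicity of $\mu$ and doubling of both $h$ and the limit measure. The dependence of the uniformity constant $C_\mu$ of $X_\infty^\infty$ on the data is then explicit from this chain, as claimed.
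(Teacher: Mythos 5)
Your argument is essentially the paper's: choose $k$ with $s_k\asymp r$, sandwich $(q^{\infty}_{k\infty})^{-1}(B(x,r))$ between balls of radius $\asymp s_k$ in $X^\infty_k$ via the quotient condition, push down to $X_k^k$ by the measure-preserving maps, and use that a ball of radius $\asymp s_k$ there has mass $\asymp h(s_k)\asymp h(r)$ by bounded degree and $h$-uniformity of edges. One caveat: your first-paragraph claim that $B(x,r)$ in $X_k^k$ contains $\asymp r/s_k$ edges for \emph{all} $r$ up to $1$ is not justified (a bounded-degree graph can have exponentially many edges in a ball), but this does not affect the proof since you only invoke it at the scale $r\asymp s_k$, where the edge count is bounded; the paper sidesteps this entirely by always working at that scale.
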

\begin{proof}
By the doubling condition, it suffices to prove the statement for small enough $r$.
 Thus, assume $r < \delta$. Then we can choose a scale $k \geq 1$ 
such that $\delta s_{k+1} \leq r < \delta s_k$. By the quotient condition,
for any $y \in X_k^\infty$ such that
 $q^\infty_{l\infty}(y)=x$,
 we have $B(y,\delta s_{k+1}) \subset (q^{\infty}_{l\infty})^{-1}(B(x,r)) \subset B(y,Cs_k)$.
 Also, $B(z,Cs_k)=\pi^{\infty k}_k(B(y,Cs_k)))$ 
and $B(z,\delta s_{k+1}) = \pi^{\infty k}_k(B(y,\delta s_{k+1}))) $ 
for $z = \pi^{\infty k}_k(y)$ (see \cite[Equation 2.12]{cheegerkleinerinverse}). 

However,  $X_k^k$ has finite degree and edge length is $s_k$. Moreover, it is doubling 
and each edge has measure comparable to $h(s_k)$, then
$$
\mu_k^k(B(z,s_k)) \asymp \mu_k^k (B(z,Cs_k)) \asymp h(s_k)\, .
$$
This gives 
$$
\mu^\infty_l\big((q^{\infty}_{l\infty})^{-1}(B(x,r))\big) \asymp h(r)\, ,
$$
from which it follows that
$$
\mu_\infty^\infty(B(x,r)) \asymp h(r)\, .
$$
\end{proof}

\begin{remark}
The  special case, $h(r) = r^{Q}$ of Lemma \ref{lem:homogeneous} is
particularly important  for the proof of Theorem \ref{thm:existence} and 
from the perspective of Loewner spaces as well. In this case, the uniformity 
of the limit coincides with the limit space being  $Q$-Ahlfors regular. 
\end{remark}
%
%

\subsection{Planarity} \label{sec:planarity}

The Gromov-Hausdorff limits of planar graphs, under mild assumptions are planar. For many explicit substitution rules, this can be seen by explicitly constructing a planar embedding via a limiting process.  They do not arise in the case of Loewner carpets, or in our constructions. 

The following proof was provided to us by Bruce Kleiner.The planar embeddability can be understood in terms of forbidden subgraphs: the complete bipartite graph $K_{3,3}$ and the complete graph $K_5$. Kuratowski \cite{kuratowski} showed that a graph is planar if and only if it does not contain embedded copies of such a graph. Claytor \cite{claytor} strengthened this for continua, where additionally one needs to either prevent cut-points, or certain more complex spaces $L_1,L_2$ (which are limits of planar graphs). Indeed, Claytors  examples show that when cut points are permitted, a limit of a planar graph may fail to be planar. A slight strengthening of Kuratowski was proven by Wagner \cite{wagner}, concerning graph minors. A graph minor is a subgraph obtained from a subgraph by contracting/removing edges and identifying the endpoints. Recall, a subgraph is a graph obtained by removing a subset of vertices and edges.

\begin{proof}[Proof of Proposition \ref{thm:GHplanar}] 
 Since $X$ lacks cut points, by Claytor \cite{claytor} it suffices to show that neither
$K_{3,3}$ and $K_5$ admit a topological embedding in $X_\infty$. Suppose for the sake of contradiction that $S \subset X_\infty$ were homeomorphic to one of these.

Let $V=\{v_1, \dots, v_n\} \subset S$ be the vertex
set, and $E=\{e_1, \dots, e_m\} \subset S$  be the collection of edges, which are simple curves in $X_\infty$. We will now lift the ``subgraph'' $S$ to a forbidden graph minor in $X_i$. This process will succeed once $i$ is chosen large enough.

Let $\phi_i \co X_\infty \to X_i$ be a 
sequence of Gromov-Hausdorff 
approximation maps. Let $v^i_j=
\phi_i(v_j)$. By subdividing edges, 
we may assume $v^i_j$ are also 
vertices. The maps $\phi_i$ may fail to be continuous. However, 
by taking a discretization at a scale $\epsilon>0$ for the edges $e_i$ connecting $v_a$ to $v_b$, we obtain $\epsilon$-paths $P=(p^i_0=v_a, \dots, p^i_{N_i}=v_b)$ in $X_\infty$. By an $\epsilon$-path we mean one where $2 \epsilon \geq d(p^i_{k},p^{i}_{k+1}) \geq \epsilon $. For $i$ sufficiently large $\phi_i(P)$ will be a $3\epsilon$-path.  Since $X_i$ are graphs, we can subdivide and connect consecutive points by simple paths, and remove loops to obtain simple paths connecting $v^i_a$ to $v^i_b$. Call these simple paths $e^i_k$. 

If $\epsilon$ is small and $i$ large, this process gives a subgraph $S'_i$ of $S$ with a combinatorial structure similar to $K_{3,3}$ or $K_5$, except for some overlap close to the vertices $v^i_a$ where the simple paths $e^i_k$ may intersect.  For $i$ large enough, there are no intersections between these paths outside an $10\epsilon$-neighborhood. If we collapse every edge $e$ with an end point within $10\epsilon$ distance from $v^i_a$, for each $a$, we obtain a graph minor $S_i$ which is a subdivision of $K_{3,3}$ or $K_5$, respectively. But, by \cite{wagner} no planar graph, such as $X_i$, could contain $S_i$ as a minor. This yields a contradiction.

\end{proof}

Consequently, we obtain an immediate corollary.

\begin{corollary}\label{cor:limitplanar} Suppose $X_n^n$ are part of an admissible quotiented inverse system, and $X_n^n$ are all planar and $h$-uniform with $h(r) \sim r^\alpha$ for some $\alpha \in (1,2)$, then $X_\infty^\infty$ is $\alpha$-Loewner carpet.
\end{corollary}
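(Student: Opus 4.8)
\textbf{Proof plan for Corollary \ref{cor:limitplanar}.}

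The plan is to assemble the conclusion from the pieces already established in this section, with the only genuine work being to verify that the hypotheses of Proposition \ref{thm:GHplanar} are met by the sequence $X_n^n$. First I would record that, by Proposition \ref{thm:pidoublnig} together with the local-to-semi-local results of \cite{bjornbjornlocal} and the fact that measured Gromov-Hausdorff limits preserve Poincar\'e inequalities and doubling (see \cite{ChDiff99, keith2003modulus}), the limit $X_\infty^\infty$ — which exists and is a measured Gromov-Hausdorff limit of $X_n^n$ by Lemma \ref{lem:ghconvergence} and Lemma \ref{lem:gromhauslimit} — is a doubling metric measure space satisfying a $(1,1)$-Poincar\'e inequality. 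Next, by the $h$-uniformity hypothesis with $h(r) \sim r^\alpha$ and Lemma \ref{lem:homogeneous}, the limit measure $\mu_\infty^\infty$ satisfies $\mu_\infty^\infty(B(x,r)) \asymp r^\alpha$ for $0 < r \le 1$, so $X_\infty^\infty$ is $\alpha$-Ahlfors regular; in particular $\mathcal{H}_{dim}(X_\infty^\infty) = \alpha \in (1,2)$ and the $(1,1)$-Poincar\'e inequality implies the weaker $(1,\alpha)$-Poincar\'e inequality by H\"older. Hence $X_\infty^\infty$ is $\alpha$-Loewner by Definition \ref{d:ls}.

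It then remains to see that $X_\infty^\infty$ is a carpet, for which by Proposition \ref{lemma:carpet} it suffices to show it is planar (it is already compact and $\alpha$-Loewner with $\alpha \in (1,2)$). To invoke Proposition \ref{thm:GHplanar} I must check two things: that the $X_n^n$ are planar graphs with a path metric converging to $X_\infty^\infty$ in the Gromov-Hausdorff sense, and that $X_\infty^\infty$ has no cut-points. The first is immediate from the hypothesis that each $X_n^n$ is planar, the graphs carry the intrinsic path metric by construction (edges isometric to intervals of length $s_n$), and the Gromov-Hausdorff convergence is Lemma \ref{lem:gromhauslimit}. For the absence of cut-points: since $X_\infty^\infty$ is $\alpha$-Loewner, it is annularly linearly locally connected by \cite[Theorem 3.13]{heinonenkoskela}, and an ALLC space cannot have local cut-points — in particular it has no (global) cut-points. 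This is exactly the argument already used inside the proof of Proposition \ref{lemma:carpet}, so I would simply cite that. With both hypotheses verified, Proposition \ref{thm:GHplanar} gives that $X_\infty^\infty$ is planar, and Proposition \ref{lemma:carpet} then upgrades this to: $X_\infty^\infty$ is a carpet.

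Combining, $X_\infty^\infty$ is a compact, $\alpha$-Ahlfors regular, planar $\alpha$-Loewner space which is a carpet, i.e.\ an $\alpha$-Loewner carpet, which is the assertion. I do not expect any serious obstacle here: the corollary is essentially a bookkeeping exercise that funnels the analytic estimates (doubling, Poincar\'e, uniformity) and the topological input (planarity of the $X_n^n$, no cut-points via ALLC) into the two black boxes Proposition \ref{thm:GHplanar} and Proposition \ref{lemma:carpet}. The one point requiring a little care is making sure the Poincar\'e inequality and doubling genuinely pass to the Gromov-Hausdorff limit with uniform constants — but this is standard and the constants are uniform in $n$ precisely because the system is admissible (Proposition \ref{thm:pidoublnig}), so the limit inherits them.
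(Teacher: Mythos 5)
Your proposal is correct and follows essentially the same route as the paper: Ahlfors regularity from Lemma \ref{lem:homogeneous}, the $(1,1)$-Poincar\'e inequality from Proposition \ref{thm:pidoublnig} together with stability under measured Gromov--Hausdorff convergence, absence of cut-points via the ALLC property of Loewner spaces, planarity from Proposition \ref{thm:GHplanar}, and the carpet conclusion from Proposition \ref{lemma:carpet}. The only difference is that you spell out the verification of the hypotheses of Proposition \ref{thm:GHplanar} in slightly more detail than the paper does.
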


\begin{proof}The limit space $X_\infty^\infty$ is $\alpha$-Ahlfors regular and Loewner by Lemma \ref{lem:homogeneous}. By Theorem \ref{thm:pidoublnig} and \cite{keith2003modulus} the limit space $X_\infty^\infty$ satisfies a $(1,1)-$Poincar\'e inequality. The limit is Loewner, and thus by annular linear connectivity from \cite{heinonenkoskela} does not have cut points. Consequently, from the previous proposition, we see that $X_\infty^\infty$ is planar. Finally \ref{lemma:carpet} gives that $X_\infty^\infty$ is a carpet. 
\end{proof}

\subsection{Analytic dimension, differentiability spaces and chart functions}
In this section we show that   our admissible quotiented inverse systems have analytic dimension one;
 see
Proposition \ref{p:adiq} . This holds for all limits of systems; i.e.\ our discussion of planarity and uniformity plays no role here. It will be used in Section \ref{sec:rigidity}. We begin
by recalling some relevant background.

A metric measure space $X$ is called a \emph{differentiability} space,
 if there exist countably many measurable subsets $U_i$ and Lipschitz
 functions $\phi_i \co U_i \to \R^{n_i}$ such that $\mu(X \setminus \bigcup U_i) = 0$
 and if for any Lipschitz function $f \co X \to \R^m$ and any $i$, and almost every
 $x \in U_i$, there exists a unique $df_i \co \R^{n_i} \to \R^m$ such that
\begin{equation}\label{eq:differential}
f(y)=f(x) + df_i(x)(\phi_i(y)-\phi_i(x)) + o(d(x,y)).
\end{equation}
In general, for any continuous function $f\co X \to R^m$, then we say that it is 
\textit{Cheeger differentiable} if for almost every $x \in U_i$ and any $i$ there exists $df_i$ as before. 
Such a $df_i$ is called the Cheeger differential. Naturally, many functions
 may fail to differentiate in such a way. However, as seen in \cite{balogh} continuous 
Sobolev functions with appropriately chosen exponents (large enough so that there is 
 Sobolev embedding theorem to H\"older spaces), are Cheeger differentiable almost everywhere.

If $ M=\max_{i} n_i$, then $M$ is referred to as the analytic dimension of $X$. 
For the following proofs it will be easier to consider our monotone maps $h_k$ 
with target in $\R$. However, the proofs would be analogous for $\frac{2}{\pi}S^1$,
 and can be obtained by collapsing the circle to $\R$.  If we can take $U_i=X$ for 
some $i$, then the corresponding function $\phi_i$ is called a global chart function.

The charts $U_i$ define trivializations of the measurable co-tangent bundle $T^* X$.
 In particular, we set $T^* X|_{U_i} = \R^{n_i}$. If $U_i$ and $U_j$ intersect, then by 
the uniqueness of derivatives $\phi_i$ is differentiable with respect to $\phi_j$ almost 
everywhere, and this induces a map $\rho_{ij}(x) \co \R^{n_j} \to \R^{n_i}$, which is 
defined for almost every $x \in U_j \cap U_i$, and which is invertible. Therefore, if 
charts intersect in positive measure subsets, then $n_i=n_j$. A section of the measurable
 cotangent bundle is an almost everywhere defined function $\sigma \co X \to T^* X$, 
where almost every point is associated with a point in the fiber of the measurable 
cotangent bundle at that point, and so that it commutes (almost everywhere) via the 
change-of-chart functions $\rho_{ij}$. Each Lipschitz function $f$ has an almost
 everywhere defined section $df$, its derivative in the charts, of the measurable
 co-tangent bundle. The measurable tangent bundle is defined as the dual bundle $TX$.
 It corresponds to tangent vectors of curves, as shown in \cite{cheegerkleinerschioppa,batediff}. 
See \cite{cheegerkleinerschioppa} for a detailed discussion on measurable vector bundles
 and natural norms defined on them.

\begin{definition}\label{d:smpres}
We call a $1$-Lipschitz map $q \co (X,\mu) \to (Y,\nu)$ \textit{$C-$strongly measure preserving}
 if $q^*(\mu) = \nu$ and $\mu(B(x,r)) \leq \nu(q(B(x,r)) \leq C\mu(B(x,r))$. 
 \end{definition}
 
 The quotient maps
 in an quotiented inverse system satisfy this condition.

\begin{proposition} \label{prop:dim} Let $Y_1 \to_{q_2} Y_2 \to_{q_3} Y_3 \cdots Y_\infty$ be a directed
 system of metric measure spaces which are uniformly doubling and satisfy uniform Poincar\'e inequalities and so that all $q_{ij} = q_i \circ q_{i-1} \circ \cdots \circ q_{j+1}$  are $C$-strongly measure preserving, and so that the metric on $Y_\infty$ is the direct limit metric. If $\phi_i \co Y_i \to \R^n$ are $L$-Lipschitz global chart functions so that $\phi_i \circ q_i = \phi_{i-1}$, then the direct limit function $\phi_\infty \co Y_\infty \to \R^n$ is also $L$-Lipschitz and a global chart function. That is, $Y_\infty$ has the same analytic dimension as $Y_i$.
\end{proposition}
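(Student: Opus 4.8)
The plan is to check that $\phi_\infty$ is a well-defined $L$-Lipschitz map and then that it can serve as a global chart, by transporting the differentiable structure back and forth through the quotient maps $q_{k\infty}\co Y_k\to Y_\infty$. First, since $\phi_i\circ q_i=\phi_{i-1}$, the function $\phi_k$ is constant along the fibres of the direct-limit relation and descends to $\phi_\infty\co Y_\infty\to\R^n$ with $\phi_\infty\circ q_{k\infty}=\phi_k$; for representatives $x_k,y_k\in Y_k$ of $x,y$, letting $k\to\infty$ gives $|\phi_\infty(x)-\phi_\infty(y)|=|\phi_k(x_k)-\phi_k(y_k)|\le L\,d_{Y_k}(x_k,y_k)\to L\,d_\infty(x,y)$, so $\phi_\infty$ is $L$-Lipschitz. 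Next, uniform doubling and the uniform Poincar\'e inequality are preserved under measured Gromov--Hausdorff limits (cf.\ \cite{keith2003modulus,ChDiff99}) and $Y_k\to Y_\infty$ in that sense by Lemma \ref{lem:ghconvergence}, so $Y_\infty$ is a PI space and hence a differentiability space; fix a measurable atlas $\{(V_\alpha,\psi_\alpha)\}$, $\psi_\alpha\co V_\alpha\to\R^{N_\alpha}$, with the $V_\alpha$ partitioning $Y_\infty$. Recall also that $q_{k\infty}^*\mu_k=\mu_\infty$ and that the $q_{k\infty}$ are $1$-Lipschitz, surjective and $C$-strongly measure preserving (Definition \ref{d:smpres}).

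The key observation is that $\phi_k$ being a \emph{global} chart on $Y_k$ means its own Cheeger differential is the identity and that every Lipschitz function on $Y_k$ is Cheeger differentiable with respect to it. Applying this to the coordinates of $\psi_\alpha\circ q_{k\infty}$ produces a Cheeger differential $dq_{k\infty}(x_k)\co\R^n\to\R^{N_\alpha}$ of $q_{k\infty}$ read in the charts $\phi_k,\psi_\alpha$, and the chain rule for Cheeger differentials gives, for $\mu_\infty$-a.e.\ $x=q_{k\infty}(x_k)\in V_\alpha$,
\[
\mathrm{id}_{\R^n}=d\phi_k(x_k)=d(\phi_\infty\circ q_{k\infty})(x_k)=d\phi_\infty(x)\circ dq_{k\infty}(x_k).
\]
Hence $d\phi_\infty(x)$ is surjective and $dq_{k\infty}(x_k)$ is injective almost everywhere, so $N_\alpha\ge n$ for every $\alpha$.

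For the reverse inequality one must show that $dq_{k\infty}(x_k)$ is also \emph{surjective} almost everywhere; then it is an isomorphism, $N_\alpha=n$, and $d\phi_\infty$ is an isomorphism a.e. If this failed on a set of positive measure, then near a generic such $x_k$ the set $\psi_\alpha\big(q_{k\infty}(B_{Y_k}(x_k,r))\big)$ would lie in an $o(r)$-neighbourhood of the proper affine subspace $\psi_\alpha(x)+\mathrm{Im}\,dq_{k\infty}(x_k)$ of $\R^{N_\alpha}$; but $q_{k\infty}^*\mu_k=\mu_\infty$ together with the two-sided bound in strong measure preservation forces this image to carry $\mu_\infty$-mass comparable to $\mu_\infty(B_{Y_\infty}(x,r))$, contradicting the standard fact that a chart of a differentiability space pushes its (doubling) measure forward non-degenerately and so assigns asymptotically no mass to thin neighbourhoods of proper subspaces. (For the concrete quotiented inverse systems of this paper this step is immediate, since by the quotient condition \eqref{eq:quotient} the maps $q_{k\infty}$ are bi-Lipschitz at scales comparable to $s_k$.) With $N_\alpha=n$ and $d\phi_\infty(x)$ invertible, one finishes by a change-of-chart computation: for Lipschitz $f\co Y_\infty\to\R^m$ and a.e.\ $x\in V_\alpha$, inverting $\phi_\infty(y)-\phi_\infty(x)=d\phi_\infty(x)(\psi_\alpha(y)-\psi_\alpha(x))+o(d_\infty(x,y))$ shows $\psi_\alpha$ is differentiable with respect to $\phi_\infty$, whence $f(y)=f(x)+df_{\psi_\alpha}(x)\,(d\phi_\infty(x))^{-1}\big(\phi_\infty(y)-\phi_\infty(x)\big)+o(d_\infty(x,y))$, the linear map being unique because $d\phi_\infty(x)$ is onto. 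Thus $\phi_\infty$ is a global chart function and $Y_\infty$ has analytic dimension $n$, the same as every $Y_i$.

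The one genuinely delicate point is the surjectivity of $dq_{k\infty}$ a.e.\ in the previous paragraph --- equivalently, that no analytic dimension is gained in the limit. This is exactly where the \emph{strong} measure-preservation hypothesis (rather than mere $q^*\mu=\nu$) is used, in conjunction with the measure-non-degeneracy of chart maps of differentiability spaces (see \cite{cheegerkleinerschioppa}); the remaining ingredients --- the descent of $\phi_\infty$, its Lipschitz bound, the chain-rule identity, and the change of charts --- are routine.
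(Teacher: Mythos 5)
Your overall strategy is genuinely different from the paper's. You first grant yourself a differentiable structure on $Y_\infty$ (as a PI space) and then try to identify it with $\phi_\infty$ by running chain rules through the quotient maps $q_{k\infty}$. The paper never invokes an a priori atlas on $Y_\infty$: it pulls an arbitrary Lipschitz $f_\infty$ back to $f_i=f_\infty\circ q_{i\infty}$ on each $Y_i$, observes that the differentials $df_i$ (which exist because $\phi_i$ is a chart) are forced by uniqueness to be compatible with the $q_i$ and hence descend to a measurable candidate $df_\infty$ on $Y_\infty$, and then verifies the expansion $f_\infty(y)=f_\infty(x)+df_\infty(x)(\phi_\infty(y)-\phi_\infty(x))+o(d(x,y))$ directly at Lebesgue points of $df_\infty$, by producing (via the uniform Poincar\'e inequality, in the spirit of Lemma \ref{lem:modulusboundstronger}) curves from $x_i$ to $y_i$ in $Y_i$ that spend little time in the bad set where $df_i$ deviates from $df_\infty(x)$, and integrating the chain rule along them. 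This proves differentiability and caps the analytic dimension at $n$ in one stroke, so the delicate ``no dimension is gained in the limit'' issue never has to be confronted separately.

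That issue is exactly where your argument has a genuine gap. The inequality $N_\alpha\ge n$ is fine: the chain-rule identity $\mathrm{id}=d\phi_\infty(x)\circ dq_{k\infty}(x_k)$ is legitimate because $q_{k\infty}$ is $1$-Lipschitz (so the error terms compose) and preimages of null sets under the measure-preserving $q_{k\infty}$ are null. But the reverse inequality rests on the assertion that a chart of a differentiability space ``assigns asymptotically no mass to thin neighbourhoods of proper subspaces.'' This is not a standard fact one can simply quote: for general differentiability spaces it sits close to the deep non-degeneracy results on pushforwards of chart measures, and even in the PI setting, extracting the quantitative statement you actually need --- that $\mu_\infty\big(B(x,r)\cap\psi_\alpha^{-1}(N_{o(r)}(V))\big)=o\big(\mu_\infty(B(x,r))\big)$ for every proper affine $V$, at a.e.\ $x$ --- requires a blow-up argument through \cite{ChDiff99} or \cite{cheegerkleinerschioppa} that you do not carry out. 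Your parenthetical fallback, that for the paper's quotiented inverse systems the maps are bi-Lipschitz at scale $s_k$, does not rescue the general proposition either: bi-Lipschitz control at a single scale comparable to $r$ says nothing about the infinitesimal object $dq_{k\infty}$. As written the key step is asserted rather than proved; to repair your route you would need to establish the non-concentration property, whereas the paper's curve-based verification sidesteps it entirely.
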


\begin{proof} The existence of the direct limit function $\phi_\infty$ follows from universality and that it is $L$-Lipschitz is easy to verify. Let $f_\infty \co Y_\infty \to \R$ be a Lipschitz function, then it induces Lipschitz functions $f_i \co Y_i \to \R$, and each one is differentiable outside a null set $N_i$ with derivative $df_i \co Y_i \to \R^{n}$. Further, by enlarging the sets to another null set, we can assume that $\Lip(\langle a, \phi_i \rangle)(X) \neq 0$ for all $x \not\in N_i$ and all non-zero $a \in \R^n$, and any fixed inner product on $\R^n$. This follows from the uniqueness of $df_i$, as discussed in \cite{batespeight}.

 The spaces $Y_i$ are complete, and so by Borel-regularity each $N_i$ can be assumed Borel. Let $N_\infty \subset Y_\infty$ be the union of the images of all the sets $N_i$. This is a Suslin set as the image of Borel sets, and so measurable \cite{kechris}. It is a null set, since the quotient maps preserve measure. Next, outside the pre-images of $N_\infty$ in $Y_i$, which are null, the differential $df_i$ is defined, and must commute with $q_i$, since $\Lip(\langle a, \phi_i \rangle) \neq 0$. Indeed, $q_i \circ df_i$ would be a derivative for $f_{i-1}$, and the assumption $\Lip(\langle a, \phi_i \rangle) \neq 0$ ensures almost everywhere uniqueness. 
 
 Since $df_i$ commute with $q_i$ outside the pre-images of the measurable null-set $N_\infty$, then we can canonically define  the mesurable ``differential'' $df_{\infty} \co Y_\infty \to \R^n$. We will now show that if $x \not\in N_\infty$ is a Lebesgue point of $df_\infty$, then Equation \eqref{eq:differential} is satisfied at $x$.  Fix the vector $v = df_\infty(x)$. Let $B_\epsilon = \{y \in Y_\infty \ |\  ||df_{\infty}(y)-v|| \geq \epsilon, \text{ or } y \in N_\infty\}$. It is inconsequential which norm is used on $\R^n$ here. Next, for any $\epsilon>0$ and any $\delta>0$ there is some $r_\epsilon>0$ so that for all $r \in (0,r_{\epsilon,\delta})$ we have 
$$\frac{\mu(B_\epsilon \cap B(x,r))}{\mu(B(x,r))}<\delta.$$
 Fix such an $r$. We can define $x_i$ converging to $x$ in $Y_i$, and similarly define $B^i_\epsilon = \{y \in Y_i \  |\  ||df_{i}(y)-v|| \geq \epsilon, \text{ or } y \in N_i \}$. It is clear then that
$$\frac{\mu_i(B^i_\epsilon \cap B(x_i,r))}{\mu_i(B(x_i,r))}<C\delta$$
for large $i$ and for all fixed $r \in (0,r_{\epsilon,\delta})$.

Fix a large constant $L$ depending on the Poincar\'e inequality constants. If now $y  \in B(x,r/L)$, then for $i$ large enough also its pre-images satisfy $y_i \in B(x_i,r/L)$. Since the spaces satisfy uniform Poincar\'e inequalities, then there exists a curve $\gamma_{xy}$ connecting $x_i$ to $y_i$ with length at most $Ld(x_i,y_i)$ and so that $$\int_{\gamma_{xy}} 1_{B^i_\epsilon \cup N_{i}} ~ds \leq f(\delta),$$ with $\lim_{\delta \to 0} f(\delta) = 0$. See for example \cite{sylvesterpoincare}, or alternatively consider the modulus bounds in Keith \cite{keith2003modulus}. A similar argument is also employed below in Lemma \ref{lem:modulusboundstronger}. However, then, since $\phi_i$ is differentiable almost everywhere along $\gamma_{xy}$, by chain rule $(f \circ \gamma)' = df_i \circ \gamma \cdot (\phi_i \circ \gamma)'$, and since $df_i \approx v$ on most of $\gamma_{xy}$, we would get
$$f_i(y_i)-f_i(x_i) = df_i(x)(\phi_i(y_i)-\phi_i(x)) + g(\epsilon,\delta)r,$$
where $\lim_{\delta,\epsilon \to 0} g(\epsilon,\delta) = 0$ independent of $i$. Thus, letting $i \to \infty$, and $\delta,\epsilon \to 0$, as $r \to 0$ gives the desired equation.
\end{proof}

\begin{proposition}
\label{p:adiq} Let $X_\infty^\infty$ be the Gromov-Hausdorff limit of an admissible inverse-quotient system. Then $X_\infty^\infty$ is a differentiability space with analytic dimension $1$ and a chart given by $h_\infty^\infty \co X_\infty^\infty \to \R$.
\end{proposition}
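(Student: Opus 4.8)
The plan is to apply Proposition \ref{prop:dim} to the directed system $X_n^\infty \to X_{n+1}^\infty \to \cdots \to X_\infty^\infty$. To do this I need to check that each space $X_n^\infty$ is itself a differentiability space of analytic dimension $1$ with global chart function $h_n^\infty \co X_n^\infty \to \frac{2}{\pi}S^1$ (or $\R$), that these chart functions are compatible with the quotient maps $q^\infty_{n,n+1}$, and that the quotient maps $q^\infty_{nm}$ are $C$-strongly measure preserving. Once these pieces are in place, Proposition \ref{prop:dim} immediately yields that $h_\infty^\infty$ is a global chart function of analytic dimension $1$, and the Poincar\'e inequality and doubling established in Proposition \ref{thm:pidoublnig}, together with the Gromov-Hausdorff convergence (Lemma \ref{lem:ghconvergence}, Lemma \ref{lem:gromhauslimit}), give that $X_\infty^\infty$ is a PI space, hence a differentiability space.

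First I would establish that each $X_n^\infty$ is a differentiability space of analytic dimension $1$ with chart $h_n^\infty$. This is essentially the content of \cite{cheegerkleinerinverse}: the rows of the quotiented inverse system are admissible inverse systems in the sense of that paper, and there it is shown that such inverse limits are PI spaces whose analytic dimension is $1$, with the monotone coordinate map playing the role of the chart function. The compatibility $h_n^\infty \circ q^\infty_{n,n+1} = h_{n+1}^\infty$ follows by passing to the limit in the compatibility condition (5) of Definition \ref{d:iqs}, $h^i_k \circ q^i_k = h^i_{k-1}$, which survives the inverse limit over the index $i$.

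Next I would verify the $C$-strong measure preservation of the $q^\infty_{nm}$. The compatibility-with-the-measure condition (4) gives $(q^i_k)^*(\mu^i_{k-1}) = \mu^i_k$ at each finite stage, and this passes to the limit, giving $(q^\infty_{nm})^*(\mu^\infty_n) = \mu^\infty_m$. The two-sided bound $\mu(B(x,r)) \le \nu(q(B(x,r))) \le C\mu(B(x,r))$ in Definition \ref{d:smpres} follows from $h$-type control: the lower bound is immediate since $q$ is $1$-Lipschitz and measure preserving, $B(x,r) \subset q^{-1}(q(B(x,r)))$; for the upper bound one uses the quotient condition \eqref{eq:quotient} (which, as remarked before Lemma \ref{lem:qdist}, holds for the $X^\infty_k$ as well) to see that $q^{-1}(q(B(x,r)))$ has diameter $\lesssim r$ at the relevant scales, together with the doubling property from Lemma \ref{lem:doubling}/Proposition \ref{thm:pidoublnig}, so that $\nu(q(B(x,r))) = \mu(q^{-1}(q(B(x,r)))) \le \mu(B(x, Cr)) \le C' \mu(B(x,r))$.

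The main obstacle I expect is the verification that the hypotheses of Proposition \ref{prop:dim} are met \emph{with uniform constants} — in particular the uniform doubling and uniform Poincar\'e inequalities for the family $\{X_n^\infty\}_n$, and the uniform strong-measure-preservation constant $C$ for \emph{all} the composed maps $q^\infty_{nm}$ simultaneously. The uniformity of the PI and doubling constants is exactly what the admissibility axioms were designed to guarantee (Proposition \ref{thm:pidoublnig}), so this is bookkeeping rather than new work, but it must be checked that the constant $C$ in Definition \ref{d:smpres} does not degenerate as $n,m \to \infty$; this is where the scale-uniform form of the quotient condition \eqref{eq:quotient} (with $\delta, C$ independent of $i,k$) is essential. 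Once this uniformity is in hand, Proposition \ref{prop:dim} applies verbatim and the conclusion follows.
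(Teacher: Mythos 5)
Your proposal is correct and follows essentially the same route as the paper: the paper's proof likewise cites \cite{cheegerkleinerinverse} for the analytic one-dimensionality of each row limit $X_i^\infty$ with chart $h_i^\infty$, invokes the uniform Poincar\'e inequalities from Lemma \ref{lem:piatscale}, notes that the quotient maps are strongly measure preserving, and then applies Proposition \ref{prop:dim}. Your write-up is somewhat more explicit than the paper's (which asserts strong measure preservation without the diameter/doubling computation you supply), but the argument is the same.
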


\begin{proof} By \cite{cheegerkleinerinverse} each $X_\infty^i$ is analytically one dimensional and a differentiability space with the given chart. Further, since $X_\infty^i$ satisfy uniform Poincar\'e inequalities by Lemma \ref{lem:piatscale} and since the quotient maps of admissible quotient systems are strongly measure preserving, we get from Proposition \ref{prop:dim} that $X_\infty^\infty$ is a differentiability space with the given chart.
\end{proof}

 \section{Admissible quotiented inverse systems constructed
via substitution rules}
\label{s:gsiqs}

In this section, we will construct special quotiented inverse systems whose  limits have
explicit quasisymmetric embeddings in $\R^2$.
They are constructed 
by iteratively taking copies of the space and identifying these copies with
 ``two sets of identifications'': identifications satisfying inverse limit axioms and additional
 identifications within vertex stars. 
These latter identifications are used to ensure and maintain planarity.
 These methods do not exhaust all quotiented inverse systems which lead to planar PI-spaces
as discussed in Section \ref{sec:generalconstruction}. However,
they do suffice to construct for each
$Q,Q'$, the infinitely many Loewner carpets as in Theorem \ref{thm:existence}.
\vskip2mm

Let $G_1$ be any bounded degree graph with edge length $s_1=1$.
 Let $h_1 \co G_1 \to \frac{4}{2\pi}S^1$ be a monotone function.
Define $\mu_1$  to be Lebesgue measure on each edge with unit mass. 

Let $1 \leq K_i  \leq M, 4 \leq N_i \leq M$ be  sequences of integers. Inductively, define 
spaces $\overline{G_{k}}$,$G_{k}$ equipped with measures $\overline{\mu_{k}}, \mu_{k}$ and maps $\overline{h}_k \co \overline{G_{k+1}} \to \frac{4}{2\pi}S^1, h_k  \co G_{k+1} \to \frac{4}{2\pi}S^1$ and scales $s_k$ as follows. 

Recursively define $s_{k+1} = \frac{s_k}{N_k}$
Let $Y_{k+1} = \sqcup_{i=1}^{K_{k}} G_{k}^i$ be the disjoint union of spaces, 
where $G_k^i$ are identical disjoint copies of $G_k$. To each copy assign
the measure which $1/K_{k} $ times the measure of $G_k$.
 Let $\overline{\pi}_k \co Y_{k+1} \to G_k$ denote the map which
the identifies the copies with the original $G_k$. 

Next, let $\sim_{k+1}^I$ be any equivalence relation on $Y_k$
 that only identifies  pairs of points $x,y \in Y_{k+1}$ if the following hold.
\begin{itemize}

\item[1)]
$x \in G_k^i,y \in G_k^j$ for some $i \neq j$.
\vskip1mm

\item[2)]
$\overline{\pi}_k(x)=\overline{\pi}_k(y)$.
\vskip1mm

\item[3)]
 $\overline{\pi}_k(x)$ is a
subdivision point in the graph $G_k^{/N_k}$, which is not an original vertex of $G_k^i$. 
\end{itemize}

\begin{remark}
\label{r:usually}
Usually only a subset of the subdivision points get identified by $\overline{\pi}_k$. 
\end{remark}

Denote the quotient space $\overline{G_{k+1}}=Y_{k+1} / \sim_{k+1}^I$ and assign to
 it the push-forward measure $\overline{\mu_{k+1}}$.
The map $h_k$ induces trivially a map on $Y_{k+1}$ and therefore also a map $\overline{h}_{k+1}$ 
on $\overline{G_{k+1}}$, since the identifications correspond to matching points in $G_k$.
For a point $x \in G_k$ denote by $(x,i)$ its copy in $G_k^i$. We also use $(x,i)$ to denote 
a point in $\overline{G_{k+1}}$ although the same point can have different representations in 
this way. For a point $(x,i)$ in either $\overline{G_{k+1}}$ or $Y_{k+1}$, we will call $i$ its \emph{label}. The metric on $\overline{G_{k+1}}$ is the quotient metric.
\vskip2mm

In order to satisfy Condition (6) in Definition \ref{d:iqs}, it is necessary 
that the inverse limit identifications are sufficiently dense so as
to ensure that $d_{\overline{G_{k+1}}}((q,i),(q,j)) \leq C s_{k}$ for some $C$ independent of $q,i,j$. 
This will hold under the following condition. (The terminology is adapted from Laakso \cite{laaksospace}.)
\begin{definition}\label{d:wormhole}
The pair of identified points $(x,i) \sim^I_{k+1} (x,j)$ in $Y_{k+1}$ as well as the resulting point in $\overline{G_{k+1}}$ are called \textit{wormholes}. 
For each edge $e$ in $G_k$, we associate a \textit{wormhole graph} $G_{k,e}=(V_k, E_{k,e})$. The vertex set is given by the set of labels $V_k =\{1, \dots, K_k\}$ and 
edge set $E_{k,e}$ by all pairs $(i,j) \in E$ if there is an $x \in e$ such that $(x,i) \sim_{k+1}^I (x,j)$.
\end{definition}
 Otherwise put, the graph $G_{k,e}$ encodes when one can change labels $i$ to $j$ while remaining within copies of 
a single edge in $G_k$. The connectivity means that one can go back and forth along copies of an edge $e$ in $G_k$ within $\overline{G_{k+1}}$ to change the labels. In particular, $d((q,i),(q,j)) \leq (L+2) s_{k} \leq Cs_{k}$, where $L$ is the length of the longest path in the wormhole graph $G_{k,e}$ for the edge $e$ containing $q$.

 

The identifications $\sim^I_{k+1}$ are restricted to identify points 
with $\overline{\pi}_k(x)=\overline{\pi}_k(y)$. Next, we allow additional identifications.
 These identifications are denoted by $\sim_{k+1}^Q$, for which the following conditions must hold.

\begin{enumerate}
 \item The condition $(x,i) \sim_{k+1}^Q (y,j)$ implies that $h_{k}(x)=h_k(y)$. That is, $x,y$ might 
correspond to different points in $G_{k}$, but must have the same value for the monotone function.
\vskip1mm

 \item The condition $(x,i) \sim_{k+1}^Q (y,j)$ implies that there is a vertex $v$ in $G_k$ 
such that $x,y \in \str^{G_k}_{v}$ and $|h_k(x)-h_k(v)|\leq \frac{s_k}{2}-s_{k+1}$. 
\vskip1mm

 \item The condition $(x,i)\sim_{k+1}^Q (y,j)$ implies that $x,y$ are vertices in the
 subdivision $G_k^{/N_k}$ and not original vertices of $G_k$. This prevents too many 
identifications and ensures separation between identification points.
\vskip1mm

 \item Each equivalence class of $\sim_{k+1}^Q$ has size at most some
 universal $\Delta$ for some $\Delta$, and the equivalence classes of $\sim_{k+1}^Q$
 are disjoint from the ones for $\sim_{k+1}^I$.
\end{enumerate}

Assuming that (1)--(4) above hold, 
we define the graph
 $G_{k+1} := \overline{G_{k+1}} / \sim_{k+1}^Q$ which we equip
with the path metric and the push-forward measure $\mu_{k+1}$.

 Denote the quotient maps by $q_k^k \co \overline{G_{k+1}} \to G_{k+1}$, and the natural map by $\pi_{k}^k \co \overline{G_{k+1}} \to G_k$.
\begin{remark}
\label{r:allowed}
Note: To be clear, we allow that $\sim_{k+1}^Q$ does not make any identifications at 
all.
\end{remark}

 In order to define a quotiented inverse system, we begin by defining
 $X_k^k = G_{k}, X_k^{k+1} = \overline{G_{k+1}}$. Next, we will recursively 
define $X_k^l$ for $l \geq k$ and define maps 
$\pi_{k}^{l-1}$ and $q^l_{k+1}$, column by column. Since the cases $k=k,k+1$ are already defined, we will only need to consider
$1\leq k+1 <l$. The details are given below, where we assume that $X_n^l$ has been defined for all $n \leq l$ and  we  proceed to define $X_n^{l+1}$. 

 \begin{figure}[h!]
  \centering
    \includegraphics[width=0.4\textwidth]{./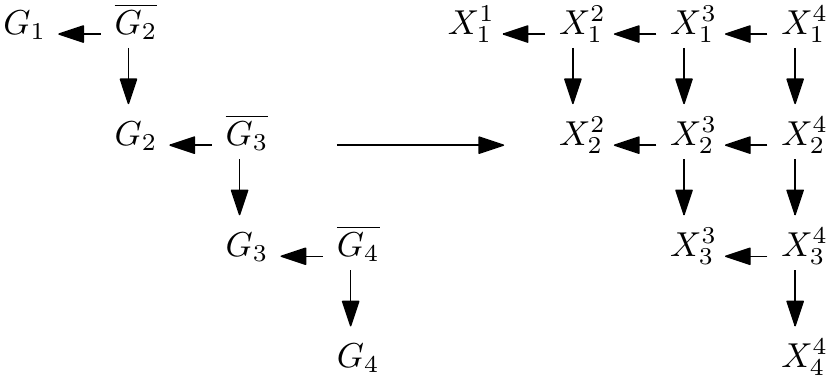}
    \caption{The graphs $G_k$ are converted into a quotiented inverse system by ``lifting identifications''.}
    \label{fig:conversio}
\end{figure}

\subsection{Additional details}
Define $X_{n}^{l+1}$ as follows. Take $M_{l}$ copies of $X_{n}^{l}$. Denote
 the copies by $X_n^{l,i}$. Denote the points in different copies by $(x,i)$,
 with $i=1, \dots, M_l$.  Again the index $i$ is referred to as the label of the point $(x,i)$. Then define an identification $\sim^I_{l+1,n}$ by

$$(x,i) \sim^{I}_{l+1,n} (x,j) \text{ if }(q^l_{nl}(x),i) \sim_{l+1}^I (q^l_{nl}(x),j).$$
Since, $\sim^{I}_{l+1}$ only identifies subdivision vertices, we must have that $x \in X_n^{l}$ is a vertex in the subdivided graph $(X_n^{l})^{/N_l}$. Finally, define $X_n^{l+1} = \sqcup_{i=1}^{M_l} X_n^{l,i} / \sim^{I}_{l+1,n}$ and equip it with the quotient metric. As above, we continue to denote points of $X_n^{l+1}$ by $(x,i)$, despite the ambiguity in the label $i$. 
 The measures on $X_n^{l+1}$ are defined by averaging the measures on each
copy of $X_n^l$.

Since $\sim^{I}_{l+1,n}$ defines a valid inverse limit step, we have a natural map 
$\pi_n^l \co X_n^{l+1} \to X_n^l$ (which sends $(x,i)$ to $x$ and is well defined).
 Also, the map $q_{n}^l \co X_{n-1}^{l} \to X_n^l$ lifts to a map $q_{n}^{l+1} \co X_{n-1}^l \to X_n^l$
for $n \geq 2$, by defining as follows:
$$q_n^{l+1}(x,i) :=(q_n^l(x),i).$$
\begin{remark}
It is easy to verify that the above is well-defined, since it respects the quotient relations $\sim^Q$ and $\sim^I$. 
\end{remark}

\begin{theorem} \label{thm:replacementpi}
 If the wormhole graphs
 are connected, then any construction as above provides 
an admissible inverse
 quotient system with spaces $X_k^i$.
\end{theorem}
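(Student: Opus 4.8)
The plan is to verify, one by one, that the system $\{X_k^i\}$ constructed via substitution rules satisfies conditions (1)--(10) of Definition \ref{d:iqs}. Most of these are nearly formal consequences of the construction; the genuine content is in two places: (a) the diameter bound (6) for the fibers of $\pi_k^i$, which is where the connectivity of the wormhole graphs is used, and (b) the quotient condition (10). For (10) the strategy is \emph{not} to check it directly, but to verify the \emph{star-quotient condition} \eqref{eq:quotB} from Subsection \ref{subsec:quotientcondition}, and then invoke Lemma \ref{lem:quotcond}, which says that conditions (1)--(9) plus the star-quotient condition imply admissibility. So the real work reduces to checking (1)--(9) and the star-quotient condition.

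First I would dispense with the easy items. The simplicial property (1) holds because all the identifications $\sim^I$, $\sim^Q$ (and their lifts $\sim^I_{l+1,n}$) only identify subdivision vertices or whole edges, and the maps $\pi$, $q$ were \emph{defined} to send $(x,i)\mapsto x$ or $(x,i)\mapsto(q(x),i)$, which are simplicial onto the appropriately subdivided graph. Connectivity (2) is inherited: $X_1^1=G_1$ is connected, taking disjoint copies and then quotienting by a nonempty identification relation (the $\sim^I$ wormhole identifications are nonempty by hypothesis, since the wormhole graphs are connected on at least two vertices whenever $K_k\geq 2$; if $K_k=1$ there is a single copy and connectivity is trivial) preserves connectedness. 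Bounded geometry (3) --- bounded degree and comparability of measures of adjacent edges --- follows because at each step the degree of a vertex is increased only by the bounded number $\Delta$ of $\sim^Q$-identifications and the bounded wormhole degree, and the measures are rescaled uniformly by $1/K_k$ (resp.\ $1/M_l$) on each copy, with $K_k,M_l\leq M$. Compatibility with the measure (4), with monotonicity (5), openness (7), and surjectivity (8) are all immediate from the defining formulas: averaging measures over copies is exactly what makes $q^*,\pi^*$ preserve measure; $h$ descends because identifications respect $h$-values (conditions (1),(2) on $\sim^Q$ and the analogous property of $\sim^I$); the maps $(x,i)\mapsto x$ and $(x,i)\mapsto (q(x),i)$ are open and surjective on graphs since they are simplicial quotient/projection maps. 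The balancing condition (9) holds because $\pi_k^i$ restricted to the star of a vertex $v'$ in $X_k^{i+1}$ is, up to the labeling, a union of $K$ isometric copies of the subdivided star at $v=\pi(v')$ in $X_k^i$, each carrying $1/K$ of the mass, so $(\pi_k^i)^*(\mu|_{\str_{v'}})$ is a constant multiple of $\mu|_{\str_v}$.

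The two substantive steps remain. For the diameter bound (6), $\diam((\pi_k^i)^{-1}(p))\leq Cs_i$: a fiber of $\pi_n^l$ over a point $p=q_{nl}^l\cdots$ lying in (a copy of) an edge $e$ consists of the points $(p,j)$ for all labels $j$ that can be reached. If $p$ is a wormhole or near one, all these copies are glued together by $\sim^I$; in general one moves from label $i$ to label $j$ by traveling within copies of the single edge $e$ of $G_k$ (or its ancestor) following a path in the wormhole graph $G_{k,e}$, incurring length at most (length of that path)$\cdot s_i$, which is bounded by $(L+2)s_i$ where $L$ bounds the diameter of the wormhole graphs --- finite precisely because the wormhole graphs are connected (and finite). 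This is exactly where the hypothesis is used, and I expect this to be the main obstacle: one must be careful that the relevant distance is measured in the quotient \emph{path} metric on $X_n^{l+1}$ and that the wormhole-path argument survives the further $\sim^Q$-identifications and the passage through all intermediate columns, so the constant $C$ does not blow up with $l$. Finally, for the star-quotient condition \eqref{eq:quotB}: the identifications defining $q^k_k$ are precisely the $\sim^Q$-identifications, and condition (2) in the list defining $\sim^Q$ says that any identified pair $(x,i)\sim^Q_{k+1}(y,j)$ has $x,y\in\str^{G_k}_v$ with $|h_k(x)-h_k(v)|\leq \frac{s_k}{2}-s_{k+1}$, which (taking $\alpha=1$, or any $\alpha>1/2$ with $N_{k+1}$ large enough that $s_{k+1}\geq \alpha s_{k+2}$) gives exactly $(q^k_k)^{-1}(x)\subset (\pi^{k-1}_{k-1})^{-1}(\overline{B(v,\frac{s_k}{2}-\alpha s_{k+1})})$ once one passes $h$-values back to metric balls using that $h$ is an isometry on edges. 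With the star-quotient condition in hand, Lemma \ref{lem:quotcond} yields condition (10), and Lemmas \ref{lem:piatscale}, \ref{lem:doubling}, Proposition \ref{thm:pidoublnig} then give the uniform doubling and $(1,1)$-Poincar\'e inequalities, completing the verification that the system is admissible. $\square$
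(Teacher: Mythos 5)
Your proposal is correct and follows essentially the same route as the paper: verify conditions (1)--(9) directly (most being formal consequences of identifying copies along matching $h$-values and distributing mass equally), use the connectivity of the wormhole graphs for the diameter bound (6), and obtain the quotient condition (10) by checking the star-quotient condition and invoking Lemma \ref{lem:quotcond}. The one point you flag as a possible obstacle --- whether the wormhole-path argument for (6) survives passage to the higher columns $\pi^l_k$, $l>k$ --- is resolved in the paper exactly by the definition of $\sim^I_{l+1,n}$: every wormhole of $\sim^I_{l+1}$ lifts to a wormhole of $X_n^{l+1}$, so the connecting path in $X_l^{l+1}$ lifts with the same length bound, and the additional $\sim^Q$-identifications can only decrease distances.
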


\begin{proof}The numbers below refer to the conditions in Definition \ref{d:iqs}. 
Conditions (1), (5) and (8) are automatic from the fact that
the spaces arise via taking copies and identifying them along points
where the monotone function agrees. The conditions (4) and (9) follow since we distribute 
mass equally among copies. Condition (3) follows since we only identify copies along 
subdivision points, and each equivalence class has size at most $\max(\Delta, M_i)$.
 Thus, each vertex has degree at most $2 \max(\Delta,M_i)$.

The openness in condition $(7)$ follows since $X^{i+1}_k$ consists of
copies of $X^i_k$ identified along points $(x,i) \sim (x,j)$ for some
$x \in X^i_k$. That is, copies are identified along matching points.

The condition $(2)$ follows from $(6)$ and the fact that the relevant maps are simplicial. Condition $(6)$ follows for  $\pi^{k+1}_k$ from the connectivity of 
the wormhole graphs. Indeed, given $x \in G_k=X_k^k$ on an edge $e$ of $G_k=X_k^k$, the points $(x,i), (x,j)\in (\pi^{k+1}_k)^{-1}(x) \subset X^{k+1}_k$ can be connected by changing an arbitrary label $i$ to another $j$ within the wormhole graph $G_{k,e}$. This path then corresponds to a path of moving back and forth within copies of the edge $e$ in $(\pi^{k+1}_k)^{-1}(e)$ and passing through wormholes of length at most $Cs_k$. 

Next, we need to verify conditions $(6)$ for $\pi^{l}_k$ for $l>k$.  Take $x \in X_k^{l}$ and $(x,i),(x,j) \in  (\pi^{l}_k)^{-1}(x)$.  Given the previous paragraph, we can connect $(q^{l}_{kl}(x),i)$ to $(q^{l}_{kl}(x),i)$ with length bounded by $Cs_l$ only using wormholes given by $\sim^I_{l+1}$. Each of these wormholes is also a wormhole in $X^{l+1}_k$ by the definition of $\sim^I_{l+1,n}$. Thus, the path lifts to a path in $X_k^{l+1}$ with the same diameter bound.


Finally, the quotient condition $(10)$ follows by verifying the star-quotient condition 
of Subsection \ref{subsec:quotientcondition}. 
Namely, the sets $(q^{k}_k)^{-1}(x)$ for different $x \in X_k^k$ consist 
of  equivalence classes of $\sim^Q_{k}$, which are contained within vertex stars and 
which are within distance $ \frac{s_k}{2}-s_{k+1}$ from the center. 
This is exactly the star-quotient condition.
\end{proof}

\subsection{Substitution rules}
Since the quotient maps $q_k$ are only allowed to identify points within vertex stars, 
we can obtain $G_{k+1}$ from $G_k$ by using various substitution rules, 
in which vertex stars with the same ``type'' are iteratively replaced by new graphs 
containing multiple copies of the initial vertex star. These substitution rules consist of  
rules for subdivided vertex stars $\str^{G^{/2}}_{v}$ in the subdivided graph $G^{/2}$ for $v$ a vertex in $G_k$. 

\begin{remark}
\label{r:note}
Note that these vertex stars may have different types,
 depending on the degree, how $h_k$ is defined (specifying in and out edges), and on
additional data that may be 
specified (for example, the way the previous stage $G_k$ is embedded in the plane). 
The collection of all this data defines the ``type'' of a vertex.
\end{remark}
 At stage $k$ we will give a rule 
which specifies how 
each star of a given type is replaced by
 $K_k$ copies of its self with two sets of identifications, 
which are subject to the same requirements as above,
and to the following  additional requirements  as well:
  
\begin{enumerate}
 \item The wormhole graphs defined above are required to be connected.
\vskip1mm


\item
The previous identifications must lead to new vertices which fall into the same collection
of predefined types.\footnote{For example, degrees of verticies shouldn't increase beyond a certain limit. And, if additional data such as a planar embedding is preserved, then this data must be translated to the new graph.} 
\end{enumerate}

\begin{remark} 
\label{rmk:hausdim}
Suppose $G_{k+1}$ is constructed from $G_k$ by taking $K_k$ copies and 
subdividing by a factor $N_k$, and assigning equal measure to each edge. If 
$e$ is an edge of $G_{k+1}$, it follows by induction that 
$$
\mu_{k+1}(e) = \prod_{i=1}^k \frac{1}{K_i} \prod_{i=1}^k \frac{1}{N_i}\, .
$$
 Define $h(r) = 1$ for $r \geq 1$, and 
$$
h(r) = \prod_{i=1}^k \frac{1}{K_i} \prod_{i=1}^k \frac{1}{N_i} \qquad
{\rm for \,\,} s_{k+1}\leq r <s_k\, .
$$
Then we get an increasing function such that, by Lemma \ref{lem:homogeneous}, 
the limit space $X_\infty$ of $G_k$ will be a $h$-uniform space with 
constant $C_h$ depending only on $N,M,\Delta$. In particular, if we are using a
 finite set of substitution rules, then independent of the ordering of such
 substitution rules we get a uniform $C_h$.
\end{remark}
 
\section{Substitution rules and snowflake embeddings}
\label{sec:planarinversequot}

 \begin{figure}[h!]
  \centering
    \includegraphics[width=.6\textwidth]{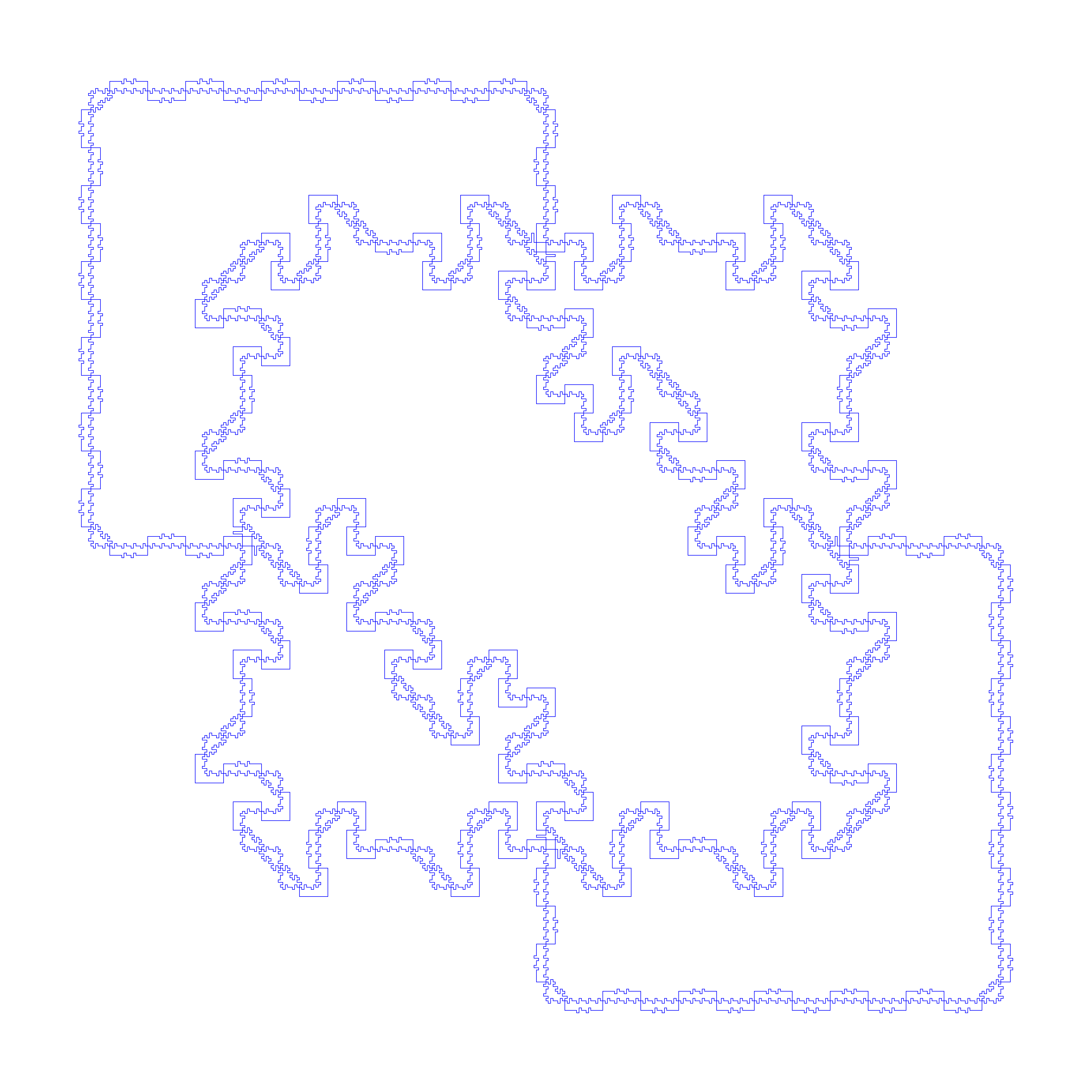}
    \caption{Three steps of our construction of an explicit planar Loewner carpet drawn using Python.}
    \label{fig:step2}
\end{figure}

In this section we use substitution rules to construct the Loewner carpets of Theorem \ref{thm:existence} as limits of certain explicit admissible quotiented inverse systems  and give their snowflake embeddings. These will be constructed using the framework of substitution rules as a special case of the construction in Section \ref{s:gsiqs}. In particular, some of the previous examples may fail to be planar. Additionally, we will impose many additional simplifying  conditions that allow us to present the examples explicitly. 

We will first do this by describing the general setting and the proof of the snow-flake embedding in Subsections \ref{ss:framework}, \ref{ss:snowflake}, and then giving a simple example in Subsection \ref{ss:basic} and then the more complicated examples in Subsection \ref{sec:generalexamples}. It is these latter constructions that resolve Theorem \ref{thm:existence}, while the simpler ones allow to give an explicit drawing in Figure \ref{fig:step2}.

\subsection{Framework}\label{ss:framework}

In what follows, the notation from Section \ref{s:gsiqs} will still be in force. The graphs $G_k$ will each be subgraphs of finer and finer integer lattices, and will possess two metrics: one as a subset of the plane and another given by the path-metric with rescaled edge lengths. The embedding will be the limit of the identity map between these different metrics.

Start with $G_0$ which is the unit square
with vertices at $(0,0),(1,0),(1,1)$ and $(0,1)$. Each graph $G_k$ will then be a subgraph of a finer integer lattice $l_k \Z^2$, where the lattice has edge length $l_k$. Set $l_0=1$. Throughout, we will assume that $l_{k+1} \leq (16)^{-1} l_k$, in order to obtain a summability condition below in Equation \eqref{eq:neighs}. The path metric on $G_k$ will be with respect to edge lengths $s_k$.

Further, we will maintain maps $h_k \co G_k \to \frac{2}{\pi}S^1$, which are isometries when restricted to edges. 
First, define the isometry (with respect to the path metric) $h_0 \co G_0 \to \frac{2}{\pi}S^1$. Since $\frac{2}{\pi} S^1$ has a natural  clockwise orientation, this induces for each edge an orientation. Therefore, $G_0$ becomes an directed graph, where each edge is directed clockwise. In $G_k$ each edge is directed so that $h_k$ is clockwise when restricted to the edge. For each of the vertices, the edges adjacent
to it are divided into in-edges and out-edges, depending on the direction of the edges.

The graph $G_{k+1}$ is obtained by substitution rules from $G_k$. For each  vertex $v \in G_k$ consider the $2^{-1} s_k$-neighborhood in the graph $G_k$, $\overline{B(v,s_k/2)} = B_v$, which is a $2^{-1}l_k$-neighborhood in the lattice.  In our substitution rules we will maintain the condition that these vertex stars have one of four types; which are determined by the identification of $G_k$ with a subgraph of $l_k\Z^2$.

\begin{itemize}
 \item[A.] If $v$ has degree two, then $B_v$ consists of an in-edge and an out-edge which form a right angle turning left.
 \vskip1mm
 \item[B.] As in A, but the turn is to the right.
  \vskip1mm
 \item[C.] The vertex $v$ has degree two and $B_v$ consists of two parallel edges adjacent to $v$.
  \vskip1mm
 \item[D.] The degree at $v$ is four with two in-edges and two out edges. The in edges are always adjacent to each other.
\end{itemize}

For the constructions in Subsection \ref{ss:basic}, we will additionally assign to each vertex a 
color, red or blue. This color is chosen so that neighboring vertices do not have 
matching color. For example, since each $v = (ml_{k},nl_k) \in l_k \Z^2$, then we 
can color a vertex red if $m+n$ is even, and blue if $m+n$ is odd. The 
substitution rules will also depend on the orientation of the edges in the lattice which is determined by 
specifying a suitably rotated version of one of the above types.


In the replacement steps, each neighborhood of a vertex $v \in G_k$ is 
replaced, abstractly, by $K_{k+1}$ copies of the neighborhood with two sets of 
identifications. These copies are then identified with isomorphic subgraphs of 
finer subgrids, and so that $v$, as a grid point, also is one of the grid points in 
the refined graph. These isomorphic copies are drawn in such a way that 
neighboring pieces only intersect at their boundaries. This drawing specifies for each new vertex its assigned type. 
The union of all the resulting copies is $G_{k+1}$ equipped with a rescaled 
metric with edge length $s_{k+1}$. Since this was constructed from copies  
$G_k$ identified at points where $h_k$ agrees, then it gives an induced map $h_{k+1} \co G_{k+1} \to \frac{2}{\pi}S^1$.

\subsection{Snow-flake embeddings}\label{ss:snowflake}

The graphs $G_k$ arise as planar subsets via replacing the vertices
 in the grid (depending on their degree, and type) by the substitution rules shown below.
Since the graphs are subgraphs of the grids, this gives natural drawings
 $f_k \co G_{k} \to l_k \Z^2 \subset \R^2$ which are homeomorphisms. We will identify $G_k$ 
with its image in the plane, but keep track of the fact that there are two metrics: one on the image, and one given by the path metric on $G_k$ with edge length $s_k$. Further, to ensure convergence in the plane, we assume that the vertices of $G_k$ are always a subset of vertices of $G_{k+1}$ (as a subset of the plane).  

We will assume that
\vskip2mm
\begin{quote}\it there is an $\alpha \in (0,1)$ so that $l_k \asymp s_k^\alpha$.
\end{quote}
\vskip2mm
This $\alpha$ will be the snowflake exponent. Without this assumption, we would obtain other moduli of continuity, and would still be able to prove quasisymmetric embeddings. This restriction, however, suffices for our examples.

The embeddings $f_k$, and the corresponding substitution rules, will be chosen so that each copy of a point $x$ on an edge $e \in G_k$ in $G_{k+1}$ will be within a $5l_k$-neighborhood of the previous edge $f_k(e)$. Note, if $A \subset X$ is a subset of a metric space $X$, then denote its $\delta$-neighborhood by
$$N_\delta(A) \defeq \bigcup_{a \in A} B(a,\delta).$$

That is, these embeddings  satisfy the following property:
\vskip2mm

{\it  If $e_0$ is an edge of $G_k=X_k^k$, and if $e_1= q_{k+1}^{k+1}((\pi^k_k)^{-1}(e_0))$, then $e_1$ is a union of edges in $G_{k+1}=X_{k+1}^{k+1}$ such that 
\begin{equation}\label{eq:inclusion}
f_{k+1}(e_1) \subset N_{5 l_{k+1}}(f_k(e_0)).
\end{equation}}
\vskip2mm

 Next, we  inductively define 
\begin{align}
\label{e:induct}
e_l & = q_{k+l}^{k+l}((\pi^{k+l-1}_{k+l-1})^{-1}(e_{l-1}))\notag\\
       &=q_{k,k+l}^{k+l}((\pi^{k+l,k}_{k})^{-1}(e_0))\, .
\end{align}
 Then, since $e_{l}$ is a union of edges in $X_{k+l}^{k+l}$, we can repeat edgewise
 the first part of the construction to obtain
 \begin{equation}\label{eq:inclusion-2}
 f_{k+l+1}(e_{l+1}) \subset N_{5l_{k+l+1}}(f_{k+l}(e_l)).
\end{equation}
Iterating \ref{eq:inclusion-2} and using the geometric decay $l_{k+1} \leq (16)^{-1}l_k$ we get:

\begin{equation}\label{eq:neighs}
f_{k+l}(e_{l}) \subset N_{5l_k\sum_{n=1}^\infty \frac{1}{16^k}}(f_{k}(e_k)) \subset \overline{N_{l_k/3}(f_{k}(e_k))}.
\end{equation}

Now, let $x \in X_\infty^\infty$, and let $x_n \in X_n^n$ such that $x \in q^{\infty}_{k,\infty}((\pi^{\infty,k}_k)^{-1}(x_n^n)) $. Recall
 from Subsection \ref{subsec:approx} 
 that such sequences are called \emph{approximating sequences}.
We can choose edges $e_n$ containing $x_n$, such that 
$e_{k+l} \subset  q_{k,k+l}^{k+l}((\pi^{k+l,l}_{k})^{-1}(e_k))$. By the same argument leading to estimate \eqref{eq:neighs} we obtain

$$
f_{k+l}(x_{k+l}) \subset \overline{N_{l_k/3}(f_{k}(e_k))}\, .
$$
In particular, the sequence $f_k(x_k)$ is a Cauchy sequence, and thus we can define $f_\infty(x)$ as its limit with
\begin{equation}\label{eq:limitpt}
f_\infty(x) \in \overline{N_{l_k/3}(f_{k}(e_k))}.
\end{equation} 
Moreover, we get the estimate
\begin{equation}\label{eq:cauchyestimates}
|f_\infty(x)-f_n(x_n)| \leq 2l_n.
\end{equation}

Also, if $x,y \in X_\infty^\infty$ are such that $x_k, y_k$ belong to non-adjacent edges $e^x_k, e^y_k$, then from Equation \eqref{eq:limitpt} and since the distance of non-adjacent edges in the lattice $l_k\Z^2$ is $l_k$, we get:
\begin{equation}\label{eq:lowerbound}
|f_\infty(x)-f_\infty(y)| \geq d(\overline{N_{l_k/3}(f_{k}(e^x_k))}, \overline{N_{l_k/3}(f_{k}(e^y_k))}) \geq l_{k}/3.
\end{equation}

At this juncture, what is left to prove is
 that the limit spaces $X_\infty^\infty$ within the above framework are PI-spaces admitting quasisymmetric embeddings. In the next sections, we will additionally specify the substitution rules and $s_k$ so that $X_\infty^\infty$ is Ahlfors regular and thus Loewner with the desired exponents.
 
 \begin{lemma} \label{lem:snow} Suppose $X_\infty^\infty$, $\alpha$ and $f_\infty$ are as above. The space $X_\infty^\infty$ satisfies a $(1,1)$ Poincar\'e-inequality and $f_\infty$ is an $\alpha$-snowflake embedding.
 \end{lemma}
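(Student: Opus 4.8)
The Poincaré inequality is the easy half: $X_\infty^\infty$ is the measured Gromov-Hausdorff limit of the graphs $X_n^n$ (Lemma \ref{lem:ghconvergence}, Lemma \ref{lem:gromhauslimit}), these satisfy a uniform local $(1,1)$-Poincaré inequality at unit scale together with uniform doubling by Proposition \ref{thm:pidoublnig}, and the $(1,1)$-Poincaré inequality together with doubling is preserved under measured Gromov-Hausdorff limits (see \cite{keith2003modulus}, \cite{ChDiff99}). So I would dispose of that in one or two sentences and spend the rest of the argument on the snowflake claim.

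For the snowflake statement I need to show there is a constant $C$ so that for all $x,y \in X_\infty^\infty$ with $d(x,y) = r \le 1$,
$$
C^{-1}\, r^\alpha \le |f_\infty(x) - f_\infty(y)| \le C\, r^\alpha\, .
$$
The plan is to fix $x,y$, set $r = d(x,y)$, and choose the scale $k$ so that (comparably) $s_k \asymp r$ — more precisely pick $k$ minimal with $\max\{\delta s_{k+1}, 2Ls_k\} \le r$, exactly the regime of Lemma \ref{lem:approx}. Then by Lemma \ref{lem:approx} there are approximants $x_k, y_k \in X_k^k$ with $d(x_k,y_k) \asymp r \asymp s_k$. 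Since $X_k^k$ has bounded degree and edge length $s_k$, the combinatorial distance between $x_k$ and $y_k$ in the graph is $\asymp 1$; in particular, after passing to $k$ possibly enlarged by a bounded amount, $x_k$ and $y_k$ lie on non-adjacent edges $e_k^x, e_k^y$ of $X_k^k$ whose images $f_k(e_k^x), f_k(e_k^y)$ in the lattice $l_k\Z^2$ are at distance $\asymp l_k$ apart (here I use that the embedding $f_k$ is a graph isomorphism onto a subgraph of $l_k\Z^2$, so graph-adjacent lattice points are at distance $l_k$ and the lattice metric is bi-Lipschitz to the graph metric with constant $2$). The lower bound $|f_\infty(x) - f_\infty(y)| \gtrsim l_k \asymp s_k^\alpha \asymp r^\alpha$ then follows directly from Equation \eqref{eq:lowerbound}. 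For the upper bound: $f_k(e_k^x)$ and $f_k(e_k^y)$ both lie in a bounded-diameter (diameter $\lesssim l_k$) region of the lattice since the graph distance of $x_k,y_k$ is $\asymp 1$, so $|f_k(x_k) - f_k(y_k)| \lesssim l_k$, and combining with the Cauchy estimate \eqref{eq:cauchyestimates}, namely $|f_\infty(x) - f_k(x_k)| \le 2l_k$ and likewise for $y$, gives $|f_\infty(x) - f_\infty(y)| \lesssim l_k \asymp s_k^\alpha \asymp r^\alpha$.

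The one genuinely delicate point — and the step I expect to be the main obstacle — is handling the case where $x_k$ and $y_k$ happen to lie on \emph{adjacent} edges, or even on the \emph{same} edge, of $X_k^k$, so that Equation \eqref{eq:lowerbound} does not apply directly and one cannot conclude a lower bound at scale $s_k$. The remedy is to descend one or two further scales: since $d(x,y) = r$ and we chose $k$ so that $s_k \asymp r$, the points $x,y$ are genuinely separated, so by choosing $k' = k + c$ for a suitable bounded $c$ (depending only on the bounded-degree constant $C$ and on the ratio bounds $3 \le m_i \le M$) one forces the approximants $x_{k'}, y_{k'}$ onto non-adjacent edges of $X_{k'}^{k'}$ while still keeping $d(x_{k'}, y_{k'}) \asymp r \asymp s_{k'}$ via Lemma \ref{lem:approx} and $l_{k'} \asymp l_k$ via the assumption $l_k \asymp s_k^\alpha$ and $s_{k'} \asymp s_k$. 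One should double-check that the decay hypothesis $l_{k+1} \le 16^{-1} l_k$, which makes the neighborhoods in \eqref{eq:neighs} summable, is compatible with $l_k \asymp s_k^\alpha$ — it is, since $s_{k+1} \le s_k/3$ forces $l_{k+1} \lesssim l_k$, and one can always arrange the substitution-rule parameters (the subdivision factors $N_k$) so that the ratio is at least $16$; this is part of the ``additional simplifying conditions'' flagged in Subsection \ref{ss:framework}. Once the non-adjacency is secured, the two-sided estimate above closes, and since the multiplicative constants depend only on $\alpha$, $M$, $C$, $\delta$, $L$ and the uniformity constants, $f_\infty$ is a genuine $\alpha$-snowflake embedding. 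Finally I would remark that $f_\infty$ is injective (immediate from the lower bound) and continuous with continuous inverse on its image (from the two-sided bound), so it is a homeomorphism onto $f_\infty(X_\infty^\infty)$, completing the proof.
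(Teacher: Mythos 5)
Your proposal is correct and follows essentially the same route as the paper: Poincar\'e via stability of the inequality under measured Gromov--Hausdorff limits, and the two-sided snowflake bound via Lemma \ref{lem:approx}, the lower bound \eqref{eq:lowerbound}, and the Cauchy estimate \eqref{eq:cauchyestimates}. The ``delicate'' adjacency issue you flag is handled in the paper simply by choosing the scale $s_l \le d(x,y)/8$ from the outset, so that $d(x_l,y_l) \ge d(x,y)/2 \ge 4s_l$ forces the approximants onto non-adjacent edges — which is exactly what your ``descend a bounded number of further scales'' achieves.
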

\begin{proof}
Since the graphs $X_k^k=G_k$ were constructed as in Theorem \ref{thm:replacementpi}, they form the diagonal sequence of an admissible quotiented inverse system. In particular, the limit space $X_\infty^\infty$ exists and satisfies the Poincar\'e inequality by Theorem \ref{thm:pidoublnig} and Theorem \ref{lem:ghconvergence}, combined with the stability of Poincar\'e inequalities from \cite{keith2003modulus}. 

Next, we show that the mapping $f_\infty$ is uniformly $\alpha$-holder.
 For each $x,y \in X_{\infty}$, choose $s_l$ so that $s_l \asymp d(x,y)$ so that
 Lemma \ref{lem:approx} holds and  $s_l \leq d(x,y)/8$. Choose an approximating 
sequence $x_n, y_n \in X_n^n=G_n$. By Lemma \ref{lem:approx},  
$d(x_n,y_n) \asymp d(x,y) \asymp s_l$ for $n \geq l$. 

The points $x_l,y_l$ lie in 
distinct edges which are not adjacent since $d(x_l,y_l) \geq d(x,y)/2 \geq 4s_l$. 
Then, by the estimate in \eqref{eq:lowerbound} we have
$$l_l \lesssim |f_\infty(x)-f_\infty(y)|.$$

Since $d(x_l,y_l) \asymp s_l$ 
there is a bounded length edge path in $G_l$ connecting $x_l$ to $y_l$, and thus also of $f_l(x_l), f_l(y_l)$ in the lattice. Since the distance of $f_\infty(x_l)$ to $f_\infty(x)$ is controlled by $l_l$, and similarly for $y_l$ and $y$, we obtain
$$l_l \lesssim |f_\infty(x)-f_\infty(y)| \lesssim l_l.$$
Since $l_l \asymp s_l^{\alpha}$, we have 
$$|f_\infty(x)-f_\infty(y)| \asymp d(x,y)^{\alpha},$$
proving that $f_\infty$ is an $\alpha$-snowflake embedding.

Since the vertex sets of $G_k$ are contained in the vertex set of 
$G_{k+1}$, using similar techniques, one can show
that $f_\infty(X_\infty)$ is equal to the Hausdorff limit of the subsets 
$G_k$ of the plane. (For the notion of Hausdorff convergence of subsets see \cite[p. 281]{munkres}.) In particular $f_\infty(X_\infty)$ is the set arising from the
 infinite sequence of substitution rules. 
 \end{proof}

\subsection{A basic example} \label{ss:basic}
We will begin with a relatively simple example, in which 
 the substitution rules and embeddings can be made explicit, as shown in Figure \ref{fig:step2}. 

Here, at each stage we subdivide the lattice in the plane by $1/16$, giving $l_k = 16^{-k}$. The edge lengths in $G_k$ are rescaled by $32$ giving $s_k = 32^{-k}$.




Each vertex has additional to the type $A,B,C,D$ also a color, either red or blue as given above. We will now describe the substitution rules giving the identifications and the explicit isomorphic copies in the plane by figures. Let $v \in G_k$ be any vertex, and let $\overline{B(v,s_k/2)} = B_v$ be its neighborhood.

The neighborhoods of $G_k$ are replaced by taking two copies of the sub-graph and identifying them by two sets of identifications $\sim^I_{k+1}$ and $\sim^Q_{k+1}$. The first is easy to describe: $\sim^I_{k+1}$ simply identifies the boundaries of each copies, which coincide with mid-points of edges in $G_k$. 

 \begin{remark}
\label{rila}
These identifications satisfy conditions for $\sim^I_k$ imposed in Section \ref{s:gsiqs}. These correspond to the inverse limit axioms in \cite{cheegerkleinerinverse} and Definition \ref{d:iqs}.
\end{remark}

 \begin{figure}[h!]
  \centering
    \includegraphics[width=0.4\textwidth]{./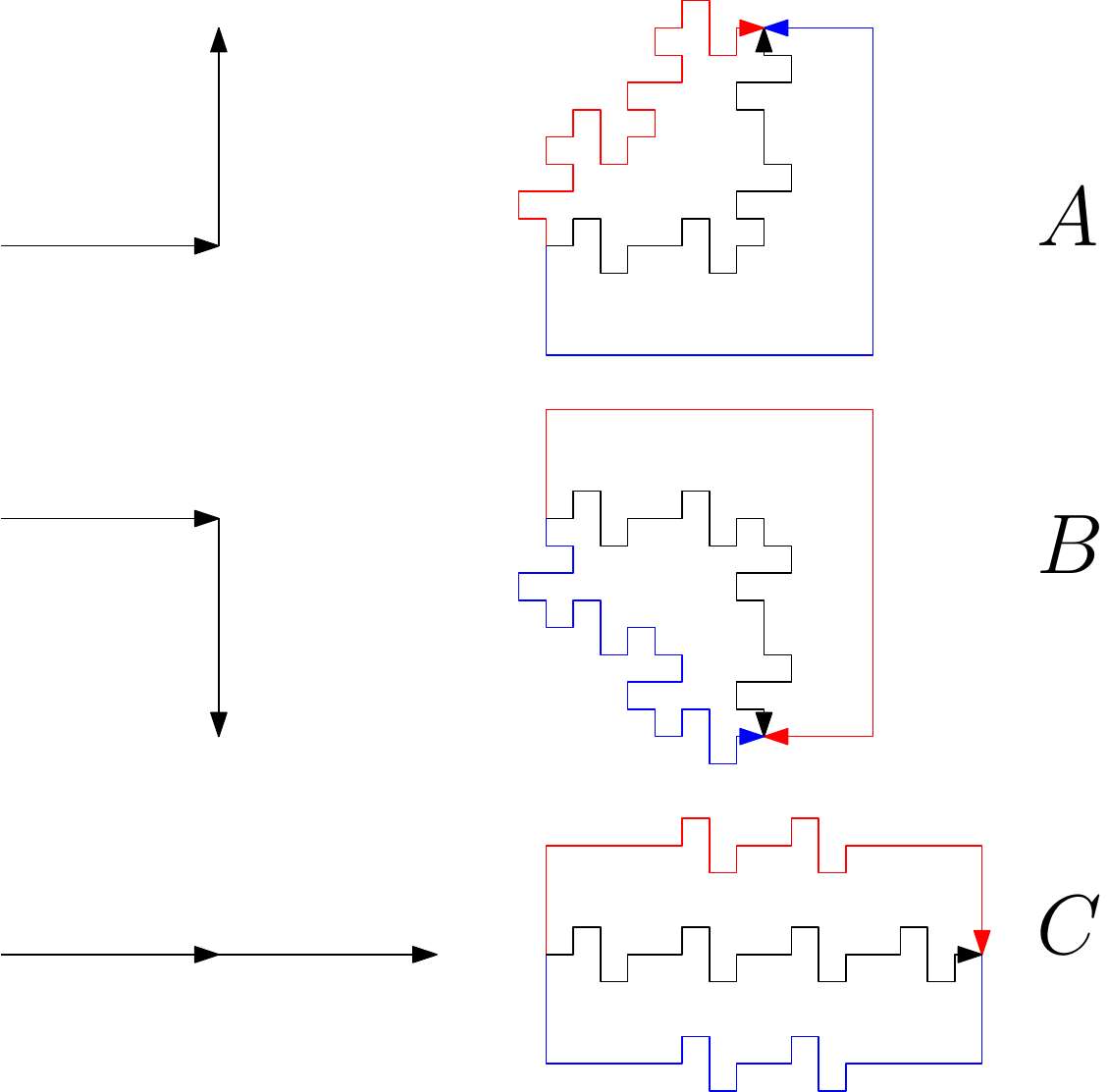}
    \caption{The substitution rules for the different types of degree two vertex neighborhoods. Whether the red or blue edges are added depends on the color of the vertex. The arrows indicate the orientation of the edges}
    \label{fig:explicit}
\end{figure}


In the case of degree two vertices, that is of type $A,B$ or $C$, these are all the identifications we will make. The isomorphic copies are rotated versions of the ones in Figure \ref{fig:explicit}, and depend on the color; the black part is always included but only one, either the blue or red, copy is added. In doing this,  the vertices and midpoints of the previous level remain fixed.



For vertices of type D, drawing in the plane 
introduces additional identifications $\sim^Q_{k+1}$, as two copies of a degree four vertex star 
must intersect. 
These identifications will depend on the orientation of the vertex, on their existing drawing in the plane, 
and on the function $h_k$, as follows. 

  \begin{figure}[h!]
  \centering
    \includegraphics[width=0.4\textwidth]{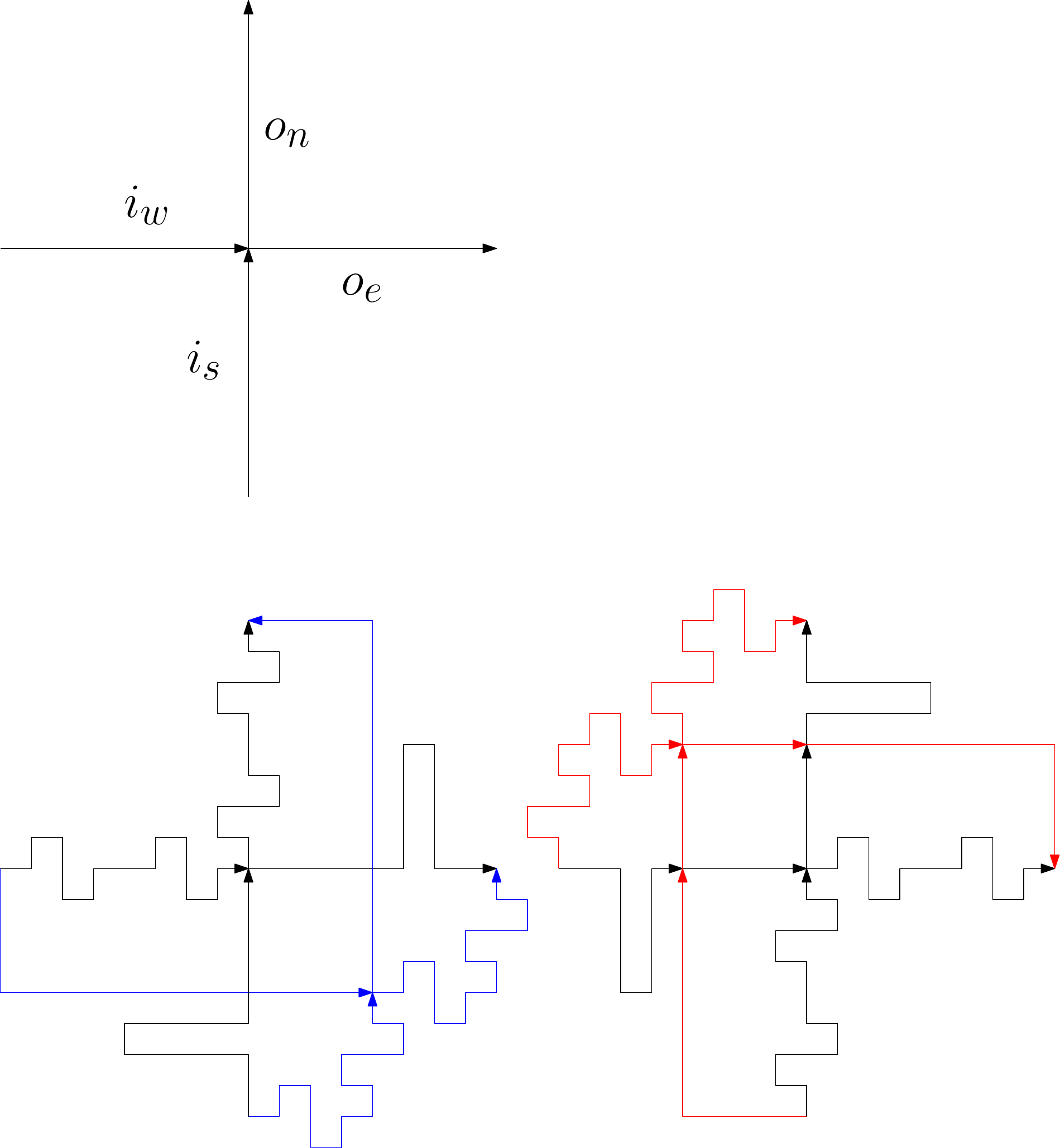}
    \caption{The substitution rule for the degree four vertex, for either the red or blue cases. The black copy corresponds to $i=1$ below.}
    \label{fig:4replacement}
\end{figure}

 Let $v$ be a degree four vertex, 
and let $i_s,i_w$ be the two in-edges (with respect to the natural 
orientation of edges induced by $h_k$), and so 
that $i_w$ is adjacent and immediately clockwise from $i_s$ in the planar drawing of $G_k$. Going in the clockwise direction, the remaining edges are the two out edges $o_n,o_e$. We will need to specify explicit identification points along the edges adjacent to $v$ by their values of of $h_k$. To describe the identifications we will
 use the following convention:

\vskip3mm

{\it 
If $t \in \frac{2}{\pi} S^1$ and $\theta \in \R$,
 then $t+\theta$ signifies the point on $\frac{2}{\pi}S^1$ obtained by traveling clockwise,
 if $\theta$ is negative, or counterclockwise, if $\theta$ is positive, along the circle by
 the distance $\theta$.}
\vskip3mm

We continue following the notation of Section \ref{s:gsiqs}. The graph $G_{k+1}$ is obtained from two copies of $G_k$, whose points are denoted by $(x,i)$ for $x \in G_k$, $i =1,2$, by the following 
identifications:
\vskip2mm

\begin{enumerate}
\item If $v \in G_k$ is a vertex of degree four which is blue, and $x \in i_e$ and $y \in i_s$,
 then we identify
$$(x,2)\sim^Q_{k+1}(y,1), \ \ \ \ \ \ \ \text{if $h_k(x)=h_k(y)=h_k(v)-4s_{k+1}$.}$$

\item  If $v \in G_k$ is a vertex of degree four which is red, and $x \in i_e$ and $y \in i_s$, then we identify
$$(x,1)\sim^Q_{k+1}(y,2),  \ \ \ \ \ \ \ \text{if $h_k(x)=h_k(y)=h_k(v)-4s_{k+1}$.}$$

\item If $v \in G_k$ is a vertex of degree four which is blue, and $x \in o_n$ and $y \in o_w$, then we identify
$$(x,2)\sim^Q_{k+1}(y,1),  \ \ \ \ \ \ \ \text{if $h_k(x)=h_k(y)=h_k(v)+4s_{k+1}$.}$$
\item If $v \in G_k$ is a vertex of degree four which is red, and $x \in o_n$ and $y \in o_w$, then we identify
$$(x,1)\sim^Q_{k+1}(y,2),  \ \ \ \ \ \ \  \text{if $h_k(x)=h_k(y)=h_k(v)+4s_{k+1}$.}$$

\end{enumerate}
\vskip2mm

Finally, the embedding of $G_{k+1}$ is obtained by first drawing the copies for each vertex neighborhood and then
allowing them 
to intersect at midpoints and at the points specified by $\sim^Q$. The interaction of multiple different rules on the graph in Figures \ref{fig:beforereplacement} is depicted in Figure \ref{fig:result}.

 \begin{figure}[h!]
  \centering
    \includegraphics[width=0.3\textwidth]{./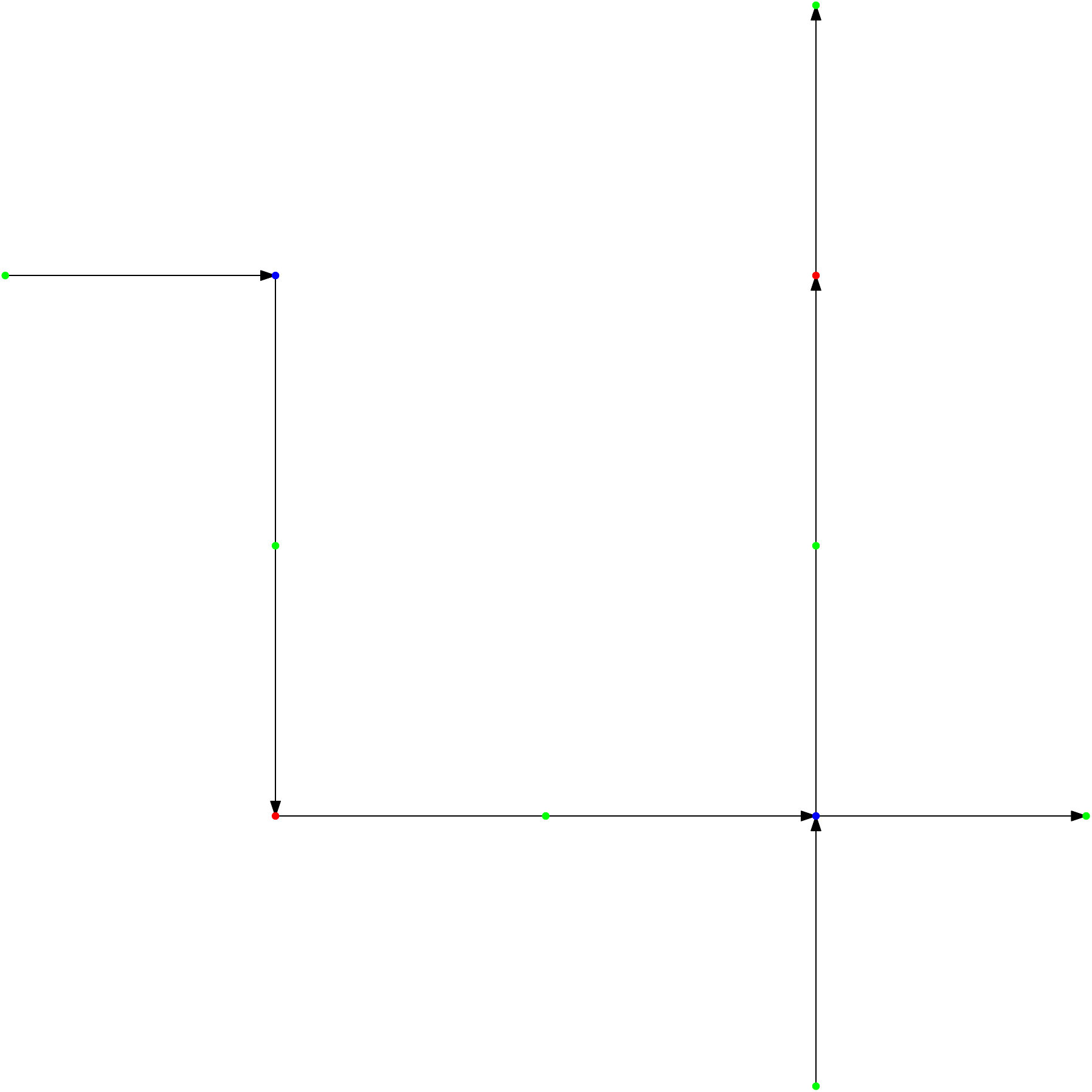}
    \caption{Before the substitution rule is applied. There are two red and two blue vertices.}
    \label{fig:beforereplacement}
\end{figure}

 \begin{figure}[h!]
  \centering
    \includegraphics[width=0.3\textwidth]{./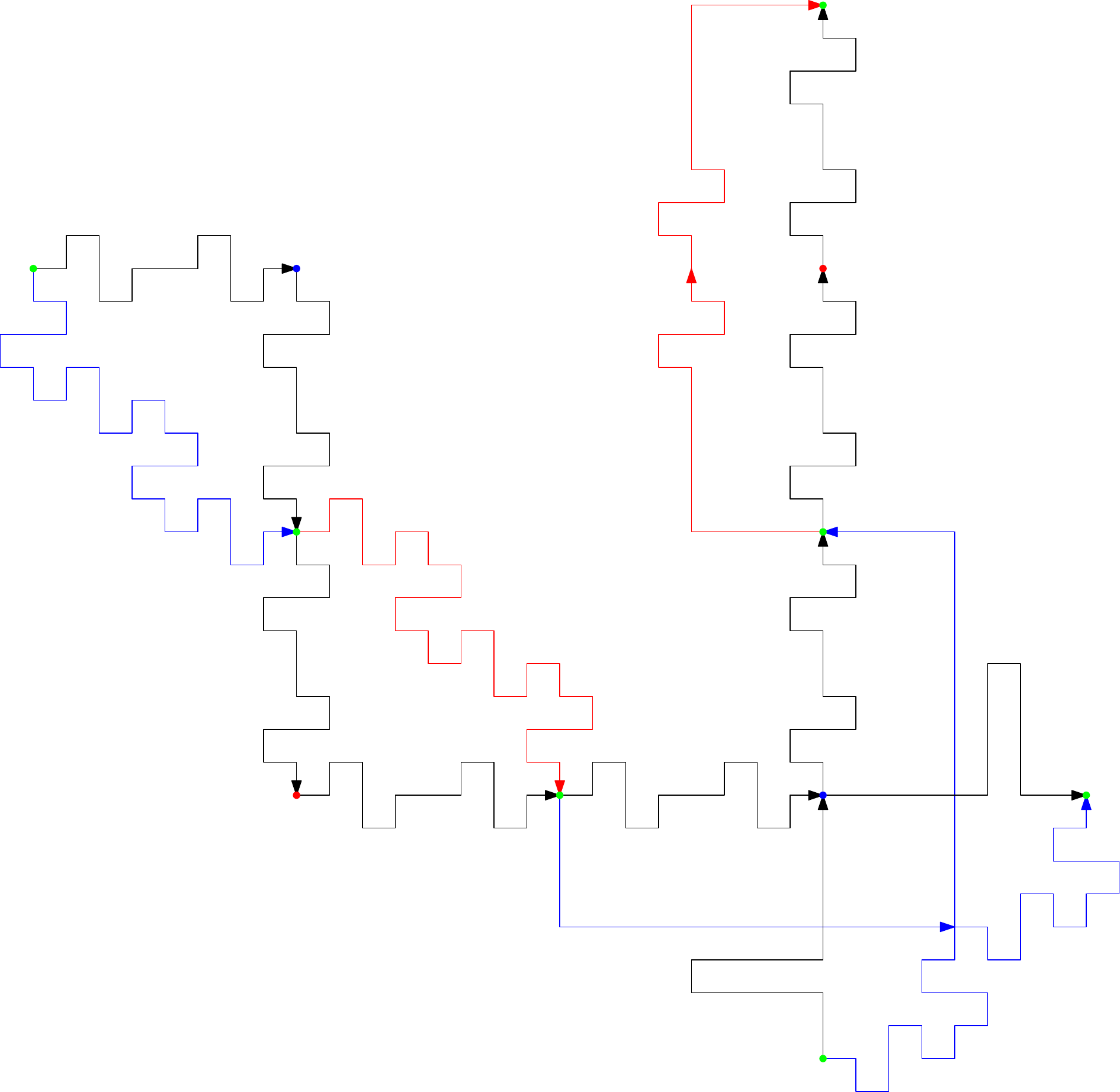}
    \caption{The result of the substitution rules applied to several neighborhoods simultaneously.}
    \label{fig:result}
\end{figure}


These substitution rules give the following example.

\begin{theorem} \label{thm:explicit} The space $X_\infty^\infty$ is $\frac{6}{5}$-Ahlfors regular, 
and satisfies a $(1,1)$-Poincar\'e inequality.
 It is quasisymmetric to the planar subset $\widetilde{X}$, which 
is $\frac{3}{2}$-Ahlfors regular, and the natural mapping $f_\infty \co X_\infty^\infty \to \widetilde{X}$
is $\frac{4}{5}$-snowflake.
\end{theorem}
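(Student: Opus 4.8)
\textbf{Proof proposal for Theorem \ref{thm:explicit}.}

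The plan is to compute the two Hausdorff dimensions from the combinatorial data of the substitution rules, then invoke the general machinery developed in the preceding sections. First I would identify the relevant parameters: at each stage we take $K_k = 2$ copies and subdivide edges by a factor $N_k$. Since $s_k = 32^{-k}$ and $s_{k+1} = s_k/N_k$, we have $N_k = 32$ for all $k$, so by Remark \ref{rmk:hausdim} each edge of $G_k$ has measure $\mu_k(e) = \prod_{i=1}^{k}\frac{1}{K_i}\prod_{i=1}^k\frac{1}{N_i} = 2^{-k}\,32^{-k} = 64^{-k}$. Thus the associated uniformity function is $h(r) = 64^{-k}$ for $s_{k+1}\le r < s_k$, i.e. $h(r) \asymp r^{Q}$ with $64^{-k} = (32^{-k})^{Q}$, giving $Q = \log 64/\log 32 = 6/5$. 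By Lemma \ref{lem:homogeneous} the limit $X_\infty^\infty$ is $\frac{6}{5}$-uniform, hence $\frac{6}{5}$-Ahlfors regular; by Theorem \ref{thm:pidoublnig} together with \cite{keith2003modulus} it satisfies a $(1,1)$-Poincar\'e inequality (this also follows from Lemma \ref{lem:snow}). Consequently $X_\infty^\infty$ is $\frac{6}{5}$-Loewner, and by Proposition \ref{lemma:carpet} (once planarity is known, which follows since the $G_k$ are drawn as planar subgraphs of $l_k\Z^2$ and the embeddings $f_k$ converge) it is a carpet.

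Next I would treat the image $\widetilde X = f_\infty(X_\infty^\infty)$. By Lemma \ref{lem:snow}, $f_\infty$ is an $\alpha$-snowflake embedding where $\alpha$ is determined by $l_k \asymp s_k^{\alpha}$. Here $l_k = 16^{-k}$ and $s_k = 32^{-k}$, so $16^{-k} = (32^{-k})^{\alpha}$ gives $\alpha = \log 16/\log 32 = 4/5$; thus $f_\infty$ is $\frac{4}{5}$-snowflake, as claimed. Since an $\alpha$-snowflake map multiplies Hausdorff dimension by $1/\alpha$, the image $\widetilde X$ has Hausdorff dimension $Q/\alpha = (6/5)/(4/5) = 3/2$. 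To upgrade this to $\frac32$-Ahlfors regularity I would use the explicit description of $\widetilde X$ from Lemma \ref{lem:snow}: it is the Hausdorff limit of the planar sets $G_k$, and the estimates \eqref{eq:neighs}, \eqref{eq:limitpt}, \eqref{eq:lowerbound} give two-sided control. Concretely, for a point $p = f_\infty(x)$ and a radius $\rho$ with $l_{k+1} \le \rho < l_k$, the ball $B(p,\rho)$ in $\widetilde X$ corresponds, up to bounded distortion via the snowflake relation, to a ball $B(x, r)$ in $X_\infty^\infty$ with $r \asymp \rho^{1/\alpha}$, so $\mathcal H^{3/2}(B(p,\rho)) \asymp \mathcal H^{6/5}(B(x,r)) \asymp r^{6/5} \asymp \rho^{3/2}$; the uniform constants come from the uniformity constant of $X_\infty^\infty$ (Lemma \ref{lem:homogeneous}, Remark \ref{rmk:hausdim}) which depends only on $N, M, \Delta$. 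Finally, $f_\infty$ being a snowflake (hence bi-Lipschitz after snowflaking the source) is in particular a quasisymmetry onto its image, so $X_\infty^\infty$ and $\widetilde X$ are quasisymmetric.

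The step I expect to be the main obstacle is establishing the \emph{lower} Ahlfors regularity bound for $\widetilde X$, i.e.\ that $\mathcal H^{3/2}(B(p,\rho)\cap\widetilde X) \gtrsim \rho^{3/2}$ uniformly. The upper bound follows from covering the approximating sets $G_k$ efficiently, but the lower bound requires knowing that the fine grid structure near $p$ genuinely carries enough of the limit set — one must rule out the possibility that all the "mass" near $p$ escaped to a small sub-piece during the limiting process. The cleanest route is to transport the lower bound through the snowflake map from the corresponding statement on $X_\infty^\infty$, which is already guaranteed by $\frac65$-Ahlfors regularity; what needs care is verifying that the comparison between a ball in $\widetilde X$ of radius $\rho$ and a ball in $X_\infty^\infty$ of radius $\asymp \rho^{1/\alpha}$ holds with the balls matched \emph{by the map} $f_\infty$, not merely in dimension — this uses the two-sided distance estimates \eqref{eq:cauchyestimates} and \eqref{eq:lowerbound} and the quasisymmetry (indeed snowflake) property to identify $f_\infty^{-1}(B(p,\rho))$ with a set comparable to a metric ball in the source.
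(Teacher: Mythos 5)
Your proposal is correct and follows essentially the same route as the paper: the exponents $Q=\log 64/\log 32=6/5$ and $\alpha=\log 16/\log 32=4/5$ are computed from $K_k=2$, $N_k=32$, $l_k=16^{-k}$ exactly as in the paper's proof, with Ahlfors regularity from Lemma \ref{lem:homogeneous} and Remark \ref{rmk:hausdim}, the Poincar\'e inequality from Theorem \ref{thm:pidoublnig} and \cite{keith2003modulus}, and the snowflake property from Lemma \ref{lem:snow}. The "main obstacle" you flag — lower Ahlfors regularity of the image — is precisely what the paper dismisses as immediate, and your proposed resolution (transporting ball-mass comparisons through $f_\infty$, which is bi-Lipschitz for the snowflaked source metric) is the standard and correct way to see it.
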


\begin{proof}[Proof of Theorem \ref{thm:explicit}] By Theorem \ref{thm:replacementpi} equipping $G_k$ with the metric where each edge has length $s_{k}$ we get the
 diagonal sequence $X_k^k$ of a quotiented inverse system, which converges to 
$X_\infty^\infty$ in the measures Gromov-Hausdorff sense.  The Poncar\'e from the graphs (Theorem \ref{thm:pidoublnig}) passes to the limit space \cite{keith2003modulus}.
Since $K_k=2, N_k = 32$, it follows from   Lemma \ref{lem:homogeneous} 
and Remark \ref{rmk:hausdim}
that $X_\infty^\infty$
 is $h$-uniform, where $h(r) = r^{\frac{6}{5}}$, and thus $6/5$-Ahlfors regular. 
Now $l_k = (16)^{-k} \asymp s_k^{\frac{4}{5}}$. Thus Lemma \ref{lem:snow} with $\alpha = \frac{4}{5}$ shows that, the image of 
that $f_\infty$ is $\alpha$-snowflake. The fact that $f_\infty(X_\infty)$ is $\frac{3}{2}$-Ahlfors regular is then immediate.
\end{proof}

\subsection{General examples}\label{sec:generalexamples}

To obtain other dimensions, we need to introduce more complicated substitution rules, and alternate different ones at different levels.

We now describe three different substitution schemes $S_N, C_N$ and $WS_{N}$.
These describe how $G_{k+1}$ is obtained from $G_k$. The copies of a point $x \in G_k$ are denoted by $(x,i)$. We also describe the intermediate graphs $\overline{G_{k+1}}$ from Section \ref{s:gsiqs} and the identifications $\sim^I, \sim^Q$. These substitution rules have a free parameter $N \in \N$. 

\begin{itemize}
\item[$S_N:$] The graph $G_{k+1}=\overline{G_{k+1}}=G_k^{/N}$, that is the graph is subdivided by $N$, and the grid is equally subdivided by $N$. 
Then $s_{k+1}=s_k/N$, and $l_{k+1}=l_k/N$. There are no identifications. The map $h_{k+1}$ is equal to $h_k$. 
\vskip1mm
\item[$WS_{N}:$] We subdivide by a large factor $G_{k+1}=\overline{G_{k+1}}=G_k^{/(8(N+2N^2))}$ as before, but the drawing in the plane is wiggled to fit in a larger grid subdivided only by $8N$. The graph is drawn by appropriately wiggling the edges. This substitution rule is more naturally represented for each edge.
\vskip1mm

 \begin{figure}[h!]
  \centering
    \includegraphics[width=0.6\textwidth]{./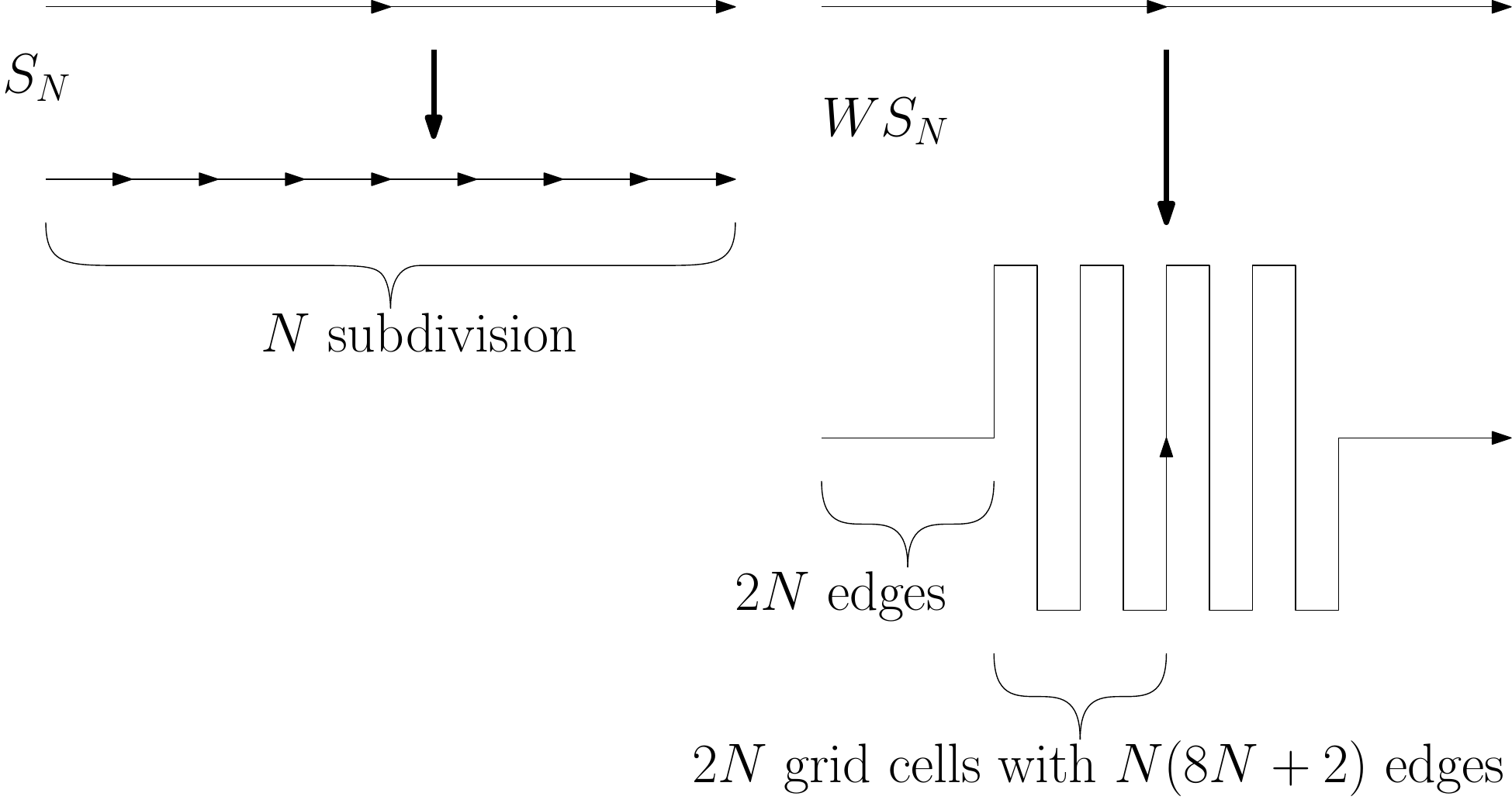}
    \caption{The figures for the substitution rules $S_N,WS_N$.  For edges oriented otherwise, one rotates the figures. The drawing rules here are for each edge of the graph, instead of vertex neighborhoods. }
    \label{fig:wsreplacement}
\end{figure}

\item[$C_N:$] The graph $\overline{G_{k+1}}$ is obtained by
 taking $2N+1$ copies of $G_k$ labeled $i=1, \dots, 2N+1$ and 
quotiented by the identifications $\sim^I_{k+1}$, which we now describe.
 Each graph is subdivided by $96N+26$ and the grids are subdivided by a factor $64N$. 
That is, $s_{k+1}=s_k/(96N+26)$ and $l_{k+1}=l_k/(64N)$. These are identified and 
embedded as shown in 
Figure \ref{fig:kfold}. 

The identifications can be expressed explicitly as follows. 
Each vertex star in $v$ has one of four types, A,B,C,D. 
The first three are subgraphs of type $D$, or rotations of one, and 
thus it suffices to describe in detail the identifications for the 
case $D$. Let $v$ be a vertex with two in edges $i_s,i_w$ 
(one from the south, and the second from the west), and two out 
edges $o_n,o_e$ (towards north and east). Every degree four vertex 
star can be rotated to this case.
 \begin{figure}[h!]
  \centering
    \includegraphics[width=0.5\textwidth]{./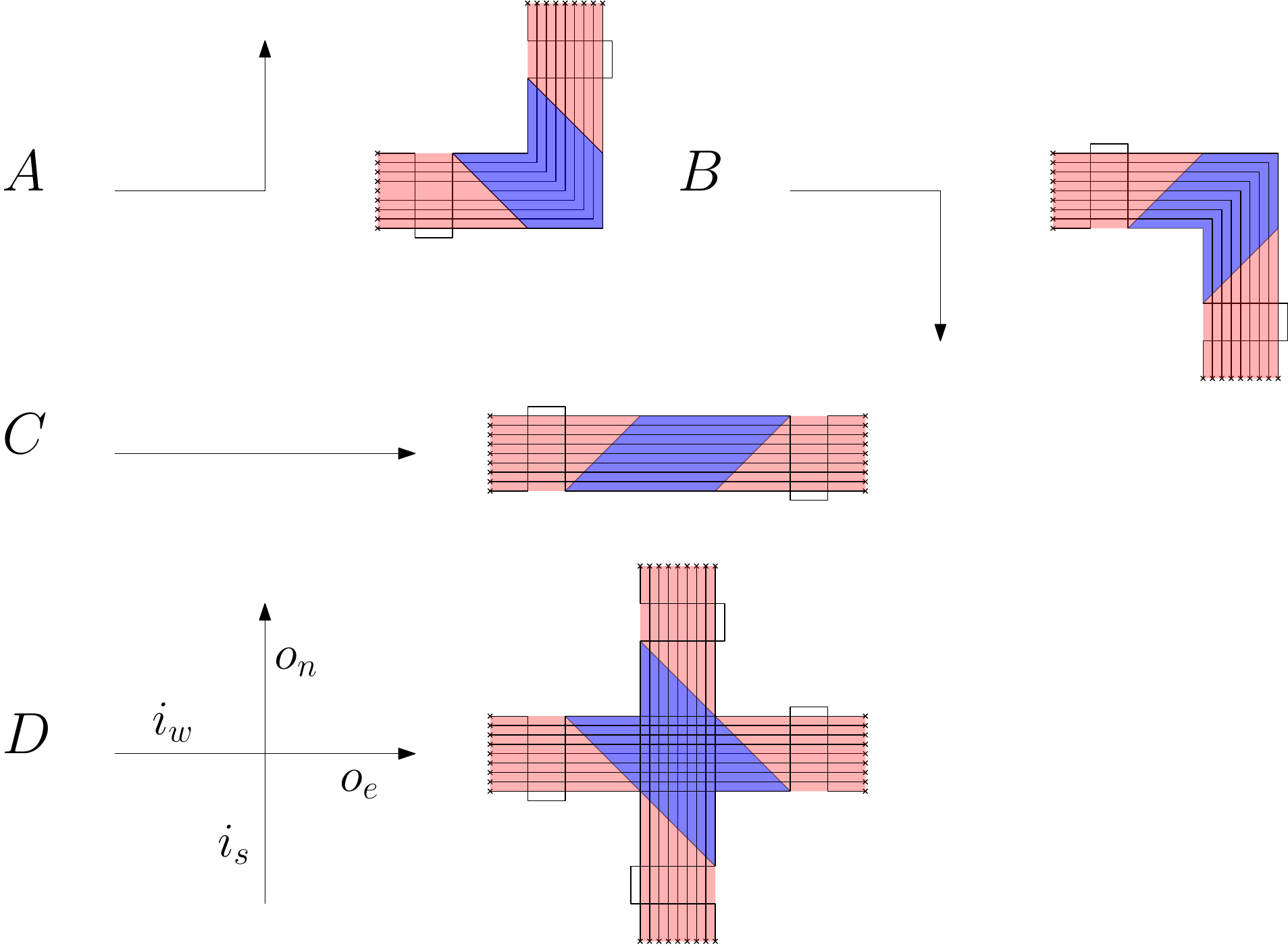}
    \caption{\textbf{$C_N$-substitution rule:} Here the neighborhood is replaced by $2N+1$ copies, $N$ above and $N$ below. Each copy has a label $i=1, \dots, 2N+1$ indexed linearly from the top to bottom. These are obtained by a translation in a diagonal direction
    of the original vertex, and then evening it out with a transitional red region shown in Figure \ref{fig:endpeace}, so that neighboring pieces match together.The identifications in the red zone are given by $\sim^I$, and in the blue by $\sim^Q$.}
    \label{fig:kfold}
\end{figure}

\begin{enumerate}
\item  $(x,1) \sim^I_{k+1} (x,j)$ if $x \in i_w$ and $h_k(x)=h_k(v)-4(2N+j-1)s_{k+1}$, or $h_k(x)=h_k(v)-4(8N+2+(2N+1-j))s_{k+1}$.
\vskip1mm

\item $(x,2N+1) \sim^I_{k+1} (x,j)$ if $x \in i_s$ and $h_k(x)=h_k(v)-4(4N+1-j)s_{k+1}$, or $h_k(x)=h_k(v)-4(8N+1+j)s_{k+1}$.
\vskip1mm

\item  $(x,1) \sim^I_{k+1} (x,j)$ if $x \in o_n$ and $h_k(x)=h_k(v)+4(2N+j-1)s_{k+1}$, or $h_k(x)=h_k(v)+4(8N+2+(2N+1-j))s_{k+1}$.
\vskip1mm

\item  $(x,2N+1) \sim^I_{k+1} (x,j)$ if $x \in o_e$ and 
$h_k(x)=h_k(v)+4(4N+1-j)s_{k+1}$, or $h_k(x)=h_k(v)+4(8N+1+j)s_{k+1}$.
\end{enumerate}
\vskip2mm

Additionally, to obtain $G_{k+1}$, we make identifications using $\sim^Q_{k+1}$ as follows.
\vskip2mm

\begin{enumerate}
\item $(x,i)\sim^Q_{k+1} (y,j)$ if $i>j$ and $x \in i_e$, $y \in i_s$ and 
$h_k(x)=h_k(y)=h_k(v)-4(i-j)s_{k+1}$.
\vskip1mm

\item $(x,i)\sim^Q_{k+1} (y,j)$ if $i>j$ and $x \in o_n$, $y \in o_w$ 
and $h_k(x)=h_k(y)=h_k(v)+4(i-j)s_{k+1}$. \\
\end{enumerate}
\end{itemize}

 \begin{figure}[h!]
  \centering
    \includegraphics[width=0.5\textwidth]{./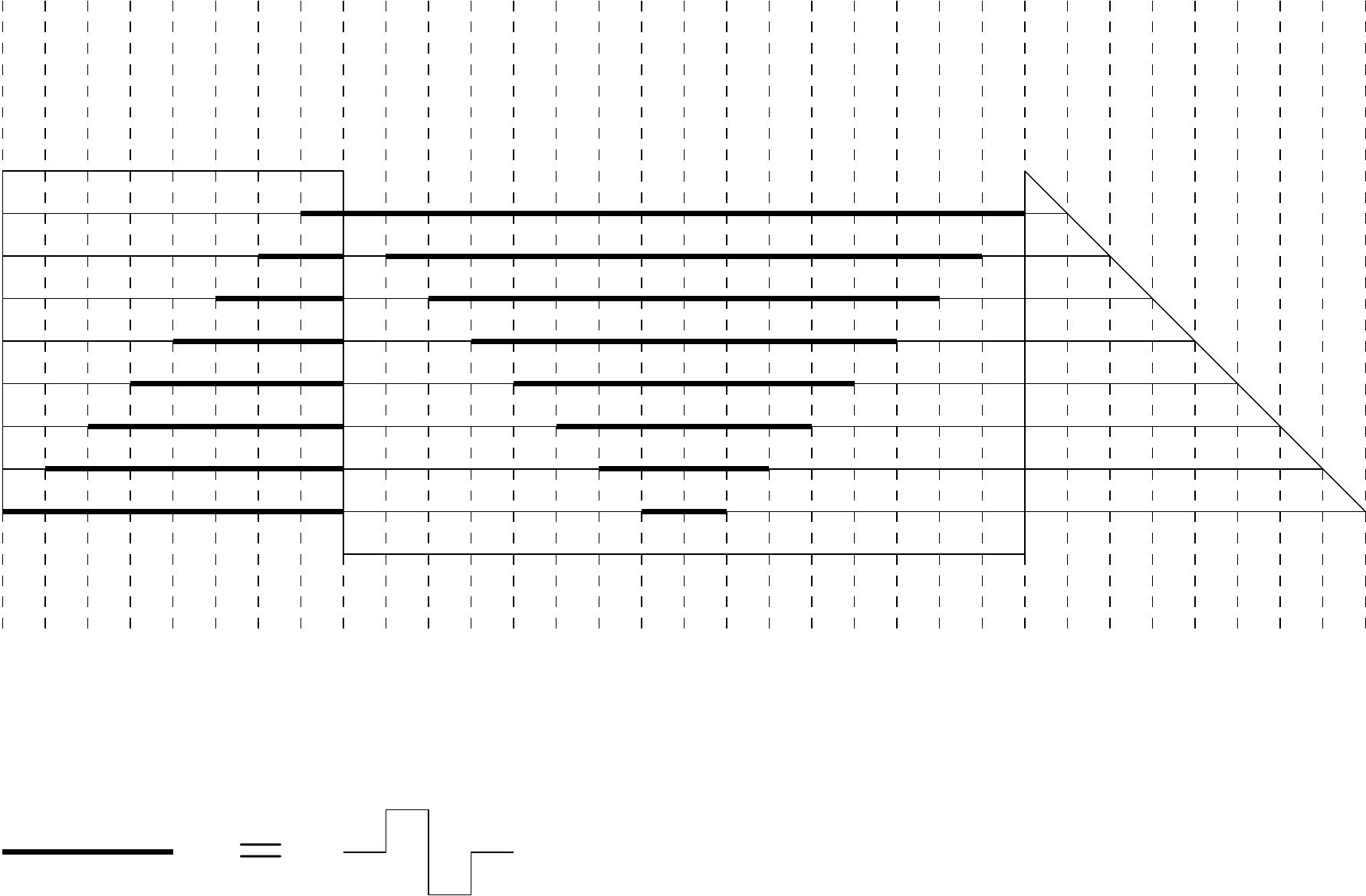}
    \caption{\textbf{$C_N$-substitution rule:} The end-piece in the $K$-fold substitution rule. The bolded edges are ``wiggled'' in a grid subdivided by a factor of four. }
    \label{fig:endpeace}
\end{figure}

This chosen subdivision factors are so that the copies fit isomorphically in the refined grids. 

These substitution rules lead to the Loewner carpets of Theorem \ref{thm:existence}. First, we prove existence and embeddability. Later, in Section \ref{sec:rigidity} we prove the assertion that for each $Q,Q'$, infinitely many of the constructed examples
are quasisymmetrically distinct. 

\begin{proof}[Proof of existence and embeddability in Theorem \ref{thm:existence}] Fix $N_0,N_1,N_3$
 to be determined
 later, and let $S_{N_0},C_{N_1},WS_{N_3}$ be the substitution rules described above. Let $\alpha_i$ for $i=0,1,2$ be 
 such that $\alpha_0+\alpha_1+\alpha_2=1$. Construct an infinite sequence $a = \{a_i\}_{i=1}^\infty$,
 with each $a_i=0,1,2$, with the property that
$$\limsup_{N \to \infty} ||\{a_i = j| i=1, \dots, N\}| -\alpha_ j N| < \infty.$$
Here, if $S \subset \N$, then $|S|$ is the number of its elements. 

\begin{remark}
Note that there exist uncountably many sequences as above. However, as mentioned in the
introduction, at present we can only prove that for each $Q,Q'$, countably many of our
examples are quasisymmetrically distinct; see Section \ref{sec:rigidity}.
\end{remark}

Next we construct 
a quotiented inverse system, by applying at the $i$'th stage the
 substitution rule $S_{N_0}$, if $a_i=0$, $C_{N_1}$ if $a_i=1$
 and $WS_{N_3}$ if $a_i=2$. 
 The substitution rules are applied to edges for $S_{N_0}, WS_{N_3}$, and to vertex stars in the second case. 

For such a sequence the edges have length
$$s_k \asymp \left(\frac{1}{N_0^{\alpha_0}} \frac{1}{(96N_1+26)^{\alpha_1}}  \frac{1}{(8(N+2N^2))^{\alpha_2}}\right)^k.$$ 
Define 
$$h(s_k) = \left(\frac{1}{N_0^{\alpha_0}} \frac{1}{((2N_1+1)(96N_1+26))^{\alpha_1}}  \frac{1}{(8(N_2+2N^2))^{\alpha_2}}\right)^k$$
and
$$Q=\frac{\alpha_0 \log(N_0) + \alpha_1 \log[(96N_1+26)(2N_1+1)] + \alpha_2\log(8(N_2+2N_2^2))}{\alpha_0 \log(N_0) + \alpha_1 \log(96N_1+26)) + \alpha_2 \log(8(N_2+2N_2^2))}.$$
Then, since $Q = \lim_{k \to \infty} \log(h(s_k))/\log(s_k)$, it is easy to see by Lemmas \ref{lem:homogeneous} and \ref{rmk:hausdim} that $X_\infty^\infty$ is $Q$-Ahlfors regular, and thus has Hausdorff dimension $Q$. Also, by Theorem \ref{thm:replacementpi}, as in the proof of Theorem \ref{thm:explicit}, it follows that $X_\infty^\infty$ is $Q$-Loewner, since it satisfies a $(1,1)$-Poincar\'e inequality and is $Q$-Ahlfors regular. 

Each graph $G_k$ is a subgraph of the grid $l_k \Z^2$, where each edge in the grid has length
$$l_k \asymp \left(\frac{1}{N_0^{\alpha_0}} \frac{1}{(64N_1)^{\alpha_1}}  \frac{1}{(8N_2)^{\alpha_2}}\right)^k.$$ 
Set 
$$
\alpha = \frac{\alpha_0 \log(N_0) + \alpha_1 \log(64N_1) 
+ \alpha_2\log(8N_2)}{\alpha_0 \log(N_0) + \alpha_1 \log(96N_1+26) 
+ \alpha_2 \log(8(N_2+2N_2^2))}\,  .
$$

We then have,
  $\alpha = \lim_{k \to \infty}\frac{\log(l_k)}{\log(s_k)}$. By Lemma \ref{lem:snow}, the maps $f_k$ are uniformly $\alpha$-snowflake. Further, these converge to $f_\infty$ which is a $\alpha$-snowflake embedding for $X_\infty^\infty$. Thus, the image $f_\infty(X_\infty^\infty)$ is $Q'$-Ahlfors regular with $Q'= Q/\alpha$. 
  
We wish to now find the values of $\alpha_i$ and $N_i$ which will enable to obtain
the desired Hausdorff dimensions $Q,Q'$. That is, we want to find solutions to the three equations
$$\alpha_0+\alpha_1+\alpha_2 = 1,$$
$$Q=\frac{\alpha_0 \log(N_0) + \alpha_1 \log[(96N_1+26)(2N_1+1)] + 
\alpha_2\log(8(N_2+2N_2^2))}{\alpha_0 \log(N_0) + \alpha_1 \log(96N_1+26))
 + \alpha_2 \log(8(N_2+2N_2^2))},
$$
and
$$
Q'=\frac{\alpha_0 \log(N_0) + \alpha_1 \log[(96N_1+26)(2N_1+1)] 
+ \alpha_2\log(8(N_2+2N_2^2))}{\alpha_0 \log(N_0) + \alpha_1 \log[(64N_1)] + \alpha_2\log(8N_2)}.$$

Define the following constants:
\begin{eqnarray*}
A_Q  &\defeq & (1-Q)\log(N_0) \\
 B_Q &\defeq & \log[(96N_1+26)(2N_1+1)]-Q\log(96N_1+26)\\
  C_ Q &\defeq & (1-Q)\log(8(N_2+2N_2^2)) \\
A_{Q'}&\defeq & (1-Q')\log(N_0) \\
B_{Q'}&\defeq &\log[(96N_1+26)(2N_1+1)]-Q'\log(64N_1)\\
 C_{Q'}&\defeq &\log(8(N_2+2N_2^2))-Q'\log(8N_2).
 \end{eqnarray*}
By multiplying with the denominators and simplifying, we get three linear equations
\begin{eqnarray}
\alpha_0 + \alpha_1 + \alpha_2 &=& 1 \label{eq:1}\\
\alpha_0 A_Q + \alpha_1 B_Q + \alpha_2 C_Q &=& 0 \label{eq:2} \\
\alpha_0 A_{Q'} + \alpha_1 B_{Q'} + \alpha_2 C_{Q'} &=& 0 \label{eq:3}
\end{eqnarray}
Additionally, we have the constraint $\alpha_i \geq 0$. 

First, choose $N_1$ and $N_2$ so big that $C_{Q'},B_{Q'},B_Q>0$. Also, choose $N_1$ so big that $Q\log(96N_1+26)<Q'\log(64N_1)$. We have $0>A_Q>A_{Q'}$ and $B_Q>B_{Q'}$ by these assumptions.

Now, solving the third and the first equations \eqref{eq:1}, \eqref{eq:3} with $\alpha_1=0$, we obtain the equation 
$$0=\alpha_0 A_{Q'}+(1-\alpha_0)C_{Q'},$$
and $\alpha_0 = \frac{-C_{Q'}}{A_{Q'}-C_{Q'}} \in (0,1)$. This gives a partial solution $( \frac{-C_{Q'}}{A_{Q'}-C_{Q'}}, 0,  \frac{A_{Q'}}{A_{Q'}-C_{Q'}}) = (\alpha_0,\alpha_1,\alpha_2) = \textbf{A}_-$. Similarly, setting $\alpha_2=0$, we obtain the partial solution
$( \frac{-B_{Q'}}{A_{Q'}-B_{Q'}}, \frac{A_{Q'}}{A_{Q'}-B_{Q'}},0) = (\alpha_0,\alpha_1,\alpha_2) = \textbf{A}_+$.

Let $f(\alpha_0,\alpha_1,\alpha_2)$ be the left hand side of the second equation \eqref{eq:2}.
 Plugging in $\textbf{A}_-$, and using $Q'>Q$ we get $f(\textbf{A}_-)<0$, since $Q>1$.
 Then, plugging in $\textbf{A}_+$ we get $f(\textbf{A}_+)>0$ since $A_Q>A_{Q'}$ and 
$B_Q>B_{Q'}$. Now, by continuity there must exists a $t \in (0,1)$, such that for
 $\textbf{A}_t = t\textbf{A}_+ + (1-t) \textbf{A}_-$ we have $f(\textbf{A}_t)=0$. 
Since this solution is a convex combination of the solutions to the first and third
 equations, and we have $\alpha_i \geq 0$, it follows that $\textbf{A}_t$ is 
the solution we are looking for.

Finally, since $f_\infty \co X_\infty^\infty \to \C$ is a quasisymmetric embedding and $X_\infty^\infty$ is ALLC, 
then its image is also an ALLC subset of the plane.
 Consequently, by Corollary \ref{prop:uniformization}, it follows that $f_\infty(X_\infty)$ 
can be uniformized by a circle, or square carpet by a planar quasiconformal map
 $g \co \C \to \C$.
 \end{proof}

  \section{Loewner rigidity}

\label{sec:rigidity}

The main goal of this section is the proof of  Theorem \ref{thm:rigidityquantitative}. Its statement in a similar form is originally due to Kleiner, but its proof has not appeared. The proof we provide was largely developed independently, but involves ideas discussed with Kleiner. This theorem
will enable us to show in Section \ref{sec:nonquasi}  that for fixed $Q,Q'$,
the infinitely many examples of Loewner carpets constructed Section \ref{sec:planarinversequot}
are pairwise quasisymmetrically distinct .

After some preliminary work, the rigidity Theorem \ref{thm:rigidityquantitative} is proved in Subsection 
\ref{ss:proofrigidity}. First we discuss properties of 
quasiconformal maps giving them a metric differential $\Lip[f](x)$ and absolute
continuity properties in Subsection \ref{ss:quasi}. A differential with respect to 
Cheeger charts is constructed in Subsection \ref{ss:differential}. The two 
notions of a metric derivative and Cheeger differentials are connected using 
\cite{cheegerkleinerschioppa} in Subsection \ref{ss:metricanddiff}. This control 
at certain regular points leads to the proof of the main theorem. In the end, we 
make a few remarks on related and useful results for quasiconformal maps in 
Subsection \ref{ss:quasiconf}.

The application to prove the quasisymmetrically distinctness is done in the final Section \ref{sec:nonquasi}.


\subsection{Statement of Rigidity theorem}

Recall, that if $\Gamma$ is any family of rectifiable curves, a non-negative Borel function $\rho$ is called admissible, if $\int_\gamma \rho ~ds \geq 1$ for every $\gamma \in \Gamma$. The modulus of $\Gamma$ is defined as
$$\Mod_p(\Gamma) \defeq \inf_{\rho} \int \rho^p ~d\mu,$$
where the infimum is over all admissible functions. See \cite{heinonenkoskela} for references and further discussion.  

We will need the following definition.
\begin{definition}
\label{d:cmon}
Let 
$$
\Gamma^C_\epsilon(x,y) 
\defeq \big\{\gamma ~|~  \gamma \text{ connects }B(x,\epsilon r) \text{ to } B(y,\epsilon r) \text{\,\,and }
\len(\gamma) \leq C(1+\epsilon)d(x,y)\big\}\, .
$$
A metric on $X$ is called {\it $C$-monotone} if for all $x_1,x_2\in X$,
 if there exists a positive function $\Phi \co (0,\frac{1}{2}] \to (0,\infty)$ 
such that for all $\epsilon \in (0,\frac{1}{2})$ and any $x,y \in X$ with $r\defeq d(x,y) >0$,
$$
\Mod_Q(\Gamma^C_\epsilon(x,y)) \geq \Phi(\epsilon)\, .
$$
\end{definition}

For PI spaces, and in particular, for Loewner spaces,
one easily gets such a modulus bound by the capacity bounds in \cite[Proposition 2.2 and Lemma 3.2]{kortepoincare}; compare also to \cite[Section 15]{ChDiff99}. The function $\Phi$ and the constant $C$ depend only on the constants in the 
Poincar\'e inequality and the doubling constant. In many cases, such as for inverse
 limits and for some cases of quotiented inverse systems, one can choose $C=1$. The case of $C=1$ corresponds to the condition of thickness studied in \cite[Section 15]{ChDiff99}. The terminology of tangent cones is given at the end of the introduction and the notion of a blow-up map is made rigorous in Subsection \ref{ss:proofrigidity}. 

Recall, that a map $f\co (X,d_X) \to (Y,d_Y)$ is \textit{$C-$bi-Lipschitz} if for all $x,y \in X$, $$d_Y(f(x), f(y)) \asymp_{C} d_X(x,y).$$

\begin{theorem}[Kleiner, unpublished]
\label{thm:rigidityquantitative}
 Let $X,Y$ be analytically one dimensional $Q$-Loewner
spaces and
assume that  $X$ and $Y$ have $C$-monotone metrics. If $f \co X \to Y$ is a quasisymmetric 
homeomorphism, then for almost every $x \in X$, and for any tangent cone $T_X$ of $X$ at $x$ 
there exists a tangent  cone $T_Y$ of $Y$ at $f(x)$  such that for any blow-up map 
$T_f \co T_X \to T_Y$ is $C^2$-bi-Lipschitz. 
\end{theorem}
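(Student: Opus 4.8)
The plan is to exploit the analytic one-dimensionality of both spaces to reduce the blow-up of $f$ to a scalar multiplication, and then to use the $C$-monotonicity to pin down the multiplicative constant in terms of modulus. First I would invoke the rigidity theory of quasisymmetric maps between Loewner spaces: by item 4) of the rigidity list (i.e.\ \cite{ChDiff99,heinonenquasi,kleinerICM}), $f$ lies in $H^{1,Q}$ and has an a.e.\ defined Cheeger differential $df$ with respect to the global chart functions $h_X\co X\to\R$ and $h_Y\co Y\to\R$ supplied by the analytic one-dimensionality. Since both cotangent bundles are one-dimensional, $df(x)$ is a.e.\ a nonzero scalar $\lambda(x)\in\R\setminus\{0\}$ (nonvanishing because $f^{-1}$ is also in $H^{1,Q}$, so $df$ is invertible a.e.). At a point $x$ which is simultaneously a Lebesgue point of $\lambda$, a point of approximate continuity/differentiability for both $f$ and $f^{-1}$, a point where the metric speed $\Lip[f]$ exists and agrees with $|\lambda(x)|$ times the norm on the cotangent fiber (this is the content of Subsection \ref{ss:metricanddiff}, via \cite{cheegerkleinerschioppa}), and a point where $X$ and $Y$ (at $f(x)$) blow up to genuine tangent cones $T_X$, $T_Y$ — the set of such $x$ has full measure — I would show that any blow-up $T_f\co T_X\to T_Y$ is the map induced on tangent cones by the linear map $\lambda(x)$ on the (one-dimensional) charts, hence is \emph{conformal}: it multiplies the ``chart distance'' infinitesimally by the single number $|\lambda(x)|$.

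The key analytic input is that conformality plus the two-sided modulus control forces bi-Lipschitzness. Concretely, fix $x$ as above and a blow-up $T_f$. The differentiability estimate \eqref{eq:differential} at $x$ says that for $u,v$ near $x$,
$$
|h_Y(f(u))-h_Y(f(v))| = |\lambda(x)|\,|h_X(u)-h_X(v)| + o(d_X(u,v)),
$$
so after rescaling, $h_{T_Y}\circ T_f = \lambda(x)\cdot h_{T_X}$ exactly on $T_X$. This controls $T_f$ along curves: for any rectifiable $\gamma$ in $T_X$ on which $h_{T_X}$ is injective (a ``monotone'' curve), $\len(T_f\circ\gamma)=|\lambda(x)|\len(\gamma)$, because both spaces are $1$-Lipschitz-equivalent to their $h$-images along such curves and $T_f$ multiplies $h$ by $\lambda(x)$. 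Now take two points $p,q\in T_X$ with $d(p,q)=r$. Apply $C$-monotonicity at $T_X$: $\Mod_Q(\Gamma^C_\epsilon(p,q))\geq\Phi(\epsilon)>0$, and the curves in $\Gamma^C_\epsilon(p,q)$ can be taken monotone (this is part of how the modulus lower bound is produced from the Poincaré inequality, cf.\ \cite[Prop.\ 2.2, Lemma 3.2]{kortepoincare}), with length $\leq C(1+\epsilon)r$. Their images under $T_f$ connect $B(T_f p,\epsilon'|\lambda|r)$ to $B(T_f q,\epsilon'|\lambda|r)$ in $T_Y$ and have length $\leq |\lambda(x)|C(1+\epsilon)r$; applying the conformal change of variables $\mathcal H^Q_{T_Y}=|\lambda(x)|^Q\,\mathcal H^Q_{T_X}$ under $T_f$ to the modulus integral, a curve family of positive $Q$-modulus in $T_X$ maps to a curve family of comparable positive modulus in $T_Y$. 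Combining the upper length bound on the image curves with the \emph{Loewner lower bound} in $T_Y$ (which bounds $\Mod_Q$ of \emph{all} curves joining two nondegenerate continua from above in terms of $d(T_f p,T_f q)/\min(\diam)$) yields
$$
d_{T_Y}(T_f p, T_f q)\ \leq\ C\,|\lambda(x)|\,d_{T_X}(p,q),
$$
and running the identical argument with $f^{-1}$, $T_{f^{-1}}$, and $1/|\lambda(x)|$ gives the reverse inequality with the same constant $C$. Composing the two estimates produces the stated $C^2$-bi-Lipschitz bound.

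The main obstacle I anticipate is the second paragraph's claim that the blow-up $T_f$ is genuinely \emph{induced by the scalar $\lambda(x)$}, i.e.\ that the infinitesimal-to-tangent-cone passage is compatible with the Cheeger differential. This requires: (i) identifying the tangent cone metric with a ``$h$-monotone path metric'' (Lemma-type statements in \cite{cheegerkleinerschioppa}, \cite{batediff} on tangent cones of PI spaces with one-dimensional analytic structure, where $d_{T_X}$ is recovered from lengths of $h_{T_X}$-monotone curves); (ii) a Vitali/Egorov argument guaranteeing that the pointwise differentiability \eqref{eq:differential} upgrades, after blow-up, to an \emph{exact} identity $h_{T_Y}\circ T_f=\lambda(x)\,h_{T_X}$ uniformly on compacta of $T_X$ — this is where one must be careful that the blow-up is along the \emph{same} sequence of scales that realizes $T_X$ and that the error term $o(d(u,v))$ is controlled uniformly, using absolute continuity of $f$ on a.e.\ curve (Subsection \ref{ss:quasi}); and (iii) checking that $T_f$ is surjective onto a full tangent cone $T_Y$ of $Y$ at $f(x)$ rather than a proper subset, which follows from running the same differentiability argument for $f^{-1}$ so that $T_{f^{-1}}$ is an inverse up to the scalar $1/|\lambda(x)|$. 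Once these compatibility facts are in hand, the modulus comparison in the third paragraph is essentially the standard Loewner/monotone-metric computation and should be routine. A secondary technical point to handle carefully is that $C$-monotonicity is stated with curves of length $\leq C(1+\epsilon)d(x,y)$ joining $\epsilon r$-balls, so the factor $C$ enters \emph{linearly} in each of the two one-sided estimates, accounting precisely for the $C^2$ in the conclusion.
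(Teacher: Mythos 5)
Your overall skeleton is close to the paper's: a.e.\ Cheeger differentiability of $\phi_Y\circ f$ with respect to $\phi_X$ giving a nonzero scalar, a full-measure set of regular points, the positive-modulus curve families $\Gamma^C_\epsilon$ from $C$-monotonicity, the chain rule $d_{f\circ\gamma}=\tau_Y(f(\gamma))\,|d(\phi_Y\circ f)(\gamma)|\,d_\gamma/\tau_X(\gamma)$ along those curves, and the symmetric argument for $f^{-1}$ to get the two-sided $C^2$ bound. The main structural difference is that the paper never proves an exact conformal structure theorem for $T_f$; it proves a finite-scale estimate (Lemma \ref{lem:distanceest}) of the form $d(f(z),f(y))\le C\rho(x)\,d(z,y)+o(d(x,y))$ for pairs $z,y$ in a cone around $x$, with $\rho(x)=\tfrac{\tau_Y(f(x))}{\tau_X(x)}|d(\phi_Y\circ f)(x)|$, and only passes to the tangent cone at the very end. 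That finite-scale route sidesteps your obstacles (i) and (iii) entirely.

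The genuine gap is at your obstacle (ii), and the tools you name there do not close it. To bound $\len(f\circ\gamma)$ for a curve $\gamma\in\Gamma^C_\epsilon$, you must split $\gamma$ into the set where the differential data is $\eta$-close to its value at $x$ and a bad set $\mathcal B$ of small measure; on $\mathcal B$ the only available pointwise bound is $d_{f\circ\gamma}\le \Lip[f]\cdot d_\gamma$, and $\Lip[f]$ is merely in $L^Q_{\rm loc}$, so smallness of $\mu(\mathcal B)$ together with absolute continuity on a.e.\ curve only controls $\int_\gamma 1_{\mathcal B}\,ds$, not $\int_\gamma \Lip[f]\,1_{\mathcal B}\,ds$ — a small set could carry almost all of the weighted length. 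The missing ingredient is the reverse H\"older (Muckenhoupt) inequality for $\Lip[f]$ from Heinonen--Koskela (Proposition \ref{prop:ainftyhk}), which upgrades to the weighted modulus estimate of Lemma \ref{lem:modulusboundstronger}: inside any family of modulus $\ge c$ there is a curve with $\int_\gamma \Lip[f]\,1_{\mathcal B}\,ds\lesssim \epsilon^\alpha$. Two smaller repairs: your step converting the length bound on image curves into $d_{T_Y}(T_fp,T_fq)\le C|\lambda|\,d(p,q)$ via a modulus upper bound is both garbled (an upper bound on $\Mod_Q$ comes from Ahlfors regularity, not the Loewner property) and lossy — it would introduce a factor $\Phi(\epsilon)^{-1/Q}$ that degenerates as $\epsilon\to0$ and destroy the constant $C$; the correct step is simply that the single good curve joins the $\epsilon$-balls, so its image, whose endpoints are displaced by at most $\psi(\epsilon)\,d(f(z),f(y))$ by quasisymmetry, bounds the distance directly by its length. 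Also, no $h$-monotonicity of the curves is needed, since the chain-rule estimate is an inequality integrated in absolute value.
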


Indeed, for many cases we can take $C=1$, and then the theorem gives that the maps $T_f$ are isometries.

\begin{remark} Although we will not need this here,
it follows that under the assumptions of Theorem \ref{thm:rigidityquantitative}, 
any quasisymmetric self map $f \co X \to X$ is $K$-quasiconformal with a uniform $K$; see  
 Corollary \ref{prop:uniformquasiconformal}. Moreover,  if the metrics on 
 $X$ and $Y$ are $1$-monotone,
 then the tangent map can be chosen to be an isometry and the map $f$  is a conformal map.
\end{remark}

\begin{remark}
The conclusion that the blow-up maps are bi-Lipschitz follows from the absolute continuity of $f$, the $L^Q$ integrability of its upper gradient $\Lip[f]$ and the Poincar\'e inequality, 
see \cite[Proof of Theorem 10.8]{heinonenquasi}, together with the theory of $H^{1,p}$ 
Sobolev spaces from \cite{ChDiff99, shanmugalingamsobolev}.
However, in full generality, there is no control on the bi-Lipschitz constant of the blow-ups. 
\end{remark}

\subsection{Results on Quasisymmetries}\label{ss:quasi}

We will need
 some background and auxiliary results related to 
 quasiconformal maps, modulus and differentiation. Below we 
 will  consider quasisymmetries $f \co X \to Y$, where
$X,Y$ are assumed $Q$-Loewner. We will denote 
the $Q$-Ahlfors regular measures of 
$X,Y$ by $\mu,\nu$ respectively. Where needed for explicit bounds, we denote by $C_{AR}$ the Ahlfors regularity constant. In other words, $\mu(B(x,r)) \asymp_{C_{AR}} r^Q$ and $\nu(B(x,r)) \asymp_{C_{AR}} r^Q$.

%
%
Positive modulus families also behave well under quasisymmetric maps.  This was shown by Tyson.

\begin{lemma}[Tyson, \cite{tysonconfdim}]  If $X,Y$ are 
$Q$-Loewner metric measure spaces and $f \co X \to Y$ is a quasisymmetry and 
$\Gamma$ is a family of curves in $X$, then there is a constant $K$ such that
$$\frac{1}{K} \Mod_Q(\Gamma) \leq \Mod_Q(f(\Gamma)) \leq K \Mod_Q(\Gamma).$$
\end{lemma}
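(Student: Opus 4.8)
The plan is to prove the two inequalities separately; since $f^{-1}\co Y\to X$ is again a quasisymmetry between $Q$-Loewner spaces, it suffices to establish $\Mod_Q(\Gamma)\le K\,\Mod_Q(f(\Gamma))$ with $K$ depending only on the quasisymmetry data and the structural constants of $X$ and $Y$, and then apply the same statement to $f^{-1}$ and the family $f(\Gamma)$ to get the reverse bound. So I would fix a Borel function $\rho\co Y\to[0,\infty]$ admissible for $f(\Gamma)$, i.e.\ $\int_{\gamma'}\rho\,ds\ge 1$ for every $\gamma'\in f(\Gamma)$, chosen with $\int_Y\rho^Q\,d\nu$ close to $\Mod_Q(f(\Gamma))$, and produce from it a function on $X$ that is admissible for $\Gamma$ and has $Q$-energy at most $K\int_Y\rho^Q\,d\nu$.

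The ingredient that makes this work is the regularity of $f$. Because $f$ and $f^{-1}$ are quasisymmetric between $Q$-Loewner spaces, they are quasiconformal, hence lie in $H^{1,Q}_{\mathrm{loc}}$ with $\Lip[f]$ a $Q$-integrable upper gradient, are absolutely continuous on $Q$-modulus almost every curve (the $\mathrm{ACC}_Q$ property), and satisfy the Lusin conditions (N) and (N$^{-1}$); see \cite{heinonenkoskela}, \cite{ChDiff99}, \cite{heinonenquasi}, \cite{shanmugalingamsobolev}. Condition (N) makes the measure $\lambda$ on $X$ defined by $\lambda(A)\defeq\nu(f(A))$ absolutely continuous with respect to $\mu$; write $\omega\defeq d\lambda/d\mu$. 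By the Lebesgue differentiation theorem $\omega(x)=\lim_{r\to 0}\nu(f(B(x,r)))/\mu(B(x,r))$ for $\mu$-a.e.\ $x$, and combining the quasisymmetry inclusions $B(y,R/C)\subset f(B(x,r))\subset B(y,CR)$ with the $Q$-Ahlfors regularity of $\mu$ and $\nu$ yields the pointwise comparison $\omega(x)\asymp \Lip[f](x)^Q$ at $\mu$-a.e.\ $x$, with constants depending only on $C$, $Q$ and the Ahlfors regularity constants (this is the standard identification of the metric derivative of a quasiconformal map with its volume derivative).

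Next I would set $\tilde\rho\defeq (\rho\circ f)\,\Lip[f]$ on $X$, and let $\Gamma_0\subset\Gamma$ be the family of curves along which $f$ fails to be absolutely continuous (so that $\Lip[f]\circ\gamma$ fails to dominate the speed of $f\circ\gamma$). By the $\mathrm{ACC}_Q$ property $\Mod_Q(\Gamma_0)=0$, hence $\Mod_Q(\Gamma)=\Mod_Q(\Gamma\setminus\Gamma_0)$ and it is enough to verify admissibility on $\Gamma\setminus\Gamma_0$. For $\gamma\in\Gamma\setminus\Gamma_0$ the image $f\circ\gamma$ is rectifiable and $\int_{f\circ\gamma}\rho\,ds\le\int_\gamma(\rho\circ f)\,\Lip[f]\,ds=\int_\gamma\tilde\rho\,ds$, while $f\circ\gamma\in f(\Gamma)$ forces $\int_{f\circ\gamma}\rho\,ds\ge 1$; so $\int_\gamma\tilde\rho\,ds\ge 1$ and $\tilde\rho$ is admissible for $\Gamma\setminus\Gamma_0$. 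Then I would estimate the energy by change of variables: using $\Lip[f]^Q\asymp\omega$, the definition of $\omega$, and the change of variables for the homeomorphism $f$ (valid by (N)), one gets the displayed chain
\[
\int_X\tilde\rho^Q\,d\mu=\int_X(\rho\circ f)^Q\,\Lip[f]^Q\,d\mu\ \asymp\ \int_X(\rho\circ f)^Q\,\omega\,d\mu=\int_X(\rho\circ f)^Q\,d\lambda=\int_Y\rho^Q\,d\nu .
\]
Taking the infimum over admissible $\rho$ gives $\Mod_Q(\Gamma)\le K\,\Mod_Q(f(\Gamma))$, and applying the same argument to $f^{-1}$ and $f(\Gamma)$ yields $\Mod_Q(f(\Gamma))\le K\,\Mod_Q(\Gamma)$.

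I expect the only genuinely delicate point to be the a.e.\ comparison $\omega(x)\asymp\Lip[f](x)^Q$ together with the Lusin condition (N) — that is, the fact that a quasiconformal map between $Q$-Ahlfors regular Loewner spaces transports $Q$-measure with Radon--Nikodym derivative comparable to the $Q$-th power of its metric derivative; everything else is bookkeeping about moduli and curves. This is exactly the content of the metric differentiation theory for quasiconformal maps in the $H^{1,Q}$ framework (\cite{heinonenkoskela}, \cite{ChDiff99}, \cite{heinonenquasi}), and I would invoke it rather than reprove it.
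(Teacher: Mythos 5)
Your argument is correct, and it is essentially the standard proof of Tyson's theorem: the paper itself offers no proof of this lemma (it is quoted from \cite{tysonconfdim}), and your route — pulling back an admissible $\rho$ to $(\rho\circ f)\Lip[f]$, discarding the zero-modulus family where $f$ fails to be absolutely continuous, and converting energy via the a.e.\ comparison $\Lip[f]^Q\asymp d\big(\nu\circ f\big)/d\mu$ — is exactly the argument in the cited source, built from the same ingredients the paper assembles in Section 6 (Lemma \ref{lem:abscontcurv}, Proposition \ref{prop:abscont}). The one point worth making explicit is that the direction of the volume-derivative bound you actually need, $\Lip[f](x)^Q\lesssim\omega(x)$, comes from the inclusion $B(y,R/C)\subset f(B(x,r))$ together with $R\gtrsim L_f(x,r)$ and lower Ahlfors regularity of $\nu$; with that spelled out the proof is complete.
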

\noindent
This is connected to the following fact which pertains to the issue of controlling the behavior of the restriction of $f$ to curves. First, a map $\gamma \co [a,b] \to X$ is called absolutely continuous, if the push-forwards measure $
\gamma^*(\lambda)$ is absolutely continuous with respect to the Hausdorff 
measure $\mathcal{H}^1|_\gamma$. In this case, by \cite{ambrosiotilli}, the 
curve is controlled by its metric derivative which is almost everywhere defined by:
$$
d_\gamma(t) = \lim_{h \to 0} \frac{d(\gamma(t),\gamma(t+h))}{h}\, .
$$
Indeed, the length of the curve can be given by
\begin{equation}\label{eq:lengam}
\len(\gamma(t)) = \int_a^b d_\gamma(t) 	~dt\, .
\end{equation}
Now, if  $\gamma \co [a,b] \to X$ is absolutely continuous, then 
 $f$ is said to be {\it absolutely continuous along $\gamma$} if $f \circ \gamma$
 is an absolutely continuous curve and  $f^*(\mathcal{H}^1|_\gamma)$ 
is absolutely continuous with respect to $\mathcal{H}^1|_{f \circ \gamma}$. In this case, the metric derivative $d_{f \circ \gamma}$ satisfies
$$\len(f\circ \gamma) \leq \int_a^b d_{f \circ \gamma(t)} 	~dt.$$
Further, by using 
$$\Lip[f](x) \defeq \limsup_{y \to x, y \neq x} \frac{d(f(x),f(y))}{d(x,y)}$$
we obtain $d_{f \circ \gamma} \leq \Lip[f] \cdot d_\gamma(t)$ almost everywhere. (Note that both sides are defined almost everywhere.) Hence, we obtain
\begin{equation}\label{eq:fgamma}
\len(f \circ \gamma) \leq \int_a^b \Lip[f](\gamma(t)) d_\gamma(t)~dt.
\end{equation}

The following lemma, which is adapted from
\cite[Theorem 8.1]{heinonenkoskela}, provides an  abundance of curves $\gamma$ along which $f \circ \gamma$ can be controlled.
\begin{lemma}\label{lem:abscontcurv} If $\Gamma$ is a family of curves and $f \co X \to Y$ is a quasisymmetry between two $Q$-Loewner spaces, then 
$$\overline{\Gamma} = \{\gamma \in \Gamma\  |\   f \text{ is absolutely continuous on } \gamma\}$$
satisfies $\Mod_Q(\overline{\Gamma}) = \Mod_Q(\Gamma)$ and $\Mod_Q(\Gamma \setminus \overline{\Gamma}) = 0$.
\end{lemma}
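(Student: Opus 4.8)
The plan is to reduce the statement to a single assertion about exceptional families: let $\Gamma_0$ denote the family of all rectifiable curves in $X$ along which $f$ fails to be absolutely continuous. Since $\Gamma\setminus\overline\Gamma\subseteq\Gamma_0$, it suffices to prove $\Mod_Q(\Gamma_0)=0$; indeed, then $\Mod_Q(\Gamma\setminus\overline\Gamma)=0$, and by subadditivity of the modulus $\Mod_Q(\Gamma)\leq\Mod_Q(\overline\Gamma)+\Mod_Q(\Gamma\setminus\overline\Gamma)=\Mod_Q(\overline\Gamma)$, while $\Mod_Q(\overline\Gamma)\leq\Mod_Q(\Gamma)$ by monotonicity, giving equality. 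So the whole content is that the non-absolutely-continuous curves form a $Q$-exceptional family.

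The first main step is an $L^Q$-bound on the pointwise dilatation $\Lip[f]$. For $x\in X$ and $r>0$ set $L_f(x,r)=\sup_{d(x,y)\leq r}d_Y(f(x),f(y))$ and $\ell_f(x,r)=\inf_{d(x,y)=r}d_Y(f(x),f(y))$; one checks $\Lip[f](x)=\limsup_{r\to 0}L_f(x,r)/r$. Because $f$ is an $\eta$-quasisymmetric homeomorphism, images of balls are round: $B\big(f(x),\tfrac{1}{C}\ell_f(x,r)\big)\subseteq f(B(x,r))\subseteq B(f(x),L_f(x,r))$ with $L_f(x,r)\leq C\,\ell_f(x,r)$ for a uniform $C$, and moreover $L_f$ is doubling in $r$. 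Using $Q$-Ahlfors regularity of $Y$ this yields $\nu(f(B(x,r)))\asymp L_f(x,r)^Q$. Fix a ball $B\subseteq X$; for small $r$ take a maximal $r$-separated net $\{x_i\}\subseteq B$, so $\{B(x_i,r)\}$ has bounded overlap, and since $f$ is a bijection the images $\{f(B(x_i,r))\}$ have the same bounded overlap. Hence $\sum_i\nu(f(B(x_i,r)))\lesssim\nu(f(2B))$, and combining with $\nu(f(B(x_i,r)))\asymp L_f(x_i,r)^Q$, $\mu(B(x_i,r))\asymp r^Q$, and the local doubling of $L_f$ in $r$, a standard Vitali/maximal-function argument gives $\int_B\big(\sup_{s<\epsilon}L_f(x,s)/s\big)^Q\,d\mu\lesssim\nu(f(2B))$ uniformly in small $\epsilon$; letting $\epsilon\to 0$ gives $\int_B\Lip[f]^Q\,d\mu\lesssim\nu(f(2B))<\infty$. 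Thus $\Lip[f]\in L^Q_{\mathrm{loc}}(X,\mu)$. (Alternatively, one may simply quote the fact recalled in the introduction that a quasisymmetry between $Q$-Loewner spaces lies in $H^{1,Q}_{\mathrm{loc}}$ with $\Lip[f]$ an upper gradient in $L^Q_{\mathrm{loc}}$; see \cite{ChDiff99,heinonenquasi,shanmugalingamsobolev}, which packages precisely this estimate together with the upper gradient inequality used next.)

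The second main step transfers this to curves. By Fuglede's theorem, since $\Lip[f]\in L^Q_{\mathrm{loc}}$ the family of rectifiable curves $\gamma$ with $\int_\gamma\Lip[f]\,ds=\infty$ has $Q$-modulus zero, and by the theory of $N^{1,Q}$/Newtonian functions (Shanmugalingam, \cite{shanmugalingamsobolev,ChDiff99}) the upper gradient inequality $d_Y(f(\gamma(a)),f(\gamma(b)))\leq\int_\gamma\Lip[f]\,ds$ holds for all subarcs of $\gamma$, for $Q$-a.e. $\gamma$. Removing these two $Q$-exceptional families, for every remaining $\gamma$ — parametrized by arclength on $[0,\ell]$ — the function $\tau\mapsto\int_0^\tau\Lip[f](\gamma(\sigma))\,d\sigma$ is finite and absolutely continuous and dominates the oscillation of $f\circ\gamma$; hence $f\circ\gamma$ is a metrically absolutely continuous curve of finite length, its metric derivative obeys $d_{f\circ\gamma}\leq\Lip[f](\gamma)\,d_\gamma$ a.e. (so that \eqref{eq:fgamma} holds), and consequently $f^*(\mathcal H^1|_\gamma)=(f\circ\gamma)_*(d_\gamma\,d\tau)$ is absolutely continuous with respect to $\mathcal H^1|_{f\circ\gamma}$. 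That is exactly the condition that $f$ be absolutely continuous along $\gamma$, so $\Gamma_0$ is contained in the union of the two $Q$-null families above and $\Mod_Q(\Gamma_0)=0$, which finishes the proof.

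The step I expect to be the real obstacle is the passage from the $L^Q$-integrability of $\Lip[f]$ to genuine \emph{metric} absolute continuity of $f$ along $Q$-almost every curve — verifying both the upper gradient inequality for $\Lip[f]$ and the Luzin-type condition guaranteeing $f^*(\mathcal H^1|_\gamma)\ll\mathcal H^1|_{f\circ\gamma}$. This is precisely the content of \cite[Theorem 8.1]{heinonenkoskela} and of the absolute-continuity-on-curves theory for Newtonian functions; under the Loewner and $Q$-Ahlfors regular hypotheses it can legitimately be invoked, but it is the step in which all of the analytic structure is actually consumed, and a careful writeup should either reproduce or carefully cite it.
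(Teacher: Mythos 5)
The paper does not actually prove this lemma: it is stated as ``adapted from \cite[Theorem 8.1]{heinonenkoskela}'', and your overall route --- reduce by monotonicity and countable subadditivity of $\Mod_Q$ to showing that the family $\Gamma_0$ of all curves on which $f$ fails to be absolutely continuous is $Q$-exceptional, then obtain this from $\Lip[f]\in L^Q_{\rm loc}$ together with Fuglede's theorem and the upper-gradient/Newtonian machinery --- is exactly the content of that citation. So in substance you are doing what the paper does, only more explicitly; the reduction in your first paragraph is correct and is the only part of the lemma not already contained in the cited theorem.

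Two caveats if your sketch is meant to be self-contained rather than a gloss on the citation. First, the intermediate claim $\int_B\big(\sup_{s<\epsilon}L_f(x,s)/s\big)^Q\,d\mu\lesssim\nu(f(2B))$ uniformly in $\epsilon$ is too strong: $\sup_{s<\epsilon}L_f(x,s)^Q/s^Q$ is comparable to a restricted maximal function of the pullback measure $E\mapsto\nu(f(E))$ with respect to $\mu$, and maximal functions of finite measures are only of weak type $(1,1)$, so the strong $L^1$ bound on the supremum fails in general. What is true, and suffices, is the fixed-scale bound $\int_B(L_f(x,r)/r)^Q\,d\mu\lesssim\nu(f(2B))$ together with Fatou (or the differentiation theorem for the upper derivative of $\nu\circ f$ with respect to $\mu$), which controls the $\limsup$ defining $\Lip[f]$; or one simply quotes \cite[Theorem 7.11]{heinonenkoskela}, as the paper itself does in the proof of Proposition \ref{prop:differential}. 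Second, the Luzin-type condition $f^*(\mathcal H^1|_\gamma)\ll\mathcal H^1|_{f\circ\gamma}$ does not follow from metric absolute continuity of $f\circ\gamma$ alone --- an absolutely continuous curve can push Lebesgue measure onto an $\mathcal H^1$-null set --- and this is precisely where \cite[Theorem 8.1]{heinonenkoskela} uses the two-sided quasisymmetric control, running the analogous argument for $f^{-1}$, whose dilatation is also in $L^Q$. You correctly flag this as the real obstacle and defer to the citation, which is legitimate; just note that the word ``consequently'' in your final step claims a deduction that the preceding sentences do not actually supply.
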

The lemma can be paraphrased as follows: $f$ is absolutely continuous on almost 
every curve $\gamma$. 
\vskip2mm

We will need a Muckenhoupt-property, or more precisely,
 a reverse H\"older property, for $\Lip[f]$ established in \cite[Theorem 7.11]{heinonenkoskela}. 
This  property 
is equivalent to various different versions of the Muckenhoupt condition.
 See \cite{steinharmonic} for more background and discussion of various equivalent definitions. 
We will not need most properties of these weights here.

\begin{proposition}\label{prop:ainftyhk}
Let $X$ and $Y$ be $Q$-Loewner spaces and  let $f \co X \to Y$ denote a quasisymmetry. Then, there is a $Q'>Q$ and constant $C$ 
such  that for any ball $B=B(x,r)$
$$
\left(\vint_{B(x,r)} \Lip[f]^{Q'} d\mu\right)^\frac{1}{Q'} \leq C \left(\vint_{CB(x,r)} \Lip[f]^Q ~d\mu \right)^\frac{1}{Q}\, .
$$
In particular, the function $\Lip f$ is a Muckenhoupt 
$A_\infty$-weight as defined in \cite{steinharmonic}.
\end{proposition}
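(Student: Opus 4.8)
The statement is \cite[Theorem 7.11]{heinonenkoskela}, so one option is simply to cite it; here I sketch a self-contained route via Gehring's self-improvement. Write $g \defeq \Lip[f]$. Since $f$ is an $\eta$-quasisymmetry between $Q$-Loewner spaces, \cite{heinonenkoskela} shows that $f$ is metrically quasiconformal, lies in $H^{1,Q}_{\loc}$ with $g$ a representative of its minimal $Q$-weak upper gradient, and enjoys the absolute-continuity properties (conditions $(N)$ and $(N^{-1})$) which make the volume derivative
\[
\mu_f(x) \defeq \limsup_{r\to 0}\frac{\nu(f(B(x,r)))}{\mu(B(x,r))}
\]
finite $\mu$-a.e., comparable to $g^Q$ a.e., and such that $\nu(f(B))\asymp\int_B g^Q\,d\mu$ for every ball $B$. (For the lower bound $\nu(f(B))\lesssim\int_B g^Q\,d\mu$ one can argue directly: push a Loewner curve family joining $\tfrac12 B$ to $X\setminus B$ forward by $f$, use Lemma~\ref{lem:abscontcurv} and the length estimate \eqref{eq:fgamma} to see that $\tfrac{2}{D}\,g\,\mathbf{1}_B$ is admissible for the pushed-forward family, where $D\defeq\diam(f(B))$, and conclude with $\nu(f(B))\asymp D^Q$.) Because both $f$ and $f^{-1}$ are quasisymmetric, $f(B(x,r))$ is a \emph{round} quasiball: its inner and outer radii about $f(x)$ are each $\asymp\diam(f(B(x,r)))$, with constants depending only on $\eta$. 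Combining these facts with the $Q$-Ahlfors regularity of $\mu$ and $\nu$ gives, for every ball $B=B(x_0,r)$,
\[
\Big(\vint_{B} g^Q\,d\mu\Big)^{1/Q}\ \asymp\ \frac{\diam(f(B))}{r}\ \asymp\ \frac1r\,\vint_{B}|f-f_B|\,d\mu ,
\]
the last comparison because roundness of $f(B)$ forces the oscillation of $f$ on $B$ to be comparable to its mean oscillation there.

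Next I would produce a weak reverse Hölder inequality. The space $X$ is complete, $\mu$ is doubling, and $X$ satisfies a $(1,Q)$-Poincaré inequality; by the Keith--Zhong self-improvement theorem there are $s\in(1,Q)$ and $\tau\ge 1$ so that $X$ satisfies a $(1,s)$-Poincaré inequality. Applying it to $f$ with upper gradient $g$ and feeding the result into the display above,
\[
\Big(\vint_{B} g^Q\,d\mu\Big)^{1/Q}\ \asymp\ \frac1r\,\vint_{B}|f-f_B|\,d\mu\ \le\ C\Big(\vint_{\tau B} g^s\,d\mu\Big)^{1/s}\qquad\text{for all balls }B,
\]
which is a weak reverse Hölder inequality for $g$ at exponent $Q$ against the smaller exponent $s$.

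Finally I would invoke Gehring's lemma in its form valid on doubling metric measure spaces (see e.g.\ \cite{bjornbjornbook}): a nonnegative, locally $L^Q$ function satisfying such an inequality is self-improving, so there are $\delta>0$ and $C'\ge1$ with
\[
\Big(\vint_{B} g^{Q(1+\delta)}\,d\mu\Big)^{\frac1{Q(1+\delta)}}\ \le\ C'\Big(\vint_{C'B} g^{Q}\,d\mu\Big)^{1/Q}
\]
for every ball $B$. Taking $Q'\defeq Q(1+\delta)>Q$ and $C\defeq C'$ is exactly the assertion; equivalently, $g^Q$ — hence $\Lip f$ — is a Muckenhoupt $A_\infty$-weight. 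The main obstacle is the first paragraph: the two-sided comparison of $\vint_B g^Q$ with $(\diam f(B)/r)^Q$ rests on the measure-theoretic fine properties (absolute continuity in measure, finiteness and a.e.\ comparability of the volume derivative, roundness of images of balls) of metrically quasiconformal maps between Loewner spaces, which I would import wholesale from \cite{heinonenkoskela}; the Keith--Zhong and Gehring steps downstream are routine.
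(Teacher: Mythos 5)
The paper offers no argument for this proposition at all: it is stated as a quotation of \cite[Theorem 7.11]{heinonenkoskela} and used as a black box. Your sketch is therefore not so much an alternative proof as a reconstruction of the standard Gehring-type argument behind that theorem, and as a reconstruction it is essentially correct: the chain
$\left(\vint_B \Lip[f]^Q\right)^{1/Q} \lesssim \diam(f(B))/r \lesssim r^{-1}\vint_B|f-f_B| \lesssim \left(\vint_{\tau B}\Lip[f]^s\right)^{1/s}$ with $s<Q$ supplied by Keith--Zhong, followed by Gehring's lemma for doubling measures, is the modern route to the reverse H\"older inequality, and the Keith--Zhong step is precisely what lets one run it assuming only the $(1,Q)$-Poincar\'e inequality built into the paper's definition of Loewner (completeness is assumed in the paper's conventions, so Keith--Zhong applies). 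Two caveats. First, note which inequalities in your first display actually feed the Gehring argument: you need the \emph{upper} bound $\int_B \Lip[f]^Q\,d\mu\lesssim \nu(f(B))$ (the hard absolute-continuity direction, which you rightly import from \cite{heinonenkoskela}) and the \emph{lower} bound $\vint_B|f-f_B|\gtrsim \diam(f(B))$; the Loewner modulus computation you spell out in parentheses gives the opposite, easier inequality $\nu(f(B))\lesssim\int_B\Lip[f]^Q\,d\mu$, which is not used in the reverse H\"older chain. Second, the oscillation lower bound is not literally ``forced by roundness of $f(B)$'': a round image is compatible with almost all of $\mu|_B$ being sent near one point. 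The correct justification is to pick two sub-balls of $B$ of definite relative measure whose centers are $\gtrsim r$ apart (using connectivity and Ahlfors regularity) and apply quasisymmetry of $f^{-1}$ to see their images are $\gtrsim\diam(f(B))$ apart; this also requires reading $\vint_B|f-f_B|$ in the usual metric-valued sense. Both points are standard and easily repaired, so the sketch stands, but as written those two steps are misattributed rather than proved.
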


As a consequence of the reverse H\"older property, 
we get the following result which states that if the set $E$ is small in measure, then not only is
 the integral of $1_E$
 along some curve $\gamma$ small, but the integral of $\Lip[f] 1_E$ as well.

\begin{lemma} \label{lem:modulusboundstronger} Let $f \co X \to Y$ 
be as in the previous lemma. Assume  $\Mod_{Q}(\Gamma)>c$, and 
that $\gamma \subset B(x,R)$,
for every $\gamma \in \Gamma$.
Then, there exists  $C_{\rm con}$ and $\alpha$ such that the following holds.
For every Borel set $E$ with
$\frac{\mu(E \cap B(x,R))}{\mu(B(x,R))} \leq \epsilon,$
 there exists $\gamma \in \Gamma$ such that:
$$
\int_\gamma \Lip f 1_E ~ds 
\leq C_{\rm con} \frac{\epsilon^\alpha}{c^\frac{1}{Q}}
 \left(\vint_{B(x,CR)} \Lip[f]^Q ~d\mu \right)^\frac{1}{Q}\, .
$$
\end{lemma}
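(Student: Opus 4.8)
The plan is to combine the reverse Hölder inequality of Proposition \ref{prop:ainftyhk} with a standard Hölder-inequality-plus-averaging argument over the curve family $\Gamma$. First I would fix the curve family $\Gamma$ with $\Mod_Q(\Gamma) > c$, all of whose members lie in $B(x,R)$, and fix a Borel set $E$ with $\mu(E\cap B(x,R))/\mu(B(x,R)) \le \epsilon$. The guiding idea: the function $g \defeq \Lip[f]\cdot 1_E$ has small $L^Q$-mass on $B(x,R)$ because $\Lip[f]$ is an $A_\infty$-weight — so on the set $E$ of small measure its $Q$-th power integrates to something controlled by a small power of $\epsilon$ times $\int_{CB}\Lip[f]^Q$. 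Quantitatively, apply Hölder with exponents $Q'/Q$ and its conjugate to $\int_E \Lip[f]^Q\,d\mu$, getting
\[
\int_{E\cap B(x,R)} \Lip[f]^Q\,d\mu \le \Big(\int_{B(x,R)}\Lip[f]^{Q'}d\mu\Big)^{Q/Q'}\mu(E\cap B(x,R))^{1-Q/Q'},
\]
and then invoke Proposition \ref{prop:ainftyhk} together with $Q$-Ahlfors regularity $\mu(B(x,R))\asymp R^Q$ to bound the first factor by $C\,\mu(B(x,R))^{-1}R^Q \big(\vint_{CB(x,R)}\Lip[f]^Q d\mu\big)$ raised to the appropriate power; the measure factor contributes $\epsilon^{1-Q/Q'}\mu(B(x,R))^{1-Q/Q'}$. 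Collecting the powers of $\mu(B(x,R))\asymp R^Q$ yields
\[
\vint_{B(x,R)} g^Q\,d\mu \;\lesssim\; \epsilon^{\,1-Q/Q'}\,\vint_{CB(x,R)}\Lip[f]^Q\,d\mu,
\]
which is the key estimate; set $\alpha \defeq (1-Q/Q')/Q$ so that the $L^Q$-average of $g$ is $\lesssim \epsilon^{Q\alpha/?}$ — I would track the exponent carefully here, but morally $\|g\|_{L^Q(\vint_{B(x,R)})} \lesssim \epsilon^{\beta}\big(\vint_{CB}\Lip[f]^Q\big)^{1/Q}$ for some $\beta>0$ depending only on $Q,Q'$.

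Next I would convert this into a statement about \emph{some} curve $\gamma \in \Gamma$. The function $\rho \defeq g / \big(\int_\gamma g\,ds\big)$ would be admissible for $\Gamma$ if $\int_\gamma g\,ds$ were bounded below on all of $\Gamma$; instead, argue by contradiction / by averaging. Suppose $\int_\gamma \Lip[f]\,1_E\,ds > \lambda$ for \emph{every} $\gamma\in\Gamma$, where $\lambda$ is the claimed right-hand side. Then $\rho \defeq g/\lambda$ is admissible for $\Gamma$, so
\[
c < \Mod_Q(\Gamma) \le \int \rho^Q\,d\mu = \lambda^{-Q}\int_{B(x,R)} g^Q\,d\mu \lesssim \lambda^{-Q}\,\mu(B(x,R))\,\epsilon^{\,Q\beta}\,\vint_{CB(x,R)}\Lip[f]^Q\,d\mu.
\]
Solving for $\lambda$ gives $\lambda^Q \lesssim c^{-1}\mu(B(x,R))\,\epsilon^{Q\beta}\vint_{CB}\Lip[f]^Q d\mu$, and using $\mu(B(x,R))\asymp R^Q$ again (absorbing it into the average — note $\vint_{B(x,CR)}$ is what appears, and $\mu(B(x,CR))\asymp\mu(B(x,R))$ by doubling) yields
\[
\lambda \;\lesssim\; \frac{\epsilon^{\beta}}{c^{1/Q}}\,\Big(\vint_{B(x,CR)}\Lip[f]^Q\,d\mu\Big)^{1/Q},
\]
contradicting the definition of $\lambda$ unless such a $\gamma$ exists. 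This contradiction produces the desired $\gamma$; renaming $\beta$ to $\alpha$ and the implied constant to $C_{\rm con}$ finishes the argument.

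The main technical obstacle I anticipate is bookkeeping of the exponents and the various balls: making sure the dilation constant $C$ produced by Proposition \ref{prop:ainftyhk} (applied possibly on a slightly larger ball), the dilation $C$ appearing in the target estimate, and the radius $R$ all match up after one invokes doubling and Ahlfors regularity to convert between $\mu(B(x,R))$, $\mu(B(x,CR))$, $R^Q$, and the averages $\vint$. One must be careful that the reverse Hölder inequality is applied on a ball contained in the region where the integrability improvement $Q' > Q$ holds (it does, by Proposition \ref{prop:ainftyhk}), and that all constants depend only on the data ($Q$, the Ahlfors regularity constant $C_{AR}$, the Poincaré and doubling constants, and the quasisymmetry function $\eta$), not on $E$, $\Gamma$, $x$, or $R$. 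A minor point: one should also note the absolute continuity of $f$ along $Q$-a.e.\ curve (Lemma \ref{lem:abscontcurv}) so that the line integral $\int_\gamma \Lip[f]\,1_E\,ds$ is meaningful and controls $\mathrm{length}$-type quantities, though for the modulus estimate itself only the admissibility of $\rho$ is needed. Everything else is the standard $A_\infty \Rightarrow$ "small sets have small weighted mass" computation, so no deep new idea is required beyond carefully assembling the cited ingredients.
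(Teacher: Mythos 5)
Your proposal is correct and follows essentially the same route as the paper: Hölder's inequality with exponents $Q'/Q$ combined with the reverse Hölder estimate of Proposition \ref{prop:ainftyhk} shows that a suitably rescaled copy of $\Lip[f]\,1_{E\cap B(x,R)}$ has $Q$-energy strictly below $c<\Mod_Q(\Gamma)$, hence is not admissible, so some $\gamma\in\Gamma$ has small integral; your exponent $\alpha=(Q'-Q)/(QQ')$ is exactly the paper's. The one caveat is your step ``absorbing $\mu(B(x,R))\asymp R^Q$ into the average'': that factor cannot vanish — it survives as a length factor $R$ on the right-hand side (the paper's proof carries it via the $1/R$ in the normalization of its test function $g$, and the corresponding $d(x,y)$ reappears when the lemma is invoked in Lemma \ref{lem:distanceest}) — but since the stated inequality itself omits this $R$, this is a bookkeeping issue shared with the statement rather than a gap in your argument.
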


\begin{proof} Let $C$ and $Q'>Q$ be the constants from
 Proposition \ref{prop:ainftyhk}. Let $C_{AR}$ be the Ahlfors regularity 
constant of $X$ and $A = \left(\vint_{B(x,R)} \Lip[f]^{Q} ~d\mu \right)^\frac{1}{Q}$. 
Set $\alpha = \frac{Q'-Q}{Q'Q}$ and $g = \frac{c^{\frac{1}{Q}}}{2C(C_{AR}A)^{\frac{1}{Q}}\epsilon^\alpha R}\Lip[f] 1_{E \cap B(x,R)}$. Then, by H\"older's inequality and  Proposition \ref{prop:ainftyhk}:
\begin{eqnarray*}
\int g^Q ~d\mu &\leq & \frac{c}{2^QAC^Q\epsilon^{\alpha Q}}\vint_{B(x,R)} \Lip[f]^Q 1_E ~d\mu \\
&\leq & \frac{c}{2^QAC^Q\epsilon^{\alpha Q}} \left(\vint_{B(x,R)} \Lip[f]^{Q'} ~d\mu \right)^\frac{Q}{Q'} \epsilon^{\frac{Q'-Q}{Q'}} <c\\
&\leq & \frac{c}{2^Q\epsilon^{\alpha Q}} \epsilon^{\frac{Q'-Q}{Q'}} <c.\\
\end{eqnarray*}
The function $g$ is then not admissible, and so, there is a curve 
$\gamma$ with
$$\int_\gamma g ~ds < 1.$$
Setting $C_{\rm con}=2C(C_{AR} A)^{\frac{1}{Q}}$ we get then the desired integral bound.
\end{proof}

 We will also need the following quantitative 
 version of absolute continuity, which is related to the Muckenhoupt condition. It follows from the arguments in \cite[Section 7.8]{heinonenkoskela}, 
the Muckenhoupt condition and the equivalent characterizations of Muckenhoupt weights
 in \cite[Chapter V]{steinharmonic}. (Recall, that the push-forward measure is defined as 
$f^*(\mu)(E) = \mu(f^{-1}(E))$.)

\begin{proposition}\label{prop:abscont} Let $f \co X \to Y$ be a quasisymmetry, then there is a constant $L \geq 1$ so that
$$f^{-1*}(\nu) \leq L\Lip[f]^Q \mu,$$
and
$$f^*(\mu) \leq L \Lip[f^{-1}]^Q \nu.$$
Moreover, there exists an $\alpha$ so that
$$\frac{f^{-1*}(\nu)(B \cap E)}{f^{-1*}(\nu)(B)} \leq L \left( \frac{\mu(E \cap L B)}{\mu(L B)}\right)^\alpha,$$
and
$$\frac{f^*(\mu)(B \cap E)}{f^*(\mu)(B)} \leq L \left( \frac{\nu(E \cap L B)}{\nu(L B)}\right)^\alpha,$$
holds for all balls $B$ and Borel sets $E$ in $X$ and $Y$ respectively. The 
constants depend only on the constants in the 
quasisymmetry and Loewner conditions.
\end{proposition}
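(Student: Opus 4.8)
The plan is to reduce everything to the reverse Hölder (Muckenhoupt $A_\infty$) property for $\Lip[f]$ established in Proposition \ref{prop:ainftyhk}, combined with the fundamental absolute continuity and integrability facts for quasisymmetries between $Q$-Loewner spaces. First I would prove the pointwise inequality $f^{-1*}(\nu)\le L\,\Lip[f]^Q\mu$. The idea is that for a Borel set $E\subset X$, the set $f(E)\subset Y$ can be covered economically using balls, and the ``volume derivative'' of $f$ is controlled by $\Lip[f]^Q$ because $f$ is quasisymmetric: a ball $B(x,r)$ maps into a set comparable (by the quasisymmetry distortion function $\eta$) to a ball of radius $\sim\Lip[f](x)\,r$ whenever $\Lip[f](x)$ is finite, and then $\nu$-Ahlfors regularity of $Y$ gives $\nu(f(B(x,r)))\lesssim (\Lip[f](x)\,r)^Q\asymp \Lip[f](x)^Q\mu(B(x,r))$. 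Running this through a Vitali covering argument at points where $\Lip[f]$ is approximately constant (a full measure set, since $\Lip[f]$ is measurable and the relevant density statements hold $\mu$-a.e.) yields $\nu(f(E))\le L\int_E \Lip[f]^Q\,d\mu$, which is exactly $f^{-1*}(\nu)(E)\le L\int_E\Lip[f]^Q\,d\mu$. This argument is essentially \cite[Section 7.8]{heinonenkoskela}; I would cite it and indicate the covering step. By symmetry, applying the same reasoning to $f^{-1}$ gives $f^*(\mu)\le L\,\Lip[f^{-1}]^Q\nu$.

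Next I would upgrade the pointwise bounds to the quantitative higher-integrability (reverse Hölder) statements. Fix a ball $B=B(x,r)$ and a Borel set $E$. Using the first inequality and the Ahlfors regularity of $\mu$,
\begin{equation}
\label{eq:abscontstep}
f^{-1*}(\nu)(B\cap E)\le L\int_{B\cap E}\Lip[f]^Q\,d\mu
\le L\,\mu(B)^{1-Q/Q'}\Big(\int_B \Lip[f]^{Q'}d\mu\Big)^{Q/Q'}\Big(\frac{\mu(B\cap E)}{\mu(B)}\Big)^{1-Q/Q'},
\end{equation}
by Hölder's inequality with exponents $Q'/Q$ and its conjugate applied to $\Lip[f]^Q\cdot 1_{B\cap E}$. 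By Proposition \ref{prop:ainftyhk}, $\big(\vint_B\Lip[f]^{Q'}\big)^{Q/Q'}\le C^Q\vint_{CB}\Lip[f]^Q$, and the right side of this is comparable, again via the pointwise bound $f^{-1*}(\nu)\le L\Lip[f]^Q\mu$ applied over $CB$ together with doubling and a lower bound $f^{-1*}(\nu)(CB)\gtrsim f^{-1*}(\nu)(B)$ (from the quantitative surjectivity/quasisymmetry so that $f(CB)\supset$ a fixed dilate of $f(B)$), to $\mu(B)^{-1}f^{-1*}(\nu)(B)$ up to a multiplicative constant. Substituting these into \eqref{eq:abscontstep} and absorbing constants gives
$$
\frac{f^{-1*}(\nu)(B\cap E)}{f^{-1*}(\nu)(B)}\le L\Big(\frac{\mu(E\cap LB)}{\mu(LB)}\Big)^{\alpha},\qquad \alpha=1-\frac{Q}{Q'}.
$$
The other inequality follows by applying the same argument to $f^{-1}$, using that $f^{-1}$ is also a quasisymmetry between $Q$-Loewner spaces and invoking Proposition \ref{prop:ainftyhk} for $\Lip[f^{-1}]$.

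The main obstacle I expect is the passage from the pointwise comparison of measures to the localized reverse-Hölder form with the \emph{correct} balls and constants — specifically, ensuring that the denominator $f^{-1*}(\nu)(B)$ can be bounded below by $\mu(B)^{-1}\int_{CB}\Lip[f]^Q d\mu$ up to a constant depending only on the structural data. This requires the (essentially standard, but needing care) fact that $\Lip[f]$ is itself a doubling/$A_\infty$ weight whose average over a ball controls $f^{-1*}(\nu)$ of that ball from both sides, which is where Proposition \ref{prop:ainftyhk} and the Loewner (Poincaré) hypothesis are really used; the rest is bookkeeping with Hölder's inequality and the doubling property. I would state the needed two-sided comparison $f^{-1*}(\nu)(B)\asymp \int_{B}\Lip[f]^Q d\mu$ as an intermediate claim, deduce it from \cite[Section 7.8]{heinonenkoskela} plus Proposition \ref{prop:ainftyhk}, and then the displayed inequalities drop out.
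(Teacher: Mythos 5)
Your proposal is correct and follows exactly the route the paper itself indicates: the paper gives no written proof of Proposition \ref{prop:abscont}, only the citation to \cite[Section 7.8]{heinonenkoskela} for the pointwise volume-derivative bound and to \cite[Chapter V]{steinharmonic} for the standard fact that a reverse H\"older inequality (here Proposition \ref{prop:ainftyhk}) yields the quantitative $A_\infty$ condition via precisely the H\"older-inequality computation you carry out. You correctly isolate the one point needing care — the two-sided comparison $f^{-1*}(\nu)(B)\asymp\int_{CB}\Lip[f]^Q\,d\mu$, whose lower bound comes from quasisymmetry plus Ahlfors regularity of $\nu$ rather than from the reverse H\"older inequality — so your write-up supplies the details the paper's citations stand in for.
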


\subsection{Existence of a differential and Sobolev spaces}\label{ss:differential}

In this section, using Cheeger derivatives we will define a linear differential $df$ for the function $f$. Later, we will show that the dilation $\Lip[f]$ of $f$ is controlled by $df$.

For the following proposition, we define $H^{1,p}_{\rm loc}(X)$ 
as the space of $f \co X \to \R \cup \{-\infty,\infty\}$ which possess
 weak upper gradients $g$ so that $f$ and $g$ are in $L^{p}_{\rm loc}$. Here, $p \in (1,\infty)$.
A function $g$ is an \emph{upper gradient} for $f$, if for every rectifiable curve 
$\gamma \co [a,b] \to X$ we have
$$|f(x)-f(y)| \leq \int_\gamma g ~ds,$$
if $f(x),f(y) <\infty$, and if one of them is infinity, then the right hand side is infinity. Upper gradients were initially defined in \cite{heinonenkoskela}. The function $g$ is called a \emph{weak upper gradient} if the previous holds for every curve $\gamma$ except for a family of exceptional curves with vanishing modulus, as defined in \cite{shanmugalingamsobolev}.

The above definition of a Sobolev space $H^{1,p}$ is the one best suited for our purposes. Another, earlier definition appeared in \cite{ChDiff99} involving a minimal generalized upper gradient defined using relaxation. By results in \cite{ChDiff99} we could derive the same estimates for this gradient using the upper gradients that we use below, but the approach we use is slightly more direct. As shown in \cite{shanmugalingamsobolev}, these definitions are equivalent. For a discussion using the definition using upper gradients see \cite{bjornbjornbook}.



The following proposition is related to \cite[Theorem 10.8]{heinonenquasi}. Our proof is somewhat different though. The derivative we construct can naturally be interpreted as the adjoint of the natural map defined on the measurable cotangent bundles $(df)_* \co T^*Y \to T^* X$.

\begin{proposition}\label{prop:differential}
Let $X$ and $Y$ be $Q$-Loewner, $f \co X \to Y$ be a quasisymmetry, and $U \subset X, V \subset Y$ be measurable sets. Suppose $\phi: U \to \R^n$ is a chart of $X$, and $f(U) \subset V$ for some chart $\psi : V \to \R^m$. 
Then, for almost every $x \in U$ the function $\psi \circ f$ is Cheeger differentiable with respect to $\phi$ at $x$ with derivative $d(\psi \circ f)\co \R^n \to \R^m$. Moreover, $m=n$ and $d(\psi \circ f)$  is invertible almost everywhere with inverse $d(\phi \circ f^{-1})(f(x))$.
\end{proposition}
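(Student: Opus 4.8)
The plan is to work chart-by-chart and combine the differentiability theory of PI spaces with the absolute continuity and reverse-Hölder properties of quasisymmetries established above. First I would observe that by Proposition \ref{p:adiq}-style reasoning (or rather, directly from \cite{ChDiff99}), both $X$ and $Y$ admit finitely many charts and we may restrict to $U$ a single chart domain $\phi\co U\to\R^n$ with $f(U)\subset V$, $\psi\co V\to\R^m$. The key point is that $\psi\circ f\co U\to\R^m$ is, componentwise, a continuous function which lies in the local Sobolev space $H^{1,Q}_{\rm loc}$: this follows because $f$ is quasisymmetric between $Q$-Loewner spaces, hence by Lemma \ref{lem:abscontcurv} it is absolutely continuous on $\Mod_Q$-a.e. curve, and by inequality \eqref{eq:fgamma} the function $\Lip[f]$ serves as a $Q$-integrable upper gradient for each coordinate $\psi_j\circ f$ (here $\psi$ is Lipschitz on $Y$, so $\Lip[\psi_j\circ f]\lesssim \Lip[f]$ up to the Lipschitz constant of $\psi$, and $\Lip[f]\in L^Q_{\rm loc}$ by Proposition \ref{prop:ainftyhk}). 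Then I would invoke the differentiability of $H^{1,Q}$ functions on PI spaces with respect to Cheeger charts — since $Q>1$ and we have a Poincaré inequality — which is exactly the content used in \cite{ChDiff99,shanmugalingamsobolev} and cited in the paragraph before Proposition \ref{prop:differential}: such Sobolev functions are Cheeger differentiable $\mu$-a.e., giving a well-defined linear map $d(\psi\circ f)(x)\co\R^n\to\R^m$ for a.e.\ $x\in U$.

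Next I would run the symmetric argument for $f^{-1}\co Y\to X$, which is also a quasisymmetry between $Q$-Loewner spaces, to conclude that $\phi\circ f^{-1}$ is Cheeger differentiable $\nu$-a.e.\ on $V$ with derivative $d(\phi\circ f^{-1})(y)\co\R^m\to\R^n$. The crucial compatibility step is to transfer "$\nu$-a.e.\ $y$" into "$\mu$-a.e.\ $x$" via $y=f(x)$: this is where Proposition \ref{prop:abscont} is used, since $f^*(\mu)\le L\,\Lip[f^{-1}]^Q\,\nu$ and $f^{-1*}(\nu)\le L\,\Lip[f]^Q\,\mu$ show that $f$ and $f^{-1}$ map null sets to null sets, so the full-measure set where both $d(\psi\circ f)$ and $d(\phi\circ f^{-1})$ exist (pulled back appropriately) is still of full measure. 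On this common set, at a point $x$ which is simultaneously a differentiability point of $\psi\circ f$ and of $\phi\circ f^{-1}$ at $f(x)$, I would compose the two first-order Taylor expansions: writing $\psi(f(y))=\psi(f(x))+d(\psi\circ f)(x)(\phi(y)-\phi(x))+o(d(x,y))$ and the analogous expansion for $\phi\circ f^{-1}$ at $f(x)$, substituting one into the other and using that $d(x,y)\asymp d(f(x),f(y))$ locally (quasisymmetry plus the two-sided bounds of Proposition \ref{prop:abscont}, or just the quantitative quasisymmetry estimate), one gets $d(\phi\circ f^{-1})(f(x))\circ d(\psi\circ f)(x)=\mathrm{id}_{\R^n}$ and likewise the other composition equals $\mathrm{id}_{\R^m}$. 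Hence both linear maps are invertible, $m=n$, and the inverses are as claimed.

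The main obstacle I anticipate is the measure-transfer/full-measure bookkeeping in the previous paragraph: one must be careful that the "bad set" for $f^{-1}$ differentiability, which is $\nu$-null in $V$, pulls back under $f$ to a $\mu$-null set in $U$, and this genuinely requires the absolute continuity statement $f^{-1*}(\nu)\ll\mu$ (equivalently, that $f$ carries $\mu$-null sets to $\nu$-null sets), which is the nontrivial analytic input and precisely why Proposition \ref{prop:abscont} (via the Muckenhoupt/reverse-Hölder machinery of \cite{heinonenkoskela,steinharmonic}) is invoked. A secondary technical point is ensuring the error terms $o(d(x,y))$ and $o(d(f(x),f(y)))$ are genuinely interchangeable, which again uses only that $f$ is a quasisymmetry so that $d(x,y)\to 0 \iff d(f(x),f(y))\to 0$ with comparable rates on small scales; this is routine. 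Everything else — the Sobolev membership, the differentiability of $H^{1,Q}$ functions, the chain rule for Cheeger differentials at common differentiability points — is either cited directly from the references listed in the excerpt or is a standard computation, so I would not belabor it.
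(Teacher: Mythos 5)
Your proposal follows essentially the same route as the paper: put each component $\psi_j\circ f$ in a local Sobolev class using $\Lip[f]$ as an upper gradient, invoke a.e.\ Cheeger differentiability, run the symmetric argument for $f^{-1}$, transfer null sets via Proposition \ref{prop:abscont}, and compose the two first-order expansions to get invertibility and $m=n$. There is, however, one step you state incorrectly and which is exactly the point the reverse-H\"older machinery is there to handle: you claim $\Lip[f]\in L^Q_{\rm loc}$ and then invoke ``differentiability of $H^{1,Q}$ functions.'' At the critical exponent $p=Q$ (the Ahlfors-regularity dimension), a.e.\ differentiability of Sobolev functions is false in general — already in $\R^n$ there are continuous $W^{1,n}$ functions that are nowhere differentiable, and the metric-space Stepanov-type theorem you need (the one from \cite{balogh}, which requires an exponent high enough for the Sobolev embedding into H\"older classes) likewise demands $p>Q$. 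Proposition \ref{prop:ainftyhk} actually gives you more than you use: $\Lip[f]\in L^{Q'}_{\rm loc}$ for some $Q'>Q$, so $\psi_j\circ f\in H^{1,Q'}_{\rm loc}$, and it is this strictly supercritical membership that legitimizes the a.e.\ differentiability claim. The paper's proof goes through $Q'$ for precisely this reason.

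A secondary point you gloss over: substituting one Taylor expansion into the other only shows that $d(\phi\circ f^{-1})(f(x))\,d(\psi\circ f)(x)$ acts as the identity on the directions $\phi(y)-\phi(x)$ actually attained near $x$; to conclude it equals $\mathrm{id}_{\R^n}$ one needs that these vectors span $\R^n$ at generic points, which is the content of the uniqueness-of-differentials discussion the paper cites from \cite{batespeight}. Your handling of the error-term interchange and of the null-set transfer via Proposition \ref{prop:abscont} matches the paper and is fine.
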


\begin{proof}[Proof of Proposition \ref{prop:differential}] 
Fix a chart $\phi \co U \to \R^n$ in $X$ which is a $L$-Lipschitz and $f(U) \subset V$, where $\psi \co V \to \R^m$ is a $L$-Lipschitz chart in $Y$. By Heinonen-Koskela \cite[Theorem 7.11]{heinonenkoskela}, it follows that the function
$$\Lip[f](x) \defeq \limsup_{y \to x, y \neq x} \frac{d(f(x),f(y))}{d(x,y)}$$
satisfies $\Lip[f] \in L^{Q'}_{\rm loc}$, that is, it is 
locally in $L^{Q'}$, for some $Q'>Q$. Now, 
extend $\psi$ to all of $Y$ by a Lipschitz extension.  
Then, for each component $\psi_i$ of $\psi$,  $\Lip[\psi_i \circ f](x) \leq L \Lip[f](x)$. The function 
$\Lip[ \psi_i \circ f]$ is a (weak)-upper gradient\ for $\psi_i \circ f$; see \cite[Theorem 10.8]{heinonenquasi}. From this it follows that
 $\psi_i \circ f \in H^{1,Q'}_{\rm loc}$. Also note that $\psi_i \circ f$ is continuous since both functions, $\psi_i$ and $f$, are continuous. By \cite{balogh} the 
 function $\psi_i \circ f$ is Cheeger differentiable almost everywhere in $U$ with derivative $d(\psi_i \circ f)$ defined for almost every $x \in U$. Combining the components, we obtain the derivative $d(\psi \circ f)$. 
 
 The argument can be repeated for $f^{-1}$ on the measurable set $f(U)$ (which is a Suslin set, since $f$ is continuous \cite{kechris}).  If now $x$ is a point of differentiability for $\psi \circ f$, and $f(x)$ is a point of differentiability of $\phi \circ f^{-1}$, then the definition of Cheeger-differentiability ensures the following 
\begin{eqnarray*}
\phi(y)-\phi(x) &=& d(\phi \circ f^{-1})(f(x))(\psi(f(y))-\psi(f(x))) + 
o(d(f(x),f(y))) \\
&=& d(\phi \circ f^{-1})(f(x))d(\psi \circ f)(x)
(\phi(y)-\phi(x)) + o(d(x,y)) + o(d(f(x),f(y))).
\end{eqnarray*} 
Now, at generic points, as $\delta \to 0$, the vectors $\phi(y)-\phi(x)$, 
for $y \in B_\delta(x)$ span the space $\R^n$ (see e.g. 
\cite{batespeight}). Further, by combining the quasisymmetry property 
with a density argument from the proof of 
\cite[Theorem 10.8]{heinonenquasi}, we get also that the error $o(d(f(x),f(y)))$ can be controlled by an error of the form $o(d(x,y))$. 
These two facts then force the linear map $d(\phi \circ f^{-1})(f(x))d(\psi \circ f)(x)$ to be identity. Here, we are implicitly using the fact that $f$ and $f^{-1}$ are absolutely continuous, see Proposition \ref{prop:abscont}.
\end{proof}

\subsection{Controlling metric differential using Cheeger differential}\label{ss:metricanddiff}
From now on, without essential loss of generality,  we assume that $X,Y$ are analytically one dimensional equipped with global charts $\phi_X \co X \to \R$ and $\phi_Y \co Y \to \R$. This will simplify arguments below, while the general case could be obtained by restrictions to appropriate subsets. Also, in the application to rigidity in Theorem \ref{thm:existence}, we do in fact have global chart functions.
\vskip1mm

{\it From now on we assume $X,Y$ are analytically one dimensional equipped with
 global charts $\phi_X \co X \to \R$ and $\phi_Y \co Y \to \R$.}
\vskip1mm 

\begin{remark} 
By restricting to a subset $U_i$ so that $f(U_i) \subset V_i$, and $U_i$ and $V_i$ are charts of 
$X$ and $Y$ respectively, the notions here could be defined without assuming the existence of
a global chart
\end{remark}

Let now $\mathcal{D}_X$ denote a fixed dense set of points in $X$.
Let $\mathcal{F}_X = \{d(x,\cdot)~|~ x \in \mathcal{D}\}$ denote a collection of functions. 
Set $V_{X}=\{x \in X ~|~ dg(x) \text{ exists at } x, \ \ \forall g \in \mathcal{F}\}$. 
By the statement that ``$dg(x)$ exists'', we mean that $g$ is differentiable 
with unique derivative $dg(x)$ with respect to the fixed global chart. 

The following is the specialization of a definition from \cite{cheegerkleinerschioppa} to our setting in which we consider curves
rather than more general curve fragments. 

\begin{definition}
\label{d:generic}
A pair $(\gamma,t)$ is generic,
 if $\gamma \co I \to X$ is a rectifiable curve and $t \in I$ is such that the following properties hold.

\begin{enumerate}
\item For each $g \in \mathcal{F}_X$, $(g \circ \gamma)'(t)$ exists at $t$ and is approximately 
continuous at $t$ .
\vskip1mm

\item The point $t$ is in the interior of $I$ and an approximate continuity point of 
$\sup_{g \in \mathcal{F}_X}(|(g \circ \gamma)'(t)|$.
\vskip1mm

\item The derivative $(\phi_X \circ \gamma)'(t)$ exists and is approximately continuous at $t$.
\end{enumerate}
\end{definition}
\vskip1mm

Recall, that a function $g \co U \to \R$ for an subset $U \subset \R$ is said to be
{\it  approximately continuous at an interior point $t \in U$} if for every $\epsilon>0$ we have  
$$
\lim_{s \to 0^+}\frac{|[t-s,t+s] \cap \{|g-g(t)|>\epsilon\}|}{2s} = 0\, .
$$

The first two conditions in Definition \ref{d:generic}  are from \cite[Thm. 4.1.6]{ambrosiotilli} . They guarantee existence of the metric derivative $d_\gamma(t)$
 and the equality 
 \begin{equation}\label{eq:metricderiv}
 d_\gamma(t) = \sup_{g \in \mathcal{F}}|(g \circ \gamma)'(t)|\, .
 \end{equation}
The last condition in Definition \ref{d:generic} allows us to define an element $\gamma'(t)$ of
 $TX_{\gamma(t)}=\R$, by $(\phi_X \circ \gamma)'(t) \in \R$. 
\vskip1mm

For a fixed $x \in V_X$, there are two norms on $TX_x = \R$, one is the dual norm that 
$\Lip$ induces on the cotangent bundle:
$$
||t||_{\rm Lip*,x} \defeq \frac{|t|}{\Lip [\phi_X](x)}\, .
$$
The other is defined by the differentials of functions in $\mathcal{F}_X$:
$$||t||_{\mathcal{D},x} = \sup_{f \in \mathcal{F}_X} |df(t)|.$$
Both norms
 are defined on the full measure subset $x \in V_X$. 
Further, by \cite[Theorem 6.1]{cheegerkleinerschioppa} for almost every $x \in V_X$, the
above two norms
coincide.

Let $\overline{V}_X$ denote the  set of $x$ such that 
there exists a curve $\gamma$ s.t. $\gamma(t) = x$, $\phi_X \circ \gamma'(t) \neq 0$, and 
$(\gamma,t)$ is generic. By a slight modification of \cite{cheegerkleinerschioppa} involving filling in the curve fragments to give true curves, this set has full measure and for each $x \in \overline{V}_X$ the above two norms are equal.  
By combining \cite[Thm. 4.1.6]{ambrosiotilli} with the above,  if $(\gamma,t)$ is regular and $\gamma(t) \in \overline{V}_X$ we have 

\begin{equation}\label{eq:regularpt}
d_\gamma(t)=||\gamma'||_{\mathcal{D},x}=||\gamma'||_{\rm Lip*,x}\, . 
\end{equation}
 {\it In particular, the metric derivative $d_\gamma$ can be expressed using the 
differential structure.}

The global charts give a global trivialization $TX_x = \R$ and the norms are given by 
$||v||_x = \tau_X(x) |v|$ for some measurable $\tau \co \R \to \R$. In fact, 
 by the previous paragraph, we can set $\tau_X(x) = \frac{1}{\Lip[\phi_X](x)}$, 
which is almost everywhere defined. Further, in the context of Proposition \ref{prop:differential} and on analytically one dimensional spaces, the function $d(\phi_Y \circ f)$ will simply 
be a scalar quantity.

The maps $f \co X \to Y$ will be controlled at certain generic regular points defined as follows.
\vskip2mm

\begin{definition}
\label{d:regular}
The point $x\in X$ {\it regular}
 for a quasi-symmetric map $f \co X \to Y$ if the following conditions hold. 
\vskip1mm

\begin{enumerate}
\item $x \in \overline{V}_X, f(x) = \overline{V}_Y$.
\vskip1mm

\item $x$ is an approximate continuity point of $\tau_X$ and $\tau_X(x) >0$, 
and $y=f(x)$ is an approximate continuity point of $\tau_Y(y)>0$.
\vskip1mm

\item $x$ is an approximate continuity point of $d(\phi_Y \circ f)$ and $\phi_Y \circ f$ is Cheeger differentiable at $x$ with respect to $\phi_X$ with $d(\phi_Y \circ f) \neq 0$.
\vskip1mm

\item $f(x)$ is an approximate continuity point of $d(\phi_X \circ f^{-1})$ with $d(\phi_X \circ f^{-1}) \neq 0$ and $\phi_X \circ f^{-1}$ is Cheeger differentiable at $f(x)$ with respect to $\phi_Y$ with derivative $d(\phi_X \circ f^{-1})$.
\vskip1mm

\item
$x$ is a Lebesgue point of $\Lip[\phi_X]$ with $\Lip[\phi_X](x) \neq 0$.
\vskip1mm

\item $f(x)$ is a Lebesgue point of $\Lip[\phi_Y]$ with $\Lip[\phi_Y](f(x)) \neq 0$.
\vskip1mm

\item $d(\phi_Y \circ f)(x)^{-1} = d(\phi_X \circ f^{-1})(f(x))$
\end{enumerate} 
\end{definition}
\vskip1mm

\noindent
Note that If $x$ is regular for $f$, then $f(x)$ is regular for $f^{-1}$. Also, it follows from Proposition \ref{prop:differential} and Proposition \ref{prop:abscont} that almost every point $x$ is a regular point.
\vskip2mm

\subsection{Blow-up maps and proof of Theorem \ref{thm:rigidityquantitative}} \label{ss:proofrigidity}

 Next, we will blow up $f$ at a regular point $x$. 
Let $r_n \to 0$ be some sequence.
Choose another sequence $s_n =\diam(f(B(x,r_n)))/2$.
If $f \co X \to Y$ is a quasisymmetry,
 then also $\lim_{n \to \infty} s_n = 0$. Next, 
choose a sub-sequence so that $$(X,d_n/r_n,\mu_X/\mu_X(B(x,r_n)))$$
converges in the measured Gromov-Hausdorff sense to some tangent 
cone $T_X$ and 
$$(Y,d_n/s_n,\mu_Y/\mu_Y(B(f(x),s_n)))$$ converges in the measured
 Gromov-Hausdorff sense to some tangent cone $T_Y$. Since our spaces are doubling, 
 such subsequences exist.

Since $X,Y$ are quasiconvex, it follows that $X$ and $Y$ are uniformly perfect. This guarantees that $s_n \neq 0$ and that the division above is well defined. The technical details of the proofs of the various unproved statements below are
are classical; for these details, see the proof of \cite[Theorem 1.1]{liconf}.
Directly applying Definition \ref{d:iqs} shows that the maps 
$f_n=f \co (X,d_n/r_n,\mu_X/\mu_X(B(x,r_n))) \to (Y,d_n/s_n,\mu_Y/\mu_Y(B(f(x),s_n)))$ 
are equicontinuous. Therefore, using a variant of Arzel\`a-Ascoli, 
we obtain a map $T_f \co T_X \to T_Y$. Indeed, there exist Gromov-Hausdorff approximations $\phi_{n,X} \co T_X \to (X,d_n/r_n,\mu_X/\mu_X(B(x,r_n)))$ and inverse approximations $\psi_{n,Y} \co (Y,d_n/s_n,\mu_Y/\mu_Y(B(f(x),s_n))) \to T_Y$. Then, we can define $T_f(z) = \lim_{n \to \infty} \phi_{n,Y} \circ f_n \circ \phi_{n,X}(z)$. Here the limit can be shown to exist along a subsequence,
initially on a dense subset and then everywhere. One can also show that $T_f$ is a quasisymmetry. 
This construction is possible for any sequence of $r_n$, and so for any tangent cone
 $T_X$ of $X$ there is an associated $T_Y$ 
which is quasisymmetric to $T_X$.

The crucial fact concerning regular points is that, asymptotically, the change $d(f(x),f(y))$ can be 
estimated from above by a constant times $\frac{\tau_Y(f(y))}{\tau_X(x)}|d(\phi_Y \circ f)|$. This quantity is
essentially the metric derivative $\Lip[f](x)$ of $f$.  
As usual, denote by $o(d(y,x))$ some function such that
 $\lim_{y \to x} \frac{o(d(y,x))}{d(y,x))} = 0$.
\begin{lemma}\label{lem:distanceest} Assume that $X$ and $Y$ admit $C$-monotone metrics and 
let $\Delta >0$. If $x$ is a regular point, and 
$z,y$ satisfy $d(z,y) \geq \Delta d(x,y)$ and $d(z,x) \leq \frac{1}{\Delta} d(x,y)$, then 
$$
d(f(z),f(y)) \leq C \cdot \frac{\tau_Y(f(x))}{\tau_X(x)}\cdot |d(\phi_Y \circ f)| \cdot d(z,y) + o(d(y,x))\, .
$$
\end{lemma}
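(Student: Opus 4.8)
\textbf{Plan for the proof of Lemma \ref{lem:distanceest}.}

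The idea is to reduce the estimate on $d(f(z),f(y))$ to a length estimate along a well-chosen curve, and then to control that length integral using the metric derivative identity \eqref{eq:regularpt} together with the reverse-H\"older/absolute-continuity properties established in Subsection \ref{ss:quasi}. First I would fix $\epsilon>0$ and, using $C$-monotonicity of the metric on $X$ at the pair $(z,y)$ together with the hypotheses $d(z,y)\geq \Delta d(x,y)$ and $d(z,x)\leq \frac{1}{\Delta}d(x,y)$, select from the family $\Gamma^C_\epsilon(z,y)$ a curve $\gamma$ joining $B(z,\epsilon d(z,y))$ to $B(y,\epsilon d(z,y))$ with $\len(\gamma)\leq C(1+\epsilon)d(z,y)$ and with $f$ absolutely continuous on $\gamma$ (the latter is arranged by combining the modulus lower bound $\Mod_Q(\Gamma^C_\epsilon(z,y))\geq \Phi(\epsilon)>0$ with Lemma \ref{lem:abscontcurv}, which removes only a zero-modulus subfamily). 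Because $\gamma$ lies in a ball of radius $\asymp d(x,y)$ around $x$ and $x$ is a regular point — hence an approximate continuity point of $\tau_X$, of $\tau_Y\circ$ (suitable identification), of $d(\phi_Y\circ f)$, and a Lebesgue point of $\Lip[\phi_X]$ — I can further use Lemma \ref{lem:modulusboundstronger} applied to the ``bad set''
$$E_\epsilon \defeq \{w : |\tau_X(w)-\tau_X(x)|+|d(\phi_Y\circ f)(w)-d(\phi_Y\circ f)(x)|+|\Lip[\phi_X](w)-\Lip[\phi_X](x)|>\epsilon\}\cap B(x,CR),$$
which has small relative measure as $d(x,y)\to 0$, to choose $\gamma$ so that in addition $\int_\gamma \Lip[f]\,1_{E_\epsilon}\,ds$ is of order $o(d(y,x))/d(y,x)$ times the scale, i.e.\ negligible compared with $d(z,y)$.

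Next I would estimate $d(f(z),f(y))$ from above by $\len(f\circ\gamma)$ plus the boundary error terms coming from the fact that $\gamma$ only connects the $\epsilon$-balls about $z$ and $y$ (these errors are $\lesssim \Lip[f](x)\cdot \epsilon\, d(z,y)$, hence absorbed into the constant and an $o(d(y,x))$ term by equicontinuity of $f$ near $x$). By \eqref{eq:fgamma}, $\len(f\circ\gamma)\leq \int_\gamma \Lip[f](\gamma(t))\,d_\gamma(t)\,dt$. Splitting the integrand over $E_\epsilon$ and its complement: on $E_\epsilon$ the contribution is negligible by the previous paragraph; on the complement I use \eqref{eq:regularpt}, which says that at a generic regular point $\Lip[f]$ is comparable — in fact, on an analytically one-dimensional space, equal up to the chain-rule scalar — to $\frac{\tau_Y(f(w))}{\tau_X(w)}|d(\phi_Y\circ f)(w)|$. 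Since $w=\gamma(t)$ lies outside $E_\epsilon$, this quantity is within $\epsilon$ of $\frac{\tau_Y(f(x))}{\tau_X(x)}|d(\phi_Y\circ f)(x)|$ (using approximate continuity at the regular point $x$, and for $\tau_Y$ the corresponding regularity at $f(x)$, plus continuity of $f$ to transfer the approximate continuity point). Pulling that near-constant out of the integral gives
$$\len(f\circ\gamma) \leq \big((1+\epsilon)\, C\, \tfrac{\tau_Y(f(x))}{\tau_X(x)}|d(\phi_Y\circ f)(x)| + o(1)\big)\,d(z,y) + o(d(y,x)),$$
where I used $\int_{\gamma\setminus E_\epsilon} d_\gamma(t)\,dt\leq \len(\gamma)\leq C(1+\epsilon)d(z,y)$.

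Finally I would let $\epsilon\to 0$ (after $d(x,y)\to 0$, or along a diagonal choice $\epsilon=\epsilon(d(x,y))\to0$ slowly enough that all the $o$-terms remain controlled, which is legitimate because the relative measure of $E_\epsilon$ and the deviation of $\tau$-quantities both tend to $0$ as $d(x,y)\to 0$ for fixed $\epsilon$). This yields exactly
$$d(f(z),f(y)) \leq C\cdot \frac{\tau_Y(f(x))}{\tau_X(x)}\cdot |d(\phi_Y\circ f)(x)|\cdot d(z,y) + o(d(y,x)),$$
the factor $C$ being inherited from the $C$ in the $C$-monotone curve family $\Gamma^C_\epsilon$. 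The main obstacle I anticipate is the bookkeeping of the two coupled limits: one must check that the error from restricting to $\gamma\setminus E_\epsilon$, the boundary error from the $\epsilon$-balls, and the fluctuation of $\Lip[f]$ around its value at $x$ can all simultaneously be made $o(d(y,x))$, which requires the \emph{quantitative} absolute continuity of Proposition \ref{prop:abscont} and the quantitative modulus bound of Lemma \ref{lem:modulusboundstronger} rather than just their qualitative versions — this is precisely why those strengthened statements were set up earlier. A secondary technical point is transferring approximate-continuity of $\tau_Y$ and $d(\phi_X\circ f^{-1})$ from $f(x)$ back along $f\circ\gamma$, which uses that $f$ is a homeomorphism and that regular points were defined so that $f(x)$ is regular for $f^{-1}$.
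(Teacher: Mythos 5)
Your overall architecture matches the paper's: select a curve from $\Gamma^C_\epsilon(z,y)$ on which $f$ is absolutely continuous, control the contribution of a ``bad set'' via Lemma \ref{lem:modulusboundstronger}, use the chain rule on the good set, and pass to the limit in $\eta$ and $\epsilon$. However, there is one genuine gap at the central step, and a smaller slip at the endpoints.

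The gap: on $\gamma\setminus E_\epsilon$ you bound $\len(f\circ\gamma)$ by $\int_\gamma\Lip[f](\gamma(t))\,d_\gamma(t)\,dt$ and then replace $\Lip[f](\gamma(t))$ by $\frac{\tau_Y(f(\gamma(t)))}{\tau_X(\gamma(t))}|d(\phi_Y\circ f)(\gamma(t))|$, citing \eqref{eq:regularpt}. But \eqref{eq:regularpt} is a statement about metric derivatives of curves and says nothing about $\Lip[f]$. The pointwise identity $\Lip[f](w)=\frac{\tau_Y(f(w))}{\tau_X(w)}|d(\phi_Y\circ f)(w)|$ is not available at this stage: up to the factor $C^2$ it is precisely the conclusion of the lemma (applied at the point $w$), so invoking it is circular, and for $C>1$ only a two-sided comparability with $C$-dependent constants could possibly hold, so even granting it you would land on $C^3\cdot\frac{\tau_Y(f(x))}{\tau_X(x)}|d(\phi_Y\circ f)(x)|\cdot d(z,y)$ rather than the sharp single factor of $C$ that the $C^2$-bi-Lipschitz conclusion of Theorem \ref{thm:rigidityquantitative} requires. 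The paper avoids $\Lip[f]$ entirely on the good set: it writes $\len(f\circ\gamma)\leq\int_a^b d_{f\circ\gamma}(t)\,dt$ and computes the metric derivative of the \emph{image} curve exactly, namely $d_{f\circ\gamma}(t)=\tau_Y(f(\gamma(t)))\,|(\phi_Y\circ f\circ\gamma)'(t)|$ by \eqref{eq:regularpt} applied in $Y$, which equals $\tau_Y(f(\gamma(t)))\,|d(\phi_Y\circ f)(\gamma(t))|\,|(\phi_X\circ\gamma)'(t)|$ by the chain rule, and hence equals $\frac{\tau_Y(f(\gamma(t)))}{\tau_X(\gamma(t))}|d(\phi_Y\circ f)(\gamma(t))|\,d_\gamma(t)$ by \eqref{eq:regularpt} applied in $X$. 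The crude bound $d_{f\circ\gamma}(t)\leq\Lip[f](\gamma(t))\,d_\gamma(t)$ is used only on the bad set, where Lemma \ref{lem:modulusboundstronger} makes it harmless; the factor $C$ then enters only through $\len(\gamma)\leq C(1+\epsilon)d(z,y)$.

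The smaller slip: the endpoint error $d(f(\gamma(a)),f(z))$ is not bounded by $\Lip[f](x)\cdot\epsilon\, d(z,y)$, since $\Lip[f](x)$ is an infinitesimal quantity at $x$ and controls nothing at $z$. The correct estimate uses the quasisymmetry modulus, $d(f(\gamma(a)),f(z))\leq\psi(\epsilon)\,d(f(z),f(y))$ with $\psi(\epsilon)\to0$, after which these terms are moved to the left-hand side and one divides by $1-2\psi(\epsilon)$. Relatedly, your bad set should also include the set where $\tau_Y\circ f$ deviates from $\tau_Y(f(x))$; its small relative density in $X$ is obtained by pulling back approximate continuity at $f(x)$ through the quantitative absolute continuity of Proposition \ref{prop:abscont}, as you anticipate at the end of your plan.
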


\begin{proof}
It suffices to show
$$\limsup_{z,y \to x} \frac{d(f(z),f(y))}{d(z,y)} \leq C \frac{\tau_Y(f(x))}{\tau_X(x)}|d(\phi_Y \circ f)|,$$
where the limit is along pairs $(y,z)$ such that $d(z,y) \geq \Delta d(x,y)$
 and $d(z,x) \leq \frac{1}{\Delta} d(x,y)$. Below, all the limit superiors
 are taking along sequences satisfying this assumption.

 Let $\eta,\epsilon>0$ be very small constants to be determined. 
Given a measurable function $g$  on $X$ and $q$ with $g(q)>0$, we define 
$$
A_{\eta,q,g} :=\left\{w \in X \ | \ |g(w)-g(q)|>\eta |g(q)|, \left|\frac{1}{g(w)}-\frac{1}{g(q)}\right| > \frac{\eta}{g(q)}\right\}\, .
$$

 Define the ``bad set'' by
$$
\mathcal{B} := A_{\eta, x, \tau_X} \cup A_{\eta, x, \tau_Y \circ f} 
\cup A_{\eta,x,|d(\phi_Y \circ f)|} \cup (X \setminus (\overline{V}_X \cup f^{-1}(\overline{V}_Y)))\, .
$$
From  the assumed regularity of $x$ and Proposition \ref{prop:abscont}, it follows that
\begin{equation}\label{eq:density}
\lim_{r \to 0} \frac{\mu(\mathcal{B} \cap B(x,6Cr))}{\mu(B(x,6Cr))}  = 0\, .
\end{equation}

In particular, there is an $r_\eta>0$ such that
\begin{equation}\label{eq:density2}
\frac{\mu(\mathcal{B} \cap B(x,6Cr))}{\mu(B(x,6Cr))} < \eta\, ,
\end{equation}
for all $r<r_0$. 

Consider the family $\Gamma^C_{\epsilon}(z,y)$ and the collection 
$\overline{\Gamma}^C_{\epsilon}(z,y)$ with the same modulus, where $f$ is absolutely 
continuous on each curve.
Then, by assumption and Lemma \ref{lem:abscontcurv}, we have 
$$\Mod_Q(\overline{\Gamma}^C_{\epsilon}(z,y))>\Phi(\epsilon)\,.$$
 By Lemma \ref{lem:modulusboundstronger} and the doubling property, 
there is a constant $C_D$ (depending on doubling and $\Delta$) with the following property.
There is a curve $\gamma \co [a,b] \to X$
 in $\overline{\Gamma}^C_\epsilon(z,y)$ connecting $B(z,\epsilon d(z,y))$ to $B(y,\epsilon d(z,y))$ with
\begin{equation} \label{est:abscont}
\int_\gamma \Lip[f](\gamma(t)) 1_{\mathcal{B}} ~ds \leq \frac{C_D (\Lip[f](x)+\eta) \eta^\alpha }{\Phi(\epsilon)^{\frac{1}{Q}}} d(x,y),
\end{equation}
for all $y,z$ with $d(y,x) \lesssim r_\eta$ and $d(z,x) \lesssim r_\eta$.

Here, we used that $d(y,z) \geq \Delta d(x,y)$, 
since this allows on to translate  estimates for $\mathcal{B}$ in \eqref{eq:density2} in
 $B(x,6Cd(x,y))$ to density estimates for the ball $B(z,Cd(y,z))$. 
 
Now, $\gamma(a) \in B(z,\epsilon d(z,y))$ and $\gamma(b) \in B(y, \epsilon d(z,y))$. 
Also, if $\gamma(t) \not\in \mathcal{B}$ then $\phi_Y \circ f \circ \gamma$ and 
$\phi_X \circ \gamma$ are differentiable, and by the chain rule, we get:

\begin{eqnarray}
d_{f \circ \gamma(t)} &=& \tau_Y(f(\gamma(t)) |(\phi_Y \circ f \circ \gamma)' | \nonumber \\
 &=& \tau_Y(f(\gamma(t)) |d(\phi_Y \circ f)(\gamma(t))| (\phi_X \circ \gamma)' \nonumber \\
&=&  \tau_Y(f(\gamma(t)) |d(\phi_Y \circ f)(\gamma(t))| \frac{d_\gamma(t)}{\tau_X(\gamma(t))} \nonumber\\
&\leq & (1+\eta)^3 \frac{\tau_Y(f(x))}{\tau_X(x)} |d(\phi_Y \circ f)(x)| d_\gamma(t). \label{eq:estimatetau}
\end{eqnarray}

For all $\gamma(t) \in \mathcal{B}$, we still have the worse estimate, which holds almost everywhere,
\begin{equation}\label{eq:lipest}
d_{f\circ \gamma(t)} \leq \Lip[f](x) d_\gamma(t).
\end{equation}

Put:
\begin{align}
\label{e:rhodef}
\rho :=  \frac{\tau_Y(f(x))}{\tau_X(x)} |d(\phi_Y \circ f)(x)| \, .
\end{align}
By the quasisymmetry Definition \ref{d:etaqs}, 
there exists a function $\psi\co [0,\infty) \to [0,\infty)$ satisfying
\begin{align}
\label{e:psi}
d(f(\gamma(a)),z) &\leq \psi(\epsilon) d(f(z),f(y))\, .\notag\\
d(f(\gamma(b)),y) &\leq \psi(\epsilon) d(f(z),f(y))
\, .
\end{align}
 By Definition \ref{d:etaqs} the function $\psi$ satisfies
 $\lim_{\epsilon \to 0} \psi(\epsilon) = 0$.
\vskip2mm

Next, from inequalities and equations \eqref{eq:lipest}, \eqref{est:abscont},  \eqref{eq:regularpt}, \eqref{eq:fgamma} and \eqref{eq:lengam}  we get
\begin{eqnarray*}
d(f(y),f(z)) &\leq & d(f(y),\gamma(b)) + d(f(z),\gamma(a)) + d(f(\gamma(a)),f(\gamma(b))) \\
&\leq & 2\psi(\epsilon) d(f(z),f(y)) + \len(f \circ \gamma) \\
& \leq & 2\psi(\epsilon) d(f(z),f(y)) + \int_a^b d_{f \circ \gamma(t)} ~dt \\
& \leq & 2\psi(\epsilon) d(f(z),f(y)) + \int_a^b d_{f \circ \gamma(t)} 	1_{\gamma(t) \not\in \mathcal{B}} ~dt  +  \int_a^b \Lip[f](\gamma(t)) d_\gamma(t) 	1_{\gamma(t) \in \mathcal{B}} ~dt   \\
& \leq &  2\psi(\epsilon)d(f(z),f(y)) + \int_a^b (1+\eta)^3 \rho d_\gamma(t)~dt  +  \frac{C_D (\Lip[f](x)+\eta) \eta^\alpha }{\Phi(\epsilon)^{\frac{1}{Q}}} d(z,y)\\
&\leq &  2\psi(\epsilon)d(f(z),f(y)) +  (1+\eta)^3 \rho  \len(\gamma) + \frac{C_D (\Lip[f](x)+\eta) \eta^\alpha }{\Phi(\epsilon)^{\frac{1}{Q}}} d(z,y)\\
&\leq &  2\psi(\epsilon)d(f(z),f(y)) +  (1+\eta)^3 \rho Cd(z,y) + \frac{C_D (\Lip[f](x)+\eta) \eta^\alpha }{\Phi(\epsilon)^{\frac{1}{Q}}} d(z,y)\\
\end{eqnarray*}

So, by reorganizing, we get
$$d(f(z),f(y)) \leq \frac{1}{1-2\psi(\epsilon)}(1+\eta)^3 \rho Cd(z,y) + \frac{1}{1-2\psi(\epsilon)} \frac{C_D (\Lip[f](x)+\eta) \eta^\alpha }{\Phi(\epsilon)^{\frac{1}{Q}}} d(z,y).$$

Finally, if we fix $\epsilon>0$ then $y,z \to x$ we can send $\eta \to 0$ in Equation \ref{eq:density2} and $r_\eta \to 0$, giving:
$$
\limsup_{y ,z \to x} \frac{d(f(y),f(z))}{d(z,y)} \leq \frac{1}{1-2\psi(\epsilon)}C \rho(x).$$
Then, since $\epsilon>0$ was arbitrary, we get the desired result:
$$
\limsup_{y,z \to x} \frac{d(f(y),f(z))}{d(z,y)} \leq C \rho(x)\, .
$$
\end{proof}

An application of the previous theorem gives the proof of our rigidity result.

\begin{proof}[Proof of Theorem \ref{thm:rigidityquantitative}] 
Let $\rho$ be as in \eqref{e:rhodef}.

 Note that by Lemma \ref{prop:differential}, we have
 $d(\phi_Y \circ f)(x)^{-1} = d(\phi_X \circ f^{-1})(f(x))$. 
 Also, by Lemma \ref{lem:distanceest} applied to $f$ and $f^{-1}$, if for fixed $\Delta$, we 
have $d(y,z) \geq \Delta d(x,y)$ and $d(z,x) \leq \frac{1}{\Delta} d(x,y)$, then
 we have:
$$
 \frac{1}{C} \rho(x) \leq \liminf_{y,z \to x} \frac{d(f(z),f(y))}{d(z,y)}
 \leq  \limsup_{y,z \to x} \frac{d(f(z),f(y))}{d(z,y)} \leq C\rho(x)\, .
$$

If $r_n \searrow 0$ is any sequence and $\epsilon >0$, 
 then for $n$ large enough we get:
$$s_n = \diam(f(B(x,r_n)))/2 \leq  (1+\epsilon)C \rho(x) r_n$$
 However, we also have:
$$\frac{\rho r_n}{C(1+\epsilon)}\leq s_n,$$
This can be obtained by choosing a generic $(\gamma,t)$ with $\gamma(t)=x$,
 and considering the sequence $\alpha^\pm_n \searrow 0$,
with $d(\gamma(t\pm\alpha^\pm_n),x) = r_n$. 

Therefore, 
$$\frac{1}{C} \leq \liminf_{n \to \infty} \frac{r_n}{s_n} \leq \limsup_{n \to \infty} \frac{r_n}{s_n}
 \leq C\, .$$ 
Then, by  Lemma \ref{lem:distanceest} again, fixing $R$ large, for any sequence $y_n,z_n \in B(x,R r_n)$ 
with $d(y_n,z_n) \geq \Delta r_n$, we get:
$$
\frac{1}{C^2} \leq \liminf_{n \to \infty} \frac{d(f(y_n),f(z_n)) r_n}{s_n d(z_n,y_n)} 
\leq \limsup_{n \to \infty} \frac{d(f(y_n),f(z_n)) r_n}{s_n d(z_n,y_n)} \leq C^2\, .
$$

Thus, with respect to the rescaled metrics $d_X/r_n$ and $d_Y/(s_n)$,
the maps $f|_{B(x,Rr_n)}$ converge to $C^2$-Bi-Lipschitz maps on any
 $\Delta r_n$-separated nets. 
By sending $\Delta \searrow 0, R \nearrow \infty$ we get the desired result for the tangent maps $T_f \co T_X \to T_X$, which is a limit of the previous maps. 
\end{proof}
\vskip4mm

\noindent
\subsection{Further remarks on  Quasiconformality:} \label{ss:quasiconf}
As an application of Theorem \ref{thm:rigidityquantitative} we study briefly the rigidity it imposes on quasiconformal maps. This is not needed elsewhere in this paper, but is of independent interest and provided for the sake of completeness.

Consider a homeomorphism $f \co X \to Y$.  
Put $L_f(x,t) \defeq	 \sup_{y \in B(x,t)} d(f(x),f(y))$,
 and 
$l_f(x,t) \defeq \inf_{y \not\in B(x,t)} d(f(x),f(y)).$ 
We define
$$H_f(x,t) \defeq \frac{L_f(x,t)}{l_f(x,t)}\, ,$$
$$H_f(x) \defeq \limsup_{t \to 0} H_f(x,t)\, .$$
\begin{definition}
\label{d:qc}
${}$
\begin{itemize}

\item[1)]
The map $f$ is quasiconformal
 if there exists $M$ such that for all $x$, we have
$H_f(x)<M$.
\vskip1mm

\item[2)]
The map {\it $K$-quasiconformal} if $H_f$ is bounded, and $H_f(x) \leq K$ 
almost everywhere; equivalently $||H_f(x)||_{\infty} \leq K$.
\end{itemize}
\end{definition}

By \cite{heinonenkoskela}, any quasiconformal map between
 Loewner spaces is quasisymmetric.
 That is, the infinitesimal boundedness of $H_f(x)$ implies
 the boundedness of $H_f(x,t)<M$ at all scales. 

 Next, we show a striking corollary of Theorem \ref{thm:rigidityquantitative}. Namely, that
 the quasicoformality constant can be uniformly controlled
 if the domain and target are analytically one dimensional.

\begin{proposition}[Kleiner, unpublished]\label{prop:uniformquasiconformal} Let $X,Y$ be 
 $Q$-Loewner and  analytically $1$-dimensional. Assume in addition that metrics on $X,Y$ are
 $C$-monotone. Any quasiconformal map $f \co X \to Y$ is $C^4$-quasiconformal.
\end{proposition}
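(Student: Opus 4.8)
The plan is to reduce to Theorem \ref{thm:rigidityquantitative} via a blow-up argument. First, since $f\co X\to Y$ is quasiconformal and $X,Y$ are Loewner, the Heinonen--Koskela theorem \cite{heinonenkoskela} (recalled just before Proposition \ref{prop:uniformquasiconformal}) shows that $f$ is in fact quasisymmetric. Hence Theorem \ref{thm:rigidityquantitative} applies: for almost every $x\in X$, every tangent cone $T_X$ of $X$ at $x$ admits a tangent cone $T_Y$ of $Y$ at $f(x)$ such that every blow-up map $T_f\co T_X\to T_Y$ is $C^2$-bi-Lipschitz, i.e. $C^{-2}d_{T_X}(a,b)\le d_{T_Y}(T_f(a),T_f(b))\le C^2 d_{T_X}(a,b)$. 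It therefore suffices to prove $H_f(x)\le C^4$ at each such $x$; together with the a priori finiteness $H_f<\infty$ (which holds since $f$ is assumed quasiconformal), this yields $\|H_f\|_\infty\le C^4$, which is exactly $C^4$-quasiconformality in the sense of Definition \ref{d:qc}.

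Fix such an $x$ and choose $t_n\searrow 0$ realizing the $\limsup$ defining $H_f(x)$, so $H_f(x,t_n)\to H_f(x)$. Blow up along $r_n\defeq t_n$ and $s_n\defeq\diam(f(B(x,r_n)))/2$; as in Subsection \ref{ss:proofrigidity} the spaces are uniformly perfect so $s_n>0$ and $s_n\to 0$, and after passing to a subsequence $(X,d_X/r_n,\mu_X/\mu_X(B(x,r_n)))\to T_X$ with basepoint $o$, $(Y,d_Y/s_n,\mu_Y/\mu_Y(B(f(x),s_n)))\to T_Y$ with basepoint $o'$, and the rescaled maps $f_n$ converge (equicontinuity and the limiting procedure being the classical facts invoked in the proof of Theorem \ref{thm:rigidityquantitative}, cf.\ \cite[Theorem 1.1]{liconf}) to a $C^2$-bi-Lipschitz map $T_f\co T_X\to T_Y$ with $T_f(o)=o'$. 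Since $s_n$ cancels in the ratio, $H_f(x,t_n)=\big(L_f(x,t_n)/s_n\big)/\big(l_f(x,t_n)/s_n\big)$, so it is enough to establish
\[
\limsup_{n\to\infty}\frac{L_f(x,t_n)}{s_n}\le C^2,\qquad \liminf_{n\to\infty}\frac{l_f(x,t_n)}{s_n}\ge \frac{1}{C^2}.
\]

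For the first bound, if it failed there would be a subsequence along which $L_f(x,t_n)/s_n\ge C^2+\delta$; choosing $y_n\in B(x,t_n)$ nearly attaining $L_f(x,t_n)$ and passing to a further subsequence so the rescaled images of $y_n$ converge to $z\in T_X$ with $d_{T_X}(o,z)\le 1$, Gromov--Hausdorff convergence gives $d_{T_Y}(o',T_f(z))=\lim_n d_Y(f(x),f(y_n))/s_n\ge C^2+\delta/2$, contradicting $d_{T_Y}(o',T_f(z))\le C^2 d_{T_X}(o,z)\le C^2$. For the second bound, suppose along a subsequence $l_f(x,t_n)/s_n\le C^{-2}-\delta$ and pick $y_n\notin B(x,t_n)$ with $d_Y(f(x),f(y_n))/s_n\le C^{-2}-\delta/2$. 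If $d_X(x,y_n)/t_n\to\infty$ along a further subsequence, then comparing $y_n$ with a point $y_n'\in B(x,t_n)$ satisfying $d_X(x,y_n')\asymp t_n$ (hence $d_Y(f(x),f(y_n'))\asymp s_n$) and applying the quasisymmetry inequality of Definition \ref{d:etaqs} forces $d_Y(f(x),f(y_n))/s_n\to\infty$, a contradiction; otherwise $d_X(x,y_n)/t_n$ stays bounded, so (subsequentially) the rescaled images of $y_n$ converge to $z\in T_X$ with $d_{T_X}(o,z)\ge 1$ and $d_Y(f(x),f(y_n))/s_n\to d_{T_Y}(o',T_f(z))\ge C^{-2}d_{T_X}(o,z)\ge C^{-2}$, again contradicting the choice of $y_n$. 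Combining, $H_f(x)=\lim_n H_f(x,t_n)\le C^2/C^{-2}=C^4$, as desired.

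The content of the argument is entirely carried by Theorem \ref{thm:rigidityquantitative}; the only point requiring care — and the step I expect to be the main (if modest) obstacle — is the bookkeeping that the suprema and infima of $d_Y(f(x),f(\cdot))/s_n$ over $B(x,t_n)$ and over its complement pass correctly to the limiting quantities for $T_f$. This uses equicontinuity of the rescaled maps for the supremum, and, for the infimum, the quasisymmetric control at large scales to preclude escape to infinity — precisely the ingredients already assembled in Subsection \ref{ss:proofrigidity}, so the proof should go through cleanly.
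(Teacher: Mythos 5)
Your argument is correct and follows essentially the same route as the paper's proof: blow up at a point (and along a sequence) realizing the $\limsup$ defining $H_f(x)$, invoke Theorem \ref{thm:rigidityquantitative} to get a $C^2$-bi-Lipschitz tangent map, and deduce $H_f(x)\le C^4$. The paper compresses the transfer of $H_f(x)$ to the blow-up into the remark that ``$H_f(x)=H_{T_f}(p)$ ... is not hard to show''; your case analysis for the supremum and infimum (including ruling out escape to infinity via quasisymmetry) is exactly the bookkeeping that remark elides.
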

\begin{proof} First, note that if $g \co X \to Y$ is a $L$-bi-Lipschitz map, and 
$X,Y$ are connected, then it is not hard to show that $H_g(x) \leq L^2$. 
Similarly, it is not hard to show that $H_f(x)=H_{T_f}(p)$, where $T_f$ is 
obtained by taking a tangent map along some sequence $r_n \searrow 0$ 
where $H_f(x)$ obtains its limit superior. Here $p$ is the base point of the tangent.
 By Theorem \ref{thm:rigidityquantitative} the map $T_f$ is $C^2$-bi-Lipschitz and 
so it is $C^4$-quasiconformal and $H_{T_f}(p) \leq C^4$. Consequently, $H_f(x) \leq C^4$. 
\end{proof}

In particular, if $C=1$, then any quasiconformal map is conformal.

\section{Quasisymmetrically distinguishing carpets using Loewner rigidity} 
\label{sec:nonquasi}

In this subsection we prove the remaining part of Theorem \ref{thm:existence}.
Namely, for all $Q,Q'$, infinitely many of our examples are pairwise quasisymmetrically distinct.

The distinctness requires introducing a new (quasi)-invariant for our construction. This invariant is fine enough to distinguish countably many of our examples from each other; while leaving the distinctness open for the full uncountable collection of examples.
\begin{definition}
If $h \co (0,\infty) \to (0,\infty)$ is a function, we define
\begin{equation}\label{eq:quasiinv}
\overline{h}_{\rm inf}(t) := \liminf_{r \to 0} \frac{h(tr)}{h(r)}\, .
\end{equation}
\end{definition}

The fact that $\overline{h}_{\rm inf}h(t)$ is invariant up to a constant factor is a consequence of the following proposition.

\begin{theorem}\label{thm:comparability}
Suppose $X,Y$ have measures $\mu,\nu$ which are $Q$-Ahfors regular. Suppose, moreover, that these measures are $h_X,h_Y$-uniform with comparability constants $C_X,C_Y$. Suppose also that $h_X,h_Y$ are uniformly doubling with constant $D$, and that the spaces admit $(1,Q)$-Poincar\'e inequalities with constants $C_{PI}$. Then, if there is a quasiconformal map $f \co X \to Y$, then for every $t \in (0,1]$ we have
$$
\overline{h}_{\inf,X}(t) \asymp \overline{h}_{\inf,Y}(t)\, .
$$
Here the comparability constant depends only on $C_X,C_Y,D,C_{PI}$.
\end{theorem}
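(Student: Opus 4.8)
The plan is to transfer the uniformity/Ahlfors-regularity data across the quasiconformal map $f$ using the rigidity and absolute-continuity machinery already assembled. First I would invoke the result of Heinonen--Koskela (quoted as property 1) in the rigidity section) that a quasiconformal map between $Q$-Loewner spaces is in fact $\eta$-quasisymmetric, with $\eta$ depending only on the structural constants; this reduces matters to the quasisymmetric case. Then the key point is to relate the measures $\mu$ and $\nu$ under $f$: by Proposition \ref{prop:abscont} we have $f^{-1*}(\nu) \le L\,\Lip[f]^Q\mu$ and $f^*(\mu)\le L\,\Lip[f^{-1}]^Q\nu$, and $\Lip[f]$ is a Muckenhoupt $A_\infty$-weight (Proposition \ref{prop:ainftyhk}) with reverse-H\"older exponent $Q'>Q$ and all constants controlled by $C_{PI}$, $D$, $C_X$, $C_Y$. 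These comparisons say that on a ball $B=B(x,r)$ the measure $f^{-1*}(\nu)$ is mutually absolutely continuous with $\mu$ with a density that behaves like an $A_\infty$-weight.

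Next I would estimate $h_Y$ in terms of $h_X$ at a given location. Fix $x\in X$, $y=f(x)$, and a radius $r\le 1$; let $s=\diam(f(B(x,r)))/2$, so $\nu(B(y,s))\asymp s^Q$. On the one hand $\nu(B(y,s))\asymp h_Y(s)$ up to $C_Y$, and on the other $\nu(f(B(x,r)))=f^{-1*}(\nu)(B(x,r))$, which by the $A_\infty$-comparison and the reverse-H\"older inequality is comparable (with constants depending only on $L$, i.e.\ on $C_{PI}$, $D$) to $\mu(B(x,r))\cdot\big(\vint_{CB(x,r)}\Lip[f]^Q\,d\mu\big)$. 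Since $f(B(x,r))$ is sandwiched between two balls of comparable radius $s$ around $y$ (quasisymmetry plus the perfectness/quasiconvexity used in Subsection \ref{ss:proofrigidity}), we get $h_Y(s)\asymp h_X(r)\cdot A(x,r)$ where $A(x,r)=\vint_{CB(x,r)}\Lip[f]^Q\,d\mu$. The same argument run with $f^{-1}$ gives the reverse comparison $h_X(r)\asymp h_Y(s)\cdot A'(y,s)$ with $A'(y,s)=\vint_{C'B(y,s)}\Lip[f^{-1}]^Q\,d\nu$, and the chain rule / change of variables forces $A(x,r)A'(y,s)\asymp 1$.

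Now I would pass to the liminf. Fix $t\in(0,1]$. Choosing the radius $tr$ in place of $r$, and using that $f(B(x,tr))$ is comparable to a ball $B(y,s_t)$ with $s_t\asymp t^{\,?}$... — here is where one must be careful: the ratio $s_t/s$ need not equal $t$. But the doubling of $h_X$, $h_Y$ together with the two-sided comparison $h_Y(\diam f(B(x,\rho))/2)\asymp h_X(\rho)\cdot A(x,\rho)$ lets one write
$$
\frac{h_Y(s_t)}{h_Y(s)} \asymp \frac{h_X(tr)}{h_X(r)}\cdot\frac{A(x,tr)}{A(x,r)}.
$$
The crucial observation is that the quotient $A(x,tr)/A(x,r)$ of two averages of the $A_\infty$-weight $\Lip[f]^Q$ over comparable balls stays bounded \emph{above and below} by constants depending only on the $A_\infty$-data (the doubling of the measure $\Lip[f]^Q\,d\mu$, which is exactly the Muckenhoupt/reverse-H\"older content of Proposition \ref{prop:ainftyhk}); it is $\asymp 1$ with structural constants. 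Substituting and taking $\liminf_{r\to0}$ — and noting that as $r\to0$ the companion radii $s,s_t\to0$ too, and conversely every sequence $s_n\to0$ arises (up to bounded factors) from some $r_n\to0$ because $f^{-1}$ is also quasisymmetric — yields $\overline{h}_{\inf,Y}(t)\asymp\overline{h}_{\inf,X}(t)$ with the claimed dependence of constants.

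\textbf{Main obstacle.} I expect the delicate step to be controlling the liminf rather than a limsup: the averages $A(x,r)$ oscillate, so one cannot simply ``cancel'' them pointwise in $r$. The fix is precisely that $\Lip[f]^Q$ is an $A_\infty$ (indeed reverse-H\"older) weight, so its averages over $B(x,r)$ and $B(x,tr)$ are comparable with a constant \emph{uniform in $r$} depending only on $t$ through the weight's doubling constant — and in fact, after the two-sided sandwich with $f^{-1}$, the product of the two oscillating factors is genuinely $\asymp 1$, so the oscillation is harmless. Making this uniformity quantitative, and checking that the correspondence $r\leftrightarrow s$ between scales in $X$ and $Y$ is bi-Lipschitz on the log scale (which again follows from $\eta$-quasisymmetry of $f$ and $f^{-1}$ together with $Q$-Ahlfors regularity), is the technical heart of the argument; everything else is bookkeeping with the cited propositions.
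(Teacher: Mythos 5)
Your route is genuinely different from the paper's, but it has a gap that I do not think your proposed fix closes. The paper's proof picks a \emph{regular} point $x$, chooses $r_n\searrow 0$ realizing the $\liminf$ defining $\overline{h}_{\inf,X}(t)$, invokes Theorem \ref{thm:rigidityquantitative} to get a $C^2$-bi-Lipschitz blow-up $T_f\co T_X\to T_Y$ with constant depending only on $D$ and $C_{PI}$, and then compares cardinalities of $t$-nets in $B(p_X,1)\subset T_X$ and of their images in $T_Y$; the $t$-independence of the final constant falls out of the uniform bi-Lipschitz bound on $T_f$. Your argument instead stays at finite scales and tries to cancel the averages $A(x,\rho)=\vint_{CB(x,\rho)}\Lip[f]^Q\,d\mu$.

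The first problem is the claim that $A(x,tr)/A(x,r)\asymp 1$ ``with structural constants.'' For a general Muckenhoupt $A_\infty$ weight this is false with constants independent of $t$: the weight $w(x)=|x|^\alpha$ on $\R^n$ gives $\vint_{B(0,tr)}w\big/\vint_{B(0,r)}w\asymp t^\alpha$. What doubling of $\Lip[f]^Q\,d\mu$ buys is comparability with a constant of size roughly $D_w^{\log_2(1/t)}$, i.e.\ a constant that degenerates as $t\to 0$. That would prove the theorem only with a $t$-dependent comparability constant, which is strictly weaker than the statement and is fatal for the application in Section \ref{sec:nonquasi}, where $C_3$ must be fixed \emph{before} the scales $t_{n+1}$ are chosen. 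Your ``product of the two oscillating factors is $\asymp 1$'' observation concerns $A(x,r)\cdot A'(y,s)$ — one scale in $X$ paired with one scale in $Y$ — and does not touch the problematic single-space, two-scale quotient $A(x,tr)/A(x,r)$. The second problem is the scale correspondence: $\diam f(B(x,tr))\asymp t\,\diam f(B(x,r))$ does \emph{not} follow from $\eta$-quasisymmetry plus $Q$-Ahlfors regularity of both sides (a radial stretch $z\mapsto z|z|^{\alpha-1}$ of $\R^2$ gives $s(\rho)\asymp\rho^\alpha$ at the origin), and ``bi-Lipschitz on the log scale'' would in any case only identify $\overline{h}_{\inf,Y}(t^\alpha)$, not $\overline{h}_{\inf,Y}(t)$. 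Both defects are repaired simultaneously by working at a regular point of $f$: there $A(x,\rho)\to\Lip[f](x)^Q\in(0,\infty)$ (Lebesgue point) so the quotient tends to $1$, and Theorem \ref{thm:rigidityquantitative} gives $s(tr)/s(r)\to\asymp_{C^2}t$. In other words, the differentiation/rigidity machinery you set aside at the outset is exactly what is needed; once you use it, you are essentially reconstructing the paper's tangent-cone argument.
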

\begin{remark}
In particular, the comparability does not depend on the constants in the Ahlfors regularity condition.
\end{remark}
\begin{proof} Fix $t>0$. Without loss of generality assume $t<\frac{1}{2}$, since otherwise
 the claim follows from the uniform doubling of $h_X,h_Y$.
\vskip1mm

 Let $x$ be a regular point for
 $f$. 
By Theorem \ref{thm:rigidityquantitative}, it follows that at $x$,
 the map $f$ induces a $L$-bi-Lipschitz map $T_X \to T_Y$ between
 any tangent cone $T_X$ at $X$, and some tangent cone
 $T_Y$ at $f(x)$ obtained along a compatible sequence in $Y$. 
The constant $L$ depends only on the doubling constant and the
 constant in the Poincar\'e inequality.  

We can choose to define $T_X$ along a sequence $r_n \searrow 0$ so that we have
$$\overline{h}_{\rm inf,X}(t) = \lim_{n \to \infty} \frac{h(tr_n)}{h(r_n)}.$$

Let $p_X$ be the base-point of $T_X$ and $\mu_X$ its limit measure, and 
$p_Y,\nu_Y$ the analogous quantities for $T_Y$. 
There is an $t$-net $N_X$ in $B(p_X,1) \subset T_X$ of size at least $D\overline{h}_{\rm inf,X}(t)^{-1}$. The set $f(N_X)$ will be a $\frac{1}{L} t$-net within $B(p_Y, L) \subset T_Y$.
Therefore, its size is bounded by
$$\frac{\nu_Y(B(p_Y,2L))}{\inf_{q \in T_Y}\nu_Y(B(q,\frac{1}{L}t))} \lesssim \frac{1}{\overline{h}_{\rm inf,Y}(t)}.$$
By combining these with the doubling property, 
we get $\overline{h}_{\rm inf,Y}(t) \lesssim \overline{h}_{\rm inf,X}(t)$. 
The other direction follows by switching the roles of $X$ and $Y$.
\end{proof}
\vskip2mm

\noindent
{\it Theorem \ref{thm:existence} (Completion of the proof.)} In 
Subsection \ref{sec:generalexamples}
 we showed that there exist positive integers $N_1,N_2,N_3$, and 
$\alpha_1,\alpha_2,\alpha_3$, 
so that the following holds. 
 Consider three substitution rules $S_{N_1}, C_{N_2}$ and 
$WS_{N_3}$ and index them by $1,2$ and $3$ respectively. 
Take any sequence $\mathbf{a}=(a_j) \in \{1,2,3\}^\N$, with
$$
\limsup_{n \to \infty}||\{j \ | \ a_j = k\}|-\alpha_k N|<\infty\, .
$$
Then, the sequence of spaces resulting from performing the substitution rule 
$a_i$ at the $i$'th step will have a limit space $X_{\mathbf{a}}$ which is $Q$-Loewner
and whose snow-flake image in the plane is $Q'$-Ahlfors regular. By choosing $N_i$ 
sufficiently large we can ensure that $\mathbf{a}$ is not constant.

Fix such a sequence, and define $\mathbf{a}^N$ by repeating every value of 
$\mathbf{a}$ $N$-times. That is, for example, $\mathbf{a}^2=(a_1,a_1,a_2,a_2,a_3,a_3, \cdots)$. 
Each of these sequences gives rise to a space $X_{N}=X_{\mathbf{a}^N}$, which is uniformly 
doubling and satisfies a uniform Poincar\'e inequality. These spaces are $h_N$-uniform with
functions $h_N(t)$ and with comparability constant $C_1$ independent of $N$. In fact, we can set 
$h_N(s^N_k)$ to be the measure of an edge at the $k$'th level of the construction, where 
$s_k^N$ are the edge lengths in that construction.  Let now $C_3$ be the constant from Theorem \ref{thm:comparability} corresponding to the uniform comparability constants $C_X=C_Y=C_1$, and the uniform constants in doubling and the Poincar\'e inequalities of $X_{N}$.

We have  for every $t \in (0,1]$
\begin{equation}\label{eq:liminf}
\lim_{N \to \infty}\overline{h}_{\inf,X_{N}}(t) \leq  C_2 t^{P}
\end{equation}
 for some $P > Q$. This is because rescaling along a long sequences of 
repeated substitution rules of the form $C_{N_2}$ gives scales where the space 
resembles a space slightly higher dimensional than $Q$

Fix $N_1 = 1$ and $t_1 = 1$, and proceed inductively as follows to define an increasing sequence of $N_i$ and a decreasing sequence $t_i \in (0,1]$. For each $N$ we have a constant $L_{N}$ so that
$$\overline{h}_{\inf,X_{N}}(t) \geq L_{N} t^Q.$$

Now, choose $t_2 \leq t_1$ so that $L_{N_1} t_2^Q \geq 4C_3 C_2 t_2^{P}$, and then choose an $N_2$ using Equation \eqref{eq:liminf} so large that $\overline{h}_{\inf,X_{N_2}}(t_2) \leq2C_2 t_2^{P}.$

Next, if $t_{n},N_n$ have been defined, then define $t_{n+1} \leq t_n$ so that  
\begin{equation}\label{eq:choice1}
\min\{L_{N_1}, \dots, L_{N_n}\} t_{n+1}^Q \geq 4C_3C_2 t_{n+1}^{P}\, ,
\end{equation}
and choose $N_{n+1}$ using Equation \eqref{eq:liminf} so large that 
\begin{equation}\label{eq:choice2}
\overline{h}_{\inf,X_{N_{n+1}}}(t_{n+1}) \leq 2C_2 t_{n+1}^{P}.
\end{equation}

We observe that the sequence of spaces $X_{N_i}$ so constructed are quasisymmetrically distinct. Namely, if  $X_{N_i}$ were quasisymmetric to $X_{N_j}$ for some $j>i$, then by Theorem \ref{thm:comparability} we would have $\overline{h}_{\inf,X_{N}}(t) \asymp C_3 \overline{h}_{\inf,X_{N_j}}(t)$. However, the choice of $t=t_j$ would contradict the choices made in \eqref{eq:choice1} and \eqref{eq:choice2}, as this would give $4C_2 t_{n+1}^{P} \leq  2C_2 t_{n+1}^{P}$, which is an impossibility.
\qed

\bibliographystyle{amsplain}

\end{document}